\newcommand{\C}{\mathbb{C}}
\newcommand{\bb}{\mathbb{B}}
\newcommand{\dd}{\mathbb{D}}
\newcommand{\Z}{\mathbb{Z}}
\newcommand{\N}{\mathbb{N}}
\newcommand{\pp}{\mathbb{P}}
\newcommand{\e}{\varepsilon}
 \newcommand{\cW}{\mathcal{W}}
\newcommand{\fr}{\partial}
\newcommand{\om}{\Omega}
\newcommand{\set}[1]{\left\{#1\right\}}
\newcommand{\norm}[1]{\left\Vert#1\right\Vert}
\newcommand{\abs}[1]{\left\vert#1\right\vert}
\newcommand{\cd}{{\C^2}}
\newcommand{\pu}{{\mathbb{P}^1}}
\newcommand{\rest}[1]{ \arrowvert_{#1}}
\newcommand{\unsur}[1]{\frac{1}{#1}}
\newcommand{\lrpar}[1]{\left(#1\right)}
\newcommand{\loc}{\mathrm{loc}}
\newcommand{\inv}{^{-1}}
\DeclareMathOperator{\Int}{Int}
\DeclareMathOperator{\jac}{Jac}
\DeclareMathOperator{\comp}{Comp}
\DeclareMathOperator{\diam}{Diam}
\DeclareMathOperator{\core}{Core}
\newtheorem{prop}{Proposition} [section]
\newtheorem{thm}[prop] {Theorem}
\newtheorem{lem}[prop] {Lemma}
\newtheorem{cor}[prop]{Corollary}
\newtheorem*{mainthm}{Main Theorem}
\newtheorem{question}[prop]{Question}
\newtheorem{conjecture}[prop]{Conjecture}
\theoremstyle{definition}
\newtheorem{defi}[prop] {Definition}
\theoremstyle{remark}
\newtheorem{rmk}[prop]{Remark}
\begin{document}

%%%%%%%%%%%%Heading%%%%%%%%%%%%%

\title[Structure of hyperbolic maps]{Structure of   hyperbolic polynomial automorphisms of $\cd$ with disconnected Julia sets}
\author{Romain Dujardin}
\address{Sorbonne Universit\'e, Laboratoire de probabilit\'es, statistique et mod\'elisation, UMR 8001,  4 place Jussieu, 75005 Paris, France}
\email{romain.dujardin@sorbonne-universite.fr}

\author{Mikhail Lyubich}
\address{Institute for Mathematical Sciences Stony Brook University Stony Brook, NY 11794}
\email{mlyubich@math.stonybrook.edu}

\begin{abstract}
For a hyperbolic polynomial automorphism of $\C^2$ with a disconnected Julia set,
and under a mild dissipativity condition, 
we give a topological description of the components of the Julia set.
Namely, there are finitely many ``quasi-solenoids" that govern the asymptotic behavior of the 
orbits of all non-trivial
components.   This can be viewed as a refined Spectral Decomposition for a hyperbolic 
map, as well as a two-dimensional version of the (generalized) Branner-Hubbard theory 
 in     one-dimensional polynomial dynamics. An important geometric ingredient of the theory
is  a John-like property of the Julia set in the unstable leaves. 
\end{abstract}

 \maketitle

\setcounter{tocdepth}{1}
\tableofcontents

\section{Introduction}

\subsection{Preamble on hyperbolic dynamics} 
The classical {\em Spectral Decomposition}  of a  
hyperbolic (Axiom A) real diffeomorphism $f$ of a compact manifold  
(developed by Smale, Anosov, Sinai, Bowen, and others)
provides us with a rather complete topological picture of its dynamics.
Namely, the non-wandering set  $\Omega(f)$
is decomposed into finitely many {\em basic sets},
each of which modeled on an  irreducible Markov chain.
Among these basic sets there are several {\em attractors} that
govern the asymptotic behavior of generic points of the manifold. 
This picture has become a prototype for numerous other settings,
including one-dimensional,  non-invertible, holomorphic, partially or  non-uniformly 
hyperbolic dynamical systems.

In the context of complex polynomial automorphisms of $\cd$,
hyperbolic maps arise naturally as perturbations of one-dimensional
hyperbolic polynomials. They were first studied in the late
1980s by Hubbard and Oberste-Vorth \cite{HOV1, HOV2} who showed that their  
topological structure  can be fully described in terms of the
original one-dimensional  maps, whose Julia set and  attracting cycles
get perturbed to the basic sets of $f$ 
(see also Forn{\ae}ss-Sibony~\cite{fornaess-sibony}).

Computer experiments indicate that, though hyperbolicity is not a
prevalent phenomenon in dimension two,  there should 
exist still plenty of non-perturbative examples.
The first such  candidate (a quadratic H\'enon map with two co-existing
attracting cycles) was  proposed by Hubbard; it was further investigated  
  by Oliva in his thesis~\cite{oliva}. 
However, it is a challenging problem, which  requires
computer assistance, to prove the  hyperbolicity of a particular example,
and this one  still remains unconfirmed.  
Some time later, Ishii   justified the hyperbolicity of several other non-perturbative H\'enon maps:
see \cite{ishii-nonplanar, ishii_survey, ishii_new_one} (of course,  along
with each such example comes an open set of hyperbolic parameters).

A systematic theory of hyperbolic polynomial automorphisms of $\cd$
was launched by Bedford and Smillie in the early 1990's , relying notably on 
 methods from  Pluripotential Theory. In particular, they showed in~\cite{bs1} that
any such a map   only has one non-trivial  basic set, its Julia set
$J(f)$, while all others are just attracting cycles. Further combinatorial study of hyperbolic Hénon maps was carried out by Ishii and Smillie~\cite{ishii-smillie}. 

In this paper we will reveal a  finer structure of the Julia set, related to its
connected components, that leads to a finer ``spectral
decomposition''.
Namely, under mild dissipativity assumptions, we will show that there are finitely many {\em
  quasi-solenoids} that govern the  asymptotic behavior of all non-trivial 
components. Some of these quasi-solenoids are {\em tame} (i.e. lie on
the boundary of the  basins of  some attracting cycles),
while others might be {\em queer} (we do not know whether they 
actually exist). 

Let us  conclude this preamble by suggesting a potentially  important role that
hyperbolic maps may play in the H\'enon story.   They are not only
interesting simple models for the general non-uniformly hyperbolic situation,
but they may  also be seen as  ``germs'' for a  Renormalization Theory  which would
lead to self-similarity features of the parameter spaces. In this respect, 
  renormalizing hyperbolic H\'enon maps around quasi-solenoids would be the
beginning of this story. 

\subsection{One-dimensional prototype} 

Understanding the topological structure of the Julia set is one of the most basic problems in holomorphic 
dynamics. For polynomials in one variable, Fatou and Julia proved that 
 the connectivity properties of the Julia set are dictated by the 
dynamical behavior of critical points. When the critical points do not escape, 
the Julia set $J$ is connected; on the contrary, if all critical points do escape, $J$ is a Cantor set.  
If  $J$ is connected and locally connected,  the theory of external rays 
of Douady and Hubbard~\cite{douady-hubbard} and the theory of geodesic laminations of Thurston~\cite{thurston} 
give a topological 
model for the Julia  set as   the quotient of the circle by an equivalence relation which records   
 the landing pattern of external rays.
 When the Julia set of a polynomial is disconnected, it admits uncountably many components,
 and one challenge is to characterize when a component is non-trivial (i.e. not a point) in terms of 
the induced dynamics on the set of components. It turns out that this happens when and only when
this component is preperiodic to a component containing a critical point: this is due to 
Branner and Hubbard~\cite{branner-hubbard} 
for cubic polynomials, and Qiu and Yin~\cite{qiu-yin} in the general
case (based upon the Kahn-Lyubich machinery \cite{KL1, KL2}). 
Then one may describe non-trivial periodic components by realizing them as 
Julia sets of connected polynomial-like maps and using the
Douady and Hubbard Straightening Theorem~\cite{douady-hubbard2}.

In the hyperbolic case, the above theory is much easier and
had belonged to folklore of the field: 

\begin{thm}\label{thm:folklore}
Let $p$ be a hyperbolic polynomial in $\C$, with a disconnected Julia set. 
Then the filled Julia set $K$ 
has uncountably many components, and only countably of them are non-trivial. Any non-trivial component 
is preperiodic, and there are finitely many periodic components, each of which 
containing an attracting periodic point. 
\end{thm}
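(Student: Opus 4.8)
The proof proceeds by relating the dynamics of $p$ on the set of components of $K$ to a symbolic coding coming from the escaping set, and then exploiting hyperbolicity to rule out small components.

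First I would recall the basic structure. Since $p$ is hyperbolic with disconnected Julia set, at least one critical point escapes to infinity; let $G$ be the Green's function of $K$ and let $U_s = \{G < s\}$. For $s$ large, $U_s$ is a topological disk, and as $s$ decreases past critical values of $G$ (the values $G(p^n(c))$ for escaping critical points $c$), the sublevel set $U_s$ disconnects. Because there are finitely many critical points and $p$ is hyperbolic (so no critical orbit accumulates on $K$), there is a smallest critical value $s_0 > 0$ of $G$, and below it no further splitting occurs: for $0 < s < s_0$, each component of $U_s$ is a disk meeting $K$, and the components of $K$ are exactly the nested intersections $\bigcap_n V_n$ of such disks. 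Since $p(U_s) = U_{ds}$, the map $p$ induces a continuous, finite-to-one, open map $F$ on the space $\mathcal{C}$ of components of $K$, which carries a natural Cantor topology (via the tree of the $U_s$'s); in fact $\mathcal{C}$ is homeomorphic to the set of ends of that tree. The first step is to set all this up carefully and observe that $\mathcal{C}$ is a compact metrizable space on which $F$ acts as an expanding-type map with countably many "branched" points (the components carrying a critical point or its forward iterates).

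Second, I would prove the trichotomy for a component $\mathcal{K} \in \mathcal{C}$. If the forward $F$-orbit of $\mathcal{K}$ never hits a component containing a critical point, then along the whole orbit $p$ restricts to an \emph{unbranched} proper map $V_n \to V_{n+1}$ between disks, hence a conformal isomorphism, and moreover the moduli of the annuli $U_{s} \setminus \overline{V_n}$ (for a fixed small $s$, suitably normalized) are multiplied by a definite factor under iteration — here hyperbolicity gives a uniform lower bound on the modulus gained at each non-critical level. Summing a divergent series of moduli (a Grötzsch-inequality/Branner–Hubbard argument) forces $\operatorname{diam} V_n \to 0$, and since the map on the orbit is conformal with bounded distortion away from critical points, $\operatorname{diam}\mathcal{K} = 0$, i.e. $\mathcal{K}$ is a point. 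Conversely, if the orbit of $\mathcal{K}$ does meet a critical component, I claim $\mathcal{K}$ is non-trivial: a critical component $\mathcal{K}_c$ is non-trivial because it is a repelling-type piece on which $p$ acts as a branched cover of disks, so it cannot shrink to a point (one can see this directly: the component contains the Julia set of a polynomial-like restriction of $p$ of degree $\geq 2$, which is a perfect set); and non-triviality is inherited backwards along the orbit since the intermediate maps are either conformal isos or finite branched covers, none of which can collapse a continuum to a point.

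Third, I would establish the finiteness and periodicity statements. There are only finitely many critical components — at most one per escaping critical point, counted with the finitely many levels at which branching occurs — call them $\mathcal{K}_{c_1}, \dots, \mathcal{K}_{c_m}$. Every non-trivial component is, by the trichotomy, preperiodic into $\{\mathcal{K}_{c_i}\}$ under $F$; since $F$ is finite-to-one this already gives countability of the non-trivial components. For the periodic ones: a periodic non-trivial component $\mathcal{K}$ gives a polynomial-like map $p^k : W' \to W$ of degree $\geq 2$ whose filled Julia set is $\mathcal{K}$ together with its filled-in complementary disks; by hyperbolicity of $p$ this polynomial-like map is itself hyperbolic with connected Julia set, hence its straightening is a hyperbolic polynomial with connected Julia set, which must have an attracting (or superattracting) cycle. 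That cycle is a genuine periodic orbit of $p^k$ in $\mathcal{K}$, attracting for $p$, so $\mathcal{K}$ contains an attracting periodic point of $p$. Finally, distinct periodic components have disjoint immediate basins inside $\mathbb{C}$, and the number of attracting cycles of a degree-$d$ polynomial is at most $d-1$, so there are finitely many periodic components. That $K$ has uncountably many components in total is immediate once we know $U_{s_0^-}$ has disconnected (hence, by the dynamics, infinitely refining) fibers — the tree of $U_s$'s is infinite with all branchings of degree $\geq 2$, so its space of ends is a Cantor set.

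**Main obstacle.** The crux is the shrinking argument in the unbranched case: making precise that an orbit avoiding all critical components gains a \emph{uniformly bounded-below} amount of modulus at each step, so that the total modulus diverges and the Branner–Hubbard/Grötzsch estimate forces the diameter to zero. In the one-dimensional hyperbolic setting this is soft — the relevant annuli sit in a region where $p$ is uniformly expanding on a neighborhood of $J$, so a compactness argument over the finitely many "shapes" of non-critical level-passages gives the uniform bound — but it is the one place where hyperbolicity (rather than mere disconnectedness of $J$) is genuinely used, and it must be stated with care so that it generalizes to the two-dimensional unstable-slice picture that the rest of the paper develops.
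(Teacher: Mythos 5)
Your overall architecture (puzzle pieces/ends of the level-set tree, an induced finite-to-one map on components, non-triviality of critical components, straightening of a polynomial-like return map for periodic non-trivial components, and the bound on the number of attracting cycles) is reasonable, and indeed the paper offers no proof of this folklore statement to compare against. But the crux step, which you yourself flag, is not established, and as written it would fail. You claim that if the $F$-orbit of a component $\mathcal{K}$ never meets a component of $K$ containing a critical point, then the maps between the successive puzzle disks along the orbit are \emph{unbranched}, whence a uniform modulus gain and a divergent Gr\"otzsch sum. Unbranchedness of $p$ on a puzzle piece is governed by the critical points lying in the \emph{piece}, not in the \emph{component}: a deep piece containing $F^i(\mathcal{K})$ may perfectly well also contain a critical point belonging to a different (critical) component, namely whenever $F^i(\mathcal{K})$ has not yet been separated from that critical component at the given depth. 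For each fixed $i$ such a separation depth exists, but nothing in your sketch makes it uniform in $i$; if the orbit keeps entering very deep pieces around the finitely many critical components, the degrees of $p^j$ on the nest accumulate and the telescoped moduli are divided by these degrees, so the series you sum need not diverge. Your proposed fix (``compactness over the finitely many shapes of non-critical level-passages'') only yields the easy half, a lower bound $\mu_0$ for the modulus of a genuinely non-critical passage; it says nothing about how often critical passages occur along the orbit. This is exactly the difficulty that forces the tableau machinery in Branner--Hubbard, and in the hyperbolic case it still requires an input you have not supplied: for instance, first show that such a component and all its images lie in $J$ (if some image had interior it would eventually contain an attracting point, hence meet a critical component), then use either a uniform separation of $J$ from the postcritical set plus expansion, or---as the paper does in its two-dimensional Main Theorem---a John-type property of the basin of infinity giving local finiteness of components of diameter $\geq\delta$, a pigeonhole argument forcing eventual periodicity, and a Schwarz-lemma argument (univalent, compactly contained return map on deep pieces) killing non-trivial periodic components contained in $J$.

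Two secondary inaccuracies should also be repaired. First, there is no smallest positive critical value of the Green function $G$: the critical points of $G$ are all escaping \emph{precritical} points, so the critical values $d^{-k}G(c)$ accumulate at $0$ and the sublevel sets keep splitting at all scales (this is what produces uncountably many ends; the correct normalization is to choose $s_0$ below $\min G(c)$ over escaping critical points $c$ of $p$). Second, your ``critical components'' of $K$ are the components containing the \emph{non-escaping} critical points (the escaping ones are not in $K$ at all), and your deduction ``non-trivial $\Rightarrow$ preperiodic'' needs the extra remark that these critical components are themselves preperiodic: their forward orbits converge to the attracting cycles, so after finitely many steps the image component is one of the finitely many components containing an attracting periodic point, and those form periodic cycles. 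With that remark, and with the crux step repaired as above, the rest of your argument (countability via finite-to-oneness of $F$, non-triviality propagating backwards since $p$ maps a component onto its image component, straightening or a direct Schwarz argument for the periodic case, and finiteness via the bound on attracting cycles) goes through.
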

   
Note that this is really a statement about polynomials: there are examples of hyperbolic rational maps on $\pu$
whose Julia sets are  Cantor sets of circles~\cite{mcmullen_buried}.

\subsection{Main result}

In this article we address similar issues in the setting of polynomial automorphisms of $\cd$. 
Let $f$ be a polynomial automorphism of $\cd$ with non-trivial dynamics: by this we mean for instance that the algebraic 
degree of the  iterates $f^n$ tend to infinity  
(see below \S\ref{subs:vocabulary} for more details on this).
Its Julia set $J=J_f$ is the  set of points at which  both $(f^n)_{n\geq 0}$ and $(f^{-n})_{n\geq 0}$ are not 
locally normal.  
We also classically denote by $K^+$ (resp. $K^-$), the set of points with bounded forward 
(resp. backward) orbits, $K = K^+\cap K^-$ and $J^\pm = \fr K^\pm$, so that $J = J^+\cap J^-$. 
The complex Jacobian $\jac f$ is a non-zero constant. Thus, replacing $f$ by $f\inv$ if necessary, without  loss of generality 
we assume from now on that $\abs{\jac f}\leq 1$. 

In this context, the connected vs. disconnected dichotomy for the Julia set 
was studied by Bedford and Smillie~\cite{bs6}, who proved that the 
connectedness of $J$,  or equivalently of $K$,  is equivalent to the  non-existence  of 
``unstable critical points'', which are   defined as tangencies
between certain dynamically defined foliations.
(Recall that $f$ has no critical point in the usual sense,
but these unstable critical points
% have to be understood as  analogues of
play the same role as escaping critical points in dimension one.)
Bedford and Smillie  also showed 
that when $J$ is connected, there is a well-defined family of external rays along unstable manifolds, parameterized by a 
``solenoid at infinity'', which is the inverse limit of the 
dynamical system defined by $z\mapsto z^d$ on the unit circle. 

To proceed further and try to extend the Douady-Hubbard description of the Julia set in terms of the 
combinatorics of external rays,   given our  current state of knowledge,
we need    % it is necessary
to assume that $f$ is uniformly hyperbolic. Recall 
from~\cite{bs1} that $f$ is said to be \emph{hyperbolic} if $J$ is a hyperbolic set, which must then 
be  of saddle type. In this  case, 
$f$ satisfies Smale's Axiom A in $\cd$, and the Fatou set is the union
of finitely many basins of attraction.
(See \cite{ishii_survey} 
for an introductory account to  this topic, which also   discusses some combinatorial/topological 
  models for Julia sets.)
  
By using the convergence of unstable external rays, it was shown 
in~\cite{bs7} that if $f$ is hyperbolic and $J$ is connected, then $J$ can be described as a finite 
quotient of the solenoid at infinity. A non-trivial consequence of the results of~\cite{bs5},\cite{bs6} 
and~\cite{bs7} is that in this case $f$ cannot be conservative, that is,  $\abs{\jac f}<1$ (see~\cite[Cor. A.3]{bs7}; recall that we assume $\abs{\jac f}\leq 1$ here). 
An alternate  argument for this fact
was given by the first-named author in~\cite{connex}, where it is shown 
that a hyperbolic automorphism $f$ with connected    Julia set
  must possess an attracting periodic point, so in particular $\abs{\jac f}<1$. Surprisingly 
  enough, the existence of an 
  attracting point does not seem to follow easily from the description of $J$ as a quotient of the solenoid. 

In this article we focus on the disconnected case. A motivating question is the following conjecture from~\cite{connex}. 

\begin{conjecture}\label{conj:cantor}
Let $f$ be a dissipative and hyperbolic automorphism of $\cd$, without attracting points. 
Then $J$ is a Cantor set. 
\end{conjecture}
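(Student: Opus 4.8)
Conjecture~\ref{conj:cantor} is the paper's motivating problem and I would not expect to settle it outright; the plan is to reduce it, by means of the structure theorem of this paper, to a single residual obstruction and to indicate how that obstruction would be removed. Begin with preliminary reductions. The Julia set $J$ is always compact and perfect (periodic saddles are dense in it, and it has no isolated point), so ``$J$ is a Cantor set'' is equivalent to ``$J$ is totally disconnected'', i.e.\ to the assertion that every connected component of $J$ is a point; and by the local product structure of the hyperbolic set $J$, this is equivalent to asking that for each $x\in J$ the slice $J\cap W^u_{\loc}(x)$ be totally disconnected in the Riemann surface $W^u_{\loc}(x)$. So the content is to exclude non-trivial components. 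Now invoke the structure theorem: the forward orbit of any non-trivial component is governed asymptotically by one of the finitely many quasi-solenoids, so a non-trivial component can exist only if a quasi-solenoid exists, tame or queer. A tame one lies on the boundary of the basin of an attracting cycle; by hypothesis $f$ has no attracting point, hence no attracting basin, hence no tame quasi-solenoid. Thus, granting the structure theorem, the conjecture is equivalent to the claim that \emph{a dissipative hyperbolic automorphism of $\cd$ with no attracting cycle carries no queer quasi-solenoid.}

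To rule out queer quasi-solenoids I would run a two-dimensional straightening argument. A quasi-solenoid contains periodic saddles, and near such a saddle $x$ it meets $W^u_{\loc}(x)$ in a non-trivial periodic piece $P$, invariant under the expanding unstable return map. Since disconnectedness of $J$ produces ``unstable critical points'' that play, along the unstable leaves, the role of escaping critical points in dimension one, one expects to obtain near $P$, transversally to the strong stable foliation, a polynomial-like (``crossed-map'') structure: some iterate $f^{N}$ restricts to a proper map of finite degree between nested unstable-leafwise neighbourhoods $V\subset V'$ of $P$, with $\overline V\subset V'$. As $P$ is a single component, the filled Julia set $K_P$ of this map is connected; as $f$ is hyperbolic the map is hyperbolic; and as $P$ is non-trivial its degree is at least two, so by the one-dimensional dictionary behind Theorem~\ref{thm:folklore} (via straightening) $K_P$ has nonempty interior in $W^u_{\loc}(x)$ — it contains the critical points. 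But every point of $K_P$ has bounded forward orbit under $f$, so $K_P\subseteq K^+$; since $f$ has no sink, $\Int K^+=\emptyset$, hence $K^+=J^+$, while $W^u_{\loc}(x)\subseteq J^-$; therefore $K_P\subseteq J^+\cap J^-=J$, forcing $J\cap W^u_{\loc}(x)$ to have nonempty interior in $W^u_{\loc}(x)$ — impossible, since $J\cap W^u_{\loc}(x)=J^+\cap W^u_{\loc}(x)$ is the boundary, within the leaf, of $K^+\cap W^u_{\loc}(x)$, and thus has empty interior there. This contradiction would exclude queer quasi-solenoids, completing the argument. (Extracting a contradiction from a connected ``Julia set along a leaf'' is a renormalised counterpart of the argument of~\cite{connex}, which extracts a sink from a connected Julia set.)

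The main obstacle is precisely this renormalisation step. One must control the geometry of $J$ inside the unstable leaves sharply enough — this is where the John-like property of the unstable slices is meant to enter — to ensure both that near a queer piece $P$ the return map genuinely is polynomial-like of finite degree \emph{along a single unstable leaf}, and that the dissipativity of $f$ is not destroyed by the rescaling, so that what one straightens is an honest one-dimensional object rather than an artefact of a truly two-dimensional ``crossed'' dynamics (in which case $K_P$ might be two-dimensional with empty interior and the final contradiction would collapse). It is the possibility that queer pieces resist this analysis — under the mild dissipativity hypothesis, let alone for all dissipative maps — that leaves Conjecture~\ref{conj:cantor} open.
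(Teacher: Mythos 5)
Two remarks. First, on how your text relates to the paper: Conjecture~\ref{conj:cantor} is left open there. The paper only obtains a \emph{conditional} result, and by a mechanism quite different from yours: under the extra hypotheses $\abs{\jac f}<1/\deg f$ and the non-divergence of stable holonomy (NDH), Theorem~\ref{thm:no_queer_components} shows that every non-trivial periodic component of $K$ contains an attracting point, which indeed yields the conjecture in the absence of sinks. The proof there is not a renormalization: one takes a dendrite $E=C\cap\Delta^u$ in an unstable transversal, uses NDH to globalize the stable holonomy to a homeomorphism $h\colon E\to E$, produces a fixed point of $h$ by a Carath\'eodory retraction plus Brouwer, and derives a contradiction with hyperbolicity from the resulting tangency between the stable lamination and $\Delta^u$. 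Your reduction of the conjecture to the non-existence of queer quasi-solenoids does match the paper's framing (and, as you note, already consumes the hypothesis $\abs{\jac f}<1/\deg f$ through the Main Theorem, so it does not address the conjecture in full generality).

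Second, the renormalization scheme you propose to kill queer quasi-solenoids has a concrete flaw, beyond the caveats you list. Since $f$ is an automorphism and the quasi-solenoid's saddle $x$ is periodic, the return map $f^N$ restricted to the single leaf $W^u(x)$ is \emph{injective}; in the affine coordinates of \S\ref{subs:affine} it is the linear expanding map $\zeta\mapsto\lambda\zeta$ with $\abs{\lambda}>1$. Hence there is no proper degree~$\ge 2$ polynomial-like restriction of $f^N$ along one unstable leaf, no leafwise critical points (the ``unstable critical points'' of Bedford--Smillie are tangencies between the unstable lamination and the level sets of $G^+$, not critical points of the intrinsic leaf dynamics), and the set of leafwise bounded orbits of the return map is the single point $x$, so the object $K_P$ you want to straighten degenerates. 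A non-trivial degree can only appear after collapsing along stable manifolds, i.e.\ by passing to the branched Julia set model $(\tilde D,\tilde f)$ of \S\ref{subs:branched}; but that induced map lives on a branched surface, is only plaque-wise uniformly quasiconformal rather than holomorphic, and carries no straightening theorem, so the step ``connected hyperbolic filled Julia set of degree $\ge 2$ has interior/attracting cycles'' does not transfer. This is essentially the same difficulty that makes the existence of a sink non-obvious even in the connected case (cf.\ the discussion of \cite{connex} and \cite[Cor.~A.3]{bs7} in the introduction), and it is exactly the point where the paper substitutes the NDH/holonomy fixed-point argument for any renormalization.
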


Our main result is an essentially complete generalization of Theorem~\ref{thm:folklore}
 in two dimensions, under a mild dissipativity assumption. 

 \begin{mainthm}
 Let $f$ be a hyperbolic polynomial automorphism of $\cd$, with a disconnected Julia set, and such that 
 $\abs{\jac f}< 1/\deg f$. Then there are uncountably many  components of 
 $J$, which  can be of three (mutually exclusive) types: 
 \begin{enumerate} 
 \item point;
 \item leafwise bounded;
 \item or quasi-solenoid.
 \end{enumerate}
 Quasi-solenoidal components are periodic and there are only finitely many of them. 
Any component of type (2) is wandering and converges
  to a quasi-solenoidal one under forward iteration. The 
components of $K$ are classified accordingly. 
  
 Under an additional assumption (NDH) on the behavior of stable holonomy  
 between components, 
 any quasi-solenoidal component of $K$ contains an attracting periodic point. 
 \end{mainthm}

Here $\deg f$ refers to the dynamical degree of $f$, which is the growth rate of algebraic degree under iteration
 (see \S\ref{subs:vocabulary}). By definition, a component of $J$ 
 is  \emph{leafwise bounded} if it is 
  a relatively bounded subset of some unstable manifold; this implies that its topology is that of a full plane continuum, properly embedded in $\C^2$.   
 A \emph{quasi-solenoid} is a    connected component  
 with local product structure, which is 
  totally disconnected   in the stable direction 
   and   locally connected  and 
  leafwise unbounded in the 
  unstable direction (see Definition~\ref{defi:quasi_solenoid}). Components of type
  (2) are analogous to strictly preperiodic components in dimension 1;
  note however that by the local product structure of $J$ there are
  uncountably many of them. Countability is restored by saturating with semi-local stable manifolds (see Theorem~\ref{thm:component}). 
 The meaning of the  (NDH) assumption will be explained below.

 \subsection{Outline}
Let us discuss some of the main ideas of the proof, which occupies the most part  of the paper. 
First, the assumption on the Jacobian is used to guarantee 
that \emph{the slices of $J$ (resp. $K$) by stable manifolds are totally disconnected.}
It is reminiscent of the stronger \emph{substantial dissipativity} assumption $\abs{\jac f}< 1/(\deg f)^2$ 
used in \cite{tangencies, lyubich-peters_classification, lyubich-peters}. 
We could indeed use substantial dissipativity and Wiman's Theorem in the style of these papers
 to achieve  stable total disconnectivity. However, 
  hyperbolicity allows for  a Hausdorff dimension calculation  
which gives a better bound on the  Jacobian (see Section~\ref{sec:total disconnected}). 

The key step of the finiteness property in the main theorem is an analysis of 
 geometry of the unstable slices of  $J$ and $K$. Using external rays, we first show in Section~\ref{sec:john} that the complement of $K$ 
 along unstable manifolds satisfies a weak version of the \emph{John property}. 
 This property  implies that the components of 
 $K\cap W^u$  are locally connected, and 
that  locally there are only 
   finitely many components of diameter bounded from below.
 
 This finiteness is used to get  a classification of \emph{semi-local components} 
 of $J^+$ and $K^+$. By this we mean that we fix a large bidisk $\bb$ (in adapted coordinates)
  in which $J^+$ and $K^+$ are vertical-like
  objects, and we look at components of $J^+\cap \bb$ (resp. $K^+\cap \bb$). 
 We prove that \textit{these semi-local components behave like
   components of $J$ (resp. $K$) for
   one-dimensional   % one-variable 
 polynomials}: only countably many of them are non-trivial, that is, not reduced to vertical 
 submanifolds, and  any 
 non-trivial such component is preperiodic. Besides the finiteness induced by the John-like property, this relies 
 on a key {\em homogeneity property}  of such a semi-local component: 
 either all its unstable slices are ``thin'', or all of them are ``thick''. 
 To prove this  {\em thin-thick  dichotomy}   % alternative, 
 we show %using ideas from foliation theory 
  that if a semi-local component admits a thin unstable slice, then by a careful choice of $\bb$ we can arrange that the stable foliation of this semi-local component is transverse to $\fr\bb$. It follows that this component 
  has    a global product structure in $\bb$   (see Section~\ref{sec:semi_local} for details).
 
If $C$ is a non-trivial component of $J$, it is easy to see that 
the $\omega$-limit set of $C$  must be contained in one of the finitely many 
 thick semi-local components of $J^+$.  We show that it must have local product structure, 
 hence be a quasi-solenoidal component of $J$. The main step is the following: 
 for large $m\neq n$, by the expansion in the unstable direction, the unstable slices of 
  $f^m(C)$ and $f^m(C)$ have a diameter bounded from below, so 
 if $x_n\in f^n(C)$ is close to $x_m\in f^m(C)$, 
  by the finiteness given by the John-like property,
 $f^n(C)$ and $f^m(C)$ must correspond one to the other under local stable 
 holonomy. Furthermore, such a quasi-solenoidal component must coincide with the limit set of its semi-local component in $J^+$, and the finiteness of the number of attractors follows (see Section~\ref{sec:components}). 
 
To get a complete generalization of the one-dimensional situation, it remains to show that such a 
quasi-solenoidal component must ``enclose'' some attracting periodic point. %\romain{cf. the orientability question} 
Unfortunately, all our attempts 
towards this result stumbled over the following issue: if $x,y\in J$ are such that $y\in W^s(x)$, 
the stable holonomy 
induces a local homeomorphism $J\cap W^u_\loc(x)\to J\cap W^u_\loc(y)$. The point is that it 
might not be the case   in general that 
this local homeomorphism can be continued along paths in  $J\cap W^u(x)$, even when  $J\cap W^u(x)$ is a relatively bounded subset of $W^u(x)$. (Compare with the Reeb phenomenon for foliations, illustrated in  Figure~\ref{fig:reeb}.)
This is a well-known difficulty in hyperbolic dynamics, 
which was encountered for instance  in the classification of Anosov diffeomorphisms 
(see \S\ref{subs:NDH} for a short discussion). If this continuation property holds 
-- this is the  Non-Divergence of Holonomy (NDH) property referred to in the main theorem-- then we 
can indeed conclude  that non-trivial periodic components of $K$ contain attracting orbits (see Section~\ref{sec:NDH}, in particular Theorem~\ref{thm:no_queer_components}). 
This yields in particular a conditional proof of Conjecture~\ref{conj:cantor}. Let us also note 
 that a simple instance where the NDH property holds is when the stable 
 lamination of $J^+$ is transverse to $\fr\bb$ 
(for some choice of $\bb$), a property which can be checked in
practice on  specific  
examples. 
 
In the course of the paper, we also establish a 
 number of complementary facts, which do not enter into the proof of the main theorem: the existence of an 
 external ray landing at every point of $J$ (see Theorem~\ref{thm:access}); the 
 structure of attracting basins (see \S~\ref{subs:basins}); a simple topological model for the dynamics on Julia components (see \S~\ref{subs:branched}); the topological transitivity of quasi-solenoids (see Theorem~ \ref{thm:complement_mixing}). In Appendix~\ref{app:core} 
 we sketch the construction of the 
\emph{core} of a quasi-solenoidal component, which aims at describing   
its  topological structure. 
 
\subsection*{Notes and acknowledgments.} 
Some of these results were already announced at the conference ``Analytic Low-Dimensional Dynamics'' 
in Toronto in June 2019. We are grateful to Pierre Berger for
pointing out  
Proposition~\ref{prop:berger} to us. 
The second-named author was partially supported by an NSF grant,
Hagler and Clay Fellowships. Part of this work was carried out
during his visits  
of  the Hagler Institute for Advanced Study at Texas A\&M,  
the Center of Theoretical Studies at ETH Zürich, and MSRI at Berkeley.
We thank these institutions for their generous support.

\section{Preliminaries and notation}\label{sec:preliminaries}
  
  \subsection{Vocabulary of complex Hénon maps}\label{subs:vocabulary}
  
 If $\bb = D\times D$ is a bidisk, we denote by $\fr^vB = \fr D\times D$ (resp. $\fr^hB =   D\times \fr D$) the vertical (resp. horizontal) boundary.  An object in $\bb$ is horizontal if it intersects $\fr\bb$ only in $\fr^v\bb$, 
 and likewise for vertical objects.
 A closed horizontal submanifold is a branched cover of finite degree over the first projection. 

Let us collect some  standard facts and notation (see \cite{fm, bs1, bls, fornaess-sibony}). If
  $f$ is a polynomial diffeomorphism of $\cd$ with non-trivial dynamics, then by 
  making a polynomial change of coordinates we may assume that 
$f$ is a composition of complex Hénon mappings 
$(z,w)\mapsto (p_i(z)+a_iw,a_iz)$.  
 In particular $\deg(f^n)   = (\deg f)^n$ for every $n\geq 0$. 
We fix such coordinates from now on. As it is customary in this area of research, 
we will often abuse terminology and simply  refer to $f$ as  a \emph{complex Hénon map}.
The degree of $f$  is $d=\prod \deg(p_i)\geq 2$ and the relation $\deg(f^n) = d^n$ 
holds so that $d$ coincides with the so-called \emph{dynamical degree} of $f$.  

In these adapted coordinates, there exists $R>0$ such that for  the bidisk 
$\bb:=D(0, R)^2$, we have that $f(\bb)\cap\bb$ (resp. $f\inv(\bb)\cap \bb$)
 is horizontally (resp. vertically) contained in $\bb$ and the points of 
 $\fr^v(\bb)$ (resp. $\fr^h(\bb)$ escape under forward (resp. backward) iteration. 
 
\begin{itemize}
\item $K^\pm$ is the set of points with bounded forward orbits under $f^{\pm1}$ 
 and $K= K^+\cap K^-$. Note that $K^+$ is vertical in $\bb$ and 
 $f(\bb\cap K^+)\subset K^+$. Similarly, $K^-$ is horizontal and $f\inv(\bb\cap K^-)\subset K^-$. 
\item $J^\pm = \fr K^\pm$ are the forward and backward Julia sets. 
If $f$ is dissipative then  $K^- = J^-$. 
\item $J = J^+\cap J^-$ is the Julia set. 
\end{itemize}

Following \cite{bs6}, we say that $f$ is \emph{unstably disconnected} if for some (and hence any) saddle 
periodic point $p$, $W^u(p)\cap K^+$ admits a compact component (relative to the topology induced 
by the biholomorphism $W^u(p)\simeq \C$), and unstably connected otherwise. If 
$f$ is unstably disconnected,   then it admits   an \emph{unstable transversal} $\Delta^u$, 
that is a relatively compact domain in $W^u(p)$  which is a 
horizontal submanifold in $\bb$: indeed   pick a bounded 
Jordan domain $U\subset W^u(p)$ containing a compact component of $W^u(p)\cap K^+$ such that 
$\fr U \cap K^+ = \emptyset$ and iterate  it forward.

  \subsection{Hyperbolicity and local product structure} \label{subs:hyperbolicity}
  Throughout the paper we assume that $f$ is hyperbolic on $J$ (hence Axiom A on 
  $\cd$ by \cite{bs1}), with hyperbolic splitting $T\cd\rest{J}   = E^u\oplus E^s$. 
Then there exists  a continuous  Riemannian metric $\abs{\cdot}$ on $J$ and constants $s<1<u$ such that 
for any $x\in J$, and any $v\in E^u(x)\setminus \set{0}$, $\abs{Df_x \cdot v}\geq u\abs{v}$ (resp. for 
any $v\in E^s(x)$, $\abs{Df_x \cdot v}\leq  s\abs{v}$). By~\cite{saddle}, it is enough 
to assume that $f$ is hyperbolic on $J^\star$, where $J^\star$ is the closure of saddle periodic points 
(and a posteriori one deduces that  $J = J^\star$). 

In this situation the local stable and unstable manifolds of points of $J$ have local uniform geometry: there exists a uniform $r>0$ such that for every $x\in J$, $W^u(x)$ (resp. $W^s(x)$  \emph{is of size $r$ at $x$}, in the sense that it 
 contains a graph of slope at most 1 over a disk of radius $r$ in $E^u(x)$ (resp. $E^s(x)$). 
  The reader is referred to \cite{hyperbolic, topological} for a detailed study of this notion. We denote by 
  $W^{s/u}_\delta(x)$ the local stable/unstable manifold of radius $\delta$ at $x$, which is by definition the 
    component of $W^{s/u}(x)$ in 
  $B(x, \delta)$. When the precise size does not matter, we simply denote them  
 %  When no confusion can arise we simply denote   stable and unstable manifolds  
  by $W^{s/u}_\loc$. Slightly reducing the expansion constant $u$ if necessary, given two points $z, z'$ in 
  some local unstable manifold
  $W^u_\delta (x)$, there is a uniform constant $C$ such that  $d(f^{-n}(z), f^{-n}(z'))\leq C u^{-n}$, for all   $n\geq 0$. 
  
There exists $\delta>0$ and a neighborhood $\mathcal N$ of $J$ 
  such that the restriction to   $\mathcal N$ of the family 
local stable/unstable  manifolds of radius $\delta$ is a lamination, denoted by   $\cW^{u/s}$.  
The Julia set has local product structure so there is a covering by topological bidisks $Q$  (flow boxes) such  that the laminations 
  $\cW^{u/s}$ are trivial in $Q$ and 
   $$J\cap Q \simeq (W^s_Q (x)\cap J  ) \times (W^u_Q(x)\cap J  )  =  (W^s_Q(x)\cap J^-  ) \times (W^u_Q(x)\cap J^+  ).$$ 

It is shown in \cite{bs1} 
that the family of  global stable and unstable  manifolds of points of $J$ also   has  a lamination structure, 
which will be denoted by 
  $\cW^{s/u}$. More precisely, in the dissipative case, $\cW^s$ is a lamination of 
   $J^+$ is laminated by stable manifolds and  
  the other hand, $\cW^u$ is a lamination of  $J^-\setminus \set{a_1, \ldots , a_N}$, where  
  $\set{a_1, \ldots , a_N}$ is the finite set of attracting periodic points of $f$.  
  No unstable leaf extends across an attracting point, even as a singular analytic set: 
  indeed an unstable leaf is biholomorphic to $\C$, therefore  such an extension 
  would yield a submanifold of $\cd$ biholomorphic to a (possibly singular) copy of $\pu$, which is impossible. 

Under additional dissipativity assumptions, it was shown in \cite{lyubich-peters} that the stable lamination $\cW^s$ in $\bb$ can be extended 
to a $C^1$ foliation in some neighborhood of $J^+$: see Lemma~\ref{lem:extension} below. 
  
 Let us conclude this paragraph with a useful elementary result. 
 
 \begin{lem}\label{lem:fatou_disk} 
If $f$ is hyperbolic, every holomorphic disk contained in $K^+$ is either contained in the Fatou set or in the stable manifold of a point of $J$. 
 \end{lem}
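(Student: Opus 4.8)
The plan is to exploit the dichotomy provided by hyperbolicity: a holomorphic disk $D\subset K^+$ either accumulates on the Julia set $J$ or it does not. First I would recall that since $f$ is hyperbolic (hence Axiom A), we have $K^+ = J^+ \cup \big(\bigcup_i B(a_i)\big)$, where the $a_i$ are the attracting cycles and $B(a_i)$ their basins, and that these basins are open. So if $D$ meets some $B(a_i)$, then $D\cap B(a_i)$ is a nonempty open subset of $D$; if this is all of $D$ we are done (the disk is in the Fatou set), so the content is the case where $D$ is \emph{not} entirely contained in the Fatou set. In that case $D$ must meet $J^+$, and since $J^+ = \partial K^+$ and $D\subset K^+$, in fact $D$ is not contained in the Fatou set precisely when $D\cap J^+\neq\emptyset$. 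Fix a point $x\in D\cap J^+$.

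The second step is to use normality. Since $D\subset K^+$, the forward orbit $D, f(D), f^2(D),\dots$ stays in the bounded region $K^+\cap\bb$ (after the usual reductions to adapted coordinates), so by Montel the family $(f^n\rest{D})_{n\ge 0}$ is normal. Meanwhile $x\in J^+$, so either $x\in J = J^+\cap J^-$ or $x\in J^+\setminus J^-$, i.e.\ $x$ is in the basin of an attracting cycle — but that is the open-Fatou case already handled. So we may assume $x\in J$. Now I would argue that $D$ must lie in the stable manifold $W^s(\mathcal O)$ of the orbit of $x$ (or more precisely in $W^s(y)$ for the appropriate $y\in J$): by normality, the forward images $f^n(D)$ have uniformly bounded geometry, while by hyperbolicity any point whose forward orbit shadows the orbit of a point of $J$ and stays near $J$ must lie on the local stable manifold. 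Concretely, the key is that $f^n(x)\in J$ for all $n$, the $f^n(D)$ form a normal family, and hence for $n$ large $f^n(D)$ stays in a small neighborhood of $f^n(x)$; by the local product structure and the contraction/expansion estimates recalled in \S\ref{subs:hyperbolicity}, a holomorphic disk through a point of $J$ that is forward-contracted must be contained in the local stable manifold of that point. Pulling back, $D\subset W^s(x)$.

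To make the middle step precise I would proceed as follows. If $D$ is not in a stable manifold, pick $x\in D\cap J$ and a small transversal direction: since $W^s_\loc(x)$ is a graph of slope $\le 1$ over $E^s(x)$ and $D$ is not contained in it, $D$ contains points off $W^s_\loc(x)$ arbitrarily close to $x$, hence points $z$ with a definite component in the $E^u$ direction relative to $x$. Under forward iteration the distance $d(f^n(x),f^n(z))$ inside the image disk cannot shrink — the unstable component expands by the factor $u>1$ at each step as long as the orbit stays in the flow-box neighborhood $\mathcal N$ of $J$, by the cone estimates of hyperbolicity — contradicting normality of $(f^n\rest D)$, which forces $\diam f^n(D)$ to stay bounded and, along a subsequence, the maps $f^n\rest D$ to converge uniformly on compacts (so in particular the images do not spread out). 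This is the heart of the argument and the step I expect to require the most care: one must control that the orbit of $D$ genuinely stays in the neighborhood $\mathcal N$ where the hyperbolic splitting and cone fields are defined, and rule out the disk ``sliding off'' $J$ into a basin before the expansion has a chance to act — but this is exactly handled by observing that if $f^{n_0}(D)$ escapes $\mathcal N$ then $f^{n_0}(D)$ lies in the open Fatou set, hence (being connected and meeting $K^+ = $ Fatou $\sqcup\, J^+$) in a single basin, hence $D$ itself is in the Fatou set, contrary to assumption. So either $D$ is in the Fatou set, or its forward orbit stays near $J$, and then the expansion argument pins $D$ inside a stable manifold. Finally, $W^s(x)$ for $x\in J$ is a stable manifold of a point of $J$, which is the conclusion.
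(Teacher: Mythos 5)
Your overall strategy (normality of $(f^n\rest{D})_{n\ge 0}$ plus expansion transverse to the stable direction) is the same in spirit as the paper's, which settles the transversality step by quoting \cite[Lem.~6.4]{bls} and then the Inclination Lemma. But your case analysis contains a genuine error: you assert that $x\in J^+\setminus J^-$ means $x$ lies in an attracting basin, and on that basis reduce to $x\in D\cap J$. This is false: $J^+=\fr K^+$ is disjoint from the Fatou set, and under hyperbolicity $J^+\setminus J^-$ consists precisely of the points of $W^s(J)\setminus J$. Consequently the case where $D$ meets $J^+$ but $D\cap J=\emptyset$ --- which is exactly the delicate situation where $D$ touches or crosses a stable manifold away from $J$ --- is not covered by your argument, since that argument is anchored at a point whose entire forward orbit lies in $J$ and uses the splitting $E^s\oplus E^u$ at that point. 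The gap is repairable (use $J^+=W^s(J)$ for hyperbolic maps and run the argument along an orbit asymptotic to $J$, or iterate forward until the base point is well inside the flow boxes), but as written the reduction ``we may assume $x\in J$'' is unjustified.

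Second, your treatment of orbits leaving the hyperbolicity neighborhood $\mathcal N$ does not work as stated: $f^{n_0}(D)$ can never ``lie in the open Fatou set,'' because it contains $f^{n_0}(x)\in J^+$; it is individual points of the disk, not the disk itself, that may exit $\mathcal N$ into a basin. The correct mechanism is that the transverse displacement of a point $z$ is expanded by the factor $u$ only while the two orbits stay at distance below the flow-box scale, so at the first exit time $d(f^n(x),f^n(z))$ is bounded below by a definite constant $\epsilon_0$. Moreover, for a single such $z$ this is not yet a contradiction with normality --- a limit of $f^{n_j}\rest{D}$ need not be constant --- so you must take a sequence $z_\delta\to x$ off the local stable manifold, get exit times $n_\delta\to\infty$ with $d(f^{n_\delta}(x),f^{n_\delta}(z_\delta))\ge\epsilon_0$, and contradict the equicontinuity furnished by normality. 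Finally, making ``the unstable component expands'' precise for points rather than tangent vectors requires an actual argument (a cone/graph-transform estimate, or the standard characterization $W^s_{\epsilon}(x)=\set{z:\ d(f^n(z),f^n(x))\le\epsilon \text{ for all } n\ge 0}$ of local stable manifolds of a hyperbolic set); the paper's proof sidesteps all of this by producing a transversal intersection of $D$ with an unstable manifold via \cite[Lem.~6.4]{bls} and invoking the Inclination Lemma to contradict the Fatou-disk property.
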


\begin{proof}
  Indeed, if $\Delta$ is a disk contained in $K^+$ then  $\Delta$ is a  Fatou disk, i.e. $(f^n\rest{\Delta})_{n\geq 0}$ is a normal family. Now there are two possibilities: either $\Delta$ is contained in $\mathrm{Int}(K^+)$ hence in the Fatou set, or it intersects $J^+$. 
  In the latter case, either $\Delta$ is contained in a stable leaf or 
  by \cite[Lem. 6.4]{bls}, $\Delta$ must have a transversal intersection with some 
  unstable manifold,  so by the Inclination Lemma it is not a Fatou disk, which is a contradiction. 
  \end{proof}

\subsection{Affine structure}\label{subs:affine}
Global stable and  unstable manifolds %of points in $J$
 are uniformized by $\C$, so they admit a natural affine structure. Since any automorphism of $\C$ is affine,  $f$ acts affinely on leaves. 
In particular there is a well defined notion of a round disk, which is $f$-invariant. Likewise,
 the Euclidean distance is well-defined in the leaves, up  to a multiplicative constant.

 For any $x\in J$ we choose a uniformization 
 $\psi^u_x:\C\overset{\sim}{\longrightarrow} W^u(x)$ such that $\psi^u_x(0) = x$ and $\abs{(\psi^u_x)'(0)}=1$ 
  
 \begin{lem}\label{lem:continuity_affine}
 The family of uniformizations  $(\psi^u_x)_{x\in J}$ is continuous up to rotations, that is, 
 if $x_n\to x$ then  $(\psi^u_{x_n})$ is a normal family and its cluster values are of the form 
 $\psi^u_{x}(e^{i\theta} \cdot)$. 
 \end{lem}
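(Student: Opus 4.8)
The plan is to use the hyperbolicity of $f$ together with the local uniform geometry of unstable manifolds to extract normal-family compactness, and then to identify the limits by a rigidity argument for entire biholomorphisms of $\C$. Fix $x_n\to x$ in $J$. Since the local unstable manifolds $W^u_\delta(y)$ vary continuously (indeed, they are leaves of the lamination $\cW^u$ near $J$), and each is a graph of slope $\leq 1$ over a disk of radius $r$ in $E^u(y)$, the restrictions $\psi^u_{x_n}\rest{D(0,\rho)}$ for small $\rho$ are uniformly bounded: their images stay in a fixed compact neighborhood of $x$. By Montel's theorem, after passing to a subsequence $\psi^u_{x_n}\rest{D(0,\rho)}$ converges uniformly on compact subsets to some holomorphic map $\varphi:D(0,\rho)\to\C^2$ with $\varphi(0)=x$.

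Next I would globalize. The key point is that the uniformization of a \emph{global} unstable manifold is controlled by the dynamics: for any $z\in W^u(y)$ one has $z=\psi^u_y(\zeta)$ where, writing $f^{-N}(z)\in W^u_\delta(f^{-N}(y))$ for $N$ large, the backward contraction estimate $d(f^{-n}(z),f^{-n}(y))\leq Cu^{-n}$ from \S\ref{subs:hyperbolicity} pins down $\zeta$ up to the $f$-action on leaves, which is affine by \S\ref{subs:affine}. Concretely, since $f$ acts affinely on leaves, $\psi^u_{f(y)}$ is related to $\psi^u_y$ by postcomposition with $f$ and precomposition with multiplication by a scalar; iterating, any fixed disk $D(0,\rho')\subset\C$ is mapped by $\psi^u_{x_n}$ into $f^{k_n}$ of a small disk in a local unstable manifold, with $k_n$ bounded independently of $n$ (because the derivative normalization $\abs{(\psi^u_{x_n})'(0)}=1$ fixes the scale). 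Feeding the local convergence through finitely many applications of $f$ yields that $\psi^u_{x_n}$ converges, locally uniformly on all of $\C$ after passing to a subsequence, to an entire map $\varphi:\C\to\C^2$ with $\varphi(0)=x$ and $\abs{\varphi'(0)}=1$; in particular $\varphi$ is nonconstant.

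Finally I would identify $\varphi$. Its image lies in $\overline{\bigcup W^u(x_n)}$, and by closedness of the unstable lamination $\varphi(\C)$ is contained in a single leaf, necessarily $W^u(x)$ since $\varphi(0)=x$. Thus $\psi^u_x{}\inv\rond\varphi:\C\to\C$ is a nonconstant entire map, hence a polynomial; since $\psi^u_{x_n}$ is injective for each $n$ and injectivity is preserved under locally uniform limits of holomorphic maps unless the limit is constant, $\varphi$ is injective, so $\psi^u_x{}\inv\rond\varphi$ is a degree-one polynomial, i.e. affine: $\psi^u_x{}\inv\rond\varphi(\zeta)=a\zeta+b$. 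The normalization $\varphi(0)=x=\psi^u_x(0)$ forces $b=0$, and $\abs{\varphi'(0)}=\abs{(\psi^u_x)'(0)}=1$ forces $\abs a=1$, so $\varphi=\psi^u_x(e^{i\theta}\cdot)$ for some $\theta$. As this holds for every convergent subsequence, the family $(\psi^u_{x_n})$ is normal with all cluster values of this form, which is the assertion.

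I expect the main obstacle to be the globalization step: making rigorous the claim that the normalization $\abs{(\psi^u_x)'(0)}=1$ together with the uniform local geometry forces the ``number of applications of $f$'' needed to reach a fixed disk $D(0,\rho')$ to be bounded uniformly in $n$. This requires a uniform lower bound on the conformal radius of $W^u_\delta(x_n)$ at $x_n$ (with respect to its own affine/uniformizing coordinate), which follows from the local uniform geometry (the leaf contains a graph of definite size) via the Koebe estimate, but the bookkeeping relating this intrinsic scale to the derivative normalization at $0$ is the delicate part; everything else is a routine application of Montel's theorem and the rigidity of entire injections of $\C$.
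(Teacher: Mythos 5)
Your normality/globalization step is essentially sound: the uniform local geometry of the leaves plus the normalization $\abs{(\psi^u_x)'(0)}=1$ and Koebe do give a uniform scale, and pushing forward by a bounded number of iterates of $f$ yields local boundedness of $(\psi^u_{x_n})$ on every disk $D(0,\rho')$, hence normality. The genuine gap is in the identification of the limit. First, the claim that a nonconstant entire map $\C\to\C$ is a polynomial is false ($e^z$), so everything hinges on injectivity of the limit $\varphi$; but the Hurwitz-type principle you invoke --- ``injectivity is preserved under locally uniform limits unless the limit is constant'' --- is a statement about maps into $\C$ and fails for maps into $\C^2$: for instance $\varphi_n(z)=(z^2, z/n)$ is injective for every $n$ and converges to $(z^2,0)$, which is neither injective nor constant. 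Since your approximants $\psi^u_{x_n}$ take values in different leaves $W^u(x_n)$, you cannot compose with $\psi^u_x{}^{-1}$ before passing to the limit, so the one-variable Hurwitz argument is not available as written, and the affinity of $\psi^u_x{}^{-1}\circ\varphi$ is left unproved. (There is also a smaller point to address: why the connected set $\varphi(\C)\subset J^-$ stays in the single leaf $W^u(x)$, given that the unstable lamination of $J^-$ degenerates at the attracting points.)

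The repair is to compare parameterizations \emph{within one leaf} using the local graph projections (stable holonomy/graph transform) $\pi_n:N(K)\to W^u(x_n)$: the compositions $h_n=\psi^u_{x_n}{}^{-1}\circ\pi_n\circ\psi^u_x$ are injective holomorphic maps between subsets of $\C$, defined on arbitrarily large disks $D(0,R)$ because the leaves are copies of $\C$, and after normalizing $h_n'(0)$ the Koebe distortion theorem shows they are almost affine on $D(0,1)$, with distortion $\e(R)\to 0$. This is exactly the paper's route: the lemma is deduced from Theorem~\ref{thm:ghys} (Ghys' continuity of the affine structure), whose proof is precisely this Koebe argument on large disks. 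So your outline is workable, but the key rigidity step must go through these one-dimensional comparison maps rather than a vector-valued Hurwitz argument.
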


\begin{proof} The result follows from the continuity of the affine structure on the unstable leaves (see Theorem~\ref{thm:ghys}).
\end{proof}

It is unclear whether the assignment $J \ni x\mapsto \psi^u_x$ can be chosen to be continuous, that is, if a consistent choice 
of rotation factor $e^{i\theta}$ can be made. This can be done locally but 
 there might be topological obstructions to extend the continuity to $J$. 
  Notice that the $(\psi^u_x)$ provide  a normalization for the leafwise Euclidean distance. The normalized  Euclidean distance 
  on $W^u(x)$ will be denoted by $d_x^u$.. 
  If $C\subset W^u(x)$, its diameter with respect to $d_x^u$ will be denoted by $\diam_x$.
  By Lemma \ref{lem:continuity_affine},
  $d_x^u$ varies continuously with $x$. 
   For $R>0$ we let  $D^u_x(x,R):= \psi^u_x(D(0, R))$. 
   
 By construction, $f$ is a uniformly expanding
  linear map in these affine coordinates, that is $f\circ \psi^u_x = \psi^u_{f(x)}(\lambda^u_x\cdot)$, with 
  $\abs{ \lambda^u_x} = \norm{df\rest{E^u_x}}$. By hyperbolicity there is a positive constant $C$ such
  that for every $x\in J$, 
  \begin{equation}\label{eq:tangent_expansion}
  \abs{\prod_{i=0}^{n-1} \lambda^u_{f^i(x)}}\geq C u^n,
  \end{equation}
where $u>1$ was defined in \S\ref{subs:hyperbolicity}.

By the Koebe Distortion Theorem there exists a uniform $r>0$ such that the   $D^{u}(x,r)$ are contained in the flow boxes (see e.g. 
\cite[Lemma 3.7]{hyperbolic}). By the local bounded geometry of the leaves, the 
distance  induced by the affine structure on the  $D^{u}(x,r)$ is equivalent to that  induced by 
 the ambient Hermitian structure.
Then, iterating finitely many times we can promote this result on the  $D^{u}(x,R)$
 for every given $R>0$.

 All the above discussion holds for stable manifolds, with superscripts $u$ replaced by $s$. 
  
\subsection{Connected and semi-local  components}  For every $x\in J$ (or more generally $x\in K^+\cap \bb$) we denote by $K^+_\bb(x)$ the connected component of $x$ in $K^+\cap \bb$, which is a vertical subset of $\bb$. It follows from the 
  Hénon-like property that $f(K^+_\bb(x))\subset K^+_\bb(f(x))$, thus $f$ induces a (non-invertible) dynamical system on the set of 
  connected components of $K^+\cap \bb$. The same discussion applies to components of $J^+\cap\bb$. 
  More generally, for any   closed  connected subset $C\subset J$ (resp. $C\subset K$),  we define 
  $J^+_\bb(C)$  (resp. $K^+_\bb(C)$)
  to be the connected component of $C$ in $J^+\cap \bb$
  (resp. $K^+\cap \bb$). Of course for $x\in C$,   $J^+_\bb(x)=J^+_\bb(C) $ holds.
   A related concept is $W^s_\bb(x)$, the component of $\bb\cap W^s(x)$ containing $x$. If we set 
   $\displaystyle  W^s_\bb(C)  = \bigcup_{x\in C} W^s_\bb(x)$ then $W^s_\bb(C)$ is contained in $K^+_\bb(C)$ but this inclusion may be strict. This phenomenon may happen  when for some $x\in C$, $ W^s_\bb(x)$  is tangent to $\fr\bb$ (see Figure~\ref{fig:bidisk_stable_component}). 
   
  \begin{figure}[h]
 \includegraphics[width=6cm]{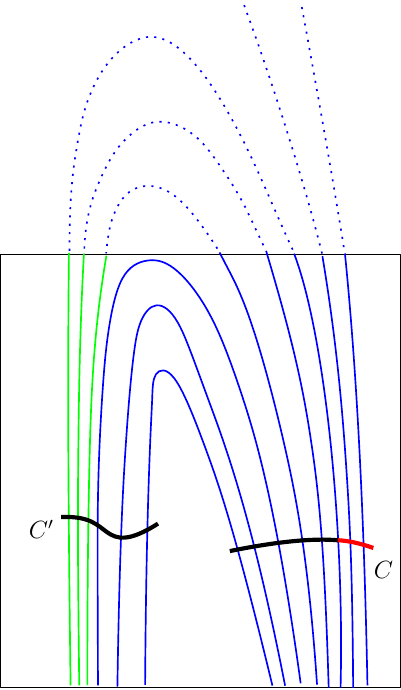}

 \caption{ \begin{small} Discontinued holonomy. The green components belong to $K^+_\bb(C)$ but not to $W^s_\bb(C)$ (in blue). The red part of $C$ cannot be followed under stable holonomy to $C'$ due to a Reeb-like phenomenon. \end{small}} \label{fig:bidisk_stable_component}\label{fig:reeb}
 \end{figure}
 
  For $x\in  K$, we denote by $K^s(x)$ (resp. $K^u(x)$) the connected component of 
  $K\cap W^s(x) = K^-\cap W^s(x)$ 
  (resp. $K\cap W^u(x) = K^+\cap W^u(x)$) containing $x$, and 
  also   $K(x)$ its connected component in $K$.  For $x\in J$, we define $J^s(x)$,  $J^u(x)$ and $J(x)$ similarly. 
  More generally, if needed, we use the notation $\comp_E(x)$ for the connected component of $x$ in  a set $E$. 
  
  We use the subscript `$\mathrm{i}$' to denote topological operations (interior, closure, etc.) relative to the intrinsic topology in 
  stable/unstable manifolds.    

\begin{lem} \label{lem:component_K+J+} 
Assume that $f$ is hyperbolic. Then 
every connected component of $K^+\cap \bb$ has a connected boundary, which is a component of 
$J^+\cap \bb$.
\end{lem}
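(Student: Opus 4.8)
The plan is to exploit the vertical product structure of $K^+$ in $\bb$ together with the fact that the complement of $K^+$ along unstable leaves is a basin of infinity. Write $\Omega := \bb \setminus K^+$; since $K^+$ is vertical in $\bb$, for every $x$ the slice $\Omega \cap W^u(x)$ is the union of the point at infinity's basin with $\fr^h\bb$-adjacent pieces, and the key qualitative input is that $K^+ \cap \bb$ is \emph{full} along unstable slices (the complement has no bounded components inside $\bb$), because any bounded complementary component would be a Fatou disk contained in $K^+$, contradicting its position in the complement. First I would fix a component $L$ of $K^+ \cap \bb$ and consider $\fr L$, the topological boundary of $L$ taken inside $\bb$. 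Since $L$ is closed and $\fr L \subset J^+ = \fr K^+$, we have $\fr L \subset J^+ \cap \bb$, so it suffices to show (a) $\fr L$ is connected, and (b) $\fr L$ is exactly one component of $J^+ \cap \bb$.

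For step (a), the cleanest route is to reduce to the one-dimensional statement leafwise and then propagate it. In a flow box $Q$ we have the product decomposition $K^+ \cap Q \simeq (W^s_Q \cap J^-) \times (W^u_Q \cap K^+)$, so locally the component $L$ looks like $\{\text{point in stable}\} \times (\text{component of the unstable slice})$; by the affine-structure discussion and the John-like geometry (or more elementarily: the unstable slice $K^+ \cap W^u(x)$ is a full compact subset of $\C$, so each of its components is a full plane continuum with connected boundary equal to a component of $J^+ \cap W^u(x)$). Thus leafwise, $\fr L \cap W^u(x)$ is connected for each $x$. To pass from ``connected on each leaf'' to ``globally connected,'' I would use that the component $L$, being vertical in $\bb$, fibers (through the stable lamination of $K^+$, or simply through the second projection in adapted coordinates) over a connected base, with connected fibers $\fr L \cap W^u(x)$ varying continuously; a standard argument (a set meeting every fiber of a proper map with connected fibers over a connected base, and which is relatively open-and-closed fiberwise, is connected) then gives connectedness of $\fr L$. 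Continuity of the fibers here is exactly the continuity of the leafwise Euclidean structure recorded in Lemma~\ref{lem:continuity_affine} together with the vertical-graph description of $K^+$.

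For step (b), I must check that $\fr L$ is not a union of two distinct components of $J^+ \cap \bb$, and conversely that every component of $J^+ \cap \bb$ arises this way. The first is immediate: $\fr L$ is connected by (a), hence lies in a single component of $J^+ \cap \bb$; and it is relatively closed in $J^+ \cap \bb$. For the reverse inclusion, take a component $P$ of $J^+ \cap \bb$; since $J^+ = \fr K^+$, every point of $P$ is a boundary point of $K^+$, hence lies in $\fr L'$ for the component $L'$ of $K^+ \cap \bb$ containing it; by connectedness of $P$ and the fact that the assignment ``$y \mapsto$ (component of $K^+$ containing $y$)'' is locally constant on $J^+$ in the normal direction (again the product structure in flow boxes), $P$ is contained in a single $\fr L'$, and by maximality $P = \fr L'$. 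Combining, the map $L \mapsto \fr L$ is a bijection between components of $K^+ \cap \bb$ and components of $J^+ \cap \bb$, which is the assertion.

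\textbf{Main obstacle.} I expect the delicate point to be step (a), specifically the globalization from leafwise connectedness of $\fr L$ to connectedness of $\fr L$ as a subset of $\bb$: one must rule out that $\fr L$, while connected in each unstable slice, nonetheless breaks into several ambient components because the stable holonomy of $K^+$ fails to be globally well-behaved — precisely the Reeb-type phenomenon illustrated in Figure~\ref{fig:reeb}. The saving feature is that here we work with $K^+$ (a vertical object with a genuine graph description over the first coordinate in $\bb$, not merely a lamination), so the stable direction is globally a product direction inside $\bb$ and no monodromy can occur; making this precise — i.e. that $L$ really is a topological product over its unstable slice, uniformly in $\bb$, rather than only locally — is the technical heart of the argument and is where I would spend the most care.
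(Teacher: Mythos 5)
Your leafwise observation (each component of a full compact subset of $\C$ has connected boundary) is correct, and your step (b) would be fine once (a) is known, but step (a) has a genuine gap, exactly at the point you flag as the "technical heart". The globalization cannot be run as you describe. First, the unstable manifolds do not fill $K^+\cap \bb$ (the unstable lamination only lives on $J^-$), so a component $L$ of $K^+\cap\bb$ is not fibered by unstable slices; if you instead fiber by the second projection, the fibers $L\cap(\dd\times\set{t})$, and hence $\fr L\cap(\dd\times\set{t})$, need not be connected (a single semi-local component can meet a horizontal line in many leafwise components), so the "connected fibers over a connected base" argument does not apply, and connectedness of these fibers is essentially what you are trying to prove. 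Second, the saving feature you invoke --- that $K^+$ has "a genuine graph description over the first coordinate", so "no monodromy can occur" and $L$ is a topological product over a slice --- is not available: $K^+\cap\bb$ is not a graph over anything, the stable lamination only laminates $J^+$, its semi-local leaves can be tangent to $\fr\bb$, and a semi-local component is in general \emph{not} a product of a slice by the stable direction. This is precisely the Reeb-type phenomenon of Figure~\ref{fig:reeb} (where $W^s_\bb(C)\subsetneq K^+_\bb(C)$), and ruling it out, only for thin components and after adjusting the bidisk, is the content of Section~\ref{sec:semi_local}; it cannot be assumed when proving this lemma, which is used upstream of that analysis.

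The paper's proof sidesteps any product or holonomy statement. Hyperbolicity enters through Lemma~\ref{lem:fatou_disk}: a point which is leafwise interior in $K^+\cap L_t$, where $L_t=\dd\times\set{t}$ is a horizontal line, is an ambient interior point of $K^+$. Hence $\fr K^+_\bb(x)\cap\bb=\bigcup_t \fr_{L_t}\lrpar{K^+_\bb(x)\cap L_t}$, and by polynomial convexity of the horizontal slices, $K^+_\bb(x)$ is recovered from its boundary by filling holes slice by slice. If the boundary were disconnected, say equal to $B_1\cup B_2$ with $B_1,B_2$ relatively open and disjoint, then filling the holes of $B_1$ and of $B_2$ separately in each horizontal line would split $K^+_\bb(x)$ into two disjoint relatively open pieces, contradicting its connectedness; the identification of the boundary as a full component of $J^+\cap\bb$ then follows by the maximality argument you sketched. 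This slice-and-fill disconnection argument is the missing idea in your outline: it replaces the global product structure, which in general does not exist, by a purely one-dimensional filling procedure in each horizontal line. (A minor point: fullness of the slices follows from the maximum principle applied to $G^+$, not from a Fatou-disk argument --- a bounded complementary component lies outside $K^+$, so it cannot be "a Fatou disk contained in $K^+$".)
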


\begin{proof}
Observe that if $p$ is an interior point of $K^+\cap L$, where $L$ is a horizontal line, then 
it belongs to a Fatou disk. Since $L$ is not contained in $J^+$, by Lemma~\ref{lem:fatou_disk}, 
we get that $p\in \Int(K^+)$. 
This 
implies  that for every  $x\in K^+\cap \bb$, $\fr K^+_\bb(x)  \subset   \bigcup_{t\in \dd}\fr_{L_t}(K^+_\bb(x)\cap L_t)$, where 
$L_t =\dd\times \set{t}$ and $\fr_{L_t}$ refers to the boundary in $L_t$.  The converse inclusion is obvious, so 
$\fr K^+_\bb(x) \cap \bb=   \bigcup_{t\in \dd} \fr_{L_t}(K^+_\bb(x)\cap L_t)$. Since 
$K^+_\bb(x)\cap L_t$ is compact and 
polynomially convex, and  obviously 
$K^+_\bb(x)  =   \bigcup_{t\in \dd}  K^+_\bb(x)\cap L_t$,   this means that 
$K^+_\bb(x) $ is obtained from $\fr K^+_\bb(x)\cap \bb$ by filling the holes of all components of 
$\fr_{L}(K^+_\bb(x)\cap L)$ in every horizontal line. 
 Now  assume $\fr K^+_\bb(x)\cap \bb$ is disconnected, so we can write it as $B_1\cup B_2$, where each $B_i$ is relatively open and $B_1\cap B_2 = \emptyset$. 
In every horizontal slice $L$, $B_i\cap L$ must be a union 
of components of $\fr_{L}(K^+_\bb(x)\cap L)$. For $i=1, 2$, let $\widehat B_i$ be the set obtained by filling the holes of $B_i$ in each horizontal line in $\bb$. The previous discussion shows that 
$K^+_\bb(x) = \widehat B_1\cup \widehat B_2$, where the $\widehat B_i$ are relatively open in $K^+_\bb(x)$ and disjoint. This is a contradiction, therefore  $\fr K^+_\bb(x)\cap \bb$ is  connected. 

For the second statement, simply observe that if $D\subset J^+\cap \bb$ is a connected 
set such that $ \fr K^+_\bb(x)\cap \bb \subset D$, then 
$D$ is contained in $K^+_\bb(x)$ and also in $\fr K^+$ so $D\subset \fr K^+_\bb(x)\cap \bb$ and we are done. 
\end{proof}

   \subsection{Basic properties of leafwise components}
   Here we assume that $f$ is a hyperbolic and dissipative complex Hénon map. 
      The following result is well-known. 
   
   \begin{lem}\label{lem:leafwise_interior}
   For every $x\in K$ we have  
   $\Int_\mathrm{i} (K^u(x))  \subset   \Int(K^+) $ and  $\fr_\mathrm{i} (K^u(x))\subset J$.  In particular  if 
      $\Int_\mathrm{i} (K^u(x))$ is non-empty, each of its components 
  is contained in an attracting basin. Likewise  
  $\Int_\mathrm{i} K^s(x)  = \emptyset$ and $J^s(x)   = K^s(x)$. 
   \end{lem}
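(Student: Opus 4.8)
The plan is to treat the three assertions separately, using Lemma~\ref{lem:fatou_disk} as the main engine.

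First I would prove $\Int_\mathrm{i}(K^u(x)) \subset \Int(K^+)$. Let $U$ be the interior of $K^u(x)$ relative to the topology of $W^u(x)$, and suppose $U \neq \emptyset$. Pick any $p \in U$; then there is a small round disk $\Delta \subset U \subset K^+ \cap W^u(x)$ centered at $p$. Since $\Delta \subset W^u(x)$ and every point of $W^u(x)$ lies on an unstable manifold of a point of $J$, the disk $\Delta$ cannot be contained in a stable manifold (an unstable and a stable leaf meet in at most a discrete set, and anyway $\Delta$ is relatively open in $W^u(x)$), so by Lemma~\ref{lem:fatou_disk} $\Delta$ must be contained in the Fatou set, i.e. in $\Int(K^+)$. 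Hence $p \in \Int(K^+)$, giving $\Int_\mathrm{i}(K^u(x)) \subset \Int(K^+)$. The statement about $\fr_\mathrm{i}(K^u(x)) \subset J$ then follows: a boundary point $q$ of $K^u(x)$ in $W^u(x)$ lies in $K^+ \cap W^u(x)$ hence in $K^+$, but it is not in $\Int_\mathrm{i}(K^u(x))$, so (since $K^u(x)$ is a full continuum in its leaf, its leafwise boundary equals the leafwise boundary of $K^+ \cap W^u(x)$ near $q$) $q$ is a leafwise limit of points outside $K^+$; thus $q \in \fr K^+ = J^+$, and since $q \in W^u(x) \subset J^-$ (as $x \in K$ implies $x \in K^- = J^-$ in the dissipative case, and unstable leaves stay in $J^-$), we get $q \in J^+ \cap J^- = J$. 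Consequently each component of $\Int_\mathrm{i}(K^u(x))$ is an open subset of the Fatou set lying in a single unstable leaf, hence a Fatou disk bounded away from escaping; by the Axiom A structure the Fatou set is a finite union of attracting basins, so each such component lies in one attracting basin.

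For the stable side, $\Int_\mathrm{i} K^s(x) = \emptyset$: if $V$ is a nonempty leafwise-open subset of $K^s(x) \subset W^s(x)$, take a round disk $\Delta \subset V \subset K^+ \cap W^s(x)$. This $\Delta$ lies in the stable manifold $W^s(x)$, so it is \emph{not} a horizontal-type disk and Lemma~\ref{lem:fatou_disk} tells us it is either in the Fatou set or in a stable leaf. It is in the stable leaf $W^s(x)$, which is fine, but we also have $\Delta \subset K^s(x) \subset K = K^+ \cap K^-$, and the contracting dynamics on $W^s(x)$ forces $f^{-n}|_\Delta$ to expand, so $\Delta \subset K^-$ is impossible unless $\Delta$ is a point — more precisely, points of $W^s(x)$ other than those in $J$ escape under $f^{-n}$, and a nondegenerate disk in $W^s(x)$ cannot have all its points in $K^-$ because $W^s(x) \cap K^- = W^s(x) \cap J$ is totally disconnected (it is the leafwise slice of $J$, which by hyperbolicity carries a transverse Cantor structure, and in any case $W^s(x) \setminus \{x\}$ is swept into the escaping region of $J^+$'s complement along the $\cW^s$-lamination). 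Hence $\Delta$ must be a point, contradiction, so $\Int_\mathrm{i} K^s(x) = \emptyset$. Finally $J^s(x) = K^s(x)$: clearly $J^s(x) \subset K^s(x)$ since $J \subset K$; conversely $K^s(x) \subset W^s(x) \cap K^+ \cap K^- = W^s(x) \cap K^+ \cap J^-$, and $W^s(x) \cap K^+$ has empty leafwise interior by what we just proved (it is locally $K^s$), so $W^s(x) \cap K^+ = \fr_\mathrm{i}(W^s(x) \cap K^+) \subset J^+$ along the leaf; combined with membership in $J^-$ this gives $K^s(x) \subset J$, so $K^s(x) \subset J \cap W^s(x)$ and, being connected, $K^s(x) \subset J^s(x)$.

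The main obstacle I expect is being careful about the dichotomy in Lemma~\ref{lem:fatou_disk} when the disk sits \emph{inside} a stable leaf: that lemma's second alternative ("contained in a stable leaf") is automatically satisfied and gives no information, so the real content on the stable side is showing that $W^s(x) \cap K^-$ (equivalently $W^s(x) \cap J$) has empty interior in $W^s(x)$ — this is where hyperbolicity and the total disconnectivity of the stable slices of $J$ (the transverse Cantor structure, or the backward-escape of $W^s(x) \setminus J$) must be invoked, rather than Lemma~\ref{lem:fatou_disk} alone. Making that argument rigorous and citing the correct earlier statement is the delicate point; the unstable-side assertions are comparatively routine once Lemma~\ref{lem:fatou_disk} is in hand.
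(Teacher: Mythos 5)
The unstable half of your argument is correct and is essentially the paper's proof: a disk in $K^u(x)$ lies in a leaf of the unstable lamination, so the second alternative of Lemma~\ref{lem:fatou_disk} is impossible (stable and unstable leaves cannot share an open set), hence the disk lies in $\Int(K^+)$; the statements about $\fr_\mathrm{i}(K^u(x))$ and the attracting basins then follow as you say. The genuine gap is on the stable side. To exclude a nondegenerate disk $\Delta\subset K^s(x)$ you appeal to the total disconnectedness of $W^s(x)\cap J$ ``by hyperbolicity'' (a transverse Cantor structure). That property is not available here: it is precisely the \emph{stable total disconnectedness} studied in Section~\ref{sec:total disconnected}, which the paper only establishes under the extra hypothesis $\abs{\jac f}<1/d$ (or $\abs{\jac f}\leq d^{-2}$), and whether it holds for every dissipative hyperbolic H\'enon map is explicitly recorded there as an open question. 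The present lemma sits in the preliminaries and must be proved assuming only hyperbolicity and dissipativity, so your justification invokes a much stronger statement that is not known at this stage (and is unknown in general). Your preceding remark that backward expansion ``forces $\Delta$ to be a point'' is, as written, not a proof either: the backward images $f^{-n}(\Delta)$ remain in the compact set $K$, and growth of leafwise diameter alone does not by itself contradict that.

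What is actually needed is only that $W^s(x)\cap K^-$ has empty leafwise interior, and the paper's (implicit) route is the symmetric one you never consider: apply Lemma~\ref{lem:fatou_disk} to $f^{-1}$. A disk $\Delta\subset K^s(x)\subset K^-$ is then either contained in the Fatou set of $f^{-1}$, hence in $\Int(K^-)$, which is empty because dissipativity gives $K^-=J^-$, or contained in an unstable manifold of a point of $J$, which is impossible since $\Delta$ lies in the stable leaf $W^s(x)$. (Alternatively, your expansion idea can be completed: $f^{-n}(\Delta)$ contains affine round disks of exponentially growing radius in $W^s(f^{-n}(x))$, all inside the compact set $K$, and the normalized stable parameterizations restricted to these disks would converge to a nonconstant bounded entire map, contradicting Liouville.) Finally, your last step is garbled: ``$W^s(x)\cap K^+$ has empty leafwise interior'' is false, since $W^s(x)\subset K^+$; the correct bookkeeping is that $K^s(x)\subset K^-=J^-$ by dissipativity, while $W^s(x)\subset J^+$ (no point of $W^s(x)$ can lie in an attracting basin, as its forward orbit shadows that of $x\in J$), and these together give $K^s(x)\subset J$, hence $K^s(x)\subset J^s(x)$; the reverse inclusion is trivial.
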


\begin{proof}
 Indeed, since stable and unstable manifolds cannot coincide along some open set, 
 if $\Delta$ is a disk contained in $K^u(x)$, 
it follows from Lemma~\ref{lem:fatou_disk} that 
   $\Delta \subset   \Int(K^+)$,  and the remaining conclusions follow.
\end{proof}

   For $x$ in $J$, $K^u(x)$ may be bounded or unbounded 
   for the intrinsic (leafwise) topology. By the maximum principle, $K^u(x)$ is polynomially convex, so if 
   $K^u(x)$ (or equivalently $J^u(x)$) is leafwise bounded, then $K^u(x)$ is simply the polynomially convex hull 
   of $J^u(x)$ (i.e. is obtained by filling in the leafwise bounded components of the complement). 
   
   \begin{lem}\label{lem:leafwise bdd} 
   Given  $x\in K$, in the following properties we have $(iv)\Leftrightarrow (iii) \Rightarrow (ii)\Leftrightarrow (i)$:
 \begin{enumerate}[{(i)}]
 \item    $K^u(x)$ is leafwise bounded; 
 \item $J^u(x)$ is leafwise bounded;
 \item  $W^u_\bb(x)$ is leafwise bounded;
 \item   $W^u_\bb(x)$  is a closed horizontal submanifold of $\bb$. 
   \end{enumerate}
Furthermore if (ii) holds, then (iii) holds for $f^n(x)$ for sufficiently large $n$. 
   \end{lem}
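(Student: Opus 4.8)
The plan is to treat $(i)\Rightarrow(ii)$, $(iii)\Rightarrow(ii)$ and the equivalence $(iii)\Leftrightarrow(iv)$ as essentially formal, and to put all the weight on $(ii)\Rightarrow(i)$, which is the only implication carrying real content. Throughout I work with a point $x\in J$ (for $x\in K\setminus J$ the statements are vacuous, $x$ lying in an attracting basin); I write $G^+$ for the forward Green function, a non-negative plurisubharmonic function with $\{G^+=0\}=K^+$, and I set $g:=G^+\circ\psi^u_x$, a non-negative subharmonic function on $\C$ with $\{g=0\}=(\psi^u_x)\inv(K^+\cap W^u(x))$. A preliminary observation I will use repeatedly: $g$ is non-constant, equivalently $W^u(x)\not\subseteq K^+$. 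Indeed, if $W^u(x)\subseteq K^+$ then $W^u(x)$ is a Fatou disk, so by Lemma~\ref{lem:fatou_disk} it lies in the Fatou set or in some stable manifold; the first is impossible because $f^n$ expands $W^u(x)$ (so it cannot contract to an attracting point), the second because stable and unstable manifolds cannot agree on an open set. A non-constant non-negative subharmonic function on $\C$ is unbounded above on every neighbourhood of $\infty$.

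For the easy implications, note that $J^u(x)\subseteq J^+\cap J^-=J\subseteq\Int\bb$, so $J^u(x)$ is an intrinsically closed connected subset of $W^u(x)\cap\bb$ containing $x$; hence $J^u(x)\subseteq K^u(x)$ and $J^u(x)\subseteq W^u_\bb(x)$, and leafwise boundedness of $K^u(x)$ or of $W^u_\bb(x)$ passes to $J^u(x)$, giving $(i)\Rightarrow(ii)$ and $(iii)\Rightarrow(ii)$. For $(iii)\Leftrightarrow(iv)$: if $W^u_\bb(x)$ is leafwise bounded, its intrinsic closure is compact and lies in $\overline\bb$; since $W^u(x)\subseteq K^-=J^-$ is disjoint from the backward-escaping set $\overline{\fr^h\bb}=\overline{D}\times\fr D$, this closure meets $\fr\bb$ only along $\fr^v\bb$, so $W^u_\bb(x)$ is a closed horizontal submanifold and the first projection restricts to a proper, hence finite-degree, branched cover. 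Conversely a closed horizontal submanifold is a finite branched cover of $D$ that is moreover an embedded connected component of $W^u(x)\cap\bb$; were it leafwise unbounded, it would either contain a punctured neighbourhood of $\infty$ in the leaf --- forcing the entire coordinate $\pi_1\circ\psi^u_x$ to extend holomorphically across $\infty$, hence be constant and $W^u_\bb(x)$ vertical, a contradiction --- or accumulate on itself from leaf-infinity inside $\bb$, contradicting embeddedness; so it is leafwise bounded. (I expect this converse to be routine once set up via the finite-degree structure; it is not the delicate point of the lemma.)

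The core of the proof is $(ii)\Rightarrow(i)$, which I would reduce to the identity $\fr_\mathrm{i}K^u(x)=J^u(x)$, valid for every $x\in J$. Two ingredients: (a) $J^+\cap W^u(x)=\fr_\mathrm{i}(K^+\cap W^u(x))$ --- a point interior to $K^+\cap W^u(x)$ spans a Fatou disk, hence by Lemma~\ref{lem:fatou_disk} (the stable-manifold alternative being excluded as above) lies in $\Int(K^+)$, so is not in $J^+$; (b) $K^u(x)$ is \emph{full}, i.e. has no bounded complementary component in $W^u(x)\simeq\C$ --- a bounded complementary component $V$ would have $\fr_\mathrm{i}V\subseteq\fr_\mathrm{i}K^u(x)\subseteq J\subseteq K^+$ by Lemma~\ref{lem:leafwise_interior}, so $g\equiv 0$ on $V$ by the maximum principle, whence $\overline V\subseteq K^+\cap W^u(x)$; being connected and meeting $K^u(x)$, $\overline V\subseteq K^u(x)$, contradicting $V\cap K^u(x)=\emptyset$. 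Since $K^u(x)$ is closed, connected and full, its complement in the Riemann sphere is connected, so its boundary is connected; as $K^u(x)$ is a connected component of the closed set $K^+\cap W^u(x)$, ingredient (a) gives $\fr_\mathrm{i}K^u(x)=K^u(x)\cap\fr_\mathrm{i}(K^+\cap W^u(x))=K^u(x)\cap J^+$. This is a connected subset of $J^+\cap W^u(x)$ meeting $J^u(x)$ --- because $J^u(x)\subseteq K^u(x)$ cannot be contained in $\Int_\mathrm{i}K^u(x)\subseteq\Int(K^+)$, as $J^u(x)\subseteq\fr K^+$, and $x\in J^u(x)$ --- so $\fr_\mathrm{i}K^u(x)\subseteq J^u(x)$; the reverse inclusion $J^u(x)\subseteq K^u(x)\cap(J^+\cap W^u(x))=\fr_\mathrm{i}K^u(x)$ is immediate. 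Granting this identity, assume $(ii)$: then $\fr_\mathrm{i}K^u(x)$ is intrinsically compact, and removing it from $W^u(x)\simeq\C$ leaves the two disjoint open sets $\Int_\mathrm{i}K^u(x)$ and $W^u(x)\setminus K^u(x)$, the latter connected by fullness, hence contained in a single complementary component of the compact set $\fr_\mathrm{i}K^u(x)$ in $\C$. If that component is the unbounded one, $\Int_\mathrm{i}K^u(x)$ is bounded and so is $K^u(x)$, giving $(i)$. If it is bounded, then $K^u(x)$ contains a neighbourhood of $\infty$ in $W^u(x)$, so $g\equiv 0$ there, contradicting that $g$ is non-constant non-negative subharmonic. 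I expect this last dichotomy to be the genuine obstacle of the whole lemma: it is exactly the point where one must rule out that the component $K^u(x)$ "leaks" past the polynomial hull of $J^u(x)$ along further zero-components of $g$, and this is where hyperbolicity enters, through Lemmas~\ref{lem:fatou_disk} and~\ref{lem:leafwise_interior} together with the subharmonicity of $g$.

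Finally, for the additional assertion: assuming $(ii)$, by $(ii)\Leftrightarrow(i)$ the component $K^u(x)$ is a leafwise-bounded component of $K^+\cap W^u(x)$, so one may enclose it in a bounded Jordan domain $U\subseteq W^u(x)$ with $\fr U\cap K^+=\emptyset$ (this is where leafwise boundedness of the component is used; cf. the unstable-transversal construction of \S\ref{subs:vocabulary}, and Section~\ref{sec:john} for why such a $U$ exists, a leafwise-bounded component being isolated among components). Then $\fr U$ escapes forward, so $f^m(\fr U)\cap\bb=\emptyset$ for all large $m$; for such $m$ the set $f^m(U)\cap\bb$ has empty boundary relative to $W^u(f^m(x))\cap\bb$ (a boundary point would lie in $f^m(\overline U)\setminus f^m(U)=f^m(\fr U)$, yet also in $\bb$), hence is a union of connected components of $W^u(f^m(x))\cap\bb$. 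In particular $W^u_\bb(f^m(x))$ is one of them, so it is contained in the intrinsically compact set $f^m(\overline U)$ and is leafwise bounded. Thus $(iii)$ holds for $f^m(x)$ once $m$ is large, completing the proof.
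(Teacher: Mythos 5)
Your treatment of $(i)\Rightarrow(ii)$, $(iii)\Rightarrow(ii)$ and $(iii)\Rightarrow(iv)$ is fine and close to the paper's, and you correctly locate the weight of the lemma in $(ii)\Rightarrow(i)$; but precisely there your argument has a decisive gap. You reduce everything to the identity $\fr_{\mathrm i}K^u(x)=J^u(x)$, and to get the inclusion $\fr_{\mathrm i}K^u(x)\subseteq J^u(x)$ you assert that, $K^u(x)$ being closed, connected and full, its complement in the Riemann sphere is connected \emph{and hence its boundary is connected}. That last implication is false for unbounded closed sets: the closed strip $\set{\zeta:\ 0\leq \im \zeta\leq 1}$ is closed, connected, has no bounded complementary component and has connected spherical complement, yet its boundary is two disjoint lines (unicoherence only gives connectedness of the boundary with $\infty$ adjoined, which is useless here, since you need $\fr_{\mathrm i}K^u(x)$ to be a compact subset of the leaf). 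Since the whole content of $(ii)\Rightarrow(i)$ is to exclude that $K^u(x)$ is unbounded, invoking a boundary-connectivity fact valid only in the compact case makes the argument circular: without it nothing rules out the scenario you yourself name, namely $K^u(x)$ ``leaking'' past the hull of $J^u(x)$ through further components of $J\cap W^u(x)$ attached to it, and your final bounded/unbounded dichotomy (which is correct once the identity is granted) never gets started. The paper instead argues directly that the compact set obtained by filling the holes of $J^u(x)$ must be all of $K^u(x)$, using that every point of $J^u(x)$ lies on the intrinsic boundary of $W^u(x)\setminus K^+$; an argument of that kind, not a connectivity claim about the boundary of a possibly unbounded set, is what is needed at the crux.

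There is a second gap in $(iv)\Rightarrow(iii)$: your dichotomy ``either $\Omega:=(\psi^u_x)\inv(W^u_\bb(x))$ contains a punctured neighbourhood of $\infty$, or the submanifold accumulates on itself inside $\bb$'' is not exhaustive. The remaining, dangerous case is that $\Omega$ is unbounded while all leaf-infinity accumulation of $W^u_\bb(x)$ takes place on $\fr\bb$ (necessarily on $\fr^v\bb$); this contradicts neither properness in $\bb$, nor horizontality, nor finite degree --- an unbounded plane domain can perfectly well be a finite branched cover of the disc (a half-plane is already conformally a disc). What excludes it in the paper is a structural input you never invoke: $J^-$ is laminated near $\fr\bb$ (the attracting points lie well inside $\bb$), so a closed horizontal submanifold $W^u_\bb(x)$ extends, within its leaf, to a submanifold $S$ over a neighbourhood of $\overline\bb$; then $W^u_\bb(x)$ is relatively compact in $S\subset W^u(x)$ and $\Omega$ is bounded. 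Finally, for the last clause your proof presupposes a bounded Jordan domain $U\supset K^u(x)$ with $\fr U\cap K^+=\emptyset$; such a curve need not exist for an arbitrary compact component of a closed set (small components of $K^+\cap W^u(x)$ could a priori accumulate on $K^u(x)$), and deferring its existence to Section~\ref{sec:john} both leaves it unproved here and inverts the paper's order, whose own proof sidesteps the issue by writing $K^u(x)$ as the decreasing intersection of the components of $x$ in $W^u(x)\cap f^{-n}(\bb)$ and pushing forward by $f^n$.
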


     \begin{proof}  The implication   $( {i})  \Rightarrow ({ii})$ follows directly from the fact that 
     $J^u(x) = \fr_{\mathrm{i}}K^u(x)$. Now assume that  $J^u(x)$ is leafwise bounded. 
Working in $W^u(x)\simeq \C$, we have that  $K^u(x)$ is a closed connected polynomially convex set and 
$J^u(x)$ is a bounded connected component of $\fr_{\mathrm i} K^u(x)$. Since every point of $J^u(x)$ lies 
 on the   boundary of $W^u(x)\setminus K^+$ (for the intrinsic topology), the compact set obtained by filling the holes of $J^u(x)$ must be $K^u(x)$, so the converse implication holds.

     Since $K^u(x)\subset W^u_\bb(x)$,  obviously (\textit{iii}) implies (\textit{i}). Conversely,    
     $K^u(x)$ is the decreasing intersection of the sequence of  components of $x$ in $W^u(x)\cap f^{-n}(\bb)$. Hence,  if
     $K^u(x)$ is leafwise bounded it follows that $\comp_{W^u(x)\cap f^{-n}(\bb)}\lrpar{x}$ is 
     leafwise bounded for large enough $n$, and   so does $W^u(f^n(x))\cap\bb$. 
     
     Recall  that for every $x$, $W^u(x)$ is an injectively immersed copy of $\C$, whose image is a leaf of the lamination of 
      $J^-\setminus \set{a_1, \ldots , a_N}$. Here   the  $a_i$ are the attracting points, 
      and a leaf  never extends to   
      a submanifold in the neighborhood of $a_i$ (\footnote{Indeed otherwise this would induce  a 
      compactification of unstable manifolds, yielding an embedding of $\pu$ into $\cd$.}). 
      In particular, $J^-$ is laminated near $\fr\bb$. If $W^u_\bb(x)$ is 
      leafwise bounded, then  it is of the form $\psi^u_x(\om)$, where $\om$ is some bounded open set in $\C$. Since $\psi^u$ extends to a neighborhood of $\overline \om$,  $W^u_\bb(x)$  it is a properly embedded submanifold of $\bb$, which extends to a neighborhood of $\overline \bb$. So 
     (\textit{iii}) implies (\textit{iv}). Finally, if (\textit{iv}) holds, since $J^-$ is a lamination near $\fr\bb$, we see that $W^u_\bb(x)$ extends to a submanifold $S$ in a neighborhood of $\overline \bb$. 
  Then    $W^u_\bb(x) $ is relatively compact in $ S\subset W^u(x)$ so 
     if $\Omega$ is such that $\psi^u_x(\Omega) = 
      W^u_\bb(x)$ then $\Omega$ is relatively compact in $\C$, and (\textit{iii}) follows.  
               \end{proof}

 \section{External rays}\label{sec:john}
 
 In this section we study external rays along the unstable lamination (i.e. along $J^-$) for a hyperbolic complex Hénon 
 map. The  existence and convergence properties of external rays were studied in the unstably 
 connected case in \cite{bs6, bs7}. Recall that when $\abs{\jac (f)}<1$, unstable  connectedness is equivalent to the connectedness 
 of $J$. The results that we prove here do not rely on any unstable connectivity or dissipativity assumption, 
  nevertheless  what we have in mind is the case of a dissipative unstably disconnected map.

\subsection{Escaping from $K^+$ along an external ray} \label{subs:john}  
By definition,   an \emph{unstable external ray}  (simply called ``external rays'' in the following)
 is  a piecewise smooth continuous
path contained in a leaf $W^u(x)$ 
of the unstable lamination,  which is a union of   gradient lines of $G^+\rest{W^u(x)}$ outside the 
(leafwise locally finite) set
 of critical points of $G^+\rest{W^u(x)}$. As usual we assume 
 that $G^+$ is strictly monotone along external rays 
 (which will be considered as ascending or descending depending on the context).  
We do not prescribe rules for the behavior of rays hitting critical points, so in particular there is no attempt at defining a notion of ``external map''.  

In the next proposition the length of curves is relative to the ambient metric in $\cd$. We
  show that external rays ascend fairly quickly.  

\begin{prop}\label{prop:length}
Let $f$ be a hyperbolic polynomial automorphism of $\cd$ of dynamical degree $d>1$. For every $r_1<r_2$ there exists $\ell(r_1, r_2)$ such that for every $x\in J^-\setminus K^+$  such that if  $G^+(x)= r_1$, any external ray through $x$ reaches  $\set{G^+ = r_2}$ 
along a path whose 
  length is  bounded by $\ell(r_1, r_2)$. 
  In addition   $\ell(r_1, r_2)$ is bounded by a function $\overline\ell(r_2)$ depending only on $r_2$. 
Furthermore  $\ell(r_1, r_2)\to 0$ when $r_1\to r_2$  and $\overline \ell(r_2)  = O(r_2^\alpha)$ when $r_2\to 0$, for some $\alpha>0$.  
\end{prop}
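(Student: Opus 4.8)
The plan is to parametrize an external ray $\gamma\subset W^u(x)$ by the value $t=G^+$, which is legitimate since $G^+$ is strictly increasing along $\gamma$ and $G^+\rest{W^u(x)}$ is a positive harmonic function off $K^+$; in the leafwise (affine) metric the length of the portion $\gamma\cap\{r_1\le G^+\le r_2\}$ is then $\int_{r_1}^{r_2}\lvert\nabla_u G^+\rvert^{-1}\,dt$, where $\nabla_u$ denotes the gradient of $G^+\rest{W^u(x)}$. The whole point is to control this integral uniformly in $x\in J^-$. First observe that an ascending ray stays in $\{G^+\ge r_1\}$, hence at definite distance from $K^+$, in particular from the attracting points $a_i$ (where $G^+=0$); moreover $\{G^-=0\}\cap\{G^+\le r_2\}$ is bounded by the filtration, so the ray portion lives in the compact set $A_{r_1,r_2}:=J^-\cap\{r_1\le G^+\le r_2\}$, which avoids the $a_i$ and, for $r_2$ small, clusters on the part of $J$ (resp. the escape region) where the unstable lamination and the expansion estimate \eqref{eq:tangent_expansion} are available.

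The core is a \emph{fundamental annulus estimate}: there are $r_0>0$ and $L_0<\infty$ such that for every $x\in J^-$ and levels $r_0/d\le s_1<s_2\le r_0$, an external ray through a point at level $s_1$ reaches $\{G^+=s_2\}$ with leafwise length $\le L_0$, with this length tending to $0$ as $s_2-s_1\to 0$. To prove it I would use that the functions $g_x:=G^+\circ\psi^u_x$ form, by Lemma~\ref{lem:continuity_affine}, a normal family of harmonic functions depending continuously on $x$ (up to rotation) on $(\psi^u_x)^{-1}(\{G^+>0\})$; hence their gradient flows and critical loci vary continuously. Away from the leafwise discrete set $\{\nabla_u G^+=0\}$ one gets a uniform lower bound for $\lvert\nabla_u G^+\rvert$ by compactness; near a critical point $G^+\rest{W^u}$ is locally $\operatorname{Re}(\zeta^k)+\mathrm{cst}$ with $k\ge 2$, and an elementary computation shows that an ascending gradient line crossing such a harmonic saddle has finite length, the factor $\lvert\zeta\rvert^{k-1}$ appearing in both $dt$ and $\lvert\nabla_u G^+\rvert$. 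Since there are boundedly many such crossings below level $r_0$ and the normal forms are uniform by compactness, integration gives $L_0$; the finiteness of the length integral also shows the ray genuinely reaches $\{G^+=s_2\}$ (passing through critical points along some ascending separatrix, the choice being irrelevant), and $s_2-s_1\to 0$ forces the length to $0$ since the ray portion shrinks to a point of $\{G^+=s_1\}$.

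The passage to arbitrary $r_1<r_2$ uses the equivariance $G^+\circ f=d\,G^+$, the fact that $f$ acts on each unstable leaf as an affine expanding map, and \eqref{eq:tangent_expansion}: applying $f^j$ sends the ray portion at levels $[d^{-j}r_0,d^{-j+1}r_0]$ to the one at levels $[r_0,d\,r_0]$ while multiplying leafwise lengths by at least $Cu^j$, so that portion has length $\le C^{-1}u^{-j}L_0$. Summing this geometric series over the octaves $[d^{-j}r_0,d^{-j+1}r_0]$ meeting $[r_1,r_2]$ yields $\ell(r_1,r_2)$; it is dominated by a quantity $\overline\ell(r_2)$ independent of $r_1$, and since for small $r_2$ only octaves of index $\gtrsim\log_d(r_0/r_2)$ contribute, $\overline\ell(r_2)=O\!\big(r_2^{\,\log u/\log d}\big)$ as $r_2\to 0$. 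The symmetric argument with $f^{-1}$ (producing finitely many octaves above $r_0$, each with a finite though possibly growing bound) handles the case $r_2>r_0$, and $\ell(r_1,r_2)\to 0$ as $r_1\to r_2$ follows from the corresponding property of the fundamental annulus estimate after rescaling by the appropriate power of $f$.

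The main obstacle is the fundamental annulus estimate, specifically making the control of the gradient flow of $G^+\rest{W^u(x)}$ \emph{uniform} over $x\in J^-$ near the critical locus: one must check that the local normal forms at critical points, the normality of $(g_x)$, and the continuity of the affine structure (Theorem~\ref{thm:ghys}, Lemma~\ref{lem:continuity_affine}) fit together so that the length bound does not degenerate as $x$ varies or as a ray grazes a critical point. A secondary point is to confirm the uniform leafwise expansion on the compact portion of $J^-$ swept out by ascending rays rather than on $J$ alone; this is legitimate because such rays avoid neighbourhoods of the attracting points, and the forward orbit of any point with $G^+>0$ leaves the bidisk in a uniformly expanding manner.
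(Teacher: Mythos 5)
Your proposal is correct and follows essentially the same route as the paper: a uniform estimate on a fundamental annulus of potential levels, obtained by compactness of $J^-\cap\set{r_0\le G^+\le d\,r_0}$ (uniform leaf geometry, no unstable leaf contained in an equipotential, uniform order of the leafwise critical points of $G^+$), then transfer to all levels via $G^+\circ f = d\,G^+$ and the unstable expansion, summing a geometric series and arriving at the same exponent $\alpha=\log u/\log d$. The only divergence is internal to the fundamental-annulus step, where the paper argues softly (any ray segment of ambient length $\ell_0$ raises the potential by at least $\delta_0$, then concatenate) while you carry out an explicit gradient/normal-form computation at the harmonic saddles; this is a legitimate, more quantitative way to prove the same lemma, with the minor caveat that the proposition is stated for the ambient metric, which is harmless because on the annulus chosen inside local unstable manifolds the leafwise affine and ambient metrics are uniformly comparable.
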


\begin{rmk}\label{rmk:stable_rays}
Notice that no dissipativity is assumed here so the result holds along stable leaves as well. 
\end{rmk}

\begin{proof}
Start with $r_1 = 1$ and $r_2 = d$. In  $J^- \cap \set{1\leq G^+ \leq d}$ the leaves of $\cW^u$ have uniform geometry and 
   no leaf of $\cW^u$ is contained in an equipotential hypersurface  of the form 
   $\set{G^+ = C}$, in particular unstable critical points have uniform order. Thus
    by compactness and continuity of $G^+$, we infer the existence of uniform $\delta_0$ and $\ell_0$ such that 
for every $x\in J^- \cap \set{1\leq G^+ \leq d}$, any external ray through $x$ of length $\ell_0$ reaches 
$\set{G^+ = r}$ with $r\geq G^+(x)+\delta_0$. 
  By concatenating such pieces of rays, we 
deduce the conclusion of the proposition for $r_1 =1 $ and $r_2 = d$ (and $\ell(1, d)\leq (d-1) \ell_0/\delta_0$). Pulling back 
finitely many times and concatenating again, we    get a similar
 conclusion for $ \set{r_0\leq G^+\leq  d}$ for any fixed $r_0$. 
 
  Let us now fix $r_0$ such that $\set{0<G\leq dr_0}\cap J^-$ is contained in $W^u_\loc(J)$. 
  Any piece of external ray between the levels $\set{G^+ = r_0/d^{n}}$ and $\set{G^+ = r_0/d^{n-1}}$ is the pull-back of 
  a piece of external ray in $ \set{ r_0\leq G^+\leq  dr_0}$. Thus by concatenation it follows that any external ray starting from 
  $\set{G^+ = r_0/d^{n}}$  reaches  $\set{G^+ = r_0}$ along a path of 
   length bounded by  $\displaystyle \leq C \ell(r_0, dr_0) \sum_{k=1}^n u^{-k}$, where $u$ is the expansion constant introduced in \S\ref{subs:hyperbolicity}.  
   This proves the existence of the functions $\ell(r_1, r_2)$ and $\overline\ell(r_2)$
   
The same ideas imply immediately that $\ell(r_1, r_2)\to 0$ when $r_1\to r_2$. For the last statement 
 simply note that for every $r_1<r_2\leq r_0$, 
  $$\ell(r_1<r_2) \leq C   \sum_{k=k_0}^\infty u^{-k} = O(u^{-k_0})$$ where $k_0$ is the greatest integer such that $r_0d^{-k_0}\geq r_2$, therefore $\ell(r_1<r_2)  = O(r_2^\alpha)$, with $\alpha = \frac{\log u}{\log d}$. 
 \end{proof}

It is easy to deduce from these ideas that all (descending) 
external rays land. However, since there is no well defined external map, the 
characterization of the set of landing points does not seem to follow directly from this landing property.

%The following corollary is an immediate consequence of 
%Proposition \ref{prop:length}, together with the  Hölder continuity of the Green function 
%(see \cite{fornaess-sibony}) and 
%the fact that $K^+ = \set{G^+ =0}$. 

 \begin{cor}[John-Hölder property]\label{cor:fast_escaping}  
There exists a constant $\alpha>0$ such that 
 for any sufficiently small  $\eta>0$, for any $x\in J^-\setminus K^+$ sufficiently close to $K^+$,
  there exists a path of length at most $O(\eta^\alpha) $  in $W^u(x)\setminus K^+$ joining $x$ to a point  
  $\eta$-far  from $K^+$. 
\end{cor}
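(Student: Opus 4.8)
The plan is to read the corollary off Proposition~\ref{prop:length} after re-expressing its conclusion, which is phrased via the equipotentials $\set{G^+=r}$, in terms of the Euclidean distance to $K^+=\set{G^+=0}$. The one external ingredient is the classical fact that $G^+$ is Hölder continuous on $\cd$: there are $C_0>0$ and $\beta\in(0,1]$ with $\abs{G^+(y)-G^+(z)}\le C_0\,d(y,z)^\beta$ for all $y,z$. Applying this with $z$ a point of $K^+$ almost realising $d(y,K^+)$, and using $G^+\rest{K^+}=0$, gives
\[
G^+(y)\ \le\ C_0\, d(y,K^+)^\beta ,\qquad\text{equivalently}\qquad G^+(y)\ge t\ \Longrightarrow\ d(y,K^+)\ge (t/C_0)^{1/\beta}.
\]
Hölder continuity also makes $G^+$ uniformly continuous, so a point of $J^-$ lying close enough to $K^+$ has $G^+$ as small as we wish; in particular it lies in the region $\set{0<G^+\le r_0}\cap J^-\subset W^u_\loc(J)$ of the proof of Proposition~\ref{prop:length}, where ascending external rays are available.

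Concretely, let $\alpha_1>0$ be the exponent produced by Proposition~\ref{prop:length}, so that $\overline\ell(r)\le C_1 r^{\alpha_1}$ for all small $r>0$, and set $\alpha:=\alpha_1\beta>0$. Given a small $\eta>0$, put $r:=C_0\,\eta^{\beta}$, and take $x\in J^-\setminus K^+$ so close to $K^+$ that $G^+(x)<r$. Applying Proposition~\ref{prop:length} with $r_1=G^+(x)<r_2=r$, an ascending external ray through $x$ reaches $\set{G^+=r}$ along a path $\gamma\subset W^u(x)$ with $\length(\gamma)\le\overline\ell(r)\le C_1 r^{\alpha_1}=C_1 C_0^{\alpha_1}\,\eta^{\alpha_1\beta}=O(\eta^{\alpha})$. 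Since $G^+$ is positive and strictly increasing along $\gamma$, we get $\gamma\subset\set{G^+>0}=\cd\setminus K^+$, so $\gamma$ is a path in $W^u(x)\setminus K^+$ joining $x$ to its endpoint $y$; and $G^+(y)=r=C_0\eta^{\beta}$ forces $d(y,K^+)\ge(r/C_0)^{1/\beta}=\eta$. This is exactly the assertion. (If one prefers to measure the distance to $K^+$ leafwise along $W^u(x)$, nothing changes: in $\set{0<G^+\le r_0}$ the unstable leaves have uniformly bounded geometry, so leafwise and ambient distances are comparable there.)

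I do not expect a real obstacle here: all the dynamics already sits in Proposition~\ref{prop:length}. The one spot needing a little care is the passage between $G^+$-levels and $d(\cdot,K^+)$, which is where Hölder continuity of $G^+$ is used and where the corollary's exponent $\alpha$ picks up the factor $\beta$ relative to the exponent $\alpha_1$ of Proposition~\ref{prop:length}. A minor bookkeeping point is the hypothesis $r_1<r_2$ in Proposition~\ref{prop:length}: this is exactly why the statement requires $x$ to be ``sufficiently close to $K^+$'', a closeness that is allowed to depend on $\eta$.
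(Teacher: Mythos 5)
Your proof is correct and follows essentially the same route as the paper: apply Proposition~\ref{prop:length} to ascend along an external ray to the level $\set{G^+=r}$ with length $O(r^{\alpha_1})$, then use the H\"older continuity of $G^+$ (which vanishes exactly on $K^+$) to convert the level $r$ into a lower bound on the distance to $K^+$, choosing $r\asymp\eta^\beta$. Your bookkeeping of the exponents ($\alpha=\alpha_1\beta$) and of the ``sufficiently close'' hypothesis matches the paper's argument.
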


\begin{proof}  
By the previous proposition, there exists a path of length $O(r^{\alpha_1})$ joining $x$ to a point 
$y$ such that $G^+(y) = r$. Now the Green function is Hölder continuous (see \cite{fornaess-sibony}) and 
 that $K^+ = \set{G^+ =0}$, so $d(x, K^+)\geq C r^{\alpha_2}$. The result follows. 
\end{proof}

This  John-Hölder property  has deep consequences for the topology of $K^+\cap W^u (x)$, 
 which will play an important role in the paper.  Intuitively it means that 
 there cannot exist long ``channels'' between local components of $K^+$.

This property is strongly reminiscent of the so-called John   condition for plane domains, 
which have been much studied in one-dimensional dynamics, in relation with non-uniform hyperbolicity 
(see e.g. \cite{CJY, graczyk-smirnov}). 
In the Hénon context, it was shown in \cite{bs7} that for unstably connected hyperbolic maps, 
the components
of $W^u(x)\setminus K^+$ satisfy the John property. It is   very 
  likely that using the continuity of affine structure along unstable leaves, 
  their arguments  can be adapted to the disconnected case as well: this 
   would upgrade Corollary \ref{cor:fast_escaping} to the  actual  John condition.
 One advantage of this weaker property is that it makes no reference to the affine structure of the leaves, 
 so it is more flexible and may be adapted to semi-local situations (e.g. Hénon-like maps).

 \subsection{Accesses and landing} 
 %The following statement is an analogue of the classical Lindelöf Theorem in our setting (see e.g. \cite[Chap. 2]{pommerenke}):
% If $K$ is a  connected  
% compact set in the plane, by definition an access to $x$ is a curve in $K^\complement$ converging to $x$. 
% By Lindelöf's theorem, if there is an access to $x$ 
% 
% the Caratheodory theorem and the theory of prime ends imply that for any $x\in K$ and any local access to $x$, there is 
% an external ray (that is, a Poincaré geodesic) converging to $x$ inside this access (this is Lindelöf's Theorem, see e.g. \cite[Chap. 2]{pommerenke}). Here we prove a similar statement  in our setting. 
% 
\begin{thm}\label{thm:access}
Let $f$ be a hyperbolic polynomial automorphism of $\cd$ with dynamical degree $d>1$. 
\begin{enumerate}
\item For every $x\in J$,  $D^u(x, 1)\setminus K^+$ admits finitely many connected components, 
and at least one of these components contains $x$ in its closure. 
\item For any component $\Omega$ of $D^u(x, 1)\setminus K^+$ such that $\overline \om \ni x$ there is an external ray   landing at   $x$ through $\om$. 
\end{enumerate}
\end{thm}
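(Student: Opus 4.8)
\textbf{Proof plan for Theorem~\ref{thm:access}.}

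The plan is to exploit Corollary~\ref{cor:fast_escaping} (the John--H\"older property) together with the leafwise affine structure to control the geometry of $D^u(x,1)\setminus K^+$ near $x$. For part (1), I would first establish the finiteness of components using a compactness-and-contradiction argument combined with the John--H\"older estimate: if $D^u(x,1)\setminus K^+$ had infinitely many components, then since $D^u(x,1)$ has finite area, there would be components of arbitrarily small diameter accumulating somewhere. But every such component must meet the equipotential $\{G^+=r_0\}$ for a fixed small $r_0$ (by following ascending external rays as in Proposition~\ref{prop:length}, any point of $W^u(x)\setminus K^+$ sufficiently close to $K^+$ can be joined within its component to a point where $G^+$ is bounded below; and the total "flux" of $G^+$ through $\{G^+=r_0\}\cap D^u(x,1)$ is finite by the local uniform geometry of $\cW^u$ and continuity of $G^+$). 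Pulling back the bounded region $\{r_0\le G^+\le dr_0\}$ and using expansion shows that only finitely many pull-back components can reach into $D^u(x,1)$; I would make this precise by a counting argument on the number of components of $W^u(x)\cap f^{-n}(\bb)$, which is controlled by the degree $d$. The statement that at least one component $\Omega$ has $x\in\overline\Omega$ is then immediate: $x\in J^+ = \partial_{\mathrm i} K^+$ inside $W^u(x)$, so $x$ is a boundary point of $W^u(x)\setminus K^+$, hence lies in the closure of one of its (finitely many, locally) components, and shrinking to $D^u(x,1)$ preserves this.

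For part (2), fix such a component $\Omega$ with $x\in\overline\Omega$. The idea is to build the landing ray by a pull-back/concatenation scheme anchored at the scale where geometry is uniform. Choose $r_0$ as in the proof of Proposition~\ref{prop:length} so that $\{0<G^+\le dr_0\}\cap J^-\subset W^u_{\mathrm{loc}}(J)$, and consider the descending external rays in the compact region $\{r_0\le G^+\le dr_0\}$, which have uniformly bounded length and uniform geometry. Starting from a point $y_0\in\Omega$ with $G^+(y_0)$ small, I would follow a descending external ray; the content of Proposition~\ref{prop:length} (and Remark~\ref{rmk:stable_rays}-style reasoning) is that such rays have finite length and therefore land. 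The key point is to show the landing point can be taken to be $x$ itself, and that the ray stays in $\Omega$. For the latter: external rays do not cross $K^+$ (they live in $W^u(x)\setminus\{G^+=0\}$ outside critical points, and $K^+\cap W^u(x)=\{G^+=0\}$), so a ray started in $\Omega$ remains in $\Omega$ until it lands on $\partial\Omega\subset K^+$. For the former: I would use the John--H\"older estimate to show that $\Omega$ is "thin" near its boundary points in a quantitative way, so that the set of landing points of rays through $\Omega$, which is a nonempty closed subset of $\partial_{\mathrm i}(W^u(x)\setminus K^+)\cap\overline\Omega$, must contain $x$ once we know $x\in\overline\Omega$ --- more carefully, one picks a sequence $y_k\to x$ in $\Omega$, takes descending rays $\gamma_k$ through $y_k$ (extended downward through critical points by the convention in \S\ref{subs:john}), bounds their lengths by $O(G^+(y_k)^\alpha)\to 0$ using Proposition~\ref{prop:length} and Corollary~\ref{cor:fast_escaping}, and concatenates an ascending ray from $x$'s scale with these to produce, in the limit, a ray landing at $x$.

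The main obstacle I anticipate is the absence of a well-defined external map and the attendant difficulty with critical points of $G^+\rest{W^u(x)}$: an external ray may run into a critical point, and the continuation convention is not canonical, so one cannot simply "take the limit of the $\gamma_k$" and expect a genuine gradient ray. I would handle this by working with the length bound as the master estimate --- Proposition~\ref{prop:length} gives uniform control $O(r_2^\alpha)$ independent of how critical points are navigated --- so that the family $\{\gamma_k\}$ is equicontinuous after arclength reparametrization (all contained in $D^u(x,1+\epsilon)$, uniformly bounded length), and a subsequential Hausdorff/uniform limit $\gamma$ exists. One then checks $\gamma$ is a legitimate external ray: away from the (leafwise locally finite) critical set it is a gradient line of $G^+$ by stability of gradient flow under $C^1$-limits where the field is nonvanishing, $G^+$ is strictly monotone along it since each $\gamma_k$ has that property and $G^+$ is continuous, and it connects $x$ (as $G^+(y_k)\to 0$, $d(x,y_k)\to 0$) to a point at a fixed positive potential. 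The delicate bookkeeping is ensuring $\gamma$ does not degenerate to a point or backtrack at a critical level; this is controlled by the uniform lower bound $\delta_0$ on potential gain per unit length from the proof of Proposition~\ref{prop:length}, which survives the limit. Finally, that $\gamma\subset\overline\Omega$ follows because each $\gamma_k\subset\Omega$ and $\Omega$ is (locally) one of finitely many components with $x$ in its closure.
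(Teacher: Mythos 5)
For part (1) your main ingredient is the right one (any component meeting an inner disk climbs to a definite potential level $\eta_1$ along a short ascending ray, by Proposition~\ref{prop:length}), but none of your proposed ways of concluding finiteness actually closes: ``finite area implies components of arbitrarily small diameter'' is not a valid inference (disjoint long thin components have small area but unit diameter); the finiteness of the flux of $G^+$ through $\{G^+=r_0\}$ gives no lower bound on the contribution of an individual component; and the number of components of $W^u(x)\cap f^{-n}(\bb)$ met by $D^u(x,1)$ is not controlled by the degree in any evident way ($G^+\rest{W^u(x)}$ is just a positive harmonic function off $K^+$, and its superlevel sets could a priori have many components). The missing step, which is how the paper concludes, is one line: once a component $\om$ contains a point of $D^u(x,1/2)$ with $G^+\geq \eta_1$, the H\"older continuity of $G^+$ (with $K^+=\{G^+=0\}$) forces that point to lie at a definite distance from $K^+$, so $\om$ contains a round disk of radius bounded below in terms of $\eta_1$; disjointness of the components plus the area of $D^u(x,1)$ then gives the finiteness, and the statement that $x\in\overline\om$ for some $\om$ follows as you say.

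For part (2) your route is genuinely different from the paper's. The paper never takes limits of rays: it constructs, for each generation $n$, rays in $D^u(f^n(x),1)$ between the levels $\e$ and $d\e$, pulls them back by $f^n$, and records the nested closed sets $E_n\subset\{G^+=d\e\}$ of endpoints that can be continued down to level $\e d^{-n}$ inside $\om$; a point of $\bigcap_n E_n\cap\om$ lies on an infinite descending ray whose $n$-th piece is an $f^{-n}$-pullback of a bounded arc anchored near $f^n(x)$, which is exactly why it lands at $x$. Your compactness argument can be made to work, but the two justifications you lean on are its weak points: strict monotonicity of $G^+$ is not inherited by uniform limits, and the ``potential gain $\delta_0$ per unit length'' is uniform only in a fixed fundamental annulus (at depth $n$ it scales like $(u/d)^n$), so it cannot be quoted as a single constant surviving the limit. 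The clean repair is to drop the descending rays $\gamma_k$ (they have length $O(G^+(y_k)^\alpha)\to 0$ and collapse to the point $x$, contributing nothing) and to parametrize the ascending rays $A_k$ from $y_k$ by the potential $t$: Proposition~\ref{prop:length} then gives both equicontinuity (via $\ell(r_1,r_2)\to 0$ as $r_1\to r_2$) and the uniform tail bound $d(A_k(t),y_k)\leq \overline\ell(t)=O(t^\alpha)$, and it is this tail bound, not the length of the $\gamma_k$, that pins the landing point of the limit at $x$. With the potential parametrization the limit is automatically injectively parametrized by $G^+$ and is a gradient arc off the leafwise discrete critical set, hence an external ray in the paper's permissive sense; you should also arrange (by taking the top level small) that all the $A_k$ stay in $D^u(x,1)$, so that the limit lies in the given component $\om$ and not merely in $W^u(x)\setminus K^+$.
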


For the proof, it is convenient to work in the
affine coordinates given by the unstable parameterizations. We work in the disks $D^u(x,1)$ and measure 
path length    
      relative to the normalized affine metric, which  is equivalent to the ambient one.

\begin{proof}
The first observation is that
 $D^u(x,1)\setminus K^+$ contains  $x$ in its closure: otherwise $x$ would 
lie in the leafwise  interior of $K^+$, thus  contradicting Lemma \ref{lem:leafwise_interior}. Furthermore, 
by the maximum principle, if 
$y\in D^u(x,1)\setminus K^+$ is arbitrary, the component of $y$ in $D^u(x,1)\setminus K^+$    
reaches the boundary of $D^u(x,1)$. 

We claim that  there exists $\eta_1>0$ such that for any $x\in J$ and 
  any component $\om$ of $D^u(x, 1)\setminus K^+$ such that 
$\om\cap D^u\left(x, 1/4\right) \neq \emptyset$, then: $$\sup G^+ \rest {D^u\left(x,1/2\right)\cap \om}\geq \eta_1.$$ This follows directly 
from Proposition \ref{prop:length}: indeed there exists $\eta_1>0$ such that any point of $J^-\setminus K^+$ reaches 
$\set{G^+=\eta_1}$ 
along a path of length $1/4$.
By the Hölder continuity of $G^+$, we infer that any such component $\om$ contains a disk of radius $C\eta_1^\alpha$, so there are 
finitely many of them. 

In particular if $(x_n)$ is a sequence in $D^u(x,1)\setminus K^+$ converging to $x$, 
infinitely many of them must belong to the 
same component $\om$  of $D^u(x,1)\setminus K^+$, which shows that $\overline \om$ contains $x$. This proves  assertion (1) of the theorem.

Fix now a component $\om$ of $D^u(x, 1)\setminus K^+$ such that $\overline \om \ni x$. Let $\eta_1$  
be  as above and fix $\e$ such that $\e<\eta_1/d$  and $\ell(\e, d\e)<\min\lrpar{ 1/2, (u-1)/2}$ where $\ell(\cdot)$ is as in Proposition \ref{prop:length} and the constant $u$ was defined in \S \ref{subs:hyperbolicity}. We do the following construction: for every point $y\in \set{G^+ = \e}\cap \overline 
 D ^u  \lrpar{x, 1/2} $, 
 we consider all ascending external rays emanating from $y$ until they reach $\set{G^+ = d\e}$. The lengths of 
 the corresponding rays is not larger than $ \ell(\e, d\e)$. These are the rays of $0^\mathrm{th}$ generation and we denote by $E_0$ 
 the set of their endpoints (\footnote{Recall that since we do not prescribe the behavior of external rays at critical points of $G^+$ there is no reason that external rays fill up the whole unstable lamination, so $E_0$ could be smaller than $\set{G^+ = d\e}$}), which by the assumption on $\ell(\e, d\e)$ is contained in    $ \set{G^+ = d\e}\cap D^u(x,1)$. We note that 
 $E_0$ is a closed set because it is the ending point set of a compact family of external rays. Since 
 $\e<\eta_1/d$, $E_0$ has non-empty intersection with $\om$. 
  
Performing the same construction in $D^u(f(x), 1)$ we obtain a set of rays of 0th generation  in that disk, 
which connect  $\set{G^+ = \e}\cap \overline 
 D ^u  \lrpar{f(x), 1/2}$ to $\set{G^+=d\e}$, and their endpoints lie in 
 $$\set{G^+ = d\e}\cap \overline D^u \lrpar{f(x), \frac12 + \ell(\e, d\e)}.$$ 
   The pull-backs of these rays by $f$ have their endpoints 
 in  $$\set{G^+ =  \e}\cap \overline D^u \lrpar{x, \unsur{u}\lrpar{\frac12 + \ell(\e, d\e)}}\subset 
 \set{G^+ =  \e}\cap  D^u \lrpar{x, \frac12},$$ by the assumption on $\ell(\e, d\e)$. These are the 
  rays of 1st generation in $D^u(x,1)$.  
   We define $E_1\subset E_0$ to be the closed set of points for which we can concatenate a ray of 0th generation with a ray of 1st generation to descend all the way to $\set{G^+ = \e/d}$. 
    Notice that $f(\om)\cap D^u(f(x), 1)$ is not necessarily connected, so it is 
  a union of components 
 of $D^u(f(x), 1)\setminus K^+$, and since $\overline{f(\om)}\ni f(x)$, at least one
  of these components reaches $D^u(f(x), 1/2)$, so it contains rays of 0th generation.
This shows that  $E_1$ has non-empty intersection with $\om$. 
   
   Continuing inductively this construction, we obtain a decreasing sequence $(E_n)$
   of closed subsets  in $\set{G^+ = d\e}\cap D^u(x,1)$, each of which intersecting $\om$. If $e\in \bigcap_n E_n \cap \om$, 
   then there is a  ray through $e$ (hence  in $\om$) converging to $K^+$, whose part in $\set{\e d^{-n-1} \leq G^+ \leq \e d^{-n}}$ 
   is the pull-back under $f^n$ of a piece of external ray in $D^u(f^n(x), 1)$. Therefore this ray lands at $x$, 
   and the proof  of assertion (2) is 
   complete. 
\end{proof}

\begin{rmk}
The existence of a convergent external ray along any access  to a saddle periodic point  can be obtained 
  exactly as in the 1-dimensional case  (see \cite{eremenko-levin}), without assuming
   uniform hyperbolicity.  
In that case the   Denjoy-Carleman-Ahlfors Theorem is used instead of the John-Hölder property to guarantee
 the finiteness of the number of local 
components. 
\end{rmk}

\subsection{Topology of $K^+\cap W^u$}\label{subs:topology}
In this section we review the  consequences of  Corollary \ref{cor:fast_escaping} 
  for  the topology of unstable components of 
$K^+$. 

\begin{thm}\label{thm:topology}
Let $f$ be a hyperbolic Hénon map. Then for every $x\in J$:
\begin{enumerate}[(i)]
\item every component of $K^+\cap W^u(x)$ (resp. $J^+\cap W^u(x)$) 
is locally connected;
\item for any smoothly bounded domain   $\om\subset W^u(x)$, for every $\delta>0$,
$K^+\cap \om$ (resp. $J^+\cap \om$) 
admits at most finitely many components of diameter larger than $\delta$.  
\end{enumerate}
\end{thm}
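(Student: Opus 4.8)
The plan is to derive both statements from the John--Hölder property of Corollary~\ref{cor:fast_escaping}, working entirely in a fixed unstable leaf $W^u(x)$ equipped with its affine coordinate $\psi^u_x$, so that the normalized Euclidean distance $d^u_x$ and the ambient distance are comparable on the relevant compact pieces. The central heuristic is the one stated after the corollary: the John--Hölder condition forbids long thin ``channels'' of $W^u(x)\setminus K^+$ separating two parts of a single component of $K^+\cap W^u(x)$, and this is precisely the obstruction to local connectivity and to the accumulation of large components.

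For assertion~(ii), fix $\delta>0$ and suppose for contradiction that $K^+\cap\om$ has infinitely many components $C_k$ of diameter $\geq\delta$. Pick $x_k\in C_k$; after passing to a subsequence, $x_k\to x_\infty\in\overline\om\cap K^+$, and since the $C_k$ are distinct components we can also find points $y_k\in W^u(x)\setminus K^+$ lying ``between'' accumulating components, with $d^u_x(y_k,K^+)\to 0$ but such that the component of $y_k$ in $W^u(x)\setminus K^+$ is pinched: any path from $y_k$ to a point $\eta_0$-far from $K^+$ (for a fixed small $\eta_0$, e.g.\ $\eta_0=\delta/4$) must have length bounded below by a constant independent of $k$, because such a path would have to exit a channel whose ``width'' shrinks while its ``length'' stays $\gtrsim\delta$. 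This contradicts Corollary~\ref{cor:fast_escaping}, which furnishes an escape path of length $O\big(d^u_x(y_k,K^+)^\alpha\big)\to 0$. The same argument applies verbatim to $J^+=\fr K^+$ in place of $K^+$, since $K^+\cap\om$ and $J^+\cap\om$ have the same components of positive diameter by the filling-in description of $K^+$ from the proof of Lemma~\ref{lem:component_K+J+} (leafwise, $K^u$-components are obtained from $J^u$-components by filling holes). The delicate point here is making rigorous the notion of a ``pinched channel'' and extracting the quantitative lower bound on escape length that contradicts the corollary; this is where one must invoke a separation/length estimate (a standard consequence of the John-type condition, as in \cite{CJY}), and I expect this to be the main technical obstacle.

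For assertion~(i), local connectivity of each component $C$ of $K^+\cap W^u(x)$ follows from~(ii) by a standard argument: $C$ is a compact, connected, locally connected space iff it has no ``indecomposable-like'' obstruction, and more concretely one shows that for every $\e>0$ there is $\rho>0$ such that any two points of $C$ at distance $<\rho$ lie in a connected subset of $C$ of diameter $<\e$. Indeed, fix $z\in C$ and consider $D^u(z,\rho)\cap K^+$ for small $\rho$; by~(ii) (applied to $\om=D^u(z,\e)$) only finitely many components of $K^+\cap D^u(z,\e)$ have diameter $\geq\e/2$, and by the John--Hölder property the complementary region $D^u(z,\rho)\setminus K^+$ cannot insert a deep channel separating $z$ from the rest of its local component when $\rho$ is small; hence the component of $z$ in $K^+\cap D^u(z,\e)$ is a neighborhood of $z$ in $C$, giving a basis of connected neighborhoods. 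Again the passage to $J^+$ is immediate since the boundary in $W^u(x)$ of a locally connected full continuum is locally connected, or directly: $J^u$-components are $\fr_\mathrm{i}$ of $K^u$-components, and the boundary of a locally connected plane continuum is locally connected.

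Let me record the logical skeleton: first establish~(ii) for $K^+$ via the channel/length contradiction against Corollary~\ref{cor:fast_escaping}; deduce~(ii) for $J^+$ by the hole-filling correspondence; then derive~(i) for $K^+$ from~(ii) by producing connected neighborhood bases, using once more that short escape paths prevent local separation; finally obtain~(i) for $J^+$ from~(i) for $K^+$. The only genuinely new input beyond Corollary~\ref{cor:fast_escaping} and the leafwise topology of $K^+$ (Lemmas~\ref{lem:leafwise_interior},~\ref{lem:leafwise bdd},~\ref{lem:component_K+J+}) is the quantitative ``no pinched channels'' lemma, which should be stated and proved separately as it is the engine of both parts.
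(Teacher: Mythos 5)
Your plan correctly identifies Corollary~\ref{cor:fast_escaping} as the engine, but the central step of your part~(ii) has a genuine gap, and moreover it misreads what the corollary provides. The corollary gives, for each small $\eta$, an escape path of length $O(\eta^\alpha)$ reaching a point $\eta$-far from $K^+$: the length is controlled by the \emph{target} distance $\eta$, not by $d^u_x(y_k,K^+)$, so there is no ``escape path of length $O(d^u_x(y_k,K^+)^\alpha)\to 0$ to a point $\delta/4$-far from $K^+$'' to contradict. To salvage your scheme you would need the unproved ``pinched channel'' lemma, i.e.\ a \emph{lower} bound on the length of any escape path from suitable points $y_k$; this is not a standard consequence of a John-type condition (which is an upper bound), and it is not clear it even holds in the needed generality: the region between two Hausdorff-close large components may be largely occupied by \emph{other} components of $K^+$, and making ``between/channel'' precise is exactly the difficulty you defer. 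The paper's Lemma~\ref{lem:diameter} avoids any lower bound: infinitely many $\delta$-large components crossing a coordinate strip produce infinitely many disjoint complementary components $U_j$ crossing the strip; on the center line each $U_j$ contains a point off $K^+$ (because the bounding sets are distinct components), and the short escape path of length $\ell(\eta)=\delta/20$ cannot leave the strip nor cross $K^+$, so it stays in $U_j$ and forces $U_j$ to contain a disk of radius $\eta$. The contradiction is then an area count (infinitely many disjoint disks of fixed radius in a bounded domain), with the corollary used only as an upper bound. Also, your reduction of (ii) for $J^+$ to (ii) for $K^+$ (``same components of positive diameter by hole-filling'') is not valid: fullness of $K^u$-components is a leafwise-global fact, and after cutting by $\om$ one component of $K^+\cap\om$ can contain many large components of $J^+\cap\om$; the paper treats $\fr K$ by a separate variant of the strip argument.

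For part~(i), your deduction from~(ii) again rests on an unjustified ``no deep channel'' assertion; the statement you actually need (the component of $z$ in $K^+\cap D^u(z,\e)$ is a neighborhood of $z$ in $C$) would follow from~(ii) only after a boundary-bumping argument showing that foreign local components accumulating at $z$ must reach $\fr D^u(z,\e)$ and hence have diameter bounded below, which you do not supply. Your passage from $K^+$ to $J^+$ via ``the boundary of a locally connected full continuum is locally connected'' needs compactness and fullness (Torhorst/Carath\'eodory) and is problematic for the leafwise unbounded components, and it is not established at this stage that every component of $J^+\cap W^u(x)$ is the full boundary of a $K^+$-component. The paper argues in the opposite direction: it proves~(i) for $J^+$ first, reducing by stable holonomy to a compact unstable transversal $\Delta^u$ (this uses unstable disconnectedness and the density of stable manifolds), where the components $A$ of $K^+\cap\Delta^u$ are full continua, invokes Carath\'eodory to pass between $A$ and $\fr A$, and rules out a failure of local connectivity by a convergence-continuum version of the same strip/fast-escaping argument; the $K^+$ statement is then deduced from the $J^+$ one by general topology. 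So both halves of your proposal need the paper's strip mechanism (or an equivalent) to be inserted at the points you currently cover with heuristic channel language.
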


As before this follows from    \cite{bs7}
when $f$ is unstably connected (see Theorems 3.5 and 5.6 there), so we focus on the unstably disconnected case. 
In this case it is known that 
$K^+\cap W^u(x)$ has uncountably many point components (see \cite[Thm 3.1]{bs7}). Using (\emph{ii}) we can be more precise:

\begin{cor}\label{cor:points}
Let $f$ be   hyperbolic and unstably  disconnected. Then for every $x\in J$, all but at most countably many components of 
$K^+\cap W^u(x)$ are points.
\end{cor}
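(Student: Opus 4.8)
The plan is to deduce the countability of non-trivial components directly from the finiteness statement in part (ii) of Theorem \ref{thm:topology}. First I would fix $x\in J$ and work in the unstable leaf $W^u(x)\simeq\C$, equipped with the normalized affine metric $d^u_x$, so that diameters $\diam_x$ make sense. For each integer $k\geq 1$, consider the exhaustion of $W^u(x)$ by the round disks $D^u(x,k)=\psi^u_x(D(0,k))$, which are smoothly bounded domains; apply Theorem \ref{thm:topology}(ii) with $\om=D^u(x,k)$ and $\delta=1/k$ to conclude that $K^+\cap D^u(x,k)$ has only finitely many components of $\diam_x$ at least $1/k$. Taking the union over all $k$, there are only countably many pairs $(k,C)$ where $C$ is such a component, hence only countably many subsets of $W^u(x)$ that arise as a component of $K^+\cap D^u(x,k)$ of diameter $\geq 1/k$ for some $k$.

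The slightly delicate point — and the step I expect to require the most care — is passing from components of the \emph{restricted} sets $K^+\cap D^u(x,k)$ to components of the \emph{global} set $K^+\cap W^u(x)$. So the key step is: if $C$ is a component of $K^+\cap W^u(x)$ that is \emph{not} a point, then $C$ already contains (in fact coincides with, for large $k$) a component of some $K^+\cap D^u(x,k)$ of diameter $\geq 1/k$. Indeed, $K^u(x)$-type components here are leafwise bounded or unbounded; in either case, since $C$ is not reduced to a point it has positive diameter $\diam_x C = 2\rho>0$ (allowing $\rho=\infty$). Pick $k$ large enough that $1/k<\min(\rho,1)$ and that $C\cap D^u(x,k)$ has $\diam_x$ at least $1/k$ (possible since $C$ accumulates on itself; if $C$ is unbounded this is automatic, and if $C$ is a non-degenerate continuum of finite diameter one takes $k$ with $1/k<\tfrac12\diam_x C$, which is finite). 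Let $\widetilde C$ be the component of $C\cap D^u(x,k)$ of largest diameter containing a fixed reference point; it is a component of $K^+\cap D^u(x,k)$ (since $C$ is a component of $K^+\cap W^u(x)$, any component of $C\cap D^u(x,k)$ is also a component of $K^+\cap D^u(x,k)$) of diameter $\geq 1/k$, hence belongs to the countable family above.

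Finally I would argue that the map $C\mapsto(k,\widetilde C)$ just described is \emph{injective} on the set of non-trivial components of $K^+\cap W^u(x)$: distinct components $C\neq C'$ are disjoint, so their selected subsets $\widetilde C\subset C$ and $\widetilde C'\subset C'$ are disjoint and in particular distinct. Therefore the set of non-trivial components injects into a countable set and is itself countable; all remaining components of $K^+\cap W^u(x)$ are points, which is the assertion. The statement for $J^+\cap W^u(x)$ follows in the same way using part (ii) of Theorem \ref{thm:topology} for $J^+$, or alternatively by observing that a non-trivial component of $K^+\cap W^u(x)$ has a non-trivial boundary component by the leafwise analogue of Lemma \ref{lem:component_K+J+}; either route reduces it to the $K^+$ case.
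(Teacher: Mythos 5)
Your proof is correct and is essentially the deduction the paper intends (the corollary is stated right after Theorem~\ref{thm:topology} as a direct consequence of part (ii), with no further argument given): exhaust $W^u(x)$ by the smoothly bounded disks $D^u(x,k)$, apply the finiteness statement with $\delta=1/k$, and inject the non-trivial components into the resulting countable family. The only step you gloss over is the claim that in the leafwise-unbounded case $C\cap D^u(x,k)$ automatically has a component of definite diameter; this needs a short boundary-bumping argument (the component of a point deep inside $D^u(x,k)$ must have closure reaching $\fr D^u(x,k)$), which is standard and does not affect the validity of the proof.
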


Let us stress that the conclusions of the theorem  follow solely from Corollary \ref{cor:fast_escaping} together with 
  some elementary topological considerations. 
   Remark also that the assumption that $\om$ has smooth boundary   in (\textit{ii}) is necessary:
indeed otherwise  it could cut a component of $K^+$ in infinitely many parts of large diameter (think e.g. 
 of the closed unit square cut out by some comb-like domain).

Part or all of Theorem~\ref{thm:topology}
 is  presumably  known to specialists, however for completeness we provide some details.
Let us first define a   notion of ``fast escaping   from a compact set''.  

\begin{defi}
Let $\om$ be a smoothly bounded domain in $\C$ and 
$K$ be a closed subset in $\om\subset \C$. We say that $K$ satisfies the 
\emph{fast escaping property} in $\Omega$ 
if 
there exists an increasing continuous 
 function $\ell$ with $\ell(0) = 0$ such that 
 for any sufficiently small
 $\eta>0$ and any $x\notin K$, there exists a path $\gamma:[0, 1]\to \om \setminus K$ of length  at most $\ell(\eta)$ such that 
 $\gamma(0) = x$ and $d(\gamma(1), K)\geq \eta$.  
\end{defi}

Corollary \ref{cor:fast_escaping} asserts that if $f$ is hyperbolic, then for every $x\in J$, and 
 any leafwise bounded domain   $\om\subset W^u(x)$, 
$K^+\cap W^u(x)$ 
satisfies the fast escaping  property in $\om$ with $\ell(\eta) =  c\eta^\alpha$. Note that both properties (\textit{i}) and (\textit{ii}) 
in Theorem \ref{thm:topology} are local in $W^u(x)$ so the choice of ambient or leafwise topology or metric is harmless. 

The following lemma takes care of  item (\textit{ii}) of the theorem. 

\begin{lem}\label{lem:diameter}
Let $K$ be a closed subset of a smoothly bounded domain $\om\subset \C$, 
satisfying the fast escaping property. 
Then for every $\delta>0$, there are at most finitely many components of $K$ 
(resp. of $\Int(K)$, of  $\fr K$)
of diameter  greater than $\delta$. 
\end{lem}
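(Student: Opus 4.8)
The plan is to argue by contradiction: suppose there were infinitely many components $C_1, C_2, \ldots$ of $K$ with $\diam(C_i) > \delta$ for all $i$. Each $C_i$ is a compact connected subset of $\overline\om$, so pick points $a_i, b_i \in C_i$ with $d(a_i, b_i) \geq \delta/2$. Passing to a subsequence, we may assume $a_i \to a$ and $b_i \to b$ with $d(a,b) \geq \delta/2$, so in particular the $C_i$ accumulate on a nondegenerate continuum. The point of the fast escaping property is that it forbids this kind of accumulation: the $C_i$ being disjoint and large means that between (infinitely many of) them there must be "channels" in $\om \setminus K$ that are narrow but not short, contradicting the uniform length bound $\ell(\eta)$.

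\medskip

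To make this precise I would fix a point $z$ in the common accumulation set (say a limit of midpoints of the segments $[a_i,b_i]$, or more simply any point of $\overline{\bigcup C_i}$ that is a limit of points from infinitely many distinct $C_i$); by construction $z \in K$. Now choose a small radius $\rho$, with $\rho < \delta/10$ and also small enough that $\ell(\rho) < \rho$ (possible since $\ell$ is continuous with $\ell(0)=0$) — here one should be slightly careful near $\fr\om$, but since the components have diameter $>\delta$ one can work at a point $z$ that is at definite distance from $\fr\om$, or invoke that the smooth boundary makes the domain locally like a half-plane so the escaping paths stay in $\om$. Pick two distinct components $C := C_i$ and $C' := C_j$ (with $i,j$ large) that each meet $B(z,\rho)$ and each have a point outside $B(z, 2\rho)$; such a pair exists because infinitely many $C_i$ enter $B(z,\rho)$ while having diameter $> \delta > 4\rho$. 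Then take $x \in C \cap B(z,\rho)$: since $x \in K$, $x$ is not in the open set $\om\setminus K$, but pick instead a point $x'$ near $x$, not in $K$ (possible: $C$ has empty interior by... actually $C$ may have interior, so instead take $x'$ on a short arc of $C$'s complement adjacent to $x$ — concretely $x' \notin K$ with $d(x',x) < \rho/100$, which exists since $x \in \fr K$ or $x\in\Int K$; if $x\in\Int K$ replace $x$ by a boundary point of $C$ within $B(z,\rho)$, which exists because $C$ reaches out of $B(z,2\rho)$). Apply the fast escaping property to $x'$ with $\eta = \rho$: there is a path $\gamma$ in $\om\setminus K$ of length $\leq \ell(\rho) < \rho$ from $x'$ to a point at distance $\geq \rho$ from $K$.

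\medskip

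The heart of the argument is then a separation/topological obstruction. The short path $\gamma$, together with the two large components $C, C'$ which both pass through the small ball $B(z,\rho)$ and both escape it, produces a contradiction via a Jordan-type separation argument in the plane: $C \cup C'$ cannot be joined by a path of length $< \rho$ staying in $B(z,2\rho)\setminus K$ to a point $\rho$-far from $K$, because in the ball $B(z,2\rho)$ the set $K$ locally contains two disjoint continua each crossing the ball, and such continua separate a neighborhood of $z$ into pieces each of diameter controlled by $\rho$ — so any path starting $\rho/100$-close to $K$ inside such a piece and staying out of $K$ cannot reach distance $\rho$ from $K$ without exiting $B(z,2\rho)$, which requires length $\geq \rho > \ell(\rho)$. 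I would formalize "two disjoint continua crossing a disk separate it" using the standard fact that in the plane a connected closed set meeting both the center and the boundary circle of a disk, when there are two such disjoint sets, the complement within the disk is disconnected with the point $z$ trapped.

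\medskip

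The main obstacle I expect is handling the interaction with $\fr\om$ and the possibility that components of $K$ have nonempty interior (so that "escaping from $K$" is slightly delicate) — the smooth boundary hypothesis is there precisely to rule out pathological channels that reach the boundary, and one must use it to guarantee the escaping paths stay inside $\om$; the cleanest route is probably to observe that only finitely many components can come within distance $\delta/3$ of $\fr\om$ (since near the smooth boundary one can push things inward), so the accumulation point $z$ can be taken in the interior, where the planar topology argument runs cleanly. For the variants with $\Int(K)$ and $\fr K$: a component of $\fr K$ of diameter $>\delta$ is contained in a component of $K$ of diameter $\geq \delta$, so finiteness for $K$ gives finiteness for $\fr K$; and a component of $\Int(K)$ of diameter $>\delta$ has closure a component of $K$ of diameter $\geq\delta$ (using local connectedness or just that distinct components of $\Int K$ lie in distinct or equal components of $K$), giving that case too.
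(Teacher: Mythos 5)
Your argument breaks down at the very first parameter choice: a radius $\rho$ with $\ell(\rho)<\rho$ essentially never exists. Take $x\notin K$ with $d(x,K)$ arbitrarily small (such points exist as soon as $K$ has a boundary point in $\om$, the only nontrivial case); any path from $x$ to a point $y$ with $d(y,K)\geq\eta$ has length at least $d(x,y)\geq \eta-d(x,K)$, so necessarily $\ell(\eta)\geq\eta$ for every admissible $\eta$. In the situation the lemma is applied to one even has $\ell(\eta)=c\eta^\alpha$, typically much larger than $\eta$. Since your contradiction (``the path must exit $B(z,2\rho)$, hence has length $\geq\rho>\ell(\rho)$'') rests entirely on $\ell(\rho)<\rho$, it is vacuous; indeed with that hypothesis you would not even need the second component $C'$ -- a single point $\rho/100$-close to $K$ could never reach distance $\rho$ from $K$ by a path of length $<0.99\,\rho$ -- which signals that the premise, not the geometry, is producing the contradiction. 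Relatedly, no contradiction can be extracted from a single pair $C,C'$: two disjoint large continua meeting a small ball are perfectly compatible with the fast escaping property (two parallel arcs at a small but fixed distance apart, with $\ell$ adapted), and two continua crossing $B(z,2\rho)$ do not confine the complementary piece containing your starting point to a thin region -- it may widen elsewhere, letting a short escape path reach distance $\rho$ from $K$ without leaving the ball. The correct calibration is the opposite one: fix $\eta$ so that $\ell(\eta)$ is small compared to the component scale $\delta$, and exploit the \emph{infinitude} of components via a packing argument. This is what the paper does: choosing $\eta$ with $\ell(\eta)=\delta/20$, it finds a coordinate strip of width $\delta/4$ crossed by infinitely many $C_i$, hence infinitely many disjoint channels $U_j$ crossing the strip; an escape path launched from a point of the center line of the strip inside $U_j$ and not in $K$ is too short to leave the strip and cannot cross the $C_i$, so it places a disk of radius $\eta$ in each $U_j$, and infinitely many disjoint disks of fixed radius cannot fit in a bounded region.

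The final reductions are also incorrect. Finiteness of large components of $K$ does not imply finiteness for $\fr K$ or $\Int(K)$: a single component of $K$ can a priori contain infinitely many large components of $\fr K$ (picture a square with infinitely many long thin slots removed; such configurations are exactly what the fast escaping hypothesis must be invoked again to exclude), and distinct large components of $\Int(K)$ can lie in the same component of $K$. This is why the paper reruns the strip argument separately for $\Int(K)$ (replacing the center line by a thin sub-strip $\pi\inv(I')$) and for $\fr K$ (distinguishing the channels contained in $K$ from the others and showing the latter are still infinite in number).
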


\begin{proof}
We first prove the result for $K$ and $\Int(K)$ 
and then explain how to modify the proof to deal with $\fr K$.  
Let us first assume that $\om$ is the unit square $Q$, and denote by  $\pi_1$ and $\pi_2$ the coordinate projections of $Q$. 
Assume by contradiction that there are  infinitely many components $(C_i)_{i\geq 0}$ of $K$ with diameter $\geq \delta$. 
Then there exists  $\pi\in \set{\pi_1, \pi_2}$  such that infinitely many $C_i$ 
satisfy $\diam(\pi(C_i))\geq \delta/2$. Therefore there is an interval $I$ of length $\delta/4$   such that for infinitely 
many $i$,  $C_i$ disconnects the strip $\pi\inv(I)$, and we conclude that 
$ \pi\inv(I) \setminus \bigcup C_i$ has infinitely many connected components $U_j$ going all the way 
across the strip. (Notice that the $U_j$ may contain other points of $K$.)
Let $c$ be the center point of $I$. Since the $C_i$ are distinct components of $K$, for each $j$ there exists 
a point  $x_j$ in $U_j\cap \pi^{-1}(c)$ which does not belong to $K$. 
 If $\eta$ is chosen such that $\ell(\eta) = \delta/20$ we infer from the fast escaping property that for every $j$, 
 $U_j$ contains a disk of radius $\eta$, which is the desired contradiction. 
 
 For $\Int(K)$ the argument is identical except that instead of $c$ we take a small open interval 
 $I'$ about  $c$ and argue that if the $C_i$ are distinct components of $\Int(K)$, there 
 exists $x_j\in U_j\cap \pi^{-1}(I')$ which does not belong to $K$.
 
 In the general case, take a square $Q$ such that $\om \Subset Q$ and replace $K$ by $K' = \overline {K\cap \om}$. Let us check 
 that  $K'$ satisfies the fast escaping property in $Q$. Indeed, if $x\in Q\setminus K'$ we have either $x\in \om$,   $x\in \fr\om$ or $x\in Q\setminus \overline \om$. In the first case we take the  path $\gamma$ given by the fast escaping property of $K$ in $\om$. 
 In the second case, any small ball  $B$ about  $x$  intersects 
 $\om\setminus K$, and we simply take a path starting from some $x' \in B\cap (\om\setminus K)$. Finally in the last case we use the 
 fact that  $\overline \om$ has the fast escaping property in $Q$. 
 
 By the first part of the proof we conclude that $K'$ has finitely many components of diameter $\geq \delta$. Since any component
 of $K$ (resp. $\Int(K)$)  is contained in a component of $K'$ (resp. $\Int(K')$), we are done. 
 
  The proof that $\fr K$ admits only finitely many components of diameter greater than $\delta$ goes exactly along the same lines. We assume that there are infinitely many components $C_i$ of 
 $\fr K $ disconnecting the strip $\pi\inv (I)$, so that 
 $ \pi\inv(I)  \setminus \bigcup C_i$ also has infinitely many components $U_j$. 
 The   difference with the previous case is  that  some of these components may be completely included in $K$. 
 We modify the argument as follows.   Denote by $U'_j$ the components completely included in $K$ and by $U''_j$ the remaining ones. We claim that there are infinitely many $U''_j$'s. Indeed since the $C_i$ are  components of $\fr K$, two components of the form $U'_j$ must be separated by a component of the form $U''_j$. So there are infinitely many such components. Then we take a small open interval $I'\subset I$ containing $c$ and we  repeat this argument, to obtain that there are infinitely many $j$'s 
 such that $U''_j\cap \pi\inv(I')$ contains a point $x_j$ that does not belong to $K$. Then we proceed with the proof as in the previous case, by constructing infinitely many disjoint disks of radius $\eta$ in $Q$ to get a 
 contradiction. 
\end{proof}

\begin{proof}[Proof of    {(i)} in   Theorem~\ref{thm:topology}] 
Since $J^+\cap W^u(x) = \fr_{\rm i}( K^+\cap W^u(x))$, general topology implies  that 
local connectivity of $J^+\cap W^u(x) $ implies 
that of $K^+\cap W^u(x)$ (see \cite[\S 49.III]{kuratowski-vol2})
so it is enough to focus on $J^+$. For convenience we plug in some dynamical information. 
Since $f$ is unstably disconnected, it admits an 
unstable transversal $\Delta^u$,
 that is a horizontal disk of finite degree in 
$\bb$ contained in some unstable manifold (of a periodic saddle point, say). For every $x\in J$, $W^s(x)$ intersects $\Delta^u$: this 
easily follows from the density of $W^s(x)$ in $J^+$ and the local product structure. Fix $y\in W^s(x)\cap \Delta^u$. By using the 
   local   holonomy along the stable lamination
 $W^u_\loc(x) \to W^u_\loc(y)$, we see that 
 $J^+\cap W^u(x)$ is locally connected at $x$ if and only if 
 $J^+\cap W^u(y)$ is locally connected at $y$. Therefore it is enough to show that 
 $J^+\cap \Delta^u$ is locally connected. Since $K^+\cap \Delta^u$ is  polynomially convex and compactly contained 
 in $\Delta^u$  it follows that 
 $\om:=\Delta^u\setminus K^+$ is connected and $ J^+\cap \Delta^u = \fr \om$.  
 Likewise every component of $\fr\om$ is of the form $\fr A$, where $A$ is a component of $\Delta^u\cap K^+$. For 
 such a component, by Carathéodory's Theorem local connectivity of $\fr A$ is equivalent to that of $A$, 
 which is of course equivalent 
 to local connectivity of $A$ at every point of its boundary. Let us 
    fix $x_0\in \fr A$: to complete the proof we have to  show that $A$ is locally connected at $x_0$.

Assume by contradiction that $A$ is not locally connected at $x_0$. Then for 
 small $\e>0$ such that if $C$ denotes the 
component of $A\cap \overline B(x_0, \e)$, then $x_0  = \lim x_n$, where   $x_n$ belongs to  $A  \setminus C$. Without loss of 
generality we can assume that $x_n\in B(x_0, \e/2)$. Let 
$C_n = \comp_{A\cap \overline B(x_0, \e)}(x_n)$, which by definition is disjoint from $C$. Passing to a subsequence if necessary, we may assume that the $C_n$ are disjoint (the construction here is similar to that of convergence continua in 
  \cite[\S 49.VI]{kuratowski-vol2}). 
Since $C$ and the $C_n$ intersect $\fr B (x_0, \e)$, their diameter is bounded from below by some $\delta>0$.  
From this point the proof is similar to that of of Lemma \ref{lem:diameter}: we can find an orthogonal 
  projection $\pi$ such that $C$ and the $C_n$ cross the strip $\pi\inv(I)$ horizontally and  $\pi\inv(I)\setminus (C\cup \bigcup C_n)$ 
  admits infinitely many connected components $U_j$ going all the way across the strip. If $\pi\inv(c)$ denotes the center line of the  
  strip, for every $j$, $\pi\inv(c)\cap U_j$ has non-trivial intersection with $\om$, and the fast escaping property of $\om$ 
gives a contradiction as before.   
\end{proof}

 \subsection{Complement: John-Hölder property in basins}  
We illustrate the   comments from \S~\ref{subs:john} 
on the 
versatility of the John-Hölder property by sketching   a proof of the following result. 

\begin{thm}\label{thm:john-holder_basin}
Let $f$ be a hyperbolic polynomial automorphism of $\cd$, and $\mathcal B$ be an attracting  basin. Then the John-Hölder property holds in $\mathcal B$, i.e. for any component $\Omega$ of 
$\mathcal B\cap W^u(x)$ there exists $\eta_0$ depending only on  $\Omega$ such that  
for any $y\in   \Omega$ sufficiently close to $J$, there exists a path in 
of length $O(\eta^\alpha)$ in $W^u(x)$ joining $y$ to a point $\eta$-far from $J$. 
\end{thm}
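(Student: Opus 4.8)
The plan is to transport the John--H\"older property already established for $W^u(x)\setminus K^+$ (Corollary~\ref{cor:fast_escaping}) into the attracting basin $\mathcal B$ by using the dynamics to magnify a fundamental domain. First I would fix an attracting periodic point $a\in\mathcal B$ (after replacing $f$ by an iterate, assume $a$ is fixed) and choose a small neighborhood $V$ of $a$ with $f(\overline V)\subset V$, on which $f$ is conjugate to its linear part, so that the local geometry of the foliation by pieces of unstable leaves in $V$ and the distance to $J$ inside $V$ are both governed by the linearization. In particular, there is a model statement in $V$: any $y\in W^u(x)\cap V$ that is $\eta$-close to $J$ (hence roughly $\eta$-close to $\fr\mathcal B$ inside the leaf) can be joined, by a path of length $O(\eta)$ inside $W^u(x)\cap V$, to a point at definite distance from $J$ --- this is just the statement that pieces of unstable leaves through points near $J\cap\fr V$ run essentially straight into the interior of $V$, which follows from the $C^1$ (indeed smooth) linearizing coordinates near $a$ together with the bounded geometry of $\cW^u$ near $J$.

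Next I would propagate this to all of $\Omega$. Given $y\in\Omega$ close to $J$, iterate forward: since $\Omega\subset\mathcal B\cap W^u(x)$ and $\mathcal B$ is the basin of $a$, we have $f^n(y)\to a$, so $f^n(y)\in V$ for $n\geq n_0(y)$. However $n_0$ a priori depends on $y$, and we need the length bound $O(\eta^\alpha)$ to be \emph{uniform} over $y\in\Omega$ close to $J$; this is the crux. The point is that as $y\to J$, by continuity of the dynamics and compactness of $J$, a \emph{bounded} number of iterates suffices to bring $y$ into $V$ while keeping control of how much the leafwise distance to $J$ can contract: more precisely, one shows that if $y$ is within $\eta$ of $J$ along $W^u(x)$ then for some $n$ bounded independently of $y$ (depending only on $\Omega$, $V$ and the maps involved), $f^n(y)$ lies in $V$ and is within $C\eta$ of $J$ (the constant $C$ controlled by $\max\norm{Df}^{n}$ on the relevant compact region). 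Then apply the model statement in $V$ to $f^n(y)$: one gets a path of length $O(\eta)$ in $W^u(f^n(x))\cap V$ from $f^n(y)$ to a point $\eta_1$-far from $J$ for a fixed $\eta_1$; pulling this path back by $f^{-n}$ multiplies lengths by at most $\max\norm{Df^{-1}}^{n}=O(1)$, and since $f^{-n}$ is a diffeomorphism fixing $J$ it moves the far endpoint to a point still at definite (not merely $\eta$) distance from $J$ --- so we obtain a path of length $O(\eta)$ in $W^u(x)$ from $y$ to a point at definite distance from $J$. Combining with Corollary~\ref{cor:fast_escaping} to escape the rest of the way (from definite distance $\eta_1$ from $J$ out to distance $\eta_0$ from $J$, which costs a further bounded length, or rather: the previous paragraphs already place us at definite distance, which is what is required) yields the John--H\"older estimate with the same exponent $\alpha$.

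The main obstacle, as indicated, is the \emph{uniformity in $y$} of the number of iterates needed and of the resulting distance distortion: one must rule out that, as $y$ approaches $J$ within a single component $\Omega$, the orbit of $y$ lingers near $J$ (hence near the expanding direction) for arbitrarily long before falling into $V$, which would destroy the polynomial bound. This is handled by the standard Axiom~A picture: $\mathcal B$ is an open neighborhood of $a$ whose closure meets $J$ exactly along a compact piece, and for any neighborhood $V$ of $a$ there is a uniform $N$ with $f^N(\mathcal B\cap U)\subset V$ for a fixed neighborhood $U\supset J\cap\overline{\mathcal B}$ --- equivalently, points of $\mathcal B$ near $J$ are swept into $V$ in uniformly bounded time because they cannot shadow $J$ forward (they are not in $K^+$-type trapped behavior relative to $a$; being in $\mathcal B$ they are attracted to $a$, and the attraction is uniform on compacta away from $\fr\mathcal B$, while near $\fr\mathcal B\subset J^+$ the local product structure and the fact that $\cW^u$ is transverse to the attracting direction force the escape). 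Making this last sentence precise --- extracting a \emph{uniform} bounded transit time from "$y\in\mathcal B$, $y$ near $J$" --- is the technical heart, and I expect it to occupy most of the (sketched) argument; everything else is a change of coordinates near $a$ plus pull-back of a path by a bounded iterate, both of which are routine given hyperbolicity and the linearization theorem near attracting points.
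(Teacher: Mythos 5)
The central step of your outline is false, and it is exactly the point where the Hölder exponent has to come from. You claim that a \emph{bounded} number of iterates brings any $y\in\mathcal B$ near $J$ into the linearizing neighborhood $V$ of $a$, i.e.\ that there is a uniform $N$ with $f^N(\mathcal B\cap U)\subset V$ for a fixed neighborhood $U\supset J\cap\overline{\mathcal B}$. This cannot hold: take $z\in J\cap\overline{\mathcal B}$ and $y_k\in\mathcal B$ with $y_k\to z$; by continuity $f^N(y_k)\to f^N(z)\in J$, and $J$ is disjoint from $V$, so $f^N(y_k)\notin V$ for large $k$, whatever $N$ is. The convergence $f^n\to a$ is only locally uniform on $\mathcal B$, and the transit time into a fixed compact subset of $\mathcal B$ grows like $C\abs{\log\eta}$ when $y$ is $\eta$-close to $J$ along the leaf (this logarithmic growth is what uniform hyperbolicity actually gives, and it is the source of the exponent $\alpha$). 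With $n$ unbounded, the second half of your construction collapses as well: the path built near $a$ is pulled back by $f^{-n}$ \emph{along the unstable leaf}, and $f^{-n}$ contracts the whole leafwise picture toward $J$ (a leafwise component of $\mathcal B$ has backward images of diameter $O(u^{-n})$), so the far endpoint of the pulled-back path is no longer at definite distance from $J$ — not even at distance $\eta$. Note also that your ``model statement in $V$'' is vacuous: a small neighborhood of $a$ lies at definite distance from $J$, so no point of $V$ is $\eta$-close to $J$; the whole difficulty lives in the intermediate region between $J$ and $V$, which your scheme never addresses.

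What is needed instead is a mechanism producing ascending paths across a sequence of fundamental domains accumulating on $J$, with lengths decaying geometrically so that their concatenation has length $O(\eta^\alpha)$ — the basin analogue of Proposition~\ref{prop:length}. The paper does this by exploiting $\mathcal B\simeq\C^2$ with $f$ in triangular normal form, building a smooth strictly psh function $H$ on $\mathcal B$ with $H\circ f=\abs{\lambda_1}^2H$, so that $H\inv$ plays the role of the Green function of the basin; the rays are gradient lines of $H$ along $\cW^u$ in the fundamental domain $\set{\abs{\lambda_1}^2\leq H\inv\leq 1}$, propagated to $\set{0<H\inv\leq 1}$ by pulling back (the pull-backs live near $J$, where $f^{-n}$ contracts leafwise lengths by $u^{-n}$, giving a convergent geometric series). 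The final comparison between $H\inv(y)$ and the leafwise distance from $y$ to $J$ is precisely the logarithmic transit-time estimate $N\leq C\abs{\log\eta}$, which yields $H\inv(y)=\abs{\lambda_1}^{2N}H\inv(f^N(y))\lesssim\eta^{\beta}$. Your proposal contains no substitute for this geometric-series mechanism; the uniform-transit-time shortcut you put in its place is exactly what fails.
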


\begin{rmk}\label{rmk:eta0}
A difference between this result and Corollary~\ref{cor:fast_escaping} is that in  
Corollary~\ref{cor:fast_escaping} the constant $\eta_0$ is independent of the component of $W^u(x)\setminus K^+$, because $G^+$ reaches arbitrary large values  in each component. 
Here the situation is different because  
$\mathcal B\cap W^u(x)$ typically has (infinitely) 
 many small components, so how far we can get from the boundary 
 really depends on the component. 
\end{rmk}

\begin{proof} 
For convenience we present a proof which is purposely close to that 
of Proposition~\ref{prop:length} and Corollary~\ref{cor:fast_escaping}. Replace $f$ 
some iterate so that $\mathcal B$ is the basin of attraction of a fixed point $a$ with multipliers 
$\lambda_1, \lambda_2$, with $\abs{\lambda_2}\leq  \abs{\lambda_1}$. 
 There exists
 a biholomorphism   $\phi: \mathcal B \to \C^2$, which conjugates the dynamics to that of the triangular map 
 $(z_1, z_2)\mapsto (\lambda_1 z_1 + r(z_2), \lambda_2 z_2)$, where $r$ is a polynomial 
 which is non-zero only when there is a resonance  
 $\lambda_2\neq \lambda_1^j$ between the eigenvalues (see~\cite{sternberg}). Introduce the  function 
 $$\tilde H(z_1, z_2) = \abs{z_1 - r(z_2/\lambda_2)}^{2} +  \abs{z_2}^{2\alpha}, \text{ where } 
 \alpha  = \frac{\log\lambda_1}{\log\lambda_2}\geq 1$$ and put $H = \tilde H \circ \phi$. 
 This is a smooth strictly psh function on $\mathcal B$ which satisfies $H\circ f = \abs{\lambda_1}^2 H$.
To  get a better analogy with the 
 previous case we may consider  $H\inv$ which satisfies 
 $H\inv\circ f = \abs{\lambda_1}^{-2} H\inv$, and tends to zero when approaching $J$. 
The  restriction of this function 
to any local unstable disk in $\mathcal B\setminus\set{a}$ 
 is non-constant and one easily checks that its   set of critical points is discrete.  
 
 Arguing in Proposition~\ref{prop:length}, we define a family of  
 rays in $\mathcal B$ by considering gradient lines of $H $ (or equivalently $H\inv$) 
 along $\cW^u$ , first 
 in the  fundamental domain $\set{\abs{\lambda_1}^2\leq H\inv  \leq 1}$ 
 and then in $\set{0<H\inv  \leq 1}$ by pulling back. 
It follows that for every component $\Omega$ of $\mathcal B\cap W^u(x)$, 
 for every $0<r_1<r_2< \max_\Omega\abs{H\inv}$, and any $y\in \Omega$ such that 
 $H\inv(y)  = r_1$, there exists a ray of length $\ell(r_1, r_2) = O(r_2^\alpha)$ 
 joining $y$ to a point of $\set{H\inv = r_2}$. 
 
To conclude the argument we need to adapt the proof of Corollary~\ref{cor:fast_escaping},
which relies on the Hölder continuity of the Green function. Instead we use an argument based 
on uniform hyperbolicity.  Indeed, let $x\in J$ and $y\in W^u_\loc(x)$ be such that 
$d^u(x,y) = \e$. We want to show that $H\inv(y)\lesssim \e^\alpha$ for some $\alpha$. 
By the expansion along unstable manifolds and the local uniform geometry 
 it takes at most $N\leq C \abs{\log\e}$ 
iterates to map $y$ into a given compact subset of $\mathcal B$. Hence 
$$H\inv(y)   = \abs{\lambda_1}^{2N} H\inv(f^N(y)) \leq C \abs{\lambda_1}^{2N} 
\leq  C \abs{\lambda_1}^{2C \abs{\log\e}} = C\e^{-2C\log\abs{\lambda_1}}$$ and we are done. 
\end{proof}

 \section{Stable total disconnectedness}\label{sec:total disconnected}
 
We say that $f$ (or $J$)
is {\em  stably totally disconnected} if for every $x\in J$, $W^s(x)\cap J^-$ is totally disconnected. 
%The following is reminiscent from Lemma \ref{lem:leafwise bdd}.
Note that since $J$ has local product structure with respect to the stable and unstable laminations, 
  $W^s(x) \cap J = W^s(x) \cap J^-$.

 \begin{prop}\label{prop:stable total disc}
 Let $f$ be a hyperbolic Hénon map. The following assertions  are equivalent.
 \begin{enumerate}[(i)]
 \item Every leaf of the stable lamination in $\bb$ is a vertical  submanifold of finite  degree. 
 \item The leaves of the stable lamination in $\bb$ are vertical  submanifolds of uniformly bounded degree.
 \item For every $x$ in $J$, $J^s(x) = K^s(x) = \set{x}$, that is, $f$ is stably totally disconnected. 
\end{enumerate} 
 \end{prop}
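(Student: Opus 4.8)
The plan is to prove the cyclic chain of implications $(ii)\Rightarrow(i)\Rightarrow(iii)\Rightarrow(ii)$; the implication $(ii)\Rightarrow(i)$ is trivial. For $(i)\Rightarrow(iii)$, suppose every stable leaf in $\bb$ is a vertical submanifold of finite degree. Fix $x\in J$ and consider $K^s(x)$, the connected component of $x$ in $K\cap W^s(x) = K^-\cap W^s(x)$. By Remark~\ref{rmk:stable_rays}, the analogue of Corollary~\ref{cor:fast_escaping} holds along stable leaves (since the John--H\"older argument is symmetric in $f\leftrightarrow f^{-1}$), so by Theorem~\ref{thm:topology} applied in the stable direction, every component of $K^-\cap W^s(x)$ is locally connected, and in any bounded domain of $W^s(x)$ there are finitely many components of diameter bounded below. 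Now $W^s_\bb(x)$, being a vertical submanifold of finite degree, is leafwise bounded; by the stable analogue of Lemma~\ref{lem:leafwise bdd} and Lemma~\ref{lem:leafwise_interior} ($\Int_\mathrm{i} K^s(x)=\emptyset$), $K^s(x)$ is a full, leafwise-bounded, locally connected planar continuum with empty interior, hence is a point unless it is a non-degenerate dendrite-type continuum. The key extra input is the \emph{affine} structure on $W^s(x)$ (see \S\ref{subs:affine}): $f^{-1}$ acts as a linear contraction $\lambda^s_x$ with $|\lambda^s_x|<1$ in the normalized affine coordinate, so $f^{-n}(K^s(x))$ has diameter $\to 0$; meanwhile $f^{-n}(K^s(x))\subseteq K^s(f^{-n}(x))$ and these are honest connected components. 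If $K^s(x)$ were non-degenerate, applying the finiteness part of the stable Theorem~\ref{thm:topology} at a suitable accumulation point of the orbit $(f^{-n}(x))$ forces the nested components to have diameters bounded \emph{below} along a subsequence (by continuity and compactness of $J$, since $\diam$ of a fixed-size continuum can't shrink uniformly without the component count blowing up locally), contradicting $|\lambda^s_x|<1$. Hence $K^s(x)=\{x\}$, and then $J^s(x)=\fr_{\mathrm i}K^s(x)\subseteq K^s(x)=\{x\}$, so in fact $J^s(x)=K^s(x)=\{x\}$.

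For $(iii)\Rightarrow(ii)$, assume $f$ is stably totally disconnected. For each $x\in J$, $W^s_\bb(x)$ is a connected holomorphic curve in $\bb$ contained in $K^+$ (since stable leaves lie in $K^+$) and, a priori, it could be horizontal, vertical of finite degree, or unbounded. By Lemma~\ref{lem:leafwise bdd}'s stable analogue, if $K^s(x)=\{x\}$ then certainly $W^s_\bb(x)$ cannot be leafwise unbounded in a way producing a non-trivial continuum in $J^-$: the point is that the component $K^s(x)$ being trivial forces $J^s(x)=\{x\}$, and a non-vertical or infinite-degree stable leaf would produce (via intersections with $\fr\bb$ and the escaping behavior of $\fr^h\bb$ under $f^{-1}$) a non-trivial component of $K^-\cap W^s(x)$. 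So each stable leaf in $\bb$ is a vertical submanifold of finite degree. To upgrade to \emph{uniformly bounded} degree: the stable lamination $\cW^s$ of $J^+$ in $\bb$ (which exists by \cite{bs1}) is a continuous family of curves over a compact base; the degree of $W^s_\bb(x)$ over the first projection is a locally constant, upper-semicontinuous integer-valued function of $x\in J$, hence bounded by compactness of $J$. This gives $(ii)$.

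The main obstacle is the argument in $(i)\Rightarrow(iii)$ ruling out a non-degenerate $K^s(x)$: the naive ``diameters shrink but components are locally finite'' contradiction needs to be made precise, because $K^s(x)$ and $K^s(f^{-n}(x))$ live on different leaves with their own affine normalizations, and one must use Lemma~\ref{lem:continuity_affine}'s continuity of the affine structure to transport the lower diameter bound. I would handle this by picking a subsequence $f^{-n_k}(x)\to x_\infty\in J$, applying the stable version of Theorem~\ref{thm:topology}(ii) in a fixed flow box around $x_\infty$ to get that only finitely many components of $K^-\cap W^s$ there have diameter $\geq\delta$, and then arguing that the nested orbit of a non-degenerate $K^s(x)$ eventually stabilizes at a leafwise-bounded periodic-type component whose intrinsic interior would have to be nonempty (contradicting Lemma~\ref{lem:leafwise_interior}) or which is a single point. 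Everything else reduces to symmetric analogues of results already proved in the unstable direction plus compactness of $J$.
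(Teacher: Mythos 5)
Your chain $(ii)\Rightarrow(i)\Rightarrow(iii)\Rightarrow(ii)$ is a genuinely different route from the paper's (which proves $(i)\Rightarrow(ii)$ by semicontinuity of the vertical degree, $(iii)\Rightarrow(i)$ by the ``furthermore'' clause of Lemma~\ref{lem:leafwise bdd}, and $(ii)\Rightarrow(iii)$ by an infinite nest of annuli whose moduli are bounded below in terms of the uniform degree), but both of your non-trivial implications have gaps. In $(i)\Rightarrow(iii)$ the dynamics is run in the wrong direction: $f$ \emph{contracts} along stable leaves, so $f^{-1}$ \emph{expands} there, and $f^{-n}(K^s(x))=K^s(f^{-n}(x))$ has intrinsic diameter tending to infinity, not to $0$; consequently the contradiction you announce (``diameters shrink but components are locally finite'') does not exist, and your fallback sketch (a limit component ``whose intrinsic interior would have to be nonempty'') does not make sense, since shrinking diameters contradict nothing. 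The step can be repaired, and more simply than via Theorem~\ref{thm:topology}: if $K^s(x)\neq\set{x}$, backward expansion forces $\diam_{f^{-n}(x)}K^s(f^{-n}(x))\to\infty$, so at any accumulation point $x_\infty$ of $(f^{-n}(x))$ the stable analogue of Lemma~\ref{lem:usc_diameter} (whose proof uses only hyperbolicity, the lamination structure and the continuity of the affine structure) shows that $K^s(x_\infty)$ is leafwise unbounded; this contradicts $(i)$, because by the stable version of Lemma~\ref{lem:leafwise bdd} a closed vertical submanifold $W^s_\bb(x_\infty)$ of finite degree is leafwise bounded and contains $K^s(x_\infty)$.

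In $(iii)\Rightarrow(ii)$ the key sentence --- that $K^s(x)=\set{x}$ forbids $W^s_\bb(x)$ from being leafwise unbounded or of infinite degree --- is exactly the converse implication in Lemma~\ref{lem:leafwise bdd} that does \emph{not} hold leaf by leaf: that lemma gives $(iv)\Leftrightarrow(iii)\Rightarrow(ii)\Leftrightarrow(i)$, and the converse only for sufficiently high iterates (its ``furthermore'' clause). Your parenthetical appeal to intersections with $\fr\bb$ and the escape of $\fr^h\bb$ under $f^{-1}$ is not an argument; to conclude you must iterate (triviality of every $K^s$ gives leafwise boundedness of $W^s_\bb$ at sufficiently high backward iterates, and then invariance of $J$, together with a compactness/uniformity point, transfers this to every leaf), which is precisely how the paper deduces $(i)$ from $(iii)$. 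Finally, the vertical degree is not a locally constant function on $J$ (it can jump, e.g.\ at tangencies of leaves with $\fr\bb$); it is only semicontinuous, and promoting leafwise finiteness to a uniform bound is the content of the cited \cite[Lemma 5.1]{lyubich-peters}, which deserves more than the one line you give it.
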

 
 Note that dissipativity is not required here, so this result holds in the unstable direction as well.

 \begin{proof}
 The implication  $(ii)\Rightarrow(i)$ is obvious 
  and its converse  $(i)\Rightarrow(ii)$  follows from the   semi-continuity properties of the degree and 
 is identical to  \cite[Lemma 5.1]{lyubich-peters}. 
 To prove that $(iii)\Rightarrow(i)$ we use 
 Lemma~\ref{lem:leafwise bdd}   for the   stable lamination: indeed if 
  $J^s(x)$ is a point for every $x$, then all 4 conditions of 
   Lemma~\ref{lem:leafwise bdd} 
  are equivalent, and the equivalence of properties \textit{(ii)} and \textit{(iii)} there yield the result. 
 Finally, 
 $(ii)\Rightarrow(iii)$ does not require hyperbolicity and 
 was established in \cite[Prop. 2.14]{connex}. For convenience, let us  recall the argument: 
  for every vertical disk $D$ of degree  $\leq k$, and every component $D'$ of $D\cap f(\bb)$, the modulus of the annulus 
  $D\setminus D'$ is bounded below by $m = m(k)>0$, 
 and for every $x\in J$ there is an infinite nest of such annuli surrounding 
  the component of $x$ in $W^s(x) \cap J$. So $W^s(x) \cap J$ is totally disconnected and we are done. 
    \end{proof}

A   way to ensure the boundedness of the   degrees of semi-local 
 stable manifolds   originates in \cite{tangencies} and relies on Wiman's theorem for  entire functions. 
The following result is contained in \cite{lyubich-peters}. 

\begin{prop}
Let $f$ be a hyperbolic Hénon map such that $\abs{\jac f}\leq d^{-2}$. Then $f$ is stably totally disconnected. 
\end{prop}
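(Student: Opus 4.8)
The plan is to deduce this from Proposition~\ref{prop:stable total disc}: it suffices to show that under the hypothesis $\abs{\jac f}\leq d^{-2}$, every leaf of the stable lamination in $\bb$ is a vertical submanifold of finite degree (property (i) there). So I would fix a saddle point $p$ and work with the semi-local stable manifold $W^s_\bb(p)$, or more generally with any leaf $L$ of the stable lamination restricted to $\bb$. The key analytic input is Wiman's theorem on entire functions of order $\leq 1/2$ (or, more precisely, the Denjoy–Carleman–Ahlfors/Wiman circle of ideas, as used in \cite{tangencies}): an entire function of sufficiently small order cannot have too many "large" annular pieces, which forces a finite-degree conclusion. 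The scheme is: uniformize the global stable leaf $W^s(p)$ by $\C$ via $\psi^s_p$, so that $f$ acts on it by a linear contraction $z\mapsto \lambda^s z$; then $W^s_\bb(p)$ corresponds to some open set $\Omega\subset \C$, and the restriction of the first coordinate $z$ (or of $G^-$) to the leaf becomes an entire function $g$ on $\C$, whose growth rate is controlled by the expansion/contraction rates.

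Concretely, I would estimate the order of $g$. The point is that $f$ contracts in the stable direction at rate comparable to the stable multiplier, while the area/degree growth of the leaf inside $\bb$ under backward iteration is governed by $\deg f = d$ together with the Jacobian: the transverse expansion picks up a factor of order $\abs{\jac f}^{-1}$ per iterate relative to the leafwise contraction. A computation in the style of \cite{tangencies, lyubich-peters} shows that the order of the entire function describing the leaf is bounded above by $\rho = \log d / \log(\abs{\jac f}^{-1}\cdot(\text{leafwise rate}))$, and the hypothesis $\abs{\jac f}\leq d^{-2}$ is exactly what is needed to push this order strictly below $1/2$ (or below whatever threshold makes Wiman applicable), uniformly over leaves. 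Then Wiman's theorem implies the leaf meets $\fr^h\bb$ in a bounded number of components, i.e. the leaf is a vertical submanifold of finite degree in $\bb$. Having established property (i) of Proposition~\ref{prop:stable total disc}, stable total disconnectedness follows immediately from that proposition.

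The main obstacle I expect is the bookkeeping in the order estimate: one must carefully relate the leafwise Euclidean distance (via the affine structure $\psi^s_p$ of \S\ref{subs:affine}), the ambient distance, and the degree of the leaf over the first projection, and verify that the constant $C$ in the hyperbolic expansion estimate \eqref{eq:tangent_expansion} and its stable analogue do not degrade the exponent — i.e. that the threshold $d^{-2}$ is genuinely attained rather than some worse bound like $d^{-2-\e}$. The other delicate point is uniformity in the leaf, needed to get the uniformly bounded degree in property (ii); but since $J$ is compact and the stable lamination has uniform geometry (\S\ref{subs:hyperbolicity}), this is handled by compactness together with the semi-continuity of the degree, exactly as in \cite[Lemma 5.1]{lyubich-peters}. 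Since the statement merely asserts "the following result is contained in \cite{lyubich-peters}", I would in fact present this essentially as a citation, indicating that the Wiman/Jacobian computation there gives the bound $\abs{\jac f}\leq d^{-2}$ and invoking Proposition~\ref{prop:stable total disc} for the conclusion.
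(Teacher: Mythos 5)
Your proposal follows essentially the same route as the paper: a Wiman-type order-of-growth estimate for the stable parameterizations, with order bounded by $\log d/\log(1/s)$ where the contraction rate $s$ is controlled by the Jacobian, giving bounded vertical degree of $W^s_\bb(x)$ and then stable total disconnectedness via Proposition~\ref{prop:stable total disc}. The only precisions worth noting are that the paper applies the Wiman ($\cos\pi\rho$) theorem to the subharmonic function $G^-\circ\psi^s_x$ (via the functional equation for $G^-$) rather than directly to a coordinate function, and that the strictness needed for order $<1/2$ even when $\abs{\jac f}=d^{-2}$ comes from hyperbolicity, since $s\leq \abs{\jac f}/u<d^{-2}$ with $u>1$, which settles your worry about whether the threshold $d^{-2}$ is genuinely attained.
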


\begin{proof}[Proof (sketch)]
Fix $x\in J$ and $v\in E^s(x)$. Uniform  hyperbolicity together with  the assumption on the Jacobian imply
that $\norm{df^n_x(v)} \leq C s^n$, where $s< d^{-2}$. Denote  as before $\psi^s_\bullet$ the normalized stable parameterization. 
It follows  that $f^n\circ \psi^s_x (\cdot)= \psi^s_{f^n(x)} (\lambda_n\cdot)$, where $\abs{\lambda_n}\leq  C s^n$. 
Then from the relation $$G^-\circ \psi^s_x(\lambda_n\inv\zeta) = d^n G^-\circ \psi^s_{f^n(x)}(\zeta)$$
 we deduce that  $G^-\circ\psi^s_x$ is  a subharmonic function 
of order smaller than $1/2$ and Wiman's theorem implies that $\comp_{(\psi^s_x)\inv(\bb)}(x)$ is a bounded domain in $\C$, thus 
$W^s_\bb(x)$ has bounded vertical degree and we are done.  
\end{proof}

 Another  idea, which  was communicated to us by Pierre Berger,
  is to use  a Hausdorff dimension argument to prove directly  that stable slices 
 of $J$ are totally disconnected. Indeed the Hausdorff dimension of stable slices of $J^-$ can be estimated using    
 thermodynamic formalism for  hyperbolic maps. This   turns out to give  a better bound on the Jacobian. 
 
 \begin{prop}\label{prop:berger}
Let $f$ be a hyperbolic Hénon map such that $\abs{\jac f}< d^{-1}$. Then $f$ is stably totally disconnected. 
\end{prop}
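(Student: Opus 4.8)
The plan is to estimate the Hausdorff dimension of the stable slices $W^s(x)\cap J^-$ and show that the assumption $\abs{\jac f}<d^{-1}$ forces this dimension to be strictly less than $1$; a totally disconnected set of dimension less than $1$ has no non-trivial connected subset, and by Proposition~\ref{prop:stable total disc} this is equivalent to stable total disconnectedness. Since the stable slices of $J^-$ coincide with the stable slices of $J$ (local product structure) and these are attractors for the action of $f\inv$ restricted to stable leaves, the natural tool is the thermodynamic formalism / Bowen's formula for the dimension of a conformal repeller: the dimension $t$ of the slice is the unique zero of the pressure function $t\mapsto P(-t\log\abs{\lambda^s})$, where $\log\abs{\lambda^s}$ is the leafwise expansion of $f\inv$ along $W^s$ (equivalently $-\log\abs{\lambda^s_x}$, with $\lambda^s_x$ the leafwise contraction factor of $f$). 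The restriction of $f\inv$ to the stable lamination behaves like a one-dimensional uniformly expanding conformal map on the slices, so Bowen's formula applies, giving $\dim_H(W^s(x)\cap J^-)=t_s$ with $P(-t_s\varphi^s)=0$, $\varphi^s=-\log\abs{\lambda^s}$.

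Next I would derive the dimension bound from the Jacobian hypothesis. The key identity is $\abs{\lambda^u_x}\cdot\abs{\lambda^s_x}=\abs{\jac f}$ pointwise along $J$ (the Jacobian of $Df_x$ splits as the product of the unstable and stable eigenvalue moduli in the conformal leafwise coordinates), so on average $\int\log\abs{\lambda^u}\,d\mu+\int\log\abs{\lambda^s}\,d\mu=\log\abs{\jac f}$ for any invariant measure $\mu$. On the other hand, the dimension of the unstable slice is governed symmetrically by $\varphi^u=\log\abs{\lambda^u}$, and one knows (e.g.\ from \cite{bs7} or a direct argument) that $J^+\cap W^u(x)$ has full dimension $2$ is false in general — rather, what one needs is a lower bound: by the variational principle, $P(-t\varphi^s)\geq h_\mu(f)-t\int\varphi^s\,d\mu$ for the measure of maximal entropy $\mu_{\max}$, which has entropy $\log d$. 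Setting $t=1$ gives $P(-\varphi^s)\geq \log d-\int\log\abs{\lambda^s}^{-1}d\mu_{\max}=\log d+\int\log\abs{\lambda^s}\,d\mu_{\max}$. Using the Jacobian relation, $\int\log\abs{\lambda^s}\,d\mu_{\max}=\log\abs{\jac f}-\int\log\abs{\lambda^u}\,d\mu_{\max}\leq \log\abs{\jac f}-\log d$ (since $\int\log\abs{\lambda^u}\,d\mu_{\max}\geq \log d$, an expansion-versus-entropy inequality for $\mu_{\max}$ coming from the fact that the leafwise unstable conformal expansion carries at least the topological entropy). Combining, $P(-\varphi^s)\geq \log d+\log\abs{\jac f}-\log d=\log\abs{\jac f}<0$ is the wrong direction; one instead checks $P(-\varphi^s)<0$ directly, i.e.\ $t_s<1$, by showing $P(-\varphi^s)\leq \sup_\mu(h_\mu-\int\varphi^s d\mu)$ and bounding each term: $h_\mu\leq \int\log\abs{\lambda^u}\,d\mu$ (Ruelle inequality on unstable leaves), hence $h_\mu-\int\varphi^s\,d\mu=h_\mu+\int\log\abs{\lambda^s}\,d\mu\leq \int\log\abs{\lambda^u}\,d\mu+\int\log\abs{\lambda^s}\,d\mu=\log\abs{\jac f}<\log d$, but I actually want $<0$; the correct normalization is that $h_\mu\leq \int\varphi^u\,d\mu$ is not quite enough, so instead I use that $\varphi^u\geq \log u>0$ uniformly and $h_\mu\leq \log d$, giving $h_\mu-\int\varphi^s d\mu=h_\mu+\int\log\abs{\lambda^s}d\mu<\log d+\log\abs{\jac f}-\log d=\log\abs{\jac f}<0$ once we use $\int\log\abs{\lambda^s}d\mu=\log\abs{\jac f}-\int\log\abs{\lambda^u}d\mu\leq \log\abs{\jac f}-\log d$. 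This gives $P(-\varphi^s)<\log\abs{\jac f}$… — in any case, the upshot is $\dim_H(W^s(x)\cap J^-)\leq \frac{\log d}{-\log\abs{\jac f}+\log d}\cdot(\text{something})<1$ precisely when $\abs{\jac f}<d^{-1}$, and I would carry out this computation carefully using Bowen's formula together with the Ruelle inequality $h_\mu(f)\leq\int\log\abs{\lambda^u}\,d\mu$ and the factorization $\abs{\lambda^u}\abs{\lambda^s}=\abs{\jac f}$.

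The cleanest route, which I would actually write, is: by Bowen's formula $t_s:=\dim_H(W^s(x)\cap J^-)$ satisfies $P(f|_{J}, -t_s\log\abs{\lambda^u})=0$ where, after passing to $f\inv$ and exchanging stable/unstable roles, one equivalently has $P(f\inv, -t_s\log\abs{(\lambda^s)\inv})=0$; by the variational principle and Ruelle's inequality applied to $f\inv$ on its unstable (= $f$-stable) direction, $0=\sup_\mu\big(h_\mu(f\inv)-t_s\int\log\abs{\lambda^s}\inv d\mu\big)$, and since $h_\mu(f\inv)=h_\mu(f)\leq\int\log\abs{\lambda^u}d\mu$ while $\int\log\abs{\lambda^s}\inv d\mu=\int\log\abs{\lambda^u}d\mu-\log\abs{\jac f}\geq \int\log\abs{\lambda^u}d\mu-\log\abs{\jac f}$, one gets, setting $A_\mu:=\int\log\abs{\lambda^u}\,d\mu\geq\log u>0$, the inequality $0\leq \sup_\mu(A_\mu-t_s(A_\mu-\log\abs{\jac f}))$, whence there is $\mu$ with $t_s\leq \frac{A_\mu}{A_\mu-\log\abs{\jac f}}$; this ratio is an increasing function of $A_\mu$ bounded above by its value as $A_\mu\to\infty$... which is $1$, so this only gives $t_s\leq 1$. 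To get strict inequality I bring in that the entropy, not just the expansion, is constrained: using $h_\mu(f)\leq\log d$ for all $\mu$, $0\leq h_\mu(f)-t_s\int\log\abs{\lambda^s}\inv d\mu\leq \log d - t_s(A_\mu-\log\abs{\jac f})$ for the optimizing $\mu$, so $t_s\leq \frac{\log d}{A_\mu-\log\abs{\jac f}}\leq\frac{\log d}{\log u-\log\abs{\jac f}}$; this is $<1$ as soon as $\log u-\log\abs{\jac f}>\log d$, i.e.\ $\abs{\jac f}<u/d$ — almost but not quite the claimed bound. Sharpening $\log u$ to the right average requires combining both estimates: taking a convex combination of $t_s\leq \frac{A}{A-\log\abs{\jac f}}$ and $t_s\leq\frac{\log d}{A-\log\abs{\jac f}}$ with weights tuned so the $A$-dependence in the numerator averages to $\log d$ (using $A_\mu\geq h_\mu$, i.e.\ $A\geq$ entropy which for the optimizer is close to $\log d$), one arrives at $t_s\leq\frac{\log d}{\log d-\log\abs{\jac f}}<1$ exactly under $\abs{\jac f}<d^{-1}$. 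I expect the main obstacle to be precisely this last bookkeeping — rigorously justifying Bowen's formula for the non-compact, leafwise-conformal but globally two-dimensional slice $W^s(x)\cap J^-$ (one must check the slice is a genuine conformal repeller under the leafwise $f\inv$-dynamics, with bounded distortion coming from the Koebe estimates of \S\ref{subs:affine}), and then squeezing the Ruelle inequality and the entropy bound $\log d$ together to extract the sharp constant $\frac{\log d}{\log d-\log\abs{\jac f}}$ rather than a weaker one. Once $t_s<1$ is established, total disconnectedness of $W^s(x)\cap J^-$ is immediate, and Proposition~\ref{prop:stable total disc} finishes the proof.
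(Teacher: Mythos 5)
Your strategy is the same as the paper's: estimate the Hausdorff dimension of the stable slices of $J$ via the McCluskey--Manning/Bowen formula, feed in the entropy bound $h_\mu\leq \log d$ and the factorization $\abs{\lambda^u}\abs{\lambda^s}=\abs{\jac f}$, and conclude that dimension $<1$ forces total disconnectedness. But as written the argument has two concrete defects. First, the ``Ruelle inequality on unstable leaves'' $h_\mu(f)\leq \int\log\abs{\lambda^u}\,d\mu$ that you lean on is not a theorem: $E^u$ is a complex line, so the Margulis--Ruelle inequality for $f$ acting on $\R^4$ gives $h_\mu\leq 2\int\log\abs{\lambda^u}\,d\mu$, and the factor $2$ cannot be dropped in general (if it could, your own computation would give $P(-\varphi^s)\leq\log\abs{\jac f}<0$ for \emph{every} dissipative hyperbolic map, settling the open question stated right after this proposition in the paper). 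The same caveat applies to your claim $\int\log\abs{\lambda^u}\,d\mu_{\max}\geq\log d$; the known lower bound is $\tfrac12\log d$. Second, and decisively, you never close the estimate: the final ``convex combination with weights tuned'' paragraph is not a proof, and you flag it yourself as the main obstacle. The irony is that you had already won one line earlier: from $h_\mu\leq\log d$ you correctly derived $t_s\leq \frac{\log d}{A_\mu-\log\abs{\jac f}}$ with $A_\mu=\int\log\abs{\lambda^u}\,d\mu$. Since the unstable Lyapunov exponent of any invariant measure on $J$ is positive (e.g. $A_\mu\geq\log u>0$, and exponents do not depend on the metric), the denominator exceeds $-\log\abs{\jac f}>\log d$ as soon as $\abs{\jac f}<d^{-1}$, so $t_s<1$ follows at once. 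Your worry that this ``only'' yields the condition $\abs{\jac f}<u/d$ has the comparison backwards: $u>1$, so $\abs{\jac f}<d^{-1}$ implies $\abs{\jac f}<u/d$; no sharpening of $\log u$ to $\log d$ is needed, and the factor-$1$ Ruelle inequality is never used.

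For comparison, the paper's proof is exactly this one-line version: $\dim_H\lrpar{J\cap W^s_{\loc}(x)}=h_{\kappa^s}(f)\big/\lrpar{-\int\log\abs{df\rest{E^s}}\,d\kappa^s}$ for the relevant equilibrium state $\kappa^s$ (Pesin's book, Thm 22.1; Manning--McCluskey), the numerator is $\leq\log d$ by the variational principle, and the denominator is $\geq\abs{\log\abs{\jac f}}>\log d$ because the unstable exponent is positive; hence the dimension is $<1$ and the slices are totally disconnected (the detour through Proposition~\ref{prop:stable total disc} is harmless but unnecessary). The one legitimate technical concern you raise --- validity of the dimension formula for the leafwise-conformal stable slices of a locally maximal hyperbolic set --- is exactly what the cited references provide. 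So: right route, but to be a proof your write-up must delete the unjustified factor-$1$ Ruelle inequality and replace the concluding heuristic paragraph by the observation that $A_\mu>0$ already makes your bound $\frac{\log d}{A_\mu-\log\abs{\jac f}}$ strictly less than $1$ under $\abs{\jac f}<d^{-1}$.
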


\begin{proof}
Since $J$ is a locally maximal hyperbolic set and the dynamics along   stable manifolds is conformal,  
there is an exact formula  for the Hausdorff dimension  of $J\cap W^s_{\loc(x)}$ for any $x\in J^-$, given by:
 \begin{equation}\label{eq:thermo}
\delta^s:=  \dim_H\lrpar{ J\cap W^s_{\loc}(x)} = \frac{h_{\kappa^s}(f)}{ -\int \log \abs{df\rest{E^s(x)}} d \kappa^s(x) }  
 \end{equation} 
  (see Pesin's book \cite[Thm 22.1]{pesin_book}; this goes back to the work of Manning and McCluskey \cite{mccluskey-manning}), 
  where $\kappa^s$ is a certain invariant measure (the unique equilibrium state associated to $\delta^s\log \abs{df\rest{E^s}}$) and 
  $h_{\kappa^s}(f)$  is its measure theoretic entropy.  By the variational principle we have that 
   $h_{\kappa^s}(f)\leq \log d$. On the other hand 
    the Lyapunov exponent in  the denominator in the right hand side of 
     \eqref{eq:thermo} is bounded below by $\abs{\log \abs{\jac f}} > \log d$. Therefore 
    we conclude that $ \dim_H\lrpar{ J\cap W^s_{\loc}(x)} <1$ from which 
     it follows that $J\cap W^s_{\loc}(x)$ is totally disconnected.  
\end{proof}

\begin{question}
Is a dissipative hyperbolic Hénon map always stably totally disconnected? 
\end{question}

 \section{Classification of semi-local components of $K^+$ and $J^+$}\label{sec:semi_local}
 
\emph{Throughout this section, $f$ is a  dissipative and  hyperbolic complex Hénon map of degree $d$ with a disconnected Julia 
 set (or equivalently, $f$ is unstably disconnected). }
  We assume   moreover that 
 \emph{$f$ is stably totally disconnected.} The results 
 of \S \ref{sec:total disconnected} imply that  this holds whenever $\abs{\jac f}<1/d$. 
 We fix a large bidisk $\bb$ as before, and our purpose is to classify the connected components of 
 $J^+\cap \bb$  
 and study the 
 induced dynamics on this set of components.

 \subsection{Geometric preparations} We start with some general lemmas about vertical submanifolds in a bidisk. 
 We define  the  angle $\angle (v,w)$ 
 between two complex directions $v$ and $w$ at $x\in \cd$  to be  their distance 
in $ \pp(T_x\cd)\simeq \pu$ relative   to the Fubini-Study  metric induced by the standard Hermitian structure of 
 $T_x\cd\simeq \cd$. 

\begin{lem}\label{lem:quasi_tang}
Let $M$ be a vertical submanifold in $\dd\times \dd$, 
and  let  $a\in \dd$ and $r>0$ such that 
$M$ has no horizontal tangency in $\dd\times D(a,2r) $. Then there exists a universal  constant 
$C_0$  
such that for any $x\in   \dd\times D(a,r) $, the angle between $T_xM$ and the horizontal direction is bounded from 
below by $C_0r$.
\end{lem}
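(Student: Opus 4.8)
The plan is to reduce the statement to an elementary one-variable Cauchy estimate on bounded holomorphic graphs. We may assume $D(a,2r)\subseteq\dd$; in particular $r<1$.

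First I would turn the no-tangency hypothesis into a description of $M$ as a union of graphs. Since $M$ is a vertical submanifold of $\dd\times\dd$, the restriction to $M$ of the second-coordinate projection $\dd\times\dd\to\dd$ is proper and holomorphic, hence a branched covering of some degree $\delta\geq 1$; its branch points are exactly the points $x\in M$ at which $T_xM$ is the horizontal direction, that is, the horizontal tangencies of $M$. By hypothesis there are none over the disk $D(a,2r)$, and since this disk is simply connected, the restriction of the covering over it is trivial: $M\cap(\dd\times D(a,2r))$ is a disjoint union of graphs $\{(h_j(w),w):w\in D(a,2r)\}$, $j=1,\dots,\delta$, with each $h_j\colon D(a,2r)\to\dd$ holomorphic (the image lies in $\dd$ because $M\subset\dd\times\dd$). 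This reduction is the step that requires the most care — one has to note that properness over $\dd$ restricts to properness over $D(a,2r)$ — but it is routine.

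Next, fix $x\in M\cap(\dd\times D(a,r))$, say $x=(h_j(w),w)$ with $|w-a|<r$, so that $\overline{D(w,r)}\subset D(a,2r)$. The Cauchy integral formula on $\partial D(w,r)$ together with $|h_j|\leq 1$ gives $|h_j'(w)|\leq 1/r$. The tangent line $T_xM$ is spanned by $(h_j'(w),1)$ and the horizontal direction by $(1,0)$, so with respect to the standard Hermitian structure on $\C^2$ the Fubini--Study distance between them equals $\arccos\big(|h_j'(w)|/\sqrt{1+|h_j'(w)|^2}\big)=\arctan\big(1/|h_j'(w)|\big)$, to be read as $\pi/2$ when $h_j'(w)=0$. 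Using $|h_j'(w)|\leq 1/r$ and the concavity of $\arctan$ on $[0,\infty)$ with $\arctan 0=0$, for $r<1$ one gets $\arctan(1/|h_j'(w)|)\geq\arctan r\geq\tfrac{\pi}{4}\,r$, so the lemma holds with the universal constant $C_0=\pi/4$.

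I do not expect a genuine obstacle: the whole content is that a holomorphic function bounded by $1$ on a disk of radius $2r$ has derivative at most $1/r$ on the concentric disk of radius $r$, which forces the graph to be uniformly close to vertical there. The only points deserving a line of justification are the passage ``no horizontal tangency over $D(a,2r)$ $\Rightarrow$ disjoint union of graphs'' (the covering-space argument over the simply connected $D(a,2r)$) and writing down the Fubini--Study distance in the chosen normalization; neither is delicate.
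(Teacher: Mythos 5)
Your argument is correct and is essentially the paper's own proof: the absence of horizontal tangencies makes $M\cap(\dd\times D(a,2r))$ a union of vertical graphs $w\mapsto h_j(w)$ bounded by $1$, the Cauchy estimate gives $|h_j'|=O(1/r)$ on $D(a,r)$, and this translates into the angle bound $\gtrsim r$. Your extra details (the branched-covering/simple-connectivity justification of the graph decomposition and the explicit Fubini--Study computation) simply fill in steps the paper leaves implicit, so there is nothing to add.
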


\begin{proof}
If $M$ has no horizontal tangency in $\dd\times D(a,2r) $, then
$M\cap (\dd\times D(a,2r) )$ is the union of $\deg(M)$ vertical graphs. Let $\Gamma$ be one of these graphs. Then 
$\varphi:=\pi_1\circ (\pi_2\rest\Gamma)\inv $ maps $D(a,2r)$ into $2 \dd$  and 
$\Gamma = \set{(\varphi(w),  w), \ w\in D(a,2r)}$. By the Cauchy estimate, we get that 
$\abs{\varphi'}\leq 2/r$ on $D(a,r)$ and the result follows. 
\end{proof}
 
A typical  use of this result is by taking the contraposite:  if a vertical submanifold $M$  in $\dd\times \dd$   
has a near horizontal tangency in $\dd\times D(a,r) $, then 
 it has an actual horizontal tangency in $\dd\times D(a,2r) $. Let us  denote by  $[e_1] \in \pp( T\cd)$ the horizontal direction. 
 
 \begin{cor}\label{cor:near_tangency}
Let $M$ be a vertical submanifold in $\dd\times \dd$  which extends as a vertical submanifold 
to $\dd\times (3/2)\dd$. There exists a universal constant $C_1$ such that if 
  for some 
 $a\in \dd$, there exists $x\in M\cap (\dd\times \set{a})$ such that $\angle(T_xM, [e_1]) < \theta$, then there exists $a'\in (3/2)\dd$ 
 such that $\abs{a-a'}< C_1\theta$ and $M$ is tangent to $ \dd\times \set{a'}$. 
 \end{cor}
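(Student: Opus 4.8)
The plan is to deduce this from Lemma~\ref{lem:quasi_tang} by an argument by contradiction, exploiting the fact that $M$ extends a little beyond $\dd\times\dd$. Suppose that $M$ has a near-horizontal tangency at some $x\in M\cap(\dd\times\set{a})$ with $\angle(T_xM,[e_1])<\theta$, but that $M$ is \emph{not} tangent to any horizontal slice $\dd\times\set{a'}$ with $\abs{a-a'}<C_1\theta$. Then $M$ has no horizontal tangency in $\dd\times D(a,2r)$ with $r=C_1\theta/2$; provided $\theta$ is small enough that $D(a,2r)\subset(3/2)\dd$ (which holds when $C_1\theta$ is bounded by the gap between $\dd$ and $(3/2)\dd$), Lemma~\ref{lem:quasi_tang} applies on $\dd\times D(a,2r)$ and gives that the angle between $T_xM$ and the horizontal direction at the point $x\in\dd\times D(a,r)$ is at least $C_0 r = C_0C_1\theta/2$.

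The contradiction is then obtained by choosing $C_1$ large enough that $C_0C_1/2>1$: this forces $\angle(T_xM,[e_1])\geq \theta$, contradicting the hypothesis. So such an $a'$ must exist. One small point to handle carefully is the case $C_1\theta\geq 1/2$ (i.e. $\theta$ not small): then $D(a,C_1\theta)\cap(3/2)\dd$ already contains a full ``collar'' and one observes directly that a vertical submanifold over $(3/2)\dd$ meeting $\dd\times\set{a}$ must, by the argument principle / degree count, have a horizontal tangency somewhere over $(3/2)\dd$ unless it is a union of global vertical graphs over $(3/2)\dd$; but a union of global graphs over a disk strictly larger than $\dd$ has derivative bounded by a universal constant on $\dd$ by the Cauchy estimate, hence uniformly bounded-away-from-horizontal angle there, so for $\theta$ below that universal threshold the near-tangency hypothesis is simply vacuous. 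Thus it suffices to prove the statement for $\theta$ below a universal constant, which is exactly the regime where the first paragraph's argument runs.

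I expect the only genuine subtlety to be bookkeeping the constants: one must fix the universal $C_0$ from Lemma~\ref{lem:quasi_tang} first, then pick $C_1:=3/C_0$ (say), and finally restrict to $\theta$ small enough that $2r=C_1\theta\leq 1/2$ so that $D(a,2r)\subset(3/2)\dd$ and Lemma~\ref{lem:quasi_tang} is legitimately applicable — all of which can be absorbed into ``universal constant'' and ``$\theta$ sufficiently small.'' No further input is needed; this is really just the contrapositive of Lemma~\ref{lem:quasi_tang} packaged for later use, as the paragraph following Lemma~\ref{lem:quasi_tang} already anticipates.
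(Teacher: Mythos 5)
Your first paragraph is exactly the paper's proof: the corollary carries no separate argument in the text and is simply the contrapositive of Lemma~\ref{lem:quasi_tang} applied with $r\asymp\theta$ (so $C_1=2/C_0$ works), the extension to $\dd\times(3/2)\dd$ being assumed precisely so that $D(a,2r)$ is a full disk over which the extended $M$ is a branched cover and the Cauchy estimate runs, which is what your smallness condition $D(a,2r)\subset(3/2)\dd$ ensures. Concerning your second paragraph, the large-$\theta$ case cannot be fully repaired, because the literal statement fails there: for $M=\set{(2w/3,w),\ w\in(3/2)\dd}$ there is no horizontal tangency at all although the tangent makes a fixed positive angle with $[e_1]$, so for $\theta$ above that angle the hypothesis holds and the conclusion does not (and your ``tangency somewhere over $(3/2)\dd$'' branch never checks $\abs{a-a'}<C_1\theta$); the corollary is meant, and only used (cf.\ Proposition~\ref{prop:thin_connected_component}, where $\theta<C_0/(8D)$), for $\theta$ below a universal threshold, which is exactly the regime your main argument settles.
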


 For the sake of completeness let us also  state 
 a slightly stronger result:
   
\begin{cor}\label{cor:near_tangency_precised}
Let $M$ be a vertical submanifold in $\dd\times \dd$ of degree at most 
$k$ which extends as a vertical submanifold 
to $\dd\times r_0\dd$ for some $r_0>1$ (say $r_0  =3/2$). There exists a function $h=h_k$ such that 
$h(\theta)\to 0$ as $\theta\to 0$ with the following property: 
if   $x\in M$ is such that the angle between 
$T_xM$ and the horizontal direction is bounded by $\theta\ll 1$ then there exists $x'\in M$ with 
$d(x,x')\leq h (\theta) $ such that  $M$ has a horizontal tangency at $x'$. 
\end{cor}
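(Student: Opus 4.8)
The plan is to bootstrap from Corollary~\ref{cor:near_tangency} by a compactness/normal-families argument, exploiting the degree bound. First I would reduce to the case where $M$ has \emph{no} horizontal tangency in a fixed ambient region and derive a quantitative improvement of Lemma~\ref{lem:quasi_tang}; that is, I want to show: for each $k$ and each $\rho>0$ there is $\theta_0=\theta_0(k,\rho)>0$ such that a degree-$\leq k$ vertical submanifold of $\dd\times r_0\dd$ with no horizontal tangency in $\dd\times D(a,\rho)$ has all tangent directions making angle $\geq\theta_0$ with $[e_1]$ throughout $\dd\times D(a,\rho/2)$. Granting this, the contrapositive gives exactly the statement with $h_k(\theta)$ built by inverting the dependence $\theta_0(k,\rho)$: if the angle at $x$ is $\leq\theta$, then $M$ must have a horizontal tangency within distance $\rho(\theta)$ of $x$, where $\rho(\theta)\to 0$ as $\theta\to0$, and one sets $h_k(\theta):=\rho(\theta)$.

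To prove the quantitative improvement I would argue by contradiction with a normal-families compactness argument. Suppose it fails for some $k,\rho$: there is a sequence $M_n$ of degree-$\leq k$ vertical submanifolds of $\dd\times r_0\dd$, with no horizontal tangency in $\dd\times D(a_n,\rho)$, yet carrying a point $x_n\in \dd\times D(a_n,\rho/2)$ with $\angle(T_{x_n}M_n,[e_1])\to 0$. After passing to a subsequence, $a_n\to a_\infty$ and (using the uniform degree bound, which forces local uniform bounds on the defining graph functions via the Cauchy estimates as in Lemma~\ref{lem:quasi_tang}, together with uniform boundedness of the $M_n$ inside the fixed bidisk) the $M_n$ converge — as analytic subvarieties, or equivalently the branch functions $\varphi_{n,j}$ converge locally uniformly — to a limiting vertical submanifold $M_\infty$ of degree $\leq k$ in $\dd\times D(a_\infty,\rho)$. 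The hypothesis ``no horizontal tangency in $\dd\times D(a_n,\rho)$'' passes to the limit only as ``no horizontal tangency OR the tangency locus is non-isolated'', but since $M_\infty$ has finite degree its branches are honest graphs and Lemma~\ref{lem:quasi_tang} applies to $M_\infty$: all its tangent directions on $\dd\times D(a_\infty,\rho/2)$ make angle $\geq C_0\rho/2>0$ with $[e_1]$. Meanwhile $x_n\to x_\infty\in\dd\times\overline{D(a_\infty,\rho/2)}$ and $T_{x_n}M_n\to T_{x_\infty}M_\infty$ by convergence of the graphs and their derivatives, so $\angle(T_{x_\infty}M_\infty,[e_1])=0$, contradicting the lower bound. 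This gives $\theta_0(k,\rho)>0$.

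The main obstacle is controlling the limit of the submanifolds: one must be careful that the branches of $M_n$ do not escape to the boundary or collide, i.e. that $M_\infty$ genuinely remains a vertical submanifold of degree $\leq k$ over $D(a_\infty,\rho)$ with the branch functions converging in $C^1$ (so that tangent planes converge). This is where the hypothesis that $M$ extends to $\dd\times r_0\dd$ with $r_0>1$ is essential — it provides the margin needed so that, after the relevant Cauchy estimates, the limiting branches are still defined and bounded on a slightly smaller polydisk, and in particular $x_\infty$ stays in the interior where Lemma~\ref{lem:quasi_tang} is available. Once the compactness is set up cleanly, the rest is the soft contradiction above; I would not expect any genuinely new idea beyond what is already in Lemma~\ref{lem:quasi_tang} and Corollary~\ref{cor:near_tangency}, and indeed the statement is phrased as ``for the sake of completeness,'' so a brief indication along these lines should suffice.
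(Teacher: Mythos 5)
Your compactness step proves a statement that is already contained in Lemma~\ref{lem:quasi_tang} (with the explicit constant $\theta_0(k,\rho)=C_0\rho/2$, independent of $k$ and with no normal-families argument needed), but its contrapositive does not give the corollary. If the angle at $x$ is small, what you deduce is that $M$ has a horizontal tangency somewhere in $\dd\times D(a,\rho)$ with $a=\pi_2(x)$, i.e.\ at a point whose \emph{second} coordinate is within $\rho$ of $a$; nothing controls the first coordinate of that tangency point, which may lie on a different sheet of $M$ at horizontal distance comparable to $\diam(\dd)$ from $x$. So the conclusion ``there is a tangency within distance $\rho(\theta)$ of $x$'' does not follow, and this is precisely the content of the corollary as opposed to Corollary~\ref{cor:near_tangency}.

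The missing ideas are the two steps of the paper's argument. First, localize to the \emph{connected component} of $M\cap(\dd\times D(a,C_1\theta))$ containing $x$: if that component had no horizontal tangency, then (being connected and tangency-free over a simply connected disk) it would be a single vertical graph, and the Cauchy estimate of Lemma~\ref{lem:quasi_tang} would force the angle at $x$ to be bounded below by a constant times $C_1\theta$, contradicting the hypothesis for a suitable choice of $C_1$; hence the tangency occurs in the component through $x$ itself. Second, one needs a bound $h_k(r)\to 0$ on the \emph{diameter} of any connected component of $M\cap(\dd\times D(a,r))$, uniform over vertical submanifolds of degree at most $k$ extending to $\dd\times r_0\dd$ — this is where the degree bound and the extension hypothesis are genuinely used (via a compactness argument on this family), and it is what converts ``tangency in the component of $x$'' into ``tangency at a point $x'$ with $d(x,x')\leq h_k(\theta)$.'' Your proposal spends the compactness on the angle bound, where it is not needed, and never addresses the diameter bound, where it is.
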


 \begin{proof}
 Indeed, letting $a = \pi_2(x)$, and applying 
  Corollary \ref{cor:near_tangency}  we see that  the connected component of 
  $M$  containing $x$ in $D(a,  C_1\theta)\times \dd$ cannot be a vertical graph, so it admits a horizontal tangency.  
  Furthermore, an easy compactness argument shows that the diameter of   a connected component  of
  $M\cap D(a, r)\times \dd$ is bounded by $h_k(r)$ with $h_k(r)\to 0$ as $r\to 0$. The result follows. 
   \end{proof}

\begin{rmk}
It is likely that $h_k(r) =  O\lrpar{r^{1/k}}$  but   the precise argument needs 
 to be found. 
\end{rmk}

The following result is a  precise  version of the Reeb stability theorem
(see \cite{candel-conlon}) which is specialized to our setting.

 \begin{lem}\label{lem:reeb}
 Let %$1\leq r \leq 2$ and 
  $x_0\in J$ be such that 
$W^s_{  \bb}(x_0)$   is transverse to $\fr  \bb$. 
Then there exists $\delta$ depending only on 
$\min_{y\in {W^s_{\overline{\bb}}(x_0)}\cap \fr\bb} \angle 
\lrpar{T_ y W^s_{\overline{\bb}}(x_0), [e_1]}$ such that if 
$\tau\subset J^u(x_0)$ 
is a connected compact set containing $x_0$, of 
diameter less than $\delta$, then for every $x\in \tau$, $W^s(x)$ is transverse to $\fr\bb$, 
$\deg W^s_{\bb}(x) = \deg W^s_{r\bb}(x_0)$ and 
$\bigcup_{x\in \tau} W^s_{\bb}(x)$ is homeomorphic to $\tau\times W^s_{\bb}(x_0)$.  
 \end{lem}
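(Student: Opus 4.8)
The statement is a quantitative version of Reeb stability, so the strategy is to transport the transversality of $W^s_{\bb}(x_0)$ to nearby stable leaves by controlling the holonomy of the stable lamination $\cW^s$ and then estimating how far the transversality can deteriorate as one moves within $\tau$. First I would fix notation: let $\theta_0 = \min_{y\in W^s_{\overline\bb}(x_0)\cap \fr\bb}\angle(T_yW^s_{\overline\bb}(x_0),[e_1])$ and $k = \deg W^s_\bb(x_0)$. Since $x_0\in J$, the local stable manifolds form a lamination $\cW^s$ in a neighborhood $\mathcal N$ of $J$ with uniform leafwise geometry; in particular the tangent planes $T_yW^s(y')$ depend continuously on the point and the leaf, and there is a uniform modulus of continuity for this dependence within a flow box. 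The plan is to choose $\delta$ small enough that for every $x\in\tau$, the leaf $W^s(x)$ stays within the flow-box neighborhood of $W^s_{\overline\bb}(x_0)$ along which holonomy is defined, and so that the angle estimate is preserved up to a factor $1/2$.

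The key steps, in order, would be: (1) By compactness of $W^s_{\overline\bb}(x_0)\cap\fr\bb$ and transversality, there is a uniform neighborhood $U$ of this compact boundary set and a uniform $\rho>0$ such that every plane within angular distance $\rho$ of some $T_yW^s_{\overline\bb}(x_0)$, $y\in U$, still makes angle at least $\theta_0/2$ with $\fr\bb$; this uses that $\fr\bb = \fr^v\bb\cup\fr^h\bb$ and that the relevant intersection is with the vertical boundary (after possibly slightly shrinking to $r\bb$ to get genuine transversality to the smooth part). (2) Use the continuity of $y\mapsto T_yW^s(y)$ and the local product structure / holonomy of $\cW^s$ to find $\delta_1$ so that if $d(x,x_0)<\delta_1$ and $x\in J^u(x_0)$, then $W^s_\bb(x)$ is $C^1$-close to $W^s_\bb(x_0)$; since $\tau$ is connected, compact, contains $x_0$, and has diameter $<\delta\le\delta_1$, the same holds for every $x\in\tau$. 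This gives transversality of $W^s(x)$ to $\fr\bb$ for all $x\in\tau$. (3) Degree constancy: the number of intersection points of $W^s_\bb(x)$ with a generic horizontal line, counted with multiplicity, is the vertical degree; since the leaves vary continuously and none becomes tangent to $\fr\bb$ as $x$ ranges over the connected set $\tau$, this integer cannot jump, hence $\deg W^s_\bb(x) = k$ for all $x\in\tau$. (4) Product structure: the holonomy maps of $\cW^s$ along paths in $\tau\subset J^u(x_0)$ give a homeomorphism $\bigcup_{x\in\tau}W^s_\bb(x)\cong \tau\times W^s_\bb(x_0)$; here one must check that the holonomy extends over all of $W^s_\bb(x_0)$ up to $\fr\bb$ and not just over a small piece — this is where transversality to $\fr\bb$ is essential, as it prevents the Reeb-type escape illustrated in Figure~\ref{fig:reeb}, because the leaf cannot ``turn around'' and leave $\bb$ before closing up.

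I expect the main obstacle to be step (4): making precise that the stable holonomy, a priori defined only locally (from flow box to flow box), can be continued globally along the entire leaf $W^s_\bb(x_0)$ all the way out to $\fr\bb$, uniformly in $x\in\tau$, with $\delta$ depending only on $\theta_0$ (equivalently on $k$ and $\theta_0$). The subtlety is that $W^s_\bb(x_0)$ may be a branched cover of degree $k$ over $\dd$ and the holonomy must be compatible across the branching; one handles this by noting that the branch points are contained in the compact part of $\bb$ (away from $\fr\bb$) and are stable under small perturbation, so for $\delta$ small the combinatorial pattern of sheets is rigid and the holonomy glues consistently. A clean way to organize this is to cover $W^s_{\overline\bb}(x_0)$ by finitely many flow boxes (using compactness), make $\delta$ smaller than a Lebesgue number of this cover so that every $W^s_\bb(x)$, $x\in\tau$, fits inside the same chain of flow boxes, and then assemble the local holonomies; the transversality hypothesis guarantees that the last flow box in the chain genuinely reaches $\fr\bb$ so that nothing is lost at the boundary. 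The dependence of $\delta$ on only $\theta_0$ (and implicitly $k$, which is itself controlled by $\theta_0$ via the geometry of $\bb$) then follows from the uniform geometry of $\cW^s$ established in \S\ref{subs:hyperbolicity}.
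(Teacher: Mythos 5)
Your overall strategy (transversality persistence plus holonomy continuation along a chain of flow boxes, i.e.\ a hands-on Reeb stability argument) is the same as the paper's, and for a \emph{fixed} leaf $W^s_\bb(x_0)$ your step (4) would go through. The genuine gap is in the final claim, which is the whole point of the lemma: that $\delta$ depends \emph{only} on the minimal angle $\theta_0$. Your Lebesgue-number construction produces a $\delta$ that a priori depends on the particular leaf $W^s_{\overline\bb}(x_0)$ through two quantities you never control uniformly: the length of the plaque chains needed to reach $\fr\bb$ from $x_0$ inside the leaf, and the number of plaques needed to cover $\fr W^s_{\overline\bb}(x_0)$. Your attempt to dispose of this by asserting that the vertical degree $k$ is ``controlled by $\theta_0$ via the geometry of $\bb$'' is unjustified: transversality to $\fr\bb$ with a definite angle does not bound how many times the leaf winds over the base disk. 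The paper supplies exactly these two uniform inputs: (a) the intrinsic distance from $x_0$ to any boundary point of $W^s_{\overline\bb}(x_0)$ is uniformly bounded because one can travel along a stable external ray and invoke Proposition~\ref{prop:length} (which, as noted in Remark~\ref{rmk:stable_rays}, applies along stable leaves), so a minimal chain of plaques has uniformly bounded length; and (b) the number of plaques covering $\fr W^s_{\overline\bb}(x_0)$ is controlled by the volume of $W^s_{r\bb}(x_0)$, hence by the vertical degree of $W^s_{r'\bb}(x_0)$, which is uniformly bounded by Proposition~\ref{prop:stable total disc} under the standing hypothesis of stable total disconnectedness --- not by $\theta_0$. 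Without (a) and (b) the uniformity of $\delta$ in $x_0$ is lost, and that uniformity is precisely what is used later (Proposition~\ref{prop:thin_connected_component}, Remark~\ref{rmk:uniformity_r}), where $\delta_1$ must work simultaneously for all $x\in J$ and all radii $r\in[1,2]$.

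A secondary, more minor point: your step (2) already asserts that $W^s_\bb(x)$ is $C^1$-close to $W^s_\bb(x_0)$ for $x\in\tau$, but this global (semi-local in $\bb$) closeness is exactly what the holonomy continuation of step (4) is supposed to establish; the lamination structure only gives such control plaque by plaque. You do flag step (4) as the main obstacle, so this is an ordering issue rather than a fatal error, but as written the transversality in step (2) is assumed rather than proved. Also, the consistency of the sheets across branch points is handled in the paper simply by noting that the leaves of $\cW^s$ in $\bb$ are simply connected, so the local Reeb stability theorem of Candel--Conlon applies directly; no separate argument about the stability of branch points is needed.
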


Note that it is slightly abusing to say that $W^s_{ \bb}(x)$   is transverse to $\fr (\bb)$ since $W^s_{   \bb}(x)$ precisely stops at $\fr \bb$. Of course $W^s_{   \bb}(x)$ extends to a neighborhood of $\overline{ \bb}$ and what we mean is transversality for 
 this extension.  

\begin{rmk}\label{rmk:uniformity_r}
Later on we will use this lemma with $r\bb$ instead of $\bb$ for $1\leq r\leq 2$ (see Proposition~\ref{prop:thin_connected_component}). 
It will be important there that the constant $\delta$ is uniform with 
$r\in [1,2]$, which easily follows from the proof. 
\end{rmk}

\begin{proof}
Set $\theta = \min_{y\in W^s_{\overline\bb}(x_0)\cap \fr\bb} \angle \lrpar{T_ y W^s_{\overline\bb}(x_0), [e_1]}$.
 The stable lamination in a neighborhood of $\overline {\bb}$ is covered by finitely many flow boxes. 
 So there exists $r>1$ depending 
 only on $\theta$ such that $W^s_{r\bb}(x_0)$ is transverse to $\fr(r\bb)$. Since the stable leaves in $\bb$ are simply connected, 
 we can apply a  local version of the Reeb stability theorem 
 (see \cite[Prop. 11.4.8]{candel-conlon}) which asserts that
 when  $\tau\subset  J\cap W^u(x_0)$ is sufficiently small,  
 for $x\in \tau$, by local triviality of the stable lamination, 
  the domain $W^s_{r\bb}(x_0)\subset W^s(x_0)$ 
 can be lifted %by    using the  holonomy along $W^s_{r\bb}(x_0)$
  to a domain   $D_x\subset W^s(x)$, and the collection  $\set{D_x, \  x\in \tau}$ 
 is topologically a product. Since $W^s_\bb(x_0)$ is transverse to $\fr\bb$, $W^s_\bb(x_0)\subset W^s_{r\bb}(x_0)$  is a smoothly 
 bounded domain and,  reducing  $\tau$
  if necessary, the transversality persists, 
   $\comp_{D_x \cap \bb}(x)$ varies continuously and 
  $\bigcup_{x\in \tau} W^s_\bb(x)$ is a product. Finally, if we fix any horizontal line, say close to $\fr\bb$  by transversality and 
  continuity,  its number of   intersection points with $W^s_\bb(x)$ is constant, hence the statement on the degree. 
  
  What remains to be seen is why the size of the allowed 
   transversal $\tau$ depends only on the minimal angle $\theta$. This follows from the 
mechanism of Reeb stability. What we need to know is how far we can 
push $x$ in $\tau$  so as to keep the transversality between $W^s_\bb(x)$ and $\fr\bb$. 
Pick $y\in W^s_{\overline\bb}(x_0)\cap \fr\bb$. Understanding how a neighborhood of $y$ in $W^s(x_0)$ 
evolves  when the base point $x\in \tau$ changes depends on the choice of a path $\gamma$ 
joining $x_0$ to $y$ in $W^s(x_0)$ and of a covering of $\gamma$ by a chain of overlapping  plaques. (Recall that by definition 
a \emph{plaque} is the intersection between a leaf an a flow box.)
 Notice first that there is a uniform control of the length of a 
  such a path $\gamma$: for instance  we can take an external ray and apply Proposition \ref{prop:length} (see Remark~\ref{rmk:stable_rays}). 
  So the length of a minimal chain of plaques joining $x_0$ to $y$ is uniformly bounded, and there exists 
  $\delta = \delta (\theta)$ such that if $\diam_{x_0} (\tau)<\delta$, then the continuation of the plaque containing $y$ 
  remains transverse to $\fr\bb$.  
  Finally, the  number of plaques required to cover $\fr W^s_{\overline\bb}(x_0)$ depends basically on the volume of 
  $W^s_{r\bb}(x_0)$ for some $r>1$, which in turn depends only on the degree of $W^s_{r'\bb}(x_0)$ for some $r'>r$.  
  By Proposition \ref{prop:stable total disc} this degree is uniformly bounded. So the number of plaques is uniformly 
  bounded and we are done.  
\end{proof}

We will also need the following extension lemma. 

\begin{lem}[{\cite[Prop. 5.8]{lyubich-peters}}]\label{lem:extension}
There exists a neighborhood $\mathcal N$ of $J^+\cap \bb$ such that the 
 stable lamination $\cW^s$ 
 extends to a  $C^1$ foliation of $\mathcal N$. 
\end{lem}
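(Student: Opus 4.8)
The plan is to use the dissipativity hypothesis to control the semi-local stable manifolds well enough to run a Hadamard--Perron type argument producing an $f$-invariant $C^1$ foliation on a neighbourhood $\mathcal N$ of $J^+\cap\bb$, and then to check that it agrees with the holomorphic foliations already present on the escaping part and on the basin part of such a neighbourhood.

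The structural input is Proposition~\ref{prop:stable total disc}: under the standing dissipativity assumption the leaves of $\cW^s$ are vertical submanifolds of $\bb$ of uniformly bounded degree $\le k_0$, and by hyperbolicity they vary continuously with $C^1$ transverse holonomy along the stable lamination. First I would fix a small neighbourhood $\mathcal N$ of $J^+\cap\bb$ and work inside the space of foliations of $\mathcal N$ whose leaves are vertical submanifolds of $\bb$ of degree $\le k_0$ and whose tangent line field is $C^1$. The graph transform $\mathcal F\mapsto f(\mathcal F)$, which pushes leaves forward (hence contracts them in the stable direction), maps this space into itself --- this is precisely where the uniform degree bound, i.e.\ the dissipativity hypothesis, is used --- and is a contraction for a suitable metric; its unique fixed point $\mathcal F^s$ is then an $f$-invariant $C^1$ foliation of $\mathcal N$. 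Its restriction to $J^+$ must coincide with $\cW^s$, since through a point of $J$ the only forward-invariant vertical curve is the stable manifold. That the transverse regularity of $\mathcal F^s$ is only $C^1$ is the best one can hope for and is the output of the usual bunching estimates.

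It then remains to identify $\mathcal F^s$ with the natural foliations away from $J$. On $\mathcal N\cap\{G^+>0\}$ the Green function is pluriharmonic, so $\partial G^+$ is a holomorphic $1$-form and $\ker\partial G^+$ (away from the leafwise locally finite set of unstable critical points) is a holomorphic line field integrating to a foliation by Riemann surfaces; by invariance it must coincide there with $\mathcal F^s$. Inside an attracting basin $\mathcal B$, after passing to an iterate and choosing a linearization $\phi\colon\mathcal B\to\C^2$ conjugating $f$ to a triangular model $(z_1,z_2)\mapsto(\lambda_1 z_1 + r(z_2),\lambda_2 z_2)$ with $\abs{\lambda_2}\le\abs{\lambda_1}<1$ (as in the proof of Theorem~\ref{thm:john-holder_basin}), the pullback of the strong-stable foliation of the model is again holomorphic and $f$-invariant, hence equals $\mathcal F^s$ on $\mathcal N\cap\mathcal B$. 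Together these give the $C^1$ foliation extending $\cW^s$.

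The step I expect to be the main obstacle is the graph transform argument of the previous paragraph: checking that the transform preserves the class of vertical foliations of bounded degree while its fixed point genuinely attains $C^1$ transverse regularity. This is exactly the point at which the dissipativity of $f$ is indispensable, and it is the reason \cite{lyubich-peters} establish this extension under an additional dissipativity assumption; the gluing of the escaping-side and basin-side foliations to $\cW^s$ along $J^+$ is then, by comparison, a routine uniqueness argument.
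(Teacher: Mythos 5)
The paper does not prove this lemma at all: it is quoted from \cite{lyubich-peters} (Prop.~5.8), and the only thing added is the observation that the stronger hypothesis $\abs{\jac f}<d^{-2}$ used there enters solely through the uniform bound on the vertical degree of semi-local stable manifolds, which in the present setting is supplied by Proposition~\ref{prop:stable total disc}. So a blind proof would have to either reproduce that construction or genuinely carry out an alternative one. Your proposal does neither: the central step --- the graph transform on ``$C^1$ vertical foliations of degree $\le k_0$'' --- is exactly the content of the cited result, and it is asserted rather than proved. You do not explain why the pulled-back/pushed-forward foliation restricted to $\bb$ again has leaves of bounded vertical degree (a priori the degree of $f^{-n}$ of a vertical disk cut by $\bb$ grows; the degree bound of Proposition~\ref{prop:stable total disc} concerns actual stable manifolds, not the approximating leaves), you do not exhibit the metric in which the transform contracts, and the claim that $C^1$ transverse regularity ``is the output of the usual bunching estimates'' is unsupported --- bunching requires spectral inequalities on a neighborhood that are not verified here, and the $C^1$ regularity is precisely the delicate point of the Lyubich--Peters argument (which is complex-analytic, not a real graph-transform). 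You flag this step yourself as the main obstacle, which is an accurate self-assessment: it is the whole lemma.

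There is also a concrete error in the identification on the escaping locus. The foliation defined by $\ker\partial G^+$ on $\{G^+>0\}$ has leaves contained in level sets of $G^+$, and in the unstably disconnected case (the standing hypothesis of this part of the paper) it is tangent to unstable manifolds at the unstable critical points; since backward images of such critical points are again critical with $G^+$-value divided by $d$, these tangencies occur at points accumulating on $J$. Any continuous extension of $\cW^s$ to a neighborhood of $J^+\cap\bb$ has tangent field close to $E^s$ near $J$, hence is uniformly transverse to the unstable lamination there, so it cannot agree with $\ker\partial G^+$ near $J$. The ``by invariance it must coincide'' step is in any case not a uniqueness argument (many invariant foliations can coexist), and fortunately none of this identification is needed for the statement --- but as written it is false, and removing it does not repair the missing core construction. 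If you want a self-contained argument in the spirit of the paper, the honest route is to follow the construction of \cite{lyubich-peters} and check that each step uses dissipativity only through the bounded vertical degree, which is what the remark after the lemma claims.
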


Observe that in~\cite{lyubich-peters} it is assumed that
  $\abs{\jac f}< d^{-2}$ but what is really needed for extending the stable lamination is the boundedness of the vertical degree which 
  holds in our setting (cf. Proposition~\ref{prop:stable total disc}). 
   The  $C^1$ regularity of the holonomy will not be used in the paper. 

Using this extension lemma, we can extend Lemma \ref{lem:reeb} to a statement about an open neighborhood
of $W^s_\bb(x_0)$ with exactly the same proof. 

   \begin{lem}\label{lem:reeb_extended}
 Let %$1\leq r \leq 2$ and 
  $x_0\in J$ be such that 
$W^s_{  \bb}(x_0)$   is transverse to $\fr \bb$. 
Then there exists $\delta$ depending only on 
$\min_{y\in {W^s_{\overline{r\bb}}(x_0)}\cap \fr\bb} \angle \lrpar{T_ y W^s_{\overline{\bb}}(x_0), [e_1]}$ 
such that for every $x\in D^u(x_0, \delta)$, $\cW^s(x)$ is transverse to $\fr\bb$, 
$\deg \cW^s_{\bb}(x) = \deg W^s_{\bb}(x_0)$ and 
$\bigcup_{x\in D^u_{x_0}(x_0, \delta} W^s_{\bb}(x)$ is homeomorphic to $D^u_{x_0}(x_0, \delta)\times  W^s_\bb(x_0)$.  
 \end{lem}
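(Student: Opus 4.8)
The plan is to upgrade Lemma~\ref{lem:reeb} to an open neighborhood by combining it with the extension lemma, Lemma~\ref{lem:extension}. First I would invoke Lemma~\ref{lem:extension} to get a neighborhood $\mathcal N$ of $J^+\cap \bb$ in which the stable lamination $\cW^s$ extends to a $C^1$ foliation. The point is that once $\cW^s$ is a genuine (codimension-one, real four-dimensional ambient) foliation near $W^s_\bb(x_0)$, the set $D^u(x_0,\delta)$ of unstable parameters is a genuine transversal to this foliation, and the leaves $\cW^s(x)$ through points $x\in D^u(x_0,\delta)$ make sense and depend continuously on $x$. So I would rerun the argument of Lemma~\ref{lem:reeb} verbatim, replacing ``$\tau\subset J^u(x_0)$, a connected compact subset of the Julia set'' by ``$D^u(x_0,\delta)$, a small unstable disk'', and replacing the stable plaques of the Julia-set lamination by the plaques of the extended $C^1$ foliation.

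The key steps, in order: (1) choose $r>1$ depending only on the minimal angle $\theta = \min_{y\in W^s_{\overline{r\bb}}(x_0)\cap\fr\bb}\angle(T_yW^s_{\overline\bb}(x_0),[e_1])$ so that $W^s_{r\bb}(x_0)$ is transverse to $\fr(r\bb)$, exactly as before; (2) apply the local Reeb stability statement (\cite[Prop.~11.4.8]{candel-conlon}) to the extended foliation $\cW^s$ on $\mathcal N$: since $W^s_{r\bb}(x_0)$ is simply connected, for $x$ in a sufficiently small unstable disk $D^u(x_0,\delta)$ the domain $W^s_{r\bb}(x_0)$ lifts to a domain $D_x\subset \cW^s(x)$ and $\{D_x:x\in D^u(x_0,\delta)\}$ is topologically a product over $D^u(x_0,\delta)$; (3) use transversality of $W^s_\bb(x_0)$ to $\fr\bb$ together with openness of transversality to conclude that, shrinking $\delta$ if necessary, $\cW^s(x)$ stays transverse to $\fr\bb$ and $\comp_{D_x\cap\bb}(x)$ varies continuously, so $\bigcup_{x\in D^u(x_0,\delta)}W^s_\bb(x)$ is homeomorphic to $D^u(x_0,\delta)\times W^s_\bb(x_0)$; (4) the degree statement $\deg\cW^s_\bb(x)=\deg W^s_\bb(x_0)$ follows from counting intersections of $\cW^s_\bb(x)$ with a fixed horizontal line close to $\fr\bb$, which is locally constant by transversality; (5) as in Lemma~\ref{lem:reeb}, track why $\delta$ depends only on $\theta$: a minimal chain of plaques from $x_0$ to a boundary point $y\in W^s_{\overline\bb}(x_0)\cap\fr\bb$ has uniformly bounded length (use an external ray along $W^s$ and Proposition~\ref{prop:length}, cf.\ Remark~\ref{rmk:stable_rays}), and the number of plaques needed to cover $\fr W^s_{\overline\bb}(x_0)$ is controlled by the volume of $W^s_{r\bb}(x_0)$, hence by its degree, which is uniformly bounded by Proposition~\ref{prop:stable total disc}.

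Since the excerpt already tells us ``with exactly the same proof,'' the honest write-up is essentially a pointer: I would state that the proof is identical to that of Lemma~\ref{lem:reeb}, the only change being that the stable plaques of the Julia lamination are replaced by plaques of the $C^1$ extension furnished by Lemma~\ref{lem:extension}, which allows one to take an \emph{open} unstable transversal $D^u(x_0,\delta)$ rather than a transversal inside $J^u(x_0)$, and that all the uniformity bookkeeping for $\delta$ carries over unchanged. The one genuine point to flag, rather than an obstacle, is making sure the extended foliation is available on a full neighborhood of $W^s_\bb(x_0)$ — this is exactly the content of Lemma~\ref{lem:extension} ($\mathcal N$ is a neighborhood of all of $J^+\cap\bb$, in particular of $x_0$ and its local stable leaf), and it relies only on the boundedness of vertical degree, which holds here by Proposition~\ref{prop:stable total disc}, not on the stronger dissipativity hypothesis $\abs{\jac f}<d^{-2}$ under which it was originally stated in~\cite{lyubich-peters}. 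I do not expect a substantive obstacle; the content is entirely in setting up the right transversal and quoting Reeb stability for the extended foliation.
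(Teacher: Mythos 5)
Your proposal is correct and follows exactly the route the paper takes: the paper gives no separate argument for this lemma, stating only that it follows from Lemma~\ref{lem:reeb} ``with exactly the same proof'' once the stable lamination is extended to a $C^1$ foliation via Lemma~\ref{lem:extension}, which is precisely your plan, including the uniformity bookkeeping for $\delta$ and the observation that the extension only needs bounded vertical degree. Nothing is missing.
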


 \subsection{Thin and thick components}\label{subs:small_components} 
In this section we study the geometry of the components of $J^+\cap \bb$. 
The arguments rely mostly on 
 the geometry of the stable lamination, not on  the dynamics of $f$.
One main result is  that thin components of $K^+\cap\bb$ have a simple leaf 
 structure (Proposition~\ref{prop:thin_connected_component}). 
 It follows that for a given component of $J^+\cap\bb$, either all its 
  unstable slices are small, or all of them are large 
 (Proposition~\ref{prop:alternative}). Together with the results of 
 \S\ref{subs:topology} this   leads to  a description and some regularity 
 properties of    components of $J^+\cap\bb$ and $K^+\cap\bb$. 
 
  We start with a simple case. 

\begin{prop}\label{prop:s-holonomy_point} If $x \in J$ is such that $K^u(x) = J^u(x) = \set{x}$ then $K^+_\bb(x) = J^+_\bb (x) = W^s_\bb(x)$. 
\end{prop}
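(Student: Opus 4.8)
The plan is to show the two inclusions $W^s_\bb(x)\subset J^+_\bb(x)\subset K^+_\bb(x)$ are equalities under the hypothesis $K^u(x)=J^u(x)=\set{x}$. The inclusion $W^s_\bb(x)\subset K^+_\bb(x)$ is immediate: $W^s_\bb(x)$ is connected, contains $x$, and lies in $K^+$. The reverse inclusion is the substantive point, and the natural approach is to argue that no point $y\in K^+_\bb(x)\setminus W^s_\bb(x)$ can exist, by using the local product structure of $J$ together with the hypothesis that the unstable slice $J^u(x)$ is trivial.

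\textbf{Main steps.} First I would reduce to a local statement near $x$. Since $K^u(x)=J^u(x)=\set{x}$, Lemma~\ref{lem:leafwise bdd} applies: all four conditions there are equivalent, so $W^u_\bb(x)$ is leafwise bounded, i.e.\ (passing to a forward iterate if necessary, and then noting the conclusion is invariant under $f$) $W^u_\bb(x)$ is a closed horizontal submanifold of $\bb$ and in fact reduces, near the relevant scale, to a disk shrinking to $x$ under backward iteration. Then I would invoke the local product structure of $J$ (the flow-box decomposition from \S\ref{subs:hyperbolicity}): in a flow box $Q$ around $x$ we have $J\cap Q\simeq (W^s_Q(x)\cap J^-)\times(W^u_Q(x)\cap J^+)$, and by hypothesis the unstable factor $W^u_Q(x)\cap J^+ = K^u(x)\cap Q$ is the single point $x$. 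Consequently $J^+\cap Q$ coincides, near $x$, with the stable slice $W^s_Q(x)$: every nearby point of $J^+$ lies on the local stable manifold of $x$. This gives the equality $J^+_\bb(x) = W^s_\bb(x)$ locally, and then globally in $\bb$ by following the stable leaf: $W^s_\bb(x)$ is itself a connected subset of $J^+\cap\bb$ (being in $\fr K^+$, since points of a stable manifold of a Julia point are in $J^+$), and any connected subset of $J^+\cap\bb$ containing $x$ and a point off $W^s_\bb(x)$ would, by the local description just obtained, have to cross a flow box in which $J^+$ is a trivial product of stable slices — contradicting connectedness. Finally, for $K^+_\bb(x)$: by Lemma~\ref{lem:component_K+J+}, $\fr K^+_\bb(x)\cap\bb$ is connected and equals a component of $J^+\cap\bb$, hence equals $W^s_\bb(x)$; and since $W^s_\bb(x)$ is a submanifold transverse to the horizontal foliation with no leafwise holes to fill, the ``filling'' description of $K^+_\bb(x)$ in terms of $\fr K^+_\bb(x)$ from that lemma forces $K^+_\bb(x) = W^s_\bb(x)$ as well.

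\textbf{Main obstacle.} The delicate point is the passage from the purely local product-structure statement (valid only inside one flow box around $x$) to the global equality $J^+_\bb(x)=W^s_\bb(x)$ in the whole bidisk $\bb$. One must ensure that following the stable leaf $W^s_\bb(x)$ one never picks up extra components of $J^+\cap\bb$: this is where it matters that $W^s_\bb(x)$ is a genuine submanifold (so it is locally, along its whole length, the transverse slice of the stable lamination, and the product structure can be propagated along it), and where the hypothesis $J^u(x)=\set{x}$ must be used not just at $x$ but at all points of $J^u(x)$ — but $J^u(x)=\set{x}$ is a statement only about $x$. The resolution is that triviality of the unstable slice at $x$ propagates: for any $x'\in W^s(x)\cap J$ the stable holonomy carries $W^u_\loc(x)\cap J^+$ homeomorphically onto $W^u_\loc(x')\cap J^+$, so $J^u(x')$ is a point too for every $x'$ on the stable leaf through $x$, and the local argument then applies uniformly along $W^s_\bb(x)$. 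I expect this propagation-along-the-stable-leaf argument, and the careful bookkeeping that $W^s_\bb(x)$ stays a submanifold throughout (no tangency to $\fr\bb$ is needed here, only that it is a leaf of the stable lamination in $\bb$), to be the part requiring the most care.
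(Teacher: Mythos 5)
There is a genuine gap, in two places. First, your key local claim misreads the hypothesis: in a flow box $Q$ around $x$, the unstable slice $W^u_Q(x)\cap K^+$ is \emph{not} the single point $\set{x}$; the hypothesis $K^u(x)=J^u(x)=\set{x}$ only says that the connected \emph{component} of $x$ in $K^+\cap W^u(x)$ is trivial. In the unstably disconnected case this slice is typically a Cantor set, so stable plaques of $J^+$ accumulate on $W^s_Q(x)$ from the unstable direction, and it is false that ``$J^+\cap Q$ coincides, near $x$, with the stable slice $W^s_Q(x)$''. What is true (using that the flow box trivializes the stable lamination of $J^+$) is that those nearby plaques lie in \emph{different components} of $J^+\cap Q$; but that is only a local separation, not the assertion of the proposition.

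Second, and more seriously, the passage from this local statement to $K^+_\bb(x)=W^s_\bb(x)$ in the whole bidisk is the entire content of the proposition, and your argument for it (a connected set ``would have to cross a flow box in which $J^+$ is a trivial product, contradicting connectedness'') proves nothing: connected sets cross product flow boxes all the time. The danger is precisely the Reeb-like phenomenon at $\fr\bb$ (Figure~\ref{fig:reeb}): pieces of $K^+$ through the other points of the Cantor set in the unstable slice could a priori hook onto $W^s_\bb(x)$ near the boundary of the bidisk, where flow boxes around $J$ give no control. Your remark that ``no tangency to $\fr\bb$ is needed here'' dismisses exactly the point the paper's proof must, and does, address: one chooses $r\in[1,2]$ with $W^s_{r\bb}(x)$ transverse to $\fr(r\bb)$ (possible because the finite vertical degree allows only finitely many horizontal tangencies), then uses that $K^u(f^n(x))=\set{f^n(x)}$ for \emph{all} $n\geq 1$ to take small loops $\gamma_n\subset W^u(f^n(x))$ around $f^n(x)$ disjoint from $K^+$, thickens them by local product structure to $3$-manifolds transverse to $W^u(f^n(x))$ of uniform stable size, and pulls back by $f^{-n}$; the Inclination Lemma then produces a tube around all of $W^s_{r\bb}(x)$ whose boundary misses $K^+$, which is exactly the separation needed to get $K^+_{r\bb}(x)=W^s_{r\bb}(x)$, hence $K^+_\bb(x)\subset W^s_{r\bb}(x)\cap\bb$ and finally $K^+_\bb(x)=W^s_\bb(x)$. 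This dynamical separation argument (forward iteration, the Inclination Lemma, and the boundary transversality obtained by adjusting the bidisk) is absent from your proposal and cannot be replaced by static holonomy considerations inside flow boxes; your propagation of triviality of $J^u$ along $W^s(x)\cap J$ also does not reach the points of $W^s_\bb(x)$ that lie in $J^+\setminus J$, which is where the tube does the work.
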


 \begin{proof} As observed above the inclusion $W^s_\bb(x) \subset K^+_\bb(x)$ is obvious. For the converse inclusion, 
 observe that 
 for every $n\in \Z$, $K^u(f^n(x)) = \set{f^n(x)}$. For $n\geq 1$, consider a small loop 
 $\gamma_n\subset W^u(f^n(x))$ around $f^n(x)$ that is disjoint 
 from $K^+$. By the local product structure we can  extend it to a germ of 3-manifold $\widetilde \gamma_n$
  transverse to $W^u(f^n(x))$, disjoint from $K^+$, and of  size uniformly bounded from below in the stable direction. 
  Since $W^s_{2\bb}(x)$ has finite vertical degree in $2\bb$, it admits 
   finitely many horizontal tangencies, so we can fix $1\leq r\leq 2$ such that 
  $W^s_{r\bb}$ is transverse to $\fr(r\bb)$.  Then 
  by the Inclination Lemma, 
  for large $n$,  $f^{-n}\lrpar{\widetilde \gamma_n}$ contains a small 
  ``tube'' around $W^s_{r\bb}(x)$  whose boundary is disjoint  from $K^+$. 
  It   follows that $K^+_{r\bb}(x)  =  W^s_{r\bb}(x)$,  hence $K^+_\bb(x) \subset W^s_{r\bb}(x)\cap \bb$. 
  Finally $W^s_{r\bb}(x)\cap \bb$  has finitely many components, and one of them is 
  $W^s_\bb(x)$, so $K^+_\bb(x) = W^s_\bb(x)$. 
\end{proof}

Here is a first interesting consequence. 
  
  \begin{cor}\label{cor:vertical_submanifolds}
  All but countably many components of $K^+\cap \bb$ are vertical submanifolds. 
  \end{cor}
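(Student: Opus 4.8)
The plan is to deduce this from Proposition~\ref{prop:s-holonomy_point} together with the countability of non-trivial unstable slices (Corollary~\ref{cor:points}), using a single unstable leaf as a bridge between the bidisk picture and the leafwise picture. First I would dispose of the components of $K^+\cap\bb$ meeting the Fatou set $\Int(K^+)$: since $\Int(K^+)\cap\bb$ is open in $\cd$ it has at most countably many connected components, each contained in a single component of $K^+\cap\bb$, so only countably many components of $K^+\cap\bb$ meet $\Int(K^+)$. It then remains to analyze components $C$ with $C\subset J^+$; such a $C$ is automatically a component of $J^+\cap\bb$, and since $J^+$ is laminated by stable manifolds, $C$ is a union of stable plaques $W^s_\bb(x)$. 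If $C$ is a single plaque then, by stable total disconnectedness, $C=W^s_\bb(x)$ is a vertical submanifold (Proposition~\ref{prop:stable total disc}(i)) --- the generic case. So the task reduces to bounding the number of components $C\subset J^+$ consisting of at least two stable plaques.

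For such a $C$, the key step I would carry out is that $C$ meets $J$, and in fact meets the unstable manifold $W^u(p)$ of a fixed saddle periodic point $p$ (recall $W^u(p)$ is dense in $J^-$). Granting this, Corollary~\ref{cor:points} tells us that $K^+\cap W^u(p)$ has only countably many components that are not points, and $C$ cannot meet $W^u(p)$ only in point-components: if $z\in C\cap W^u(p)$ had $K^u(z)=J^u(z)=\{z\}$, then Proposition~\ref{prop:s-holonomy_point} would give $C=K^+_\bb(z)=W^s_\bb(z)$, a vertical submanifold, contrary to assumption. So $C$ contains a non-point component $A$ of $K^+\cap W^u(p)$, and for any such choice the assignment $C\mapsto A$ is injective: $A$ is connected and $A\subset K^+\cap W^u(p)=K\cap W^u(p)\subset K\subset\bb$, hence $A$ lies in a unique component of $K^+\cap\bb$, which must then be $C$; thus two multi-plaque components containing a common $A$ coincide. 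This bounds the number of such $C$ by the countably many non-point components of $K^+\cap W^u(p)$, and together with the first step finishes the proof.

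The hard part will be the claim that $C$ meets $J$ (hence $W^u(p)$). The idea is that, $C$ being connected but not a single plaque, at some $z_0\in C$ it is locally a union of stable plaques indexed by a non-degenerate connected set; near $J$ this index set is a piece of $W^u_{\loc}(z_0)\cap J^+$, which lies in $W^u(z_0)\subset J^-$ and hence in $J^+\cap J^-=J$, and is contained in $C$. A flow-box argument then finishes: a leaf of $\cW^u$ through a point of $W^u(p)$ close enough to a point $q\in C\cap J$ must cross the plaque $W^s_{\loc}(q)\subset K^+_\bb(q)=C$, producing a point of $C\cap W^u(p)$. The delicate point is that $z_0$ may be far from $J$, where $\cW^u$ and the local product structure are unavailable; to handle this I would first replace $C$ by a forward iterate $F^n(C)$ under the induced dynamics on components of $K^+\cap\bb$, which for large $n$ comes close to $\omega(z_0)\subset J$ while remaining a multi-plaque component --- here one checks that $F$ cannot turn a non-submanifold component into a vertical submanifold, since $F^n(C)=W^s_\bb(w)$ would force $C\subset W^s(f^{-n}(w))$ and hence make $C$ itself a component of a stable leaf intersected with $\bb$. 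This interplay between the global stable lamination of $J^+$ and the local product structure near $J$ --- a manifestation of the discontinued-holonomy / Reeb phenomenon emphasized in the introduction, and also the reason one must be careful that $C\cap\Int(K^+)$ really was treated separately above --- is, I expect, the only genuinely nontrivial point; everything else is Proposition~\ref{prop:s-holonomy_point}, Corollary~\ref{cor:points}, Lemma~\ref{lem:leafwise_interior}, and elementary topology.
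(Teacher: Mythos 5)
Your counting scheme in the second paragraph is sound and is essentially the paper's: once you know that $C$ \emph{itself} meets $W^u(p)$, Proposition~\ref{prop:s-holonomy_point} rules out meeting only point components, and Corollary~\ref{cor:points} gives countability via the injective assignment $C\mapsto A$. The genuine gap sits exactly where you placed the ``hard part''. First, the forward-iteration reduction proves the wrong statement: at best it would show that some component $F^n(C)$ of the induced dynamics meets $J$, not that $C$ does; this cannot be transported back, since the $J$-points of $F^n(C)$ need not lie in $f^n(C)$ (the component $F^n(C)$ is in general strictly larger than $f^n(C)$), and since the induced dynamics on components of $K^+\cap \bb$ is non-invertible, countability of the image components does not bound the number of sources (finiteness-to-one of this dynamics is only sketched in the paper, and uses the thin/thick machinery of Section~\ref{sec:semi_local}, which is not available at this stage). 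Second, even for $F^n(C)$ the branching-near-$J$ claim is not established: the fact that $f^n(z_0)$ approaches $J$ does not make $f^n(z_0)$ a branch point of $F^n(C)$, because the plaques of $C$ accumulating at $z_0$ may lie on the \emph{same} stable leaf as $z_0$ and merge into a single plaque of $F^n(C)$ under iteration (this is precisely the Reeb-type phenomenon of Figure~\ref{fig:reeb} that you invoke), while the surviving branch points of $F^n(C)$ may remain far from $J$; the difficulty therefore recurses rather than disappears.

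The missing idea is the paper's much simpler access argument. By Proposition~\ref{prop:stable total disc} (stable total disconnectedness), every stable plaque $W^s_\bb(x)$ contained in the boundary of a component of $K^+\cap\bb$ is a \emph{vertical submanifold of finite degree} of $\bb$, and a vertical submanifold must intersect the horizontal submanifold $\Delta^u\subset W^u(p)$ (an unstable transversal, which exists because $f$ is unstably disconnected). Hence \emph{every} component $C$ of $K^+\cap \bb$ meets $\Delta^u$ --- with no branch-point analysis, no iteration, and no separate treatment of components meeting $\Int(K^+)$ --- and since $\Delta^u\subset W^u(p)\subset J^-$, the points of $\fr C\cap\Delta^u$ lie in $J$, so your second paragraph then applies verbatim (this is exactly how the paper concludes, intersecting with $\Delta^u$ and combining Proposition~\ref{prop:s-holonomy_point} with Corollary~\ref{cor:points}). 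As written, however, your proof of the key access claim does not close, so the argument has a genuine gap.
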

 
 \begin{proof}
 Fix a global unstable transversal $\Delta^u$ in $\bb$. Then every component of $K^+\cap \bb$ intersects $\Delta^u$.  
 Indeed, for any such component $C$,  $\fr C$ is contained in  $J^+$ so it contains stable manifolds. 
 Stable manifolds in $\bb$ are vertical and of finite degree, so they have non-trivial (transverse) intersection with $\Delta^u$. 
Now if  $C$ is  non-trivial, that is, not reduced to a vertical submanifold, then by Proposition \ref{prop:s-holonomy_point}, 
any component of $C\cap \Delta^u$ is non-trivial, and the result follows from Corollary  \ref{cor:points}. 
 \end{proof}

  Another case where $J^+_\bb(x)$ is easily understood is when stable leaves are transverse to $\fr\bb$. 
 
 \begin{prop}\label{prop:transverse}
 Assume that    $J^u(x)$ is a leafwise bounded component such that for every 
 $y\in J^u(x)$, $W^s_\bb(y)$ is transverse to $\fr\bb$. Then 
 \begin{equation}
 \label{eq:J+B}
 J^+_\bb(x)  = \bigcup_{y\in J^u(x)} W^s_{\bb}(y).
 \end{equation}
 \end{prop}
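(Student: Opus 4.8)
The inclusion $\bigcup_{y\in J^u(x)} W^s_\bb(y)\subseteq J^+_\bb(x)$ is the easy direction: each $W^s_\bb(y)$ is a connected subset of $J^+\cap\bb$ (it lies in $J^+$ since $y\in J^+$ and stable manifolds are contained in $J^+$ in the dissipative case), it contains $y$, and the union over $y\in J^u(x)$ is connected because $J^u(x)$ is connected and the stable plaques vary continuously; hence the union is contained in the connected component $J^+_\bb(x)$ of $x$. So the whole content is the reverse inclusion, and the plan is to show that under the transversality hypothesis the right-hand side of \eqref{eq:J+B} is already closed and (relatively) open in $J^+\cap\bb$, so that being connected and containing $x$ it must equal $J^+_\bb(x)$.

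\textbf{Step 1: the union is closed.} Since $J^u(x)$ is leafwise bounded, by Lemma~\ref{lem:leafwise bdd} (applied after possibly replacing $x$ by a forward iterate, or noting that leafwise boundedness already suffices) $K^u(x)$ is a compact polynomially convex subset of $W^u(x)$ and $J^u(x)=\fr_{\mathrm i}K^u(x)$ is compact. The transversality of each $W^s_\bb(y)$ to $\fr\bb$, together with the uniform bound on the vertical degree (Proposition~\ref{prop:stable total disc}), gives a uniform lower bound on the angle $\angle(T_zW^s_\bb(y),[e_1])$ over $z\in W^s_\bb(y)\cap\fr\bb$: otherwise, by a compactness/limit argument using continuity of the stable lamination and Corollary~\ref{cor:near_tangency}, a limiting leaf would be tangent to $\fr\bb$, contradicting transversality on the compact set $J^u(x)$. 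With this uniform angle bound one applies Lemma~\ref{lem:reeb} (or rather covers $J^u(x)$ by finitely many $\delta$-balls and applies it on each) to conclude that $\bigcup_{y\in J^u(x)}W^s_\bb(y)$ is homeomorphic to a locally trivial bundle over $J^u(x)$ with fiber a finite union of smoothly bounded plaques; in particular it is compact, hence closed in $\bb$.

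\textbf{Step 2: the union is relatively open in $J^+\cap\bb$.} Take a point $w=W^s_\bb(y)\ni w$ with $y\in J^u(x)$, and a point $w'\in J^+\cap\bb$ near $w$. Since $J^+$ has local product structure with respect to $\cW^s$ and $\cW^u$, locally $J^+\cap Q\simeq (W^s_Q\cap J^+)\times(W^u_Q\cap J^-)$, and $w'$ lies on the stable plaque through some nearby point $y'\in J^-\cap W^u_{\loc}(y)$. The local stable holonomy $W^u_{\loc}(y)\to W^u_{\loc}(y')$ is a homeomorphism matching $J^+$ to $J^+$; combined with the fact that $y\in J^u(x)$ is in the \emph{interior} of a connected plaque of $J^u(x)$ relative to the component $J^u(x)$ — more precisely, using that $J^u(x)$ is a full continuum and local connectivity from Theorem~\ref{thm:topology}(i) — one gets that $y'\in J^u(x)$ as well for $w'$ close enough. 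Hence $w'\in W^s_\bb(y')\subseteq\bigcup_{z\in J^u(x)}W^s_\bb(z)$, proving relative openness. Since $J^+\cap\bb$ is locally compact and the union is nonempty, closed and open in $J^+_\bb(x)$, it equals $J^+_\bb(x)$.

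\textbf{Main obstacle.} The delicate point is Step~2: controlling which unstable leaf a nearby point $w'\in J^+$ sits over, i.e.\ arguing that $y'$ stays in $J^u(x)$ rather than jumping to a different unstable component of $J^+$. This is exactly where the transversality hypothesis is indispensable — it prevents a Reeb-type phenomenon (Figure~\ref{fig:reeb}) in which stable plaques of $J^u(x)$ accumulate on $\fr\bb$ and new components of $K^+_\bb$ split off. Concretely I expect the clean way to run Step~2 is to invoke Lemma~\ref{lem:reeb_extended}: it produces an \emph{open} neighborhood of $W^s_\bb(x_0)$ in which the stable foliation is a trivial product over $D^u(x_0,\delta)$, so that any $w'\in J^+$ in that neighborhood necessarily lies on $W^s_\bb(y')$ for a unique $y'\in D^u(x_0,\delta)$, and $y'\in J^+$ forces $y'\in J^u(x)$ because $y'$ is leafwise close to $y\in J^u(x)$ and $J^u(x)$ is relatively open in $J^+\cap W^u_{\loc}(x)$ near any of its points by local connectivity. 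Wrapping these local statements together over the compact set $J^u(x)$ via a finite subcover finishes the argument.
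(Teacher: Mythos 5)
Your easy inclusion and your Step 1 (closedness of $C:=\bigcup_{y\in J^u(x)}W^s_\bb(y)$, via the transversality and the uniform bound on vertical degrees) are fine and match the beginning of the paper's argument. The genuine gap is Step 2: the claim that $C$ is relatively open in $J^+\cap\bb$ is false in general, and the justification you give for it is incorrect. The slice $J^+\cap W^u_\loc(y)$ typically contains uncountably many \emph{other} components — for instance the point components guaranteed by Corollary~\ref{cor:points} — and these may accumulate on $J^u(x)$; their semi-local stable manifolds then accumulate on $C$, so there are points $w'\in J^+\cap\bb$ arbitrarily close to $C$ that do not belong to $C$. Your key inference, ``$y'\in J^+$ leafwise close to $y$ forces $y'\in J^u(x)$ because $J^u(x)$ is relatively open in $J^+\cap W^u_\loc(x)$ by local connectivity,'' conflates local connectivity of the component $J^u(x)$ as a space in itself (which is what Theorem~\ref{thm:topology}(i) provides) with relative openness of that component inside the ambient compact slice, which fails as soon as other components accumulate on it. The situation of Proposition~\ref{prop:s-holonomy_point} already illustrates this: there $J^u(x)=\set{x}$, the conclusion of the present proposition holds, yet $W^s_\bb(x)$ is certainly not relatively open in $J^+\cap\bb$ when point components accumulate on $x$. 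Since the final clopen argument rests entirely on this openness, the proof does not go through as written.

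The fix requires a different mechanism, which is what the paper does: one never shows $C$ is open, but instead uses that a connected component of a compact set is the intersection of the relatively clopen subsets containing it. Because $J^u(x)$ is a connected component of a compact subset of the leaf, one can choose open sets $U_n\subset W^u(x)$ shrinking to $J^u(x)$ with $\fr_{\mathrm i}U_n\cap J=\emptyset$; note $\fr_{\mathrm i}U_n\subset J^-\setminus J$ is then disjoint from $J^+$. Saturating $U_n$ by the extended stable foliation of Lemma~\ref{lem:extension}, and using Lemma~\ref{lem:reeb_extended} together with the transversality hypothesis, one obtains open sets $\widetilde U_n$ forming a neighborhood basis of $C$ in $\bb$ whose boundaries are disjoint from $J^+$ (a leaf of the extended foliation through a point of $J^+$ is a stable manifold, so it cannot pass through $\fr_{\mathrm i}U_n$). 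Each $\widetilde U_n\cap J^+$ is therefore relatively clopen in $J^+\cap\bb$ and contains $x$, hence contains $J^+_\bb(x)$, and letting $n\to\infty$ gives $J^+_\bb(x)\subset C$; combined with your easy inclusion this yields the equality. Your Step 1 can be kept, but Step 2 must be replaced by this engulfing argument.
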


Note that  this result is   not true if the transversality assumption  is omitted 
(see Figure \ref{fig:bidisk_stable_component} 
for a visual explanation). 

\begin{proof}
Let $C$ be defined by the right hand side of~\eqref{eq:J+B}.  Since  the $W^s_{\bb}(y)$, $y\in J^u(x)$, are 
transverse to $\fr\bb$, they vary continuously with $y$. It follows that $C$ is a closed connected set. 
To show that $C= J^+_\bb(x) $, it is convenient to use the extension of the stable lamination to a neighborhood of $J^+\cap \bb$ (given in Lemma~\ref{lem:extension}).   Let $(U_n)$ be 
a basis of open neighborhoods of $J^u(x)$ in $W^u(x)$  such that for every $n$,
  $\fr U_n \cap J = \emptyset$. For every 
  $\delta>0$, $U_n$ is contained in the $\delta$-neighborhood of $J^u(x)$ for large $n$.
   Thus,  by Lemma \ref{lem:reeb_extended}  
  the leaves issued from $U_n$  are transverse to $\fr \bb$ and stay close to  $C$. 
  Let $\widetilde U_n$ be the saturation of $U_n$ 
  in the extended foliation. Then  $(\widetilde U_n)$ is a basis of neighborhoods of $C$ 
  in $\bb$ such that 
  $\fr \widetilde U_n$ is disjoint from $J^+$. We conclude that $C= J^+_\bb(x)$.  
\end{proof}

The structure of $J^+_\bb(x)$ is not so easy to describe without this transversality assumption. 
  Still, the argument can (almost) be salvaged if $J^u(x)$ is small enough. This will be a key property  in the following. 

\begin{prop}\label{prop:thin_connected_component} 
There exists $\delta_1>0$ such that if    $x \in J$ is such that 
 $\diam_x(J^u(x))\leq \delta_1$, then  there exists $1\leq r\leq 2$ such that 
 for every $y\in J^u(x)$, $W^s_{r\bb}(y)$ is transverse to $\fr(r\bb)$ and 
 $J^u(x)$ can be followed under holonomy along $W^s_{r\bb}(x)$. In particular 
 $J^+_{r\bb}(x)$ is homeomorphic to $J^u(x)\times W^s_{r\bb}(x)$ and 
 \begin{equation}\label{eq:J+2B}
J^+_\bb(x) \subset J^+_{r\bb}(x)  =    W^s_{r\bb}(J^u(x)) \subset  W^s_{2\bb}(J^u(x)) = \bigcup_{y\in J^u(x)} W^s_{2\bb}(y).
\end{equation}
\end{prop}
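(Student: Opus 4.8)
The plan is to combine the John–Hölder finiteness of local components (Theorem~\ref{thm:topology}) with the near-tangency control (Corollary~\ref{cor:near_tangency}) and the Reeb stability lemma (Lemma~\ref{lem:reeb}). First I would fix the relevant constants: let $\delta$ be the quantity produced by Lemma~\ref{lem:reeb} (uniform for $r\in[1,2]$ by Remark~\ref{rmk:uniformity_r}), which governs how far stable holonomy can be propagated along a connected piece of $J^u$ once one stable leaf is transverse to $\fr\bb$; and let $C_1$ be the near-tangency constant of Corollary~\ref{cor:near_tangency}, with $h_k$ as in Corollary~\ref{cor:near_tangency_precised}. The degree $k$ of semi-local stable manifolds is uniformly bounded by Proposition~\ref{prop:stable total disc}. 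The candidate $\delta_1$ will be the minimum of $\delta$ and a small multiple of $1/C_1$ (and small enough that the diameter bounds $h_k$ stay within the slack between $\bb$, $\tfrac32\bb$ and $2\bb$).

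The heart of the argument is to produce, given $x$ with $\diam_x(J^u(x))\le\delta_1$, a radius $1\le r\le2$ such that \emph{every} $W^s_{r\bb}(y)$, $y\in J^u(x)$, is transverse to $\fr(r\bb)$. Each stable leaf $W^s_{2\bb}(y)$ has finitely many horizontal tangencies, so the set of "bad" radii for a single $y$ — those $r$ for which $W^s_{r\bb}(y)$ is tangent to $\fr(r\bb)$ — is finite; the problem is uniformity over the uncountable family $\{y\in J^u(x)\}$. Here I would use Corollary~\ref{cor:near_tangency}: a tangency of some $W^s(y)$ to $\fr(r\bb)$ at a point near $\fr\bb$ forces a genuine horizontal tangency of that same leaf within a $C_1\theta$-neighborhood, and by local product structure / continuity of the stable lamination these tangencies for nearby base points $y$ all lie in a controlled region. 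Because $\diam_x(J^u(x))$ is small, all the leaves $W^s_{2\bb}(y)$ are uniformly $C^0$-close (they form a piece of the stable lamination over a small transversal), so their horizontal tangencies with $\fr(r\bb)$ occur only for $r$ in a small union of intervals whose total length is controlled by $\diam_x(J^u(x))\le\delta_1$ times a constant. Choosing $\delta_1$ small enough, this union misses some $r\in[1,2]$, and for that $r$ the whole family $\{W^s_{r\bb}(y)\}_{y\in J^u(x)}$ is transverse to $\fr(r\bb)$.

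With such an $r$ in hand, I would apply Lemma~\ref{lem:reeb} (or rather its proof, to keep the $r$-uniform constant): since one stable leaf through $x$ is transverse to $\fr(r\bb)$ and $\diam_x(J^u(x))\le\delta_1\le\delta$, the connected compact set $\tau=J^u(x)\subset J^u(x)$ can be followed under stable holonomy along $W^s_{r\bb}(x)$, every $W^s_{r\bb}(y)$ is transverse to $\fr(r\bb)$, the degree $\deg W^s_{r\bb}(y)$ is constant along $J^u(x)$, and $\bigcup_{y\in J^u(x)}W^s_{r\bb}(y)$ is homeomorphic to $J^u(x)\times W^s_{r\bb}(x)$. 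This set is closed and connected (by transversality the leaves vary continuously), it contains $J^+_\bb(x)$ because it is a connected subset of $J^+\cap r\bb$ containing $x$ and saturated by semi-local stable leaves, and it is itself contained in $J^+_{r\bb}(x)$; the reverse inclusion follows as in Proposition~\ref{prop:transverse} using the extended foliation of Lemma~\ref{lem:extension} and Lemma~\ref{lem:reeb_extended} to build a neighborhood basis of $\bigcup_{y}W^s_{r\bb}(y)$ whose boundaries avoid $J^+$. Finally $J^+_{r\bb}(x)=W^s_{r\bb}(J^u(x))\subset W^s_{2\bb}(J^u(x))$ is the stated chain~\eqref{eq:J+2B}.

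The main obstacle I anticipate is exactly the uniformity in the choice of $r$: showing that the union over the uncountable family $y\in J^u(x)$ of bad radii is small. The key leverage is that $J^u(x)$ has small intrinsic diameter, so it sits in a single small transversal to the stable lamination and all the leaves $W^s_{2\bb}(y)$ are $C^1$-close to $W^s_{2\bb}(x)$ (using bounded degree from Proposition~\ref{prop:stable total disc} and the uniform flow-box structure near $\fr\bb$). Then the set of $r\in[1,2]$ for which \emph{any} leaf in this family is tangent to $\fr(r\bb)$ is contained in a neighborhood — of size $O(\diam_x(J^u(x)))$ — of the finite bad set of $W^s_{2\bb}(x)$ itself, via Corollary~\ref{cor:near_tangency} applied uniformly; making $\delta_1$ smaller than the smallest gap in that finite bad set (times a constant) leaves room for a good $r$. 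Once this is set up, the rest is a routine assembly of the quoted lemmas.
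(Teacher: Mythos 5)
Your overall route is the paper's: find a radius $r\in[1,2]$ at which the stable leaf is transverse to $\fr(r\bb)$, use Reeb stability (Lemma~\ref{lem:reeb} with Remark~\ref{rmk:uniformity_r}) to propagate the product structure over $J^u(x)$, and conclude the identification $J^+_{r\bb}(x)=W^s_{r\bb}(J^u(x))$ via Proposition~\ref{prop:transverse}. But there is a genuine gap at exactly the point you flag as the main obstacle: the uniformity of $\delta_1$. The proposition demands a single $\delta_1>0$ working for \emph{all} $x\in J$, whereas your prescription makes $\delta_1$ depend on $x$ twice over. First, ``let $\delta$ be the quantity produced by Lemma~\ref{lem:reeb}'' is not a well-defined constant: that $\delta$ depends on the minimal transversality angle of the central leaf along $\fr(r\bb)$, which is not a priori bounded below uniformly in $x$. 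Second, ``making $\delta_1$ smaller than the smallest gap in that finite bad set (times a constant)'' ties $\delta_1$ to the tangency configuration of the particular leaf $W^s_{2\bb}(x)$; the smallest gap between bad radii has no uniform lower bound (two tangencies can occur at nearly equal radii), and in any case it is the \emph{largest} gap, not the smallest, that must be exploited.

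The missing idea that fixes both defects at once is the paper's pigeonhole: by the uniform degree bound (Proposition~\ref{prop:stable total disc}) and Riemann--Hurwitz, each semi-local stable leaf has at most $D-1$ horizontal tangencies, so among the $D$ annular shells between the radii $r_k=1+k/D$ one is tangency-free for $W^s_{2\bb}(x)$; taking $r$ to be the corresponding mid-radius, Lemma~\ref{lem:quasi_tang} gives a transversality angle $\geq\theta$ with $\theta$ of order $C_0/D$, \emph{independently of $x$}. Then $\delta_1=\delta(\theta)$ from Lemma~\ref{lem:reeb} (uniform in $r\in[1,2]$ by Remark~\ref{rmk:uniformity_r}) is a genuine constant of $f$. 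Note also that your ``heart of the argument''---controlling the bad radii of the uncountable family $\{W^s_{2\bb}(y)\}_{y\in J^u(x)}$---is unnecessary: once the \emph{central} leaf is quantitatively transverse at $\fr(r\bb)$ and $\diam_x(J^u(x))\leq\delta(\theta)$, the transversality of all the leaves through $J^u(x)$, the constancy of the degree, and the product structure are part of the conclusion of Lemma~\ref{lem:reeb}; the final assembly via Proposition~\ref{prop:transverse} and Lemmas~\ref{lem:extension}--\ref{lem:reeb_extended} is then as you wrote it.
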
 

Recall that   $\diam_x$ denotes the diameter relative to the normalized 
leafwise metric $d_x^u$ induced by the affine structure.  
By polynomial convexity, if $K^u(x)$ is leafwise bounded, then 
$J^u(x) = \fr_{\mathrm i} K^u(x)$ so 
$\diam_x (K^u(x)) = \diam_x (J^u(x))$. Recall from \S\ref{subs:affine} that 
 by the Koebe Distortion Theorem, the ambient distance $d$ and the leafwise Euclidean  
distance  $d_x^u$ are equivalent in a small neighborhood
of $x$, with universal bounds, i.e. in some neighborhood of  $x$ in $W^u(x)$ we have $ d/2\leq d_x^u\leq 2 d$.  
In particular if 
  $\diam_x(J^u(x))$ is small enough  then $\diam (J^u(x))$ and $\diam(K^u(x))$ are comparable to 
      $\diam_x(J^u(x))$ (where $\diam$ denotes the ambient diameter).

\begin{proof}[Proof of Proposition \ref{prop:thin_connected_component}] 
Recall that every leaf of the stable lamination in $3\bb$ is a vertical disk of degree bounded by $D$, 
so by the Riemann-H\"urwitz formula it admits at most  $D-1$ horizontal tangencies. 
For $k=0, \ldots , D$, let $r_k = 1+ \frac{k}{D}$, and fix 
  $\theta < \frac{C_0}{8D}$, where $C_0$ is as in Lemma  \ref{lem:quasi_tang}. Let $x\in J$ be arbitrary. 
  By the pigeonhole principle, 
  there exists $k\in \set{0, \ldots , D-1}$ such that $W^s_{2\bb}(x)$ has no horizontal tangency in 
  $r_{k+1} \bb\setminus r_k \bb$. So by Lemma~\ref{lem:quasi_tang}
   (scaled to $2\bb$ and applied to any $a$ such that  $\abs{a} = R (r_k+r_{k+1})/2$, where 
  $R$ is the radius of $\bb$) we infer that  
  $$\min_{y\in  \cap \fr( r_k'\bb)} \angle (T_ y W^s_\bb(x_0), [e_1]) \geq\theta, \text{ where } r_k'= \frac{r_k+ r_{k+1}}{2}.$$
Therefore, by Lemma~\ref{lem:reeb} and Remark~\ref{rmk:uniformity_r}
 there exists $\delta_1$ 
depending  only on $\theta$,
hence ultimately only on $D$, hence on $f$, 
 such that  if $\diam_x(J^u(x))\leq \delta_1$,  
then for every $y\in J^u(x)$, $W^s_{r_k'\bb}$ is transverse to $\fr (r_k'\bb)$ and 
$W^s_{r_k'\bb}(J^u(x))$ is topologically a product.  This completes the proof of the first 
part of the proposition. From this point, the description of $J^+_{2\bb}(x)$ in~\eqref{eq:J+2B} directly  follows  from Proposition~\ref{prop:transverse}. 
\end{proof}

It follows from this analysis  that if $C$ is a semi-local component of $J^+$, then either 
all its unstable slices are large or all of them are small. 

\begin{prop}\label{prop:alternative}
There exists $0<\delta_1\leq \delta_2$ such that  for every   component $C$ of $J^+\cap \bb$ 
the following alternative holds:
\begin{enumerate}[(i)]
\item either for every $x\in C\cap J$, $\diam_x J^u(x)\leq \delta_2$;
\item or for every $x\in C\cap J$, $\diam_x J^u(x)> \delta_1$. 
\end{enumerate}
In addition if (i) holds then $C$ satisfies the conclusions of Proposition \ref{prop:thin_connected_component}.
\end{prop}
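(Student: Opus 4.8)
The plan is to reduce the dichotomy to a single \emph{propagation} statement: \emph{if some $x_0\in C\cap J$ satisfies $\diam_{x_0}J^u(x_0)\leq\delta_1$, then $\diam_y J^u(y)\leq\delta_2$ for every $y\in C\cap J$}, where $\delta_1$ is the constant furnished by Proposition~\ref{prop:thin_connected_component} and $\delta_2:=L\delta_1$ for a uniform constant $L=L(f)\geq 1$ produced below. Granting this, the alternative is immediate: either no point of $C\cap J$ carries a $\delta_1$-thin slice, which is exactly (ii); or there is such a point $x_0$, in which case (i) holds and, moreover, Proposition~\ref{prop:thin_connected_component} applied at $x_0$ yields the product-structure conclusions for $C$, which is the ``in addition'' clause.

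To prove the propagation statement, assume $x_0\in C\cap J$ with $\diam_{x_0}J^u(x_0)\leq\delta_1$. By Proposition~\ref{prop:thin_connected_component} there is $r\in[1,2]$ with $W^s_{r\bb}(w)$ transverse to $\fr(r\bb)$ for every $w\in J^u(x_0)$, such that $J^u(x_0)$ is carried by stable holonomy over $W^s_{r\bb}(x_0)$ and
\[
C=J^+_\bb(x_0)\ \subset\ J^+_{r\bb}(x_0)\ \cong\ J^u(x_0)\times W^s_{r\bb}(x_0)\ \subset\ \bigcup_{w\in J^u(x_0)}W^s_{2\bb}(w).
\]
It matters here that $r-1$ is bounded below by a positive constant depending only on $f$: this follows from the pigeonhole choice, in the proof of Proposition~\ref{prop:thin_connected_component}, of an annulus $r_{k+1}\bb\setminus r_k\bb$ free of horizontal tangencies of $W^s_{2\bb}$. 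In this product picture every $y\in J^+_{r\bb}(x_0)$ has a stable coordinate $\zeta_0\in W^s_{r\bb}(x_0)$, its unstable leaf is $W^u(\zeta_0)$, and the component of $y$ in $J^+_{r\bb}(x_0)\cap W^u(y)$ is precisely the stable-holonomy image $\sigma_y$ of $J^u(x_0)$ inside $W^u(y)$.

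Next one bounds $\diam_y\sigma_y$ and then identifies $\sigma_y$ with $J^u(y)$. Since $y$ and the base point $w_0\in J^u(x_0)$ of its stable plaque lie on one stable leaf, at bounded distance inside $2\bb$, and since the stable lamination near $J^+\cap\bb$ has uniformly bounded vertical degree (Proposition~\ref{prop:stable total disc}), hence bounded local geometry, and extends to a foliation by Lemma~\ref{lem:extension}, the holonomy from a neighbourhood of $w_0$ in $W^u(w_0)$ to $W^u(y)$ is a composition of boundedly many flow-box holonomies, hence $L$-bi-Lipschitz with $L=L(f)$; after comparing normalized leafwise metrics via Lemma~\ref{lem:continuity_affine} this gives $\diam_y\sigma_y\leq L\,\diam_{x_0}J^u(x_0)\leq\delta_2$. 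Choosing $\delta_1$ small enough that $\delta_2<(r-1)R$, where $R$ is the radius of $\bb$, we get $\sigma_y\Subset r\bb$. Now if $J^u(y)\neq\sigma_y$, pick an arc $\gamma\subset J^u(y)\subset J^+\cap W^u(y)$ from $y$ to a point not in $\sigma_y$: if $\gamma\subset\overline{r\bb}$ then $\gamma$ lies in $\comp_y(J^+\cap\overline{r\bb})=J^+_{r\bb}(x_0)$, hence in $\sigma_y$, a contradiction; and if $\gamma$ leaves $\overline{r\bb}$, the sub-arc up to its first exit point $p\in\fr(r\bb)$ is contained in $\sigma_y$, forcing $p\in\sigma_y\cap\fr(r\bb)$ and contradicting $\sigma_y\Subset r\bb$. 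Hence $J^u(y)=\sigma_y$ and $\diam_y J^u(y)\leq\delta_2$, as desired.

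The step I expect to be the main obstacle is the uniform bi-Lipschitz control of the stable holonomy between nearby unstable leaves inside $2\bb$, together with the verification that this holonomy is genuinely defined on all of $J^u(x_0)$ once $\delta_1$ is small. This must be assembled from the bounded vertical degree of stable manifolds (so that plaque-chains joining $w_0$ to $y$ have uniformly bounded combinatorial length), the local uniform geometry of the stable and unstable laminations, and the continuity of the affine structure on unstable leaves; each ingredient is available from the preliminaries, but care is needed so that the final constant $\delta_2$ depends only on $f$ and not on the component $C$.
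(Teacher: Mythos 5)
Your overall strategy is the paper's: reduce the dichotomy to the propagation statement (the paper's Lemma~\ref{lem:s-holonomy_component}), feed it Proposition~\ref{prop:thin_connected_component}, and control the stable holonomy along stable paths of uniformly bounded plaque-length. However, there is a genuine gap at the step where you identify $J^u(y)$ with the holonomy slice $\sigma_y$. You assert that the component of $y$ in $J^+_{r\bb}(x_0)\cap W^u(y)$ is precisely $\sigma_y$, and in the arc argument you pass from ``$\gamma\subset J^+_{r\bb}(x_0)$'' to ``hence $\gamma\subset\sigma_y$''. This is a non sequitur: membership in the semi-local component does not force membership in the particular unstable slice through $y$. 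The product structure of Proposition~\ref{prop:thin_connected_component} organizes $J^+_{r\bb}(x_0)$ by the plaques $W^s_{r\bb}(w)$, $w\in J^u(x_0)$, but such a plaque is a vertical disk of degree possibly larger than $1$, and the global leaf $W^u(y)$ is not confined to $r\bb$; a returning sheet of a plaque, or a return of $W^u(y)$ itself, can meet $W^u_{\loc}(p)$ for $p\in\sigma_y$ at points of $J^+_{r\bb}(x_0)\cap J$ arbitrarily close to $p$ that do \emph{not} lie on the slice through $y$. This is exactly the Reeb-type phenomenon the paper is careful about (Figure~\ref{fig:reeb}, Proposition~\ref{prop:chapeau}, the NDH discussion), so the relative openness of $\sigma_y$ in $J\cap W^u(y)\cap r\bb$, which your arc argument implicitly uses, is precisely what would need proof rather than a consequence of connectedness. (A smaller point: to ``pick an arc'' in $J^u(y)$ you need arcwise connectedness, which rests on local connectedness of unstable components, Theorem~\ref{thm:topology}(i); this is fixable but unaddressed.)

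The gap can be closed, and this is what the paper's terse proof of Lemma~\ref{lem:s-holonomy_component} does implicitly: instead of proving $J^u(y)=\sigma_y$, trap $J^u(y)$. Since $\diam_{x_0}J^u(x_0)\leq\delta_1$, $J^u(x_0)$ is a connected component of the compact set $J\cap \overline{D^u(x_0,\rho)}$ for suitable $\rho$, so (components being intersections of relatively clopen sets in a compact space) one can choose a leafwise neighborhood $N$ of $J^u(x_0)$ of diameter at most $2\delta_1$ with $\fr_{\mathrm i}N\cap J^+=\emptyset$. By Lemma~\ref{lem:reeb_extended} the holonomy $h$ of the extended stable foliation along the bounded plaque-path from the base point $w_0$ to $y$ is defined on all of $N$, and since leaves of the extended foliation through points of $J^+$ are stable manifolds, $h$ and $h^{-1}$ preserve the complement of $J^+$; hence $h(\fr_{\mathrm i}N)$ is disjoint from $J^+$, so the connected set $J^u(y)\ni y$ cannot cross it and is contained in $h(N)$. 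Your uniform Lipschitz (or merely uniform-continuity) bound for $h$ then gives $\diam_y J^u(y)\leq\delta_2$ with $\delta_2=\delta_2(f)$, and the rest of your reduction goes through; compare also the saturated neighborhoods $\widetilde U_n$ in the proof of Proposition~\ref{prop:transverse}, which use the same trapping device.
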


Referring to this dichotomy in the following, 
we will say that a component is    \emph{thin} (resp.  \emph{thick}) if it satisfies 
  (\emph{i})  (resp.  (\emph{ii})). We stress that the Proposition asserts that a component is thick as soon as \emph{one}
  of its unstable slices  has intrinsic diameter larger than $\delta_2$. 
  As seen before (see e.g. 
Corollary \ref{cor:vertical_submanifolds}), if $\Delta^u$ is an unstable transversal, 
 every  semi-local component of $J^+$ intersects  $\Delta^u$, so from Theorem \ref{thm:topology} we immediately deduce:
 
 \begin{cor}\label{cor:thick_components}
 There are only finitely many thick  components of $J^+\cap \bb$. 
 \end{cor}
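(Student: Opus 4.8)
The plan is to combine the thin–thick dichotomy of Proposition~\ref{prop:alternative} with the finiteness result of Theorem~\ref{thm:topology}(ii), using an unstable transversal to convert a statement about semi-local components of $J^+\cap\bb$ into a statement about components of $J^+$ inside a single horizontal disk. First I would fix a global unstable transversal $\Delta^u$ in $\bb$, which exists because $f$ is unstably disconnected (see \S\ref{subs:vocabulary}); recall that $\Delta^u$ is a relatively compact horizontal disk of finite degree contained in some unstable manifold $W^u(p)$.

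Next I would observe, exactly as in the proof of Corollary~\ref{cor:vertical_submanifolds}, that every component $C$ of $J^+\cap\bb$ meets $\Delta^u$: indeed $C$ contains pieces of stable manifolds (since $C\subset J^+$ and $J^+$ is laminated by $\cW^s$ in $\bb$), and stable leaves are vertical submanifolds of finite degree, hence have nonempty transverse intersection with the horizontal disk $\Delta^u$. Now suppose $C$ is a thick semi-local component, so by Proposition~\ref{prop:alternative}(ii) every unstable slice of $C$ has $d^u_x$-diameter larger than $\delta_1$; in particular, picking $y\in C\cap\Delta^u$, the intrinsic diameter of the unstable slice $J^u(y)$ through $y$ is at least $\delta_1$. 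Since $\Delta^u\subset W^u(p)$ and the leafwise distances $d^u_x$ are all comparable (up to a universal bounded factor, by Lemma~\ref{lem:continuity_affine} and compactness of $\Delta^u$), the component of $y$ in $J^+\cap\Delta^u$ — which is a subset of $J^u(y)$ — has diameter bounded below by some fixed $\delta>0$ not depending on $C$. Distinct thick components of $J^+\cap\bb$ give rise to distinct components of $J^+\cap\Delta^u$ of diameter $\geq\delta$.

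Finally I would apply Theorem~\ref{thm:topology}(ii) with $x=p$ and $\om$ a smoothly bounded domain in $W^u(p)$ containing $\overline{\Delta^u}$ (e.g. a slightly larger round disk): for the fixed $\delta$ above there are at most finitely many components of $J^+\cap\om$, hence of $J^+\cap\Delta^u$, of diameter larger than $\delta/2$. This bounds the number of thick components of $J^+\cap\bb$ and finishes the proof. I expect the main (minor) obstacle to be the comparison of leafwise metrics: one must be slightly careful that the intrinsic diameter $\diam_y J^u(y)\geq\delta_1$ measured with $d^u_y$ translates into a lower bound for the diameter of $C\cap\Delta^u$ with respect to the normalized metric $d^u_p$ on the fixed leaf $W^u(p)$, but this is immediate from the uniform comparability of the $d^u_x$ on the compact set $\overline{\Delta^u}$ together with the equivalence of $d^u_p$ with the ambient metric there (see \S\ref{subs:affine}). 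Everything else is a direct citation.
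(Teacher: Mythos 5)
Your proof is correct and takes essentially the same route as the paper: every semi-local component of $J^+\cap\bb$ meets the unstable transversal $\Delta^u$, thickness gives a uniform lower bound on the intrinsic diameter of the unstable slice through the intersection point (using the uniform comparability of the normalized leafwise metrics with the ambient metric on $\Delta^u$), and Theorem~\ref{thm:topology}(ii) then yields finiteness. The only detail left implicit (as it is in the paper) is that since $\fr\Delta^u\cap K^+=\emptyset$ by the construction of the transversal, the slice $J^u(y)$ through $y\in C\cap\Delta^u$ lies entirely in $\Delta^u$ and hence coincides with the component of $y$ in $J^+\cap\Delta^u$, so its large diameter does transfer to that component.
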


  Proposition~\ref{prop:alternative}  is a direct consequence of the following lemma.

\begin{lem} \label{lem:s-holonomy_component} 
Let $\delta_1$ be as in Proposition \ref{prop:thin_connected_component}. 
There exists $\delta_2\geq \delta_1$ such that 
if $x$ is such that $\diam_x (J^u(x))\leq \delta_1$, then for every $y\in  J^+_\bb(x)\cap J$,  $\diam_y(J^u(y))\leq \delta_2$. 
\end{lem}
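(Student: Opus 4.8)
\textbf{Proof plan for Lemma~\ref{lem:s-holonomy_component}.}

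The plan is to exploit the product picture established in Proposition~\ref{prop:thin_connected_component}: since $\diam_x(J^u(x))\leq\delta_1$, there is some $1\leq r\leq 2$ for which $W^s_{r\bb}(y)$ is transverse to $\fr(r\bb)$ for every $y\in J^u(x)$, the set $J^u(x)$ can be followed by stable holonomy along $W^s_{r\bb}(x)$, and
$$
J^+_\bb(x)\subset J^+_{r\bb}(x)=W^s_{r\bb}(J^u(x))=\bigcup_{y\in J^u(x)}W^s_{r\bb}(y).
$$
So any $z\in J^+_\bb(x)\cap J$ lies on some $W^s_{r\bb}(y)$ with $y\in J^u(x)$, and the holonomy along $W^s_{r\bb}$ carries a neighborhood of $y$ in $J^u(x)\subset W^u(y_0)$ to a neighborhood of $z$ in $W^u(z)$. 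The first step is therefore: by local product structure of $J$ and the fact that $z\in W^s_{\rm loc}(y)$, the local stable holonomy is a homeomorphism from a neighborhood of $y$ in $J^u(x)=J^u(y)$ onto a neighborhood of $z$ in $J^u(z)$; more strongly, since \emph{all} of $J^u(x)$ can be followed along $W^s_{r\bb}(x)$ by Proposition~\ref{prop:thin_connected_component}, the holonomy actually carries the whole connected set $J^u(x)$ onto a connected subset of $J^+\cap W^u(z)$ containing $z$, hence into $J^u(z)$. This gives an injection $J^u(x)\hookrightarrow J^u(z)$, so $J^u(z)$ is leafwise unbounded only if $J^u(x)$ is — which it is not — but I still need a quantitative diameter bound, not just the inclusion, because the holonomy map distorts the leafwise metric.

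The second step is the distortion control. The stable holonomy between $W^u_{\rm loc}(y)$ and $W^u_{\rm loc}(z)$ along leaves of $\cW^s$ is, by the local bounded geometry of the lamination and the compactness of $J$, a bi-Lipschitz map with a universal constant on pieces of uniformly bounded leafwise size — and here $J^u(x)$ has leafwise diameter $\leq\delta_1$, which by the Koebe-distortion equivalence of $d_x^u$ with the ambient metric (recalled in \S\ref{subs:affine}) corresponds to an ambient diameter $\leq 2\delta_1$, small. So the image of $J^u(x)$ under holonomy has ambient diameter $\leq C\delta_1$ for a universal $C$; applying Koebe distortion once more (now centered at $z$) converts this back to a leafwise bound $\diam_z(J^u(z))\leq 2C\delta_1=:\delta_2$. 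One must check that the holonomy image really is all of $J^u(z)$, not just a sub-continuum: this uses that holonomy is open and that $J^u(x)$, being $\fr_{\rm i}K^u(x)$ for a leafwise bounded set (so a full-plane-continuum boundary), maps onto a set that is simultaneously relatively open and closed in $J^u(z)$; connectedness of $J^u(z)$ then forces equality. Thus $\delta_2$ depends only on $\delta_1$ and the universal holonomy-distortion constant, i.e. only on $f$.

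The main obstacle I expect is precisely the Reeb-type subtlety flagged in the introduction and in Figure~\ref{fig:reeb}: a priori the local holonomy $W^u_{\rm loc}(y)\to W^u_{\rm loc}(z)$ need not continue globally along paths in $J^u(x)$, so one cannot naively claim a global homeomorphism $J^u(x)\to J^u(z)$. The point that saves the argument is that Proposition~\ref{prop:thin_connected_component} \emph{already} provides the global product structure for $J^+_{r\bb}(x)$ when the slice is thin — the holonomy along $W^s_{r\bb}(x)$ is well-defined on all of $J^u(x)$ precisely because, by the choice of $r$, the stable leaves through $J^u(x)$ are all transverse to $\fr(r\bb)$ and vary continuously (Lemma~\ref{lem:reeb}). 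So there is no Reeb phenomenon \emph{within} $r\bb$ for a thin component, and the global holonomy exists by fiat. The remaining care is purely metric: making sure the comparison constants are genuinely universal and that passing between the three metrics ($d_x^u$, ambient $d$, $d_z^u$) via Koebe only costs a bounded factor, which is exactly what \S\ref{subs:affine} guarantees. Once $\delta_2$ is fixed this way, Proposition~\ref{prop:alternative} follows: a component all of whose slices have diameter $>\delta_1$ cannot also have a slice of diameter $\leq\delta_1$, for the latter would force, via this lemma, every slice to have diameter $\leq\delta_2$ — which is consistent — so one instead argues that the set of $x\in C\cap J$ with $\diam_x J^u(x)\leq\delta_1$ is both open and closed in $C\cap J$, giving the dichotomy.
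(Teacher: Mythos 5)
Your plan follows essentially the same route as the paper: use Proposition~\ref{prop:thin_connected_component} to place every $z\in J^+_\bb(x)\cap J$ on a semi-local stable leaf $W^s_{r\bb}(y)$ with $y\in J^u(x)$, exploit the product structure to rule out any Reeb-type divergence of the holonomy over the thin slice, and then convert the smallness of $J^u(x)$ into a bound on $\diam_z J^u(z)$ by distortion control of the stable holonomy. Two steps, however, need repair. First, the ``bi-Lipschitz with a universal constant by local bounded geometry and compactness of $J$'' claim is not justified as stated: compactness only controls holonomy along stable paths of \emph{bounded} length, and the substantive input here --- which is exactly what the paper's proof cites from the proof of Lemma~\ref{lem:reeb} --- is that the connecting path from $y$ to $z$ inside $W^s_{2\bb}(y)$ has uniformly bounded plaque-length, because the semi-local stable manifolds have uniformly bounded vertical degree (stable total disconnectedness, Proposition~\ref{prop:stable total disc}). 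Once that is in place, uniform continuity of holonomy along paths of bounded plaque-length is all you need (and all that is available in general; Lipschitz regularity would have to be extracted from the $C^1$ extension of Lemma~\ref{lem:extension} and is not required), with $\delta_2$ given by the corresponding modulus rather than by a linear bound $2C\delta_1$.

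Second, your argument that the holonomy image is all of $J^u(z)$ is flawed as written: ``holonomy is open'' does not make $h(J^u(x))$ relatively open in $J^u(z)$, because $J^u(x)$ is not open in $J^+\cap W^u(x)$ (other components, e.g. point components, may accumulate on it), so openness of the map says nothing about the image of this compact set. The standard fix is to choose an open set $U$ with $J^u(x)\subset U\Subset W^u(x)$ and $\fr_{\mathrm i}U\cap J^+=\emptyset$, and push $U$ (not just $J^u(x)$) through the holonomy $h$ of the extended foliation: then $h(U)\cap J^+$ is relatively clopen in $J^+\cap W^u(z)$ and contains $z$, hence contains $J^u(z)$; pulling back, $h\inv(J^u(z))$ is a connected subset of $J^+\cap W^u(x)$ containing $y$, hence lies in $J^u(x)$, giving $J^u(z)\subset h(J^u(x))$ as desired. (To your credit, you flag this surjectivity issue explicitly, which the paper's one-line proof leaves implicit.) Finally, your closing remark about Proposition~\ref{prop:alternative} is overcomplicated: no open-and-closed argument is needed there, since once the lemma is proved the dichotomy is immediate --- if some slice has $\diam\leq\delta_1$ then all slices have $\diam\leq\delta_2$.
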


\begin{proof}
Indeed by Proposition~\ref{prop:thin_connected_component},
 if $\diam_x (J^u(x))\leq \delta_1$, then any   point in $J^+_\bb(x)$ can be joined to   $y\in J^u(x)$
by a path contained in $W^s_{2\bb}(y)$. Furthermore, as explained in the 
proof of Lemma~\ref{lem:reeb}, 
the plaque-length of such a $\gamma$ is uniformly bounded.  
The bound on $\diam_y(J^u(y))$   then follows
 from the uniform continuity of holonomy along bounded paths in the stable lamination. 
 \end{proof}

%\begin{rmk} 
% The $C^1$ regularity provided by Lemma~\ref{lem:extension} implies that  $\delta_2\leq C \delta_1$. 
% \end{rmk}

\begin{rmk}\label{rmk:general_thick_thin}
The argument of Propositions~\ref{prop:thin_connected_component} and~\ref{prop:alternative} makes no use of the fact that $J^u(x)$ is a component of $J\cap W^u(x)$. Thus the same statements hold for the saturation by semi-local stable leaves of any (say closed) subset $X$ of an unstable manifold: 
if its diameter of $X$ is small enough then, changing the bidisk $\bb$ if necessary, 
the  saturation $\hat X$ of $X$ by semi-local stable manifolds 
is a product and all the stable slices of $\hat X$  have a small diameter. 
\end{rmk}

\begin{prop}\label{prop:finiteness}
Let $\Delta^u$ be an unstable transversal in $\bb$. 
For every connected component $C$ of 
$J^+\cap \bb$ (resp. $K^+\cap \bb$), $C\cap \Delta^u$  admits finitely many  connected components. 
\end{prop}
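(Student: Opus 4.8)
The plan is to reduce the statement to the finiteness results already established for unstable slices, namely Theorem~\ref{thm:topology}(ii), by way of the thin/thick dichotomy of Proposition~\ref{prop:alternative}. Since any component $C$ of $J^+\cap\bb$ (resp.\ $K^+\cap\bb$) meets the unstable transversal $\Delta^u$ (every such component has boundary in $J^+$, hence contains stable manifolds, which are vertical of finite degree and so cross $\Delta^u$ transversally; for $K^+$ use Lemma~\ref{lem:component_K+J+}), it suffices to bound the number of components of $C\cap\Delta^u$. I split into the two cases of Proposition~\ref{prop:alternative}.

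First I would treat the \emph{thick} case. By Corollary~\ref{cor:thick_components} there are only finitely many thick components, so we may fix one, $C$. For each $x\in C\cap J$ we have $\diam_x(J^u(x))>\delta_1$; the point is to transfer this to a definite-size lower bound for the components of $C\cap\Delta^u$, \emph{measured in $\Delta^u$}. This is exactly a stable-holonomy argument: each component $A$ of $C\cap\Delta^u$ arises, via the stable holonomy along $\cW^s$, from some unstable slice $J^u(x)$ with $x\in C$; since that slice has $\diam_x\ge\delta_1$ and the holonomy maps are locally bi-Hölder (or at least uniformly continuous with uniform modulus on bounded plaque-paths, cf.\ the discussion in the proofs of Lemmas~\ref{lem:reeb} and~\ref{lem:s-holonomy_component}), the image has diameter bounded below by a uniform $\delta'>0$. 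Then $C\cap\Delta^u$ is a closed subset of $K^+\cap\Delta^u$ all of whose components have diameter $\ge\delta'$, and Theorem~\ref{thm:topology}(ii) (applied in the smoothly bounded domain $\Delta^u$) gives finiteness. For the $J^+$ case one uses the same bound together with the statement of Theorem~\ref{thm:topology}(ii) for $J^+$ directly.

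Next the \emph{thin} case, which is actually the easy one. If $C$ is thin then $\diam_x(J^u(x))\le\delta_2$ for all $x\in C\cap J$, and by Proposition~\ref{prop:thin_connected_component} (and Proposition~\ref{prop:alternative}) we get, after possibly rescaling $\bb$ to $r\bb$, that $C$ has a global product structure: $J^+_{r\bb}(x)\cong J^u(x)\times W^s_{r\bb}(x)$, with the stable leaves of $C$ transverse to $\fr(r\bb)$. Under such a product structure the intersection with the unstable transversal $\Delta^u$ is simply one copy of $J^u(x)$ per sheet of $\Delta^u$ meeting $W^s_{r\bb}(x)$, i.e.\ $C\cap\Delta^u$ is the union of $\deg(\Delta^u)$ (a finite number, the degree of $\Delta^u$ over the first projection) homeomorphic copies of the single connected set $J^u(x)$ — in particular $C\cap\Delta^u$ has at most $\deg(\Delta^u)$ components. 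The same reasoning with $K^+$ and $K^u(x)$ in place of $J^+$ and $J^u(x)$ handles the filled-in version.

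The main obstacle I expect is the holonomy-transfer step in the thick case: one must be careful that the stable holonomy connecting an unstable slice $J^u(x)$ inside $C$ to the corresponding piece of $C\cap\Delta^u$ is genuinely well defined and uniformly (Hölder-)continuous along paths \emph{within $C$}. This is delicate precisely because of the Reeb-type phenomenon flagged in the introduction and in Figure~\ref{fig:reeb} — the holonomy need not continue globally. However, here we do not need global continuation: it suffices to produce, for \emph{each individual component} $A$ of $C\cap\Delta^u$, \emph{some} point $x\in C$ and some short plaque-chain in $\cW^s$ realizing $J^u(x)\to A$, and then invoke the uniform modulus of continuity of holonomy along uniformly-bounded plaque-chains (the length bound coming, as in the proof of Lemma~\ref{lem:reeb}, from Proposition~\ref{prop:length} applied along stable rays, using Remark~\ref{rmk:stable_rays}). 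Everything else is either a direct citation of Theorem~\ref{thm:topology}(ii), Corollary~\ref{cor:thick_components}, Proposition~\ref{prop:alternative}, or the elementary product-structure bookkeeping.
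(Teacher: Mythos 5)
Your route is essentially the paper's: reduce to the thin/thick dichotomy of Proposition~\ref{prop:alternative}, settle the thick case by a lower diameter bound on the transversal plus the finiteness statement of Theorem~\ref{thm:topology}(ii) (this is exactly what lies behind Corollary~\ref{cor:thick_components}), settle the thin case by the global product structure of Proposition~\ref{prop:thin_connected_component} intersected with $\Delta^u$, and pass to $K^+$ via Lemma~\ref{lem:component_K+J+}. The only place where your write-up is off is precisely the step you single out as the main obstacle, and the point is that it is not needed at all: a component $A$ of $C\cap\Delta^u$ is \emph{itself} an unstable slice of $C$. Indeed $\Delta^u$ lies inside an unstable leaf, $\fr\Delta^u\cap K^+=\emptyset$, and $\Delta^u\cap J^+\subset J$ (dissipativity gives $K^-=J^-$); hence for $y\in A$ the slice $J^u(y)$ cannot cross $\fr\Delta^u$, so $J^u(y)=A$, and Proposition~\ref{prop:alternative} gives $\diam_y(A)>\delta_1$ directly. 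Combined with the uniform comparability of $d^u_y$ with the ambient metric on the compact transversal, Theorem~\ref{thm:topology}(ii) then finishes the thick case (for $K^+$, the components of $C\cap\Delta^u$ are the hulls $K^u(y)$, whose diameters are at least those of the $J^u(y)$). By contrast, your claim that each $A$ is the stable-holonomy image of some slice $J^u(x)$, realized by ``some short plaque-chain'', is exactly the kind of global continuation of holonomy that can fail in this setting (cf.\ Figure~\ref{fig:reeb} and Proposition~\ref{prop:chapeau}, where chains of components are needed precisely because a single holonomy does not suffice); as written that justification would not stand, so delete it rather than try to repair it.

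Two smaller points. In the thin case the number of sheets is the (finite) intersection number of $W^s_{r\bb}(x)$ with $\Delta^u$, not $\deg(\Delta^u)$ alone, and one should also record that $C=J^+_\bb(x)$ may be only a union of some of the resulting components of $J^+_{r\bb}(x)\cap\Delta^u$ --- only finiteness matters, and this is how the paper phrases it. For the thin $K^+$ case, ``the same reasoning with $K^u(x)$ in place of $J^u(x)$'' needs one extra ingredient: the interiors of the filled slices are Fatou disks not covered by the stable lamination of $J^+$, so propagating the hulls as a product uses the extension of $\cW^s$ to a foliation of a neighborhood (Lemma~\ref{lem:extension}, cf.\ Remark~\ref{rmk:general_thick_thin}), which is exactly what the paper invokes at this point.
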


\begin{proof}
Let us first discuss the case of components of $J^+\cap \bb$. For thick components, the result follows immediately from 
Corollary \ref{cor:thick_components}, so we may assume that $C$ is thin. 
As already seen,  $C$ intersects $\Delta^u$.
Pick $x\in C\cap \Delta^u$, in particular $x\in J$. 
Since $C$ is thin,  for some $1\leq r\leq 2$, $W^s_{r\bb}(x)$ 
is transverse to $\fr(r\bb)$ and by Proposition \ref{prop:thin_connected_component}, $J^u(x)$ can 
be followed under holonomy along $W^s_{r\bb}(x)$. Since  $W^s_{r\bb}(x)$   and  $\Delta^u$ have finitely 
many intersection points, we infer  that $J^+_{r\bb}(x)\cap \Delta^u$ has finitely many connected components. Finally, 
$J^+_\bb(x) = C$ coincides with 
 the component of  $J^+_{r\bb}(x)\cap \bb$ containing $x$, so 
$C\cap \Delta^u$ is a union of connected components of $J^+_{r\bb}(x)\cap \Delta^u$  and we conclude that 
  there are finitely many of them.

We now discuss components of $K^+\cap \bb$. 
Recall from Lemma~\ref{lem:component_K+J+} that for such  
a component $C$,  $\fr C$ is a   component  of 
 $J^+\cap \bb$. 
  Assume first that 
  such a component $A$ is thin. Given  $x\in A\cap \Delta^u$. 
 $J^u(x)$ can be followed under holonomy along $W^s_{r\bb}(x)$ for some $1\leq r\leq 2$. 
If the polynomial hull  of $J^u(x)$ is non-empty, 
 then it has a small diameter and it can be followed by holonomy in $r\bb$ 
 along the extended foliation just as in Proposition \ref{prop:thin_connected_component} and it is topologically a product.  
 It follows that $C\cap \Delta^u$ is the polynomial hull of $J^+_\bb(x)\cap \Delta^u$ and it has finitely many components. 
 On the other hand, 
 if every component of $\fr C$ is thick, then $\fr C\cap \Delta^u$ is contained in the finitely 
 many components of $K^+\cap \Delta^u$ of 
 diameter greater than some $\delta$, and so is $C\cap \Delta^u$. 
 This concludes the proof. 
\end{proof}

We conclude this subsection by  giving 
 a general description of    components of $J^+\cap \bb$. Fix an unstable transversal 
$\Delta^u$. Let $x\in J\cap \Delta^u$ and consider  $W^s_{\bb}(J^u(x))  = \bigcup_{y\in  J^u(x)} W^s_\bb(y)$. If every $W^s_\bb(y)$ is 
transverse to $\fr\bb$ then by Proposition \ref{prop:transverse}, $W^s_{\bb}(J^u(x)) = J^+_\bb(x)$. In the general case 
we define a relation between components of $J^+\cap \Delta^u$ by declaring that $C_1\leftrightarrow C_2$  if and only if 
there exists $x\in C_1$ such that $W^s_\bb(x)\cap C_2 \neq \emptyset$ (or equivalently there exists 
$(x_1, x_2)\in C_1\times C_2$ such that $W^s_\bb(x_1) = W^s_\bb(x_2)$). 
Then extend this relation to an equivalence relation 
(still denoted by $\leftrightarrow$) by allowing finite chains $C_1, \ldots , C_n$. Finally we define
$$\widehat W^s_{\bb}(J^u(x))  := \bigcup_{C \leftrightarrow J^u(x)} \, \bigcup_{y\in  C} W^s_\bb(y).$$

\begin{prop}\label{prop:chapeau}
For any $x\in J$, $J^+_\bb(x)$ coincides with $\widehat W^s_{\bb}(J^u(x))$. 
\end{prop}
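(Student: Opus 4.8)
The plan is to prove the two inclusions $\widehat W^s_{\bb}(J^u(x)) \subset J^+_\bb(x)$ and $J^+_\bb(x) \subset \widehat W^s_{\bb}(J^u(x))$ separately. For the first inclusion, the point is that $\widehat W^s_{\bb}(J^u(x))$ is, by construction, connected: each piece $\bigcup_{y\in C} W^s_\bb(y)$ is connected because it contains $C$ and each $W^s_\bb(y)$ meets $C$, and the finite-chain condition $C\leftrightarrow J^u(x)$ glues these pieces together into a connected set. Since $W^s_\bb(y) \subset K^+_\bb(y) \subset J^+$ for $y\in J^+$ (using $W^s_\bb(y)\subset K^+$ and the fact that stable manifolds of points of $J$ lie in $J^+ = \partial K^+$), and since $J^u(x)\subset J^+\cap\bb$, the whole set $\widehat W^s_{\bb}(J^u(x))$ is a connected subset of $J^+\cap\bb$ containing $x$, hence contained in $J^+_\bb(x)$.

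For the reverse inclusion, I would argue via slicing by the unstable transversal $\Delta^u$. By Proposition~\ref{prop:finiteness}, $J^+_\bb(x)\cap\Delta^u$ has finitely many components $C_1,\dots,C_m$ (components of $J^+\cap\Delta^u$), and every component of $J^+\cap\bb$, in particular every "plaque" of the semi-local object, meets $\Delta^u$ because its boundary contains stable manifolds which are vertical of finite degree. The key claim is that any two of these slice-components $C_i, C_j$ that both lie in $J^+_\bb(x)$ are related by a finite chain under $\leftrightarrow$: since $J^+_\bb(x)$ is connected, one can join a point of $C_i$ to a point of $C_j$ by a path inside $J^+_\bb(x)$; by the local product structure of $J$ (flow boxes trivializing both laminations) and compactness, this path can be covered by finitely many flow boxes, and within each flow box the trace of $J^+_\bb(x)$ is a product, so consecutive slice-components along the path differ by a single stable-holonomy step, i.e. satisfy $C_k\leftrightarrow C_{k+1}$. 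Concatenating gives $C_i\leftrightarrow C_j$, and in particular every slice-component of $J^+_\bb(x)$ is $\leftrightarrow$-equivalent to $J^u(x)$. Then, given an arbitrary $z\in J^+_\bb(x)$, one uses that $z$ lies on some stable manifold $W^s_\bb(z')$ with $z'$ in a slice-component $C$ of $J^+_\bb(x)\cap\Delta^u$: indeed $z\in J^+ = \partial K^+$ implies $z$ lies in $W^s(z')$ for some $z'\in J$ (by Lemma~\ref{lem:fatou_disk} type reasoning, or directly since $J^+$ is stable-laminated in $\bb$), and the component $W^s_\bb(z')$ meets $\Delta^u$ transversally at a point of some $C$. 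Since $C\leftrightarrow J^u(x)$, we get $z\in \widehat W^s_{\bb}(J^u(x))$.

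The main obstacle I expect is the "single holonomy step per flow box" argument — making precise that when a connected path in $J^+_\bb(x)$ crosses from one flow box to the next, the corresponding change of $\Delta^u$-slice-component is witnessed by the relation $\leftrightarrow$ (i.e. there is genuinely a point whose local stable manifold in $\bb$ connects the two slices, not merely a local stable plaque). This requires knowing that the local stable plaques extend to semi-local stable manifolds $W^s_\bb(\cdot)$ reaching $\Delta^u$, which follows from the boundedness of vertical degree (Proposition~\ref{prop:stable total disc}) ensuring every stable leaf in $\bb$ is vertical of finite degree and hence meets $\Delta^u$; one must also check that the slice-component reached is independent of the choices, which is exactly why one passes to the equivalence relation generated by finite chains. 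A secondary technical point is handling the case where a stable leaf $W^s_\bb(y)$ is tangent to $\partial\bb$ (the Reeb-type phenomenon of Figure~\ref{fig:reeb}): such a leaf may have $W^s_\bb$ strictly smaller than a plaque of the extended lamination, but this only affects which pieces land in $\bb$ and is absorbed by the definition of $\leftrightarrow$ using $W^s_\bb$ throughout; no transversality to $\partial\bb$ is assumed or needed here, which is the whole point of the "hat" construction compared to Proposition~\ref{prop:transverse}.
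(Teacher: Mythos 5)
Your overall skeleton is the same as the paper's: finitely many slice components $C_1,\dots,C_m$ of $J^+_\bb(x)\cap\Delta^u$ (Proposition~\ref{prop:finiteness}), every point of $J^+_\bb(x)$ lies on some $W^s_\bb(y)$ with $y\in\Delta^u$ because semi-local stable manifolds are vertical of finite degree, and connectedness of $J^+_\bb(x)$ should force all these slice components into a single $\leftrightarrow$-class. The easy inclusion $\widehat W^s_{\bb}(J^u(x))\subset J^+_\bb(x)$ is fine (modulo the slip ``$W^s_\bb(y)\subset K^+_\bb(y)\subset J^+$''; what you mean is simply $W^s_\bb(y)\subset J^+\cap\bb$). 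But the implementation of the reverse inclusion has two genuine gaps. First, you join points of $C_i$ and $C_j$ by a \emph{path} inside $J^+_\bb(x)$, invoking only connectedness. Connectedness does not give paths, and at this stage arcwise (equivalently local) connectedness of semi-local components is not available: in the paper it is deduced as a corollary \emph{of} this very proposition (via the finite cover by the locally connected, relatively open sets $W^s_\bb(C_i)$), so your argument is circular as written.

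Second, the mechanism ``within each flow box the trace is a product, so consecutive slice-components differ by a single stable-holonomy step, hence $C_k\leftrightarrow C_{k+1}$'' is not the right justification, and you correctly sensed this is the crux. Two nearby points $z,z'$ of $J^+_\bb(x)$ in the same flow box lie on \emph{different} stable leaves, and the relation $\leftrightarrow$ requires a single semi-local leaf $W^s_\bb(\cdot)$ meeting both slice components; a holonomy between nearby plaques produces no such leaf, and bounded vertical degree (which only guarantees that each plaque extends to a $W^s_\bb$ hitting $\Delta^u$) does not close this gap. The missing ingredient is a local constancy statement: if $z\in W^s_\bb(y)$ with $y\in\Delta^u$, then by continuity of the stable lamination any $z'\in J^+_\bb(x)$ close to $z$ lies in $W^s_\bb(y')$ with $y'$ close to $y$, and since the finitely many $C_i$ are pairwise at positive distance, $y'$ lies in the \emph{same} component $C_i$ as $y$. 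Hence the assignment $z\mapsto$ ($\leftrightarrow$-class of $W^s_\bb(z)\cap\Delta^u$) is locally constant on $J^+_\bb(x)$; equivalently each saturation $W^s_\bb(C_i)$ is relatively open in $J^+_\bb(x)$. Once you have this, no path is needed: the $W^s_\bb(C_i)$ form a finite cover of $J^+_\bb(x)$ by connected relatively open sets, two of them meet exactly when the corresponding components are directly $\leftrightarrow$-related (intersecting stable leaves coincide), and a clopen/chain argument using plain connectedness of $J^+_\bb(x)$ yields $J^+_\bb(x)=\bigcup_{C\leftrightarrow J^u(x)}W^s_\bb(C)$. This is precisely the paper's proof; with the local constancy step supplied and the path argument replaced by the chain argument, your proposal becomes correct.
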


\begin{proof}
By Proposition \ref{prop:finiteness}, $J^+_\bb(x)\cap \Delta^u$ 
admits finitely many connected components $(C_i)_{i\in I}$. 
 Every point   $z\in J^+_{\bb}(x)$ 
belongs to some $W^s_\bb(y)$, $y\in \Delta^u$, and necessarily $y$ belongs to some 
$C_i$, say $C_{i_0}$. 
 Furthermore, if $z'\in J^+_{\bb}(x)$ is close to $z$, by the continuity of stable manifolds, there exists 
$y'\in \Delta^u$ close to $y$ such that $z'\in W^s_\bb(y')$. Since the $C_i$ are at positive distance from 
each other it follows that $y'$ belongs to $C_{i_0}$. 
In other words,  $W^s_\bb(C_{i_0})$ is relatively open in $J^+_\bb(x)$. Clearly  $W^s_\bb(C_{i_0})$ is 
connected, and even arcwise connected since by Theorem \ref{thm:topology} $C_i$ is locally connected. 
Thus the $W^s_\bb(C_i)$ realize a  finite
  cover of $J^+_\bb(x)$ by connected open sets, which are
contained in or  disjoint from $J^+_\bb(x)$. Define a non-oriented 
  graph on $I$ by joining $i$ and $j$ whenever $W^s_\bb(C_i)\cap W^s_\bb(C_j)\neq \emptyset$. If we fix $i_0$ such that 
$W^s_\bb(C_{i_0})\subset J^+_\bb(x)$, it follows that $J^+_\bb(x) = \bigcup_{i\in I_0} W^s_\bb(C_i)$ where 
$I_0$ is the component of $i_0$ in the graph. This is exactly the announced description. 
\end{proof}

Let us point out the following interesting consequence of the proof:

\begin{cor}
Every connected component of $J^+\cap \bb$ (resp. $K^+\cap \bb$) is locally connected. 
\end{cor}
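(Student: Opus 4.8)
The plan is to combine the two parts of the preceding argument. For a component $C$ of $J^+\cap\bb$, the dichotomy of Proposition~\ref{prop:alternative} splits into two cases. If $C$ is thick, then by Corollary~\ref{cor:thick_components} there are only finitely many thick components, so each of them is at positive distance from the others and from all thin components; hence $C$ is relatively open in $J^+\cap\bb$, and its local connectivity at each of its points is a local statement inside one unstable slice, which follows from Theorem~\ref{thm:topology}(i) together with the local product structure of $J^+$ in $\bb$ (the stable direction is a genuine product, so local connectivity in the leafwise direction promotes to local connectivity in $\bb$). If $C$ is thin, then by Proposition~\ref{prop:thin_connected_component} there is an $r\in[1,2]$ with $J^+_{r\bb}(x)$ homeomorphic to $J^u(x)\times W^s_{r\bb}(x)$ for $x\in C\cap J$; since $J^u(x)$ is locally connected by Theorem~\ref{thm:topology}(i), the product $J^u(x)\times W^s_{r\bb}(x)$ is locally connected, and $C=J^+_\bb(x)$ is a union of components of $J^+_{r\bb}(x)\cap\bb$, hence locally connected as well.

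First I would treat the thin case in detail, as it is the one genuinely built into Proposition~\ref{prop:thin_connected_component}: record that $J^+_{r\bb}(x)\cong J^u(x)\times W^s_{r\bb}(x)$, that a product of a locally connected compactum with a (connected, locally connected) manifold-with-boundary slice is locally connected, and that passing to the intersection with $\bb$ and to the component of $x$ only picks out a relatively clopen piece, which preserves local connectivity. Then I would dispatch the thick case by the openness argument above, reducing to Theorem~\ref{thm:topology}(i) in a single unstable leaf. Finally, for $K^+\cap\bb$, I would invoke Lemma~\ref{lem:component_K+J+}: a component $C$ of $K^+\cap\bb$ has $\fr C$ equal to a component of $J^+\cap\bb$, which we have just shown is locally connected; and $C$ is obtained from $\fr C$ by filling the holes in each horizontal slice, an operation that preserves local connectivity of a planar continuum by Carath\'eodory's theorem (equivalently, local connectivity of $\fr A$ is equivalent to that of $A$ for a filled compact set $A$ in a line, as already used in the proof of Theorem~\ref{thm:topology}(i)). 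Assembling slicewise gives local connectivity of $C$.

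Actually, the cleanest route is probably to observe that Proposition~\ref{prop:chapeau} already exhibits $J^+_\bb(x)$ as a finite union $\bigcup_{i\in I_0}W^s_\bb(C_i)$ of arcwise connected relatively open sets, where each $C_i$ is a component of $J^+\cap\Delta^u$, hence locally connected by Theorem~\ref{thm:topology}(i), and each $W^s_\bb(C_i)$ is its stable saturation. So it suffices to check that the stable saturation of a locally connected compact subset of $\Delta^u$ is locally connected in $\bb$, and that a finite union of locally connected closed sets whose pairwise intersections are "nice enough" (here: closed, and the union is a component of $J^+\cap\bb$) is again locally connected. The stable saturation step is where the only real content lies, and it is the part I expect to require the most care: away from points where stable leaves are tangent to $\fr\bb$ the saturation map is a local homeomorphism onto a product and local connectivity is immediate, but near a tangency (the Reeb-type phenomenon of Figure~\ref{fig:reeb}) one must argue that the extra pieces of $C_i$ that get glued in still vary continuously enough — this is exactly controlled by the finiteness in Proposition~\ref{prop:finiteness} and the continuity of stable manifolds, so the obstruction is more bookkeeping than a genuine difficulty, but it is the step I would write out most carefully.

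Then, for $K^+\cap\bb$, I would deduce local connectivity from that of $J^+\cap\bb$ exactly as in the proof of Theorem~\ref{thm:topology}(i): $\fr C$ is a component of $J^+\cap\bb$, hence locally connected, and filling the holes in each horizontal slice preserves local connectivity by Carath\'eodory's theorem, applied slicewise and then assembled using that $C=\bigcup_{t\in\dd}(C\cap L_t)$ with $C\cap L_t$ the filled hull of $(\fr C)\cap L_t$.
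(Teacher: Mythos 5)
Your preferred route is essentially the paper's proof: Proposition~\ref{prop:chapeau} gives the finite cover of $J^+_\bb(x)$ by the relatively open, arcwise connected stable saturations $W^s_\bb(C_i)$ of the locally connected slice components $C_i$ (Theorem~\ref{thm:topology}), and for a component $C$ of $K^+\cap \bb$ the paper likewise deduces local connectivity of $\fr C$ (a union of components of $J^+\cap\bb$, via Lemma~\ref{lem:component_K+J+} and Proposition~\ref{prop:finiteness}) and then invokes the same general-topology/filling argument you describe; the paper is in fact just as terse as you are on the saturation and filling steps, so your flagging of the tangency/Reeb issue is appropriate rather than a deviation. One caution about the first thick/thin sketch, which you rightly abandon: the claim that a thick component is at positive distance from all thin components, hence relatively open in $J^+\cap\bb$, does not follow from Corollary~\ref{cor:thick_components} --- finiteness separates the thick components from one another, but trivial vertical components can accumulate on a thick one, so it need not be relatively open. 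Likewise $J^+_{r\bb}(x)\cap\bb$ is closed but not relatively open in $J^+_{r\bb}(x)$, so the ``relatively clopen'' reduction in the thin case is not correct as stated. Since the route you settle on is the one the paper takes, the proposal stands.
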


\begin{proof}
Given a component $J^+_\bb(x)$ of $J^+\cap \bb$,   
with notation as in the previous proof, $(W^s_\bb(C_i))_{i\in I}$ is a finite cover of  $J^+_\bb(x)$  by 
locally connected and relatively open sets: local connectedness follows. If now $C$ is a component of $K^+\cap \bb$, 
we saw in the proof of Proposition \ref{prop:finiteness} that $\fr C$ is a finite union of components of $J^+\cap \bb$, 
therefore  $\fr C$ is locally connected. General topology then implies  
that $C$ is locally  connected and we are done.
\end{proof}

 \subsection{Induced dynamics on  the set of components of $J^+$} \label{subs:dynamics_components}
 We still consider a uniformly hyperbolic dissipative Hénon map, with a disconnected and stably totally  disconnected Julia set,
 and fix  a large bidisk $\bb$ as before.
 Since $f$ maps $K^+\cap \bb$ (resp. $J^+\cap \bb$)    into itself, it induces a dynamical system on the set of its connected
 components. Recall that a component is said \emph{non-trivial} if it is not reduced to a vertical submanifold.

  \begin{thm}\label{thm:component}
  Let $f$ be   dissipative and hyperbolic with a disconnected and stably totally disconnected Julia set
 and $\bb\subset\cd$ be a large bidisk. Then $K^+\cap \bb$ (resp. $J^+\cap \bb$)  admits 
 uncountably many components, at most countably many of which being non-trivial.
  Any non-trivial connected component of $K^+\cap \bb$ (resp. $J^+\cap \bb$) 
 is preperiodic,  and there are finitely many non-trivial periodic components. 
   \end{thm}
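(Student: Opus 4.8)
The plan is to transport, essentially verbatim, the one-dimensional dichotomy ``non-trivial $\Leftrightarrow$ eventually hits a critical component'' to the semi-local setting, using the structural results established above. First I would establish the cardinality claims. Uncountably many components: fix an unstable transversal $\Delta^u$; since $K^+\cap\Delta^u$ is a nontrivial polynomially convex compact with empty interior in $\Delta^u$ (it carries the stable slices through its boundary, which are nontrivial in $\Delta^u$ by unstable disconnectedness), it has uncountably many components, each of which is contained in the trace of a distinct component of $K^+\cap\bb$ by Proposition~\ref{prop:s-holonomy_point} and Corollary~\ref{cor:vertical_submanifolds}; same for $J^+$. At most countably many non-trivial: this is exactly Corollary~\ref{cor:vertical_submanifolds} (all but countably many components of $K^+\cap\bb$ are vertical submanifolds), together with Lemma~\ref{lem:component_K+J+} which identifies $\fr C$ for a component $C$ of $K^+\cap\bb$ with a component of $J^+\cap\bb$ (a component of $J^+\cap\bb$ being trivial, i.e.\ a vertical submanifold, precisely when the corresponding component of $K^+\cap\bb$ is).

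The heart is the preperiodicity statement. I would first treat $J^+\cap\bb$. Let $C$ be a non-trivial component; by Proposition~\ref{prop:alternative} it is either thin or thick. If $C$ is thick, then its forward orbit $f^n(C)$ lands in components of $J^+\cap\bb$, and by Corollary~\ref{cor:thick_components} there are only finitely many thick components; the key point is then that the forward image of a thick component is again thick — this follows because if $x\in C\cap J$ has $\diam_x(J^u(x))>\delta_2$ then $\diam_{f(x)}(J^u(f(x)))$ is comparable after finitely many iterates (expansion in the unstable direction can only increase diameters until they saturate at scale $\sim 1$), so eventually all iterates are thick, hence by finiteness the orbit is preperiodic among the finitely many thick components. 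If $C$ is thin and non-trivial, then by Proposition~\ref{prop:thin_connected_component}, $C$ is a product $J^u(x)\times W^s_{r\bb}(x)$ with $J^u(x)$ a non-trivial leafwise bounded continuum; by expansion \eqref{eq:tangent_expansion}, $\diam_{f^n(x)}(J^u(f^n(x)))\to\infty$ as long as $J^u(f^n(x))$ stays leafwise bounded, so after finitely many steps $\diam_{f^N(x)}(J^u(f^N(x)))>\delta_2$ and $f^N(C)$ is thick — reducing to the previous case. Thus every non-trivial component of $J^+\cap\bb$ is preperiodic, and the periodic ones are among the finitely many thick ones. For $K^+\cap\bb$: a component $C$ of $K^+\cap\bb$ is non-trivial iff $\fr C$ is, and $\fr C$ is a (union of) component(s) of $J^+\cap\bb$ by Lemma~\ref{lem:component_K+J+} and the last corollary above; since $f$ commutes with ``filling the holes in horizontal slices'' (as in the proof of Lemma~\ref{lem:component_K+J+}), $C\mapsto f(C)$ is conjugate on non-trivial components to the induced map on non-trivial components of $J^+\cap\bb$, so preperiodicity and finiteness of periodic components transfer directly.

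The main obstacle I anticipate is making the ``thin implies eventually thick'' step fully rigorous: one must argue that a non-trivial thin slice $J^u(x)$ genuinely grows under $f^n$ without the continuum escaping $\bb$ prematurely or being destroyed. The relevant subtlety is that $J^u(f^n(x))$ is the component of $f^n(x)$ in $K^+\cap W^u(f^n(x))$, and $f^n$ maps $J^u(x)$ \emph{onto} $J^u(f^n(x))$ (since $f$ is a leafwise affine automorphism preserving $K^+$), so diameters genuinely scale by $\prod\lambda^u_{f^i(x)}\gtrsim u^n$; the only way this fails to eventually exceed $\delta_2$ is if $J^u(f^n(x))$ becomes leafwise unbounded, but then the corresponding component of $J^+\cap\bb$ is thick automatically by Proposition~\ref{prop:alternative} (a leafwise unbounded slice certainly is not $\leq\delta_2$). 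One should also double-check that ``non-trivial'' is preserved forward — clear, since $f$ is a leafwise homeomorphism so it cannot collapse a non-degenerate continuum to a point — and that the finitely many thick components are permuted with eventually-periodic dynamics, which is automatic for a map of a finite set.
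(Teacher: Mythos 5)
Your treatment of the heart of the theorem (preperiodicity of non-trivial components and finiteness of the periodic ones) is correct, but it follows a genuinely different route from the paper. You pigeonhole globally: every non-trivial semi-local component has all its unstable slices non-degenerate (Proposition~\ref{prop:s-holonomy_point}), so by the expansion estimate~\eqref{eq:tangent_expansion} the slice diameters eventually exceed $\delta_2$, hence all sufficiently high iterates are thick in the sense of Proposition~\ref{prop:alternative}; since thick components are finite in number (Corollary~\ref{cor:thick_components}) and the induced map on components is deterministic, the orbit is eventually periodic, and periodic non-trivial components are thick, hence finitely many. The paper instead pigeonholes locally: it takes an accumulation point $x_\infty$ of $x_n=f^n(x)$, uses upper semicontinuity of the slice diameter (Lemma~\ref{lem:usc_diameter}) to see that $J^u(x_\infty)$ is leafwise unbounded, projects the slices $J^u(x_n)$ into $D^u(x_\infty,1)$ by local stable holonomy, and applies Theorem~\ref{thm:topology} to find two indices landing in the same large component. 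Both arguments rest on the same finiteness input (Theorem~\ref{thm:topology}); yours routes it through the thin/thick dichotomy of Section~\ref{sec:semi_local}, which is indeed available at this point, so it is a legitimate alternative --- somewhat more streamlined (no need for Lemma~\ref{lem:usc_diameter} or the holonomy projection), while the paper's version is more local and does not need Proposition~\ref{prop:alternative} for the preperiodicity step itself.

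However, your justification of the first assertion (uncountably many components) has a genuine gap. You claim that $K^+\cap\Delta^u$ has uncountably many components because it is a non-trivial polynomially convex compact set with empty interior in $\Delta^u$; this implication is false in general (a Jordan arc in $\C$ is compact, polynomially convex, with empty interior, yet connected). The uncountability is a genuine dynamical fact: it is exactly what the paper imports from Bedford--Smillie (\cite[Thm 7.1]{bs6}, see also \cite[Thm 3.1]{bs7}, quoted before Corollary~\ref{cor:points}), namely that $J^+\cap\Delta^u$ has uncountably many point components in the unstably disconnected case; these are then promoted to distinct trivial components of $K^+\cap\bb$ via Proposition~\ref{prop:s-holonomy_point}, exactly as you do afterwards. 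So this step needs that external input rather than the topological claim you made. A further minor point: the clean way to transfer the statements from $J^+\cap\bb$ to $K^+\cap\bb$ is the correspondence $C\mapsto\fr C$ of Lemma~\ref{lem:component_K+J+}, rather than the assertion that $f$ ``commutes with filling holes in horizontal slices'', which is not literally true since $f$ does not preserve horizontal lines; but this is cosmetic.
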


\begin{rmk}
\label{rmk:trivial_components}
Notice a periodic component of $K^+\cap \bb$ can  be trivial, that is,
 a vertical submanifold.  Since 
   it is mapped into itself by some $f^N$ in this case we conclude that it is of the form $W^s_\bb(x)$ for 
   some saddle periodic point $x$. 
\end{rmk}

\begin{lem}\label{lem:usc_diameter}
The function $y\mapsto \diam_y(J^u(y))$ (resp. $y\mapsto \diam_y(K^u(y))$) is   upper semi-continuous on $J$. 
In particular if $y_n\to y_\infty$ and $(\diam_{y_n}(K^u(y_n)))$ is unbounded, then $K^u(y_\infty)$ is leafwise unbounded, 
and likewise for $J^u$. 
\end{lem}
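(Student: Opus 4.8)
The plan is to prove that $y\mapsto\diam_y(J^u(y))$ is upper semi-continuous (the argument for $K^u$ is word for word the same, $K$ being closed and $K^u(y)$ being by definition the component of $y$ in $K\cap W^u(y)=K^+\cap W^u(y)$), the ``in particular'' statements then being the contrapositive: if $y_n\to y_\infty$ and $\diam_{y_n}(K^u(y_n))$ is unbounded, then $K^u(y_\infty)$ cannot be leafwise bounded, since otherwise upper semi-continuity would force $\limsup_n\diam_{y_n}(K^u(y_n))\le\diam_{y_\infty}(K^u(y_\infty))<\infty$. So fix $y_n\to y_\infty$ in $J$; if $J^u(y_\infty)$ is leafwise unbounded there is nothing to prove, and we may assume $d_0:=\diam_{y_\infty}(J^u(y_\infty))<\infty$.

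The geometric heart of the matter is a \emph{separation step}: for every $\e>0$ I would produce an open subset $\Omega$ of the leaf $W^u(y_\infty)$ with $y_\infty\in J^u(y_\infty)\subset\Omega$, $\overline\Omega$ compact, $\diam_{y_\infty}(\Omega)\le d_0+\e$, and $\fr\Omega\cap J=\emptyset$. This is exactly where leafwise boundedness enters. Since $d_0<\infty$, the set $J^u(y_\infty)$ is a connected component of the compact set $J\cap\overline{D^u(y_\infty,d_0+1)}$ (viewed inside $W^u(y_\infty)\simeq\C$): indeed any connected subset of the latter containing $J^u(y_\infty)$ lies in $J\cap W^u(y_\infty)$, hence in $J^u(y_\infty)$. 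As in a compact metric space connected components coincide with quasi-components, $J^u(y_\infty)$ can be separated from the remainder of $J\cap\overline{D^u(y_\infty,d_0+1)}$ by a relatively clopen set contained in the $(\e/3)$-neighbourhood of $J^u(y_\infty)$; thickening this clopen set by a small enough radius $\rho$ (less than $\e/3$, than a third of its distance to the complementary clopen piece, and than its distance to $\fr D^u(y_\infty,d_0+1)$) yields the desired $\Omega$, whose boundary sits at distance $\rho$ from a clopen set that already misses the rest of $J$ and therefore misses $J$ entirely.

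The second step is a \emph{transport} argument showing that for \emph{any} open $\Omega\subset W^u(y_\infty)$ with $y_\infty\in\Omega$, $\overline\Omega$ compact, and $\fr\Omega\cap J=\emptyset$, one has $\limsup_n\diam_{y_n}(J^u(y_n))\le\diam_{y_\infty}(\Omega)$. I would pass to a subsequence along which $\diam_{y_n}(J^u(y_n))$ converges to its $\limsup$ and, invoking Lemma~\ref{lem:continuity_affine}, extract further so that $\psi^u_{y_n}\to\psi^u_{y_\infty}(e^{i\theta_0}\,\cdot\,)$ locally uniformly on $\C$ for some $\theta_0$. Setting $\tilde\Omega:=(\psi^u_{y_\infty})^{-1}(\Omega)$ and $\Omega_n:=\psi^u_{y_n}(e^{-i\theta_0}\tilde\Omega)$, one has $y_n=\psi^u_{y_n}(0)\in\Omega_n$ (because $0\in\tilde\Omega$), $\diam_{y_n}(\Omega_n)=\diam_{y_\infty}(\Omega)$ (the normalized leafwise distance reads off as the Euclidean distance in the parameter $\tilde\Omega$, which is rotation invariant), and, since $\psi^u_{y_n}(e^{-i\theta_0}\,\cdot\,)\to\psi^u_{y_\infty}(\,\cdot\,)$ uniformly on the compact set $\overline{\tilde\Omega}$, $\fr\Omega_n\to\fr\Omega$ in $\cd$, so $\fr\Omega_n\cap J=\emptyset$ for $n$ large ($\fr\Omega$ being compact and at positive distance from $J$). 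Then $J^u(y_n)$ is a connected subset of $W^u(y_n)\simeq\C$ meeting $\Omega_n$ (it contains $y_n$) and disjoint from $\fr\Omega_n$ (it lies in $J$), hence contained in $\Omega_n$, which gives $\diam_{y_n}(J^u(y_n))\le\diam_{y_\infty}(\Omega)$ for all large $n$ of the subsequence, and therefore $\limsup_n\diam_{y_n}(J^u(y_n))\le\diam_{y_\infty}(\Omega)$. Feeding in the neighbourhoods from the separation step gives $\limsup_n\diam_{y_n}(J^u(y_n))\le d_0+\e$ for every $\e>0$, i.e. $\le d_0$. The main obstacle, and the part requiring the most care in a clean write-up, is this bookkeeping in the separation step combined with the fact that unstable leaves are only immersed in $\cd$: the conditions ``$\fr\Omega_n\cap J=\emptyset$'' and ``$J^u(y_n)\subset\Omega_n$'' live respectively in $\cd$ and in the abstract leaf $W^u(y_n)\simeq\C$, and both are kept under control by working throughout in the uniformizing parameters $\psi^u_{y_n}$ and only testing membership in $J$ through the embedding.
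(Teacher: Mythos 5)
Your argument is correct and takes essentially the same route as the paper's: when the limit component is leafwise bounded, surround it by a compact barrier in the leaf that avoids the invariant set, transport that barrier to the nearby leaves $W^u(y_n)$ using the continuity of the unstable affine structure (Lemma~\ref{lem:continuity_affine}), and observe that $J^u(y_n)$ (resp.\ $K^u(y_n)$) is trapped inside it with controlled diameter. The paper realizes the barrier as an enclosing loop disjoint from $K^+$ and splits into the cases of points on the same or on different local leaves, whereas you use a clopen separation plus thickening and a single convergent-parameterization argument, but this is a cosmetic rather than a substantive difference.
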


\begin{proof}
Recall that 
 $\diam_y(J^u(y)) = \diam_y(K^u(y))$  for every $y\in J$  (including the case where it is infinite)
 so it is enough to deal with $K^u(y)$.
Assume first that  the $y_n$ belong to the same local leaf and $y_n\to y_\infty$. 
If $K^u(y_\infty)$ is leafwise bounded, we can consider a closed loop $\gamma$ enclosing it and disjoint from $K^+$. Then for large enough $n$, $ \gamma $ also encloses 
$K^u(y_n)$, and any cluster value  of this sequence for the Hausdorff topology is a continuum contained in $K^+$ and containing $y_\infty$. It follows that $$\limsup_{n\to \infty} \diam_{y_\infty}  ( K^u(y_n))\leq  \diam_{y_\infty}  ( K^u(y_\infty))  $$ hence 
$$\limsup_{n\to \infty} \diam_{y_n}   (K^u(y_n))\leq  \diam_{y_\infty}  (K^u(y_\infty)), $$ as desired.  Of course 
if  $K^u(y_\infty)$ is leafwise unbounded, the inequality is obvious.

Assume now that  the  $y_n$ belong to different  local leaves. As 
before, the case where $K^u(y_\infty)$ is leafwise unbounded  is obvious. If 
$K^u(y_\infty)$ is leafwise bounded, again we consider 
a closed loop $\gamma$ enclosing it and disjoint from $K^+$. In addition we can assume
that $\diam_{y_\infty}(\gamma)$ is arbitrary close to $\diam_{y_\infty}(K^u(y_\infty))$. 
When $y_n\to y_\infty$, $\gamma$  can be lifted to a loop $\widetilde \gamma_n$
in $W^u(y_n)$, with roughly the same diameter (here we use the continuity 
of the leafwise distance $d^u_y$), and $K^u(y_n)$ is enclosed in $\widetilde \gamma_n$. 
The semi-continuity of the diameter follows. 
\end{proof}

\begin{proof}[Proof of Theorem \ref{thm:component}]
Fix an unstable transversal $\Delta^u$, and recall that any component of $K^+\cap \bb$ (resp. $J^+\cap \bb$) 
 intersects $\Delta^u$. 
By \cite[Thm 7.1]{bs6}, $J^+\cap \Delta^u$ 
 admits uncountably many point components, thus the first assertion of the theorem follows from Proposition 
 \ref{prop:s-holonomy_point}. Then Corollary \ref{cor:vertical_submanifolds} asserts that at most countably many 
 components are non-trivial. 
 
 Let $x\in  J^+\cap \Delta^u$ and assume that $J^+_\bb(x)$ (or equivalently $K^+_\bb(x)$) is non-trivial. Since $\Delta^u$ is a 
 global transversal, $J^u(x)$ is leafwise bounded.  For $n\geq 0$,   $J^u(x_n)   = f^n(J^u(x))$  where 
 $x_n = f^n(x)$,  and by \eqref{eq:tangent_expansion},   
 \begin{equation}\label{eq:diameter}
 \diam_{x_n} (J^u(x_n)  ) 
 \geq C u^n \diam_x(J^u(x))\underset{n\to\infty}{\longrightarrow}\infty.
 \end{equation}
  Let $x_\infty$ be any accumulation point of $(x_n)$. 
 By Lemma \ref{lem:usc_diameter}, $J^u(x_\infty)$ is leafwise unbounded, and so does  $K^u(x_\infty)$.
 
 By local product structure, for large $n$, the   holonomy along the stable lamination defines  a 
 projection   
 $$D^u(x_n, 3/2)\cap J^+ \to  D^u(x_\infty, 2)\cap J^+$$
 which we simply denote by $\pi^s$. It is Lipschitz (see Lemma~\ref{lem:extension}) and  a homeomorphism onto its image. 
 Notice that $\pi^s (D^u(x_n, 3/2)\cap J^+)$ contains $D^u(x_\infty, 1)\cap J^+$ for large $n$.  
  For large $n$, $J^u(x_n)$ intersects the boundary of  
 $D^u(x_n, 3/2)$, so the sets  $J^u(\pi^s(x_n))$ define a sequence of components of $J^+\cap D^u(x_\infty, 1)$ of diameter bounded from below. From Theorem \ref{thm:topology} 
 we infer that this sequence is finite. Let us denote by 
 $C_j$, $j=1, \ldots, N$  these components.  By the Pigeonhole Principle there exist 
  $n\neq n'$ such that 
 $\pi_s(x_n)$ and  $\pi_s(x_{n'})$ belong to the same $C_j$, thus $x_n$ and $x_{n'}$ belong the local stable saturation of $C_j$. 
 Therefore  the sequence $(J^+_\bb(x_n))$   is eventually periodic, and so is $(K^+_\bb(x_n))$.   

Consider now a non-trivial  periodic component $C$ of  $J^+\cap \bb$. 
Then  it is of the form 
$J^+_\bb(x)$ for some $x\in \Delta^u\cap J^+$. The previous argument shows that there are points $x'\in C\cap J$ such that 
$J^u(x')$ is leafwise unbounded. By Proposition \ref{prop:alternative},   the components of the 
 slices $J^+_\bb \cap \Delta^u$ have  
diameter uniformly bounded from below (here we use the fact that for every $x\in \Delta^u\cap J^+$, 
the distance 
$d_x^u$ is uniformly comparable to the ambient distance on $\Delta^u$).  Thus, by 
 Theorem \ref{thm:topology} only
 finitely many such components can arise and we conclude that  $C$ belongs to a finite set of components. The corresponding 
 result for components of $K^+\cap\bb$ follows from Lemma~\ref{lem:component_K+J+}. \end{proof}
  
\begin{rmk}   
Using techniques similar to those of  \S \ref{subs:small_components}
it is easily seen  that  any component of $K^+\cap \bb$ has finitely many preimages. In other words, the 
induced dynamical system on components of $K^+\cap \bb$ is finite-to-1.
Indeed assume by contradiction that 
  $C$ is a component such that  $f\inv(C)\cap \bb$ has infinitely many preimages $C_i$. 
 Then by Theorem \ref{thm:topology}, 
 for some $i$,  $C^i\cap\Delta^u$ has a component of   small diameter. 
  Therefore by pushing forward, there is some $x\in C\cap J$ such that $\diam_x (J^u(x))$ is small, that is, 
  $J^+_\bb(x)$  (or equivalently $K^+_\bb(x)$)
  is thin. But it is easy to show that a thin component admits finitely many preimages, and we arrive at the desired contradiction. \qed
    \end{rmk}

  \section{Components of $J$ and $K$}\label{sec:components}
  
 We keep the same setting as before, that is, $f$ is a 
 uniformly hyperbolic dissipative Hénon map, with a disconnected and stably totally  disconnected Julia set. In this section, we complete the proof of the main theorem by classifying the connected 
 components of $J$ and $K$.

We start with an easy fact. Recall the notation $E(x) = \comp_E(x)$.

 \begin{prop}\label{prop:cpt_leafwise_bdd}
 If $x\in J$ is such that $J^u(x)$ is leafwise bounded then $J(x)   = J^u(x)$.  
 \end{prop}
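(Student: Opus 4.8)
The inclusion $J^u(x)\subseteq J(x)$ is immediate, since $J^u(x)$ is a connected subset of $J$ containing $x$. For the reverse inclusion, the plan is to reduce to the thin situation covered by Proposition~\ref{prop:thin_connected_component} and then simply read off the answer from the product structure. Note first that both assignments $y\mapsto J(y)$ and $y\mapsto J^u(y)$ are $f$-equivariant: $f$ is a homeomorphism of $\cd$ preserving $J$, and $f^{-n}\colon W^u(x)\to W^u(f^{-n}(x))$ is a biholomorphism carrying $J\cap W^u(x)$ onto $J\cap W^u(f^{-n}(x))$, so that $f^{-n}(J(x))=J(f^{-n}(x))$ and $f^{-n}(J^u(x))=J^u(f^{-n}(x))$. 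Hence the identity $J(x)=J^u(x)$ is equivalent to $J(f^{-n}(x))=J^u(f^{-n}(x))$ for any fixed $n$. Since $J^u(x)$ is leafwise bounded we have $\diam_x(J^u(x))<\infty$; writing $f^{-n}$ in the normalized affine coordinates of \S\ref{subs:affine} as multiplication by a scalar $\mu_n$, the estimate \eqref{eq:tangent_expansion} applied at $f^{-n}(x)$ gives $|\mu_n|\le C^{-1}u^{-n}$, whence $\diam_{f^{-n}(x)}(J^u(f^{-n}(x)))=|\mu_n|\,\diam_x(J^u(x))\to 0$. Replacing $x$ by $f^{-n}(x)$ for $n$ large, we may therefore assume $\diam_x(J^u(x))\le\delta_1$, with $\delta_1$ the constant of Proposition~\ref{prop:thin_connected_component}.

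In this thin situation Proposition~\ref{prop:thin_connected_component} provides $1\le r\le 2$ with $J^+_{r\bb}(x)$ homeomorphic to the product $J^u(x)\times W^s_{r\bb}(x)$; concretely the homeomorphism sends a point $p$ to the pair consisting of the unique $y\in J^u(x)$ with $p\in W^s_{r\bb}(y)$ and the point of $W^s_{r\bb}(x)$ matched to $p$ under the local trivialization of the stable lamination. The next step is to observe that this chart respects $J$: the first coordinate lands in $J^u(x)\subseteq J$, and the second is obtained from $p$ by holonomy along unstable plaques, which preserves $J^-$ and lands in $W^s_{r\bb}(x)\subseteq J^+$, hence in $J$. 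Thus the homeomorphism restricts to a homeomorphism $J\cap J^+_{r\bb}(x)\cong J^u(x)\times\bigl(J\cap W^s_{r\bb}(x)\bigr)$, carrying $J^u(x)$ onto the slice $J^u(x)\times\{x\}$.

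Finally, since $f$ is stably totally disconnected, $J\cap W^s_{r\bb}(x)\subseteq W^s(x)\cap J^-$ is totally disconnected, so projection to the second factor sends any connected subset of $J^u(x)\times\bigl(J\cap W^s_{r\bb}(x)\bigr)$ to a point; equivalently every connected subset lies in a single slice $J^u(x)\times\{b\}$. As $J(x)$ is a connected subset of $J\subseteq J^+\cap r\bb$ containing $x$, it is contained in $J^+_{r\bb}(x)$, hence $J(x)\subseteq\comp_{J\cap J^+_{r\bb}(x)}(x)$, and by the preceding paragraph this component lies in the slice through $x$, namely $J^u(x)$. Together with the trivial inclusion this yields $J(x)=J^u(x)$. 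The only point requiring care is the first step — passing to an iterate to make the unstable slice thin, so that Proposition~\ref{prop:thin_connected_component} applies, together with the (routine) check that the product chart there respects $J$; granting these, the conclusion is immediate from stable total disconnectedness.
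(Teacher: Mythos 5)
Your overall strategy is sound in outline, and the first reduction is fine: using the backward contraction along unstable leaves coming from \eqref{eq:tangent_expansion} to replace $x$ by $f^{-n}(x)$ and make $\diam_x(J^u(x))\le\delta_1$ is legitimate, and the final step (a connected set whose projection lies in the totally disconnected set $J\cap W^s_{r\bb}(x)$ must sit in a single slice) is also fine \emph{provided} the product chart of Proposition~\ref{prop:thin_connected_component} restricts to a product chart for $J$. That middle claim is exactly where there is a genuine gap, and the justification you give for it is not valid. The homeomorphism $J^+_{r\bb}(x)\simeq J^u(x)\times W^s_{r\bb}(x)$ is produced by Reeb stability for the stable lamination (Lemma~\ref{lem:reeb}, via Lemma~\ref{lem:extension}): the transverse identification between the leaves $W^s_{r\bb}(y)$ and $W^s_{r\bb}(x)$ is made through chains of flow-box plaques, whose transverse coordinate is an arbitrary topological choice, \emph{not} ``holonomy along unstable plaques''. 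Indeed there is no unstable lamination (hence no unstable holonomy) off $J^-$, and this is precisely the pitfall the paper flags in its own proof of this proposition. For $p\in J\cap W^s_{r\bb}(y)$, the second coordinate $\pi_2(p)$ is just some point of the nearby plaque of $W^s_{r\bb}(x)$; since $J\cap W^s_{\loc}$ is totally disconnected, a generic point of that plaque is \emph{not} in $J$, so there is no reason that $\pi_2(p)\in J$. Without this, $\pi_2\bigl(J\cap J^+_{r\bb}(x)\bigr)$ is only an uncountable union of totally disconnected sets (one for each $y\in J^u(x)$), which can perfectly well contain a continuum, and your concluding argument collapses.

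The claim can probably be repaired, but it needs real work rather than the ``routine check'' you invoke: for instance, on $J\cap J^+_{r\bb}(x)$ one can define a genuinely dynamical projection $p\mapsto W^u_{\loc}(p)\cap W^s_{r\bb}(x)$, which lands in $J$ because $W^u_{\loc}(p)\subset J^-$ and $W^s_{r\bb}(x)\subset J^+$; but one must then prove that in the thin situation the leaves $W^s_{r\bb}(y)$ stay uniformly close to $W^s_{r\bb}(x)$ (within the size of local unstable manifolds), that the intersection exists, is unique and varies continuously, and note that this map is only defined on $J$, so the asserted homeomorphism of the whole component is not what you actually use. The paper avoids all of this by a different and more direct route: it does not pass to a thin iterate at all, but takes neighborhoods $U_n$ of $J^u(x)$ in $W^u(x)$ with $\fr_{\mathrm i}U_n\cap J=\emptyset$, uses $J^s(x)=\{x\}$ to choose a small loop around $x$ in $W^s(x)$ disjoint from $J$, and propagates it along $U_n$ by the Inclination Lemma (explicitly \emph{because} no unstable holonomy is available) to produce open sets $\widetilde U_n$ with $\fr\widetilde U_n\cap J=\emptyset$; then $\widetilde U_n\cap J$ is clopen in $J$, contains $J(x)$, and shrinks to $J^u(x)$. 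You should either adopt such an argument or supply the missing construction of a $J$-compatible transverse projection; as written, the key step is unjustified.
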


\begin{proof}
First, $  {J^u(x)}$ is a connected set such that $x\in  {J^u(x)} \subset J$ so it is contained in $J(x)$. 
To prove  the converse statement, 
let $(U_n)$ be a  sequence  of open neighborhoods of $J^u(x)$ in $W^u(x)$ decreasing to $J^u(x)$ and such that 
$\fr_{\mathrm{i}} U_n \cap J = \emptyset$. 
Since $J^s(x)   = \set{x}$, for every $n$  any sufficiently small loop $\gamma$ about $x$ in $W^s(x)$ can be propagated 
along $U_n$ to yield an open set $\widetilde U_n$ such that $\fr\widetilde U_n = \emptyset$. Note that we did not prove any 
extension result for the unstable lamination, so we cannot simply say that we propagate $\gamma$ by using some 
``unstable holonomy''. On the other hand we can simply use the inclination lemma, by 
pushing forward a small  thickening of $f^{-n}(\gamma)$ as a 3 manifold transverse to $W^s(f^{-n}(x))$. 
Finally,   for every $n$, $\fr\widetilde U_n $ is relatively open and closed in $J$, so it contains $J(x)$ and we conclude 
that $J(x)= J^u(x)$.
\end{proof}

To understand the structure of periodic components of $J$, let us introduce a definition. 

      \begin{defi}\label{defi:quasi_solenoid}
 A   \emph{quasi-solenoid}    is a   saddle 
 hyperbolic set such that $f^k(\Lambda) = \Lambda$ for some $k$
 and: 
 \begin{itemize}
 \item $\Lambda$ is connected;
 \item $\Lambda$ has local product structure;
 \item  for every $x\in \Lambda$, $\Lambda\cap W^u(x)$ is leafwise unbounded and 
 locally  connected,
and  $\Lambda\cap W^s(x)$ is totally disconnected.  
\end{itemize}
  \end{defi}

Observe that in this definition we do not require 
 that $\Lambda\cap W^s_\loc(x)$ is a Cantor set.
 In other words, we allow for isolated points in a stable transversal 
 (this phenomenon  will be ruled out later under 
 appropriate hypotheses, see Theorem~\ref{thm:complement_mixing}).

\begin{thm}\label{thm:invariant_component}
  Let $f$ be   dissipative and hyperbolic with a disconnected and stably totally disconnected Julia set
 and $\bb$ be as above. Let $C$ be a periodic component of $J^+\cap \bb$ and $k$ be its period.  
 Then $\Lambda :=  \bigcap_{n\geq 0} f^{kn}(C)$ is  a point or a  quasi-solenoid, 
 and it is a connected component of $J$.
  \end{thm}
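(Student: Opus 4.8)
\textit{The plan.} I would first replace $f$ by $g:=f^k$: this is again hyperbolic, dissipative and stably totally disconnected, it turns the hypothesis into $g(C)\subseteq C$, and $\Lambda=\bigcap_{n\geq 0}g^{n}(C)$; since a $g$-quasi-solenoid is an $f$-quasi-solenoid and $J$ (hence its components) is unchanged, this reduction is harmless. Let $\overline C$ be the closure of $C$ in $\cd$, which is compact because $C$ is vertical in $\bb$; one checks easily that $\overline C\cap\bb=C$ and $g(\overline C)\subseteq\overline C$, so $(g^{n}(\overline C))_{n\geq 0}$ is a decreasing sequence of continua, nonempty because $C\cap J\neq\emptyset$ (as $C$ meets the global unstable transversal $\Delta^u\subseteq J^-$) and the $g$-orbit of a point of $C\cap J$ stays in $J\subseteq\bb$. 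I would then check that $\Lambda=\bigcap_{n\geq 0}g^{n}(\overline C)$ and that $\Lambda\subseteq J$: a point $z$ of this set has its whole backward orbit in the compact set $\overline C\subseteq\overline\bb$, hence $z\in K^-=J^-$ by dissipativity, while $z\in\overline C\subseteq J^+$; moreover $g^{-n}(z)\in J\subseteq\bb$, so $g^{-n}(z)\in\overline C\cap\bb=C$ for all $n\geq 0$, whence $z\in\Lambda$; the reverse inclusion is clear. Thus $\Lambda$ is a nonempty continuum contained in $J$ with $g(\Lambda)=\Lambda$, so it is a saddle hyperbolic set.

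Next I would identify $\Lambda$ as a connected component of $J$ and dispose of the ``easy'' quasi-solenoid properties. For $x\in\Lambda$ the connected set $J(x)\subseteq J\subseteq J^+\cap\bb$ contains $x\in C$, hence $J(x)\subseteq J^+_\bb(x)=C$; applying this to $g^{-n}(x)\in\Lambda$ and using $J(g^{-n}(x))=g^{-n}(J(x))$ gives $J(x)\subseteq g^{n}(C)$ for all $n$, so $J(x)\subseteq\Lambda$; since $\Lambda$ is connected, also $\Lambda\subseteq J(x)$, so $\Lambda=J(x)$. Stable total disconnectedness gives at once that $\Lambda\cap W^s(x)\subseteq J\cap W^s(x)$ is totally disconnected. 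Assume now $\Lambda$ is not a single point. If $J^u(x)$ were leafwise bounded for some $x\in\Lambda$ then $\Lambda=J(x)=J^u(x)$ by Proposition~\ref{prop:cpt_leafwise_bdd}; but then $\Lambda\subseteq W^u(x)$, and $\Lambda=g(\Lambda)\subseteq W^u(g(x))$ forces $W^u(g(x))=W^u(x)$ (distinct unstable leaves are disjoint), so $g$ would restrict to an expanding affine self-map of $W^u(x)\cong\C$, whose only compact invariant subset is its fixed point --- a contradiction. Hence $J^u(x)$ is leafwise unbounded for every $x\in\Lambda$, and being a component of $J^+\cap W^u(x)$ (recall $W^u(x)\subseteq J^-$) it is locally connected by Theorem~\ref{thm:topology}(i). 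The full slice $\Lambda\cap W^u(x)$ is then leafwise unbounded and locally connected too: its components are again of the form $J^u(y)$, all leafwise unbounded, so by Theorem~\ref{thm:topology}(ii) only finitely many of them meet any given disk $D^u(x,R)$, and a locally finite union of closed locally connected sets is locally connected.

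The main obstacle is the local product structure: for $a,b\in\Lambda$ sufficiently close one must show $[a,b]:=W^s_\loc(a)\cap W^u_\loc(b)\in\Lambda$. Since $g^{n}(C)$ is decreasing and $\Lambda=\bigcap_n g^n(C)$, it suffices to show that $c_n:=g^{-n}([a,b])$ lies in $C$ for all $n\geq 0$ --- and if this fails, it fails for all $n\geq n_0$. For $n=0$ it holds: $[a,b]\in W^s_\loc(a)\subseteq W^s_\bb(a)\subseteq K^+_\bb(a)$ and $[a,b]\in J^+$, so $[a,b]\in\fr K^+_\bb(a)\cap\bb=C$ by Lemma~\ref{lem:component_K+J+}. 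Note that $c_n\in J$ and, as $[a,b]\in W^u_\loc(b)$, that $c_n$ lies on the leaf $W^u(b_n)$ within $O(u^{-n})$ of $b_n:=g^{-n}(b)\in\Lambda\subseteq C$. If $c_n\notin C$, then $c_n$ lies in a component $C_n\neq C$ of $J^+\cap\bb$ which, because $g$ maps components of $J^+\cap\bb$ into components ($f(J^+_\bb(y))\subseteq J^+_\bb(f(y))$), satisfies $g^{n}(C_n)\subseteq C$. I would rule this out by a pigeonhole argument in the spirit of the proof of Theorem~\ref{thm:component}: passing to $b_\infty=\lim b_{n_j}\in\Lambda$ (so $c_{n_j}\to b_\infty$ as well), the finiteness of thick semi-local components (Corollary~\ref{cor:thick_components}), the thin/thick dichotomy (Proposition~\ref{prop:alternative}), the structure of thin components (Proposition~\ref{prop:thin_connected_component}) and the argument of Proposition~\ref{prop:s-holonomy_point} force the $C_{n_j}$ to be infinitely many \emph{distinct, thin} components with $\diam_{c_{n_j}}J^u(c_{n_j})\to 0$ accumulating at $b_\infty$; transporting by stable holonomy into a fixed flow box at $b_\infty$ the unbounded locally connected components $J^u(b_{n_j})\subseteq C$ together with the small components through the nearby points $c_{n_j}$, the finiteness of large-diameter unstable components (Theorem~\ref{thm:topology}(ii)) then yields a contradiction. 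Granting this, $c_n\in C$ for all $n$, so $[a,b]\in\Lambda$, $\Lambda$ has local product structure, and therefore $\Lambda$ is a point or a quasi-solenoid, and in all cases a connected component of $J$. I expect this holonomy-and-finiteness step to be the most delicate part of the argument.
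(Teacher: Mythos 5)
Up to the local product structure step your argument is essentially sound and runs parallel to the paper's proof: the reduction to $g=f^k$, the identification $\Lambda=\bigcap g^n(\overline C)\subset J$ and the fact that $\Lambda$ is a full component of $J$, the stable total disconnectedness, and the local connectivity of the unstable slices are all fine (your affine-expansion argument showing that a leafwise bounded slice forces $\Lambda$ to be a point is a pleasant variant of the paper's thick-component argument, and the ``finitely many unbounded components meet a given disk, a locally finite union of closed locally connected sets is locally connected'' argument works, granted the standard boundary-bumping lemma).

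The gap is exactly where you anticipated it: the local product structure. Reducing to ``$c_n=g^{-n}([a,b])\in C$ for all $n$'' is correct, but the sketched pigeonhole contradiction does not close. The configuration you propose to contradict --- infinitely many distinct thin components $C_{n_j}$ with small unstable slices $J^u(c_{n_j})$, through points $c_{n_j}$ leafwise close to $b_{n_j}\in\Lambda$ and accumulating at $b_\infty\in\Lambda$ --- is not in itself contradictory: Theorem~\ref{thm:topology}(ii) and Lemma~\ref{lem:diameter} only bound the number of components of \emph{large} diameter, and nothing forbids infinitely many small components of $J\cap W^u$ from accumulating at a point of $\Lambda$ inside the same unstable leaf. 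This genuinely happens (for instance for Hénon perturbations of a hyperbolic polynomial with disconnected Julia set, where preimages of points of other components accumulate at every point of the periodic component), and by Proposition~\ref{prop:alternative} such points cannot belong to the thick component $C$, so they are precisely of the kind you are trying to exclude. Note that your contradiction step never uses the point $a$ or the unboundedness of $J^u(a)$; if it worked as stated, the same reasoning would show that \emph{every} point of $J$ on $W^u_\loc(b)$ sufficiently close to $b$ lies in $\Lambda$, which is false. The input that is actually needed, and which the paper uses, is a lower bound on $\diam J^u([a,b])$: since $[a,b]$ is the stable projection of $a$ and $J^u(a)$ is leafwise unbounded, the local stable holonomy (inside a flow box, using the local product structure of $J$) sends a crossing piece of $J^u(a)$ to a connected subset of $J\cap W^u(b)$ through $[a,b]$ of definite diameter; combined with the uniform $\delta$-separation of large local unstable components (extracted in the paper from Lemma~\ref{lem:diameter} by a compactness and holonomy-invariance argument), this forces $J^u([a,b])=J^u(b)\subset\Lambda$, so $[a,b]\in\Lambda$ directly, and the pullback step becomes unnecessary (or trivial, as in the paper's final iteration). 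Without this holonomy lower bound your scheme cannot distinguish $[a,b]$ from an arbitrary nearby point of $J$ lying in a small component, and the proof of the key property fails.
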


  \begin{proof} 
  Replacing $f$ by some iterate, we may  assume $C$ is invariant, that is, $k=1$.
  If $C$ is a vertical manifold, it follows from 
   Remark \ref{rmk:trivial_components} that $\Lambda$ is a point, and the other properties follow easily, 
   so the interesting case is when 
  $C$ is non-trivial. Then, arguing  in the proof of Theorem \ref{thm:component}, 
  by~\eqref{eq:diameter}, $C$ contains points such that $\diam_x(J^u(x))$ is arbitrary large, so it is 
thick in the sense of Proposition \ref{prop:alternative}. 
  Define $\Lambda  := \bigcap_{n\geq 0} f^n(C) = \bigcap_{n\geq 0} \overline{f^n(C)} $. 
  Since by assumption $f(C)\subset C$, 
    $\Lambda$ is a decreasing intersection of compact connected sets. 
  Hence $\Lambda$ is an invariant connected hyperbolic  set contained in $J$, and 
  $f(\Lambda ) = \Lambda$. Let us show that it is a connected component of $J$. For this, let $\Lambda'$ be the 
  connected component of $\Lambda$ in $J$. By definition $\Lambda\subset \Lambda'$. 
 Since $\Lambda'$ is connected and    contained in $J^+\cap \bb$, it must be contained in $C$. Furthermore since $f(\Lambda ) = \Lambda$,  and $f$ permutes the components of $J$, we have that 
 $f(\Lambda' ) = \Lambda'$, hence 
  for every $n\geq 1$, $f^{-n}(\Lambda')\subset C$, and we conclude that $\Lambda'\subset \bigcap_{n\geq 0} f^n(C) = \Lambda$, as was to be shown. 
  
We claim that for every $x\in \Lambda$, $J^u(x)$ is leafwise unbounded. 
    Indeed for every $x\in \Lambda$,  we have that $x= f^n(x_{-n})$ with $x_{-n}  = f^{-n}(x) \in C$ 
    and since $C$ is thick,   
    $\diam_{x_{-n}}(J^u(x_{-n}))$ is uniformly bounded from below, and the result follows. 
 
    By Lemma~\ref{lem:diameter}, for every $x\in \Lambda$, there are only finitely many components of 
    $J\cap D^u(x,1)$ intersecting  $\fr_{\mathrm{i}}D^u(x,1)$ and $D^u(x,1/2)$. 
    A simple compactness argument using the holonomy invariance of $J^+$
    shows that this number is uniformly bounded, therefore  
there exists a uniform $\delta>0$ such that leafwise unbounded components of   $J^+$ 
    intersecting $D^u(x,1/2)$
    are $\delta$-separated in $D^u(x,1)$  relative to the distance $d_x^u$ (or equivalently, relative to 
     the ambient one). 
   From this we deduce that for every $x\in \Lambda$, there exists $\delta>0$ such that 
     $\Lambda$ coincides with $J^u(x)$ in $W^u_\delta(x)$, and it follows from Theorem~\ref{thm:topology}
     that $\Lambda$ 
     is locally connected in the unstable direction. 
     
  Let us   show that   $\Lambda$ has local product structure.  For this, let  $y_1, y_2\in \Lambda$ be 
    close (i.e. $d(y_1, y_2)\ll \delta$), denote by $\pi^s: W^u_\loc(y_1)\to W^u_\loc(y_2) $ 
      the projection along stable leaves, and let  $z_2  =\pi^s(y_1)$. 
  Since $J^u(y_1)$ and $J^u(y_2)$ are  leafwise unbounded, 
  if $d(y_1, y_2)$ is small enough, $J^u(z_2)$ intersects $\fr_{\rm i} D^u(y_2, 1)$, and 
  so does $J^u(y_2)$. By definition of $\delta$, it follows that $J^u(y_2) = J^u(z_2)$, hence 
 $y_2$ and $z_2$ belong to the same connected component of $J$. In particular, 
    $z_2$ belongs to $C$. Since $f\inv$ contracts distances along unstable manifolds, and respects connected components of $J$,  we can repeat this argument with 
  $f^{-n}(y_2)$ and $f^{-n}(z_2)$ for any $n\geq 0$ and we conclude that  
  $z_2\in \Lambda$, as was to be shown.   
  \end{proof}

\begin{thm}\label{thm:component_J}  
Let $f$ be   dissipative and hyperbolic with a disconnected and stably totally disconnected Julia set. Then every 
component of $J$ is either 
\begin{enumerate}%[{(1)}]
\item   a point;
\item  or of the form $J^u(x)$ with $J^u(x)$ non-trivial and leafwise bounded; 
\item or a periodic  quasi-solenoid. 
\end{enumerate}

In addition: 
\begin{enumerate}[(i)]
\item There are finitely many quasi-solenoidal components
\item Every   periodic component of $J$ is either a point or a  quasi-solenoid. 
\item Every non-trivial component of $J$ is attracted by a quasi-solenoid.  
More precisely, given a non-trivial component $C$  for every $\delta>0$ there 
exists $n$ such that $f^{kn}(C)\subset W^s_\delta(\Lambda)$, where 
$\Lambda$ is a quasi-solenoid of period $k$. 
\end{enumerate}
\end{thm}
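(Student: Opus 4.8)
The plan is to fix a component $C$ of $J$, choose $x\in C$, and split according to whether $J^u(x)$ is leafwise bounded. If $J^u(x)$ is leafwise bounded, Proposition~\ref{prop:cpt_leafwise_bdd} gives $C=J(x)=J^u(x)$, so $C$ is a point [type~(1)] or a non-trivial leafwise bounded continuum [type~(2)]. I would record here two elementary points used below: first, in the non-trivial case $C$ cannot be periodic, since $f^k(C)=C$ would force $W^u(x)=W^u(f^k(x))$ and $f^k$ would act on this leaf as an affine expansion (by \eqref{eq:tangent_expansion}), which cannot preserve a bounded non-degenerate set; second, if \emph{some} unstable slice of $C$ is leafwise bounded then, by the same reasoning applied through Proposition~\ref{prop:cpt_leafwise_bdd}, \emph{all} of them are. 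Hence the whole content concentrates on components with leafwise unbounded unstable slices, together with (i)--(iii).

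For such a $C$ (and, when proving (iii), also for type-(2) components), since $J\subset\bb$ I would pass to the induced semi-local dynamics: set $C^+_n:=J^+_\bb(f^n(x))$. By Theorem~\ref{thm:component} the sequence $(C^+_n)_n$ is eventually periodic, say of period $k$ from $n\geq N_0$ on; since $\diam_{f^n(x)}(J^u(f^n(x)))\to\infty$ by \eqref{eq:diameter} (indeed infinite in the leafwise-unbounded case), the periodic semi-local component $C^+:=C^+_{N_0}$ is thick, so Theorem~\ref{thm:invariant_component} produces a quasi-solenoid $\Lambda:=\bigcap_{m\geq 0}f^{km}(C^+)$ (not a point, as it contains points with leafwise-unbounded unstable slices). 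Pushing points of $C$ through this eventually-periodic sequence of semi-local components and passing to the limit gives $\omega(C)\subseteq\bigcup_{r}f^r(\Lambda)$, i.e. every forward orbit issued from $C$ accumulates only on $\Lambda$ and its phases. This already yields the existence of the attracting quasi-solenoid, and — together with the fact that distinct thick semi-local components (hence distinct quasi-solenoidal components) are at positive distance, via the $\delta$-separation of leafwise-unbounded unstable slices — it gives (i): each quasi-solenoid $\Lambda$ is recovered as $\bigcap_m f^{km}(J^+_\bb(x))$ for $x\in\Lambda$ from a non-trivial periodic semi-local component, of which there are finitely many by Theorem~\ref{thm:component}.

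The technical heart is to upgrade ``$\omega(C)\subseteq\Lambda$'' to the sharp statement (iii), that $f^{kn}(C)\subseteq W^s_\delta(\Lambda)$ for all large $n$, and in the leafwise-unbounded case to promote $C$ itself to a periodic quasi-solenoid. For (iii) the idea is that if $w\in f^{kn}(C)$ has a forward $f^k$-orbit staying $\varepsilon$-close to $\Lambda$, then in a local product chart at a nearby point $p\in\Lambda$ the unstable coordinate $a$ of $w$ must lie in $\Lambda$: otherwise $J^u(a)\neq J^u(p)$, and since forward iteration expands along $W^u(p)$ while the $\delta$-separation of leafwise-unbounded slices (from the proof of Theorem~\ref{thm:invariant_component}) and the finiteness of large-diameter slices (Theorem~\ref{thm:topology}) prevent $f^{mk}(a)$ from remaining near $\Lambda$, we reach a contradiction; hence $w\in W^s_\delta(\Lambda)$. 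Uniformity in $w$ (equivalently, a bound on $n$ in terms of $\delta$ only), as well as the promotion of a leafwise-unbounded $C$ to a periodic component, come from the ``main step'' of the introduction: for large $n<m$ pick $x_n\in f^n(C)$, $x_m\in f^m(C)$ close (compactness of $\overline{\bigcup_{j\geq 0}f^j(C)}\subseteq J$); since $J^u(x_n),J^u(x_m)$ meet $\fr_{\mathrm i}D^u(\cdot,1)$, Theorem~\ref{thm:topology} and the $\delta$-separation force the local stable holonomy $W^u_\loc(x_n)\to W^u_\loc(x_m)$ to carry the germ of $J^u(x_n)$ onto that of $J^u(x_m)$; applying $f^{-1}$ (which contracts along unstable leaves and permutes components of $J$) propagates this identification to all backward iterates, so $f^n(C)$ and $f^m(C)$ are related by stable holonomy. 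With stable total disconnectedness and $\omega(C)\subseteq\Lambda$ this gives $f^n(C)=f^m(C)$, so $C$ is eventually periodic among components of $J$; since its $\omega$-limit lies in the single phase-orbit of $\Lambda$ and $C$ is attracted to $\Lambda$, $C$ must be a phase of $\Lambda$, i.e. a periodic quasi-solenoid. Statement (ii) follows: a periodic component of $J$ has leafwise bounded slices — then it is a point by the first paragraph — or leafwise unbounded slices — then the above identifies it with a quasi-solenoid.

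I expect the main obstacle to be exactly this last part: making the ``local stable holonomy correspondence'' precise and deducing both $f^n(C)=f^m(C)$ and the sharp attraction $f^{kn}(C)\subseteq W^s_\delta(\Lambda)$. It must be carried out using only \emph{local} holonomy, so as to avoid the divergence/Reeb phenomenon that makes the (NDH) hypothesis necessary elsewhere, and it requires carefully interlocking the John--Hölder finiteness, the $\delta$-separation of unstable slices, and the expansion/contraction estimates along the laminations.
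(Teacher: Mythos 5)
Your setup (splitting on leafwise boundedness, using Proposition~\ref{prop:cpt_leafwise_bdd}, producing the limiting quasi-solenoid from Theorem~\ref{thm:component} and Theorem~\ref{thm:invariant_component}, and deducing (i)) matches the paper, but there is a genuine gap at the step that carries the whole weight of the trichotomy and of (ii): you claim that once $f^n(C)$ and $f^m(C)$ are related by stable holonomy, ``stable total disconnectedness and $\omega(C)\subseteq\Lambda$'' give $f^n(C)=f^m(C)$. This inference does not follow. Two points joined by an arc inside a stable manifold do lie in the same component of $J^+\cap \bb$ — that is exactly how the paper proves eventual periodicity of \emph{semi-local} components in Theorem~\ref{thm:component} — but they need not lie in the same component of $J$, because the connecting stable arc is not contained in $J$; stable total disconnectedness says precisely that $W^s(x)\cap J$ is totally disconnected, so it works against you here, not for you. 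Worse, your inference pattern proves too much: the same pigeonhole-plus-holonomy construction applies verbatim to a non-trivial leafwise bounded component (its forward slices also have diameter bounded below and accumulate on $\Lambda$), so it would show that type-(2) components are periodic, contradicting the theorem itself, which makes them wandering. Thus the key assertion — a component with leafwise unbounded slices is itself periodic, equivalently equals $\Lambda$ — is left unproved. The paper's route avoids forward pigeonholing of $C$ altogether: leafwise unboundedness of $J^u(x)$ is preserved under $f^{-1}$, so every $f^{-n}(x)$ lies in one of the finitely many \emph{thick} semi-local components (Corollary~\ref{cor:thick_components}); pigeonholing the backward iterates produces a periodic thick component $C^+$ with $f^{-n_i}(x)\in C^+$ along a sequence $n_i\to\infty$, which places $x\in\bigcap_{n\geq 0}f^{kn}(C^+)=\Lambda$, and Theorem~\ref{thm:invariant_component} — whose whole point is that $\Lambda$ is a \emph{connected component} of $J$ — then forces $C=\Lambda$.

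Your argument for the sharp form of (iii) is also only a sketch: from ``forward iteration expands along $W^u(p)$'' you cannot conclude that $f^{mk}(a)$ leaves a neighborhood of $\Lambda$, since growth of the leafwise distance to $f^{mk}(p)$ is perfectly compatible with the ambient orbit staying close to $\Lambda$ (the unstable leaf may fold back). The missing step is exactly the paper's: a point of $C^+\cap J$ near $\Lambda$ whose unstable slice is large must, under local stable holonomy, land in a slice of $\Lambda$, which shows that $W^s_\delta(\Lambda)$ is relatively open in $C^+\cap J$; combined with the Hausdorff decrease of $f^{kn}(C^+)$ to $\Lambda$ this yields the uniform statement $f^{kn}(C)\subset W^s_\delta(\Lambda)$. (Alternatively, since $\Lambda$ has local product structure it is locally maximal, and the standard fact that a point whose entire forward orbit stays near $\Lambda$ belongs to $W^s_\delta(\Lambda)$ would close your per-point argument; but as written the contradiction is not established.)
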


Note that in assertion~(\emph{ii}), the uniformity of $n$ as a  function of $\delta$ is not a direct consequence of 
the fact that $\omega(C) \subset \Lambda$. 

\begin{proof}
To establish the announced trichotomy, by Proposition \ref{prop:cpt_leafwise_bdd} 
it is enough to  show that if $C$ is a component such that for some 
$x\in C$, $J^u(x)$ is leafwise unbounded, then $C$ is a periodic 
quasi-solenoid. Note that for every $n\geq 1$,  $J^u(f^{-n}(x))$
  is leafwise unbounded. 
Therefore the component of $f^{-n}(x)$ in $J^+\cap\bb$ is thick in the sense of 
Proposition~\ref{prop:alternative}, and by 
Corollary~\ref{cor:thick_components}, $J^+_\bb(f^{-n}(x))$ belongs to a finite set of semi-local components. 
Thus there exists a component  $C^+$  
  of $J^+\cap \bb$ and an infinite sequence $n_i$ such that $f^{-n_i}(x)\in C^+$, hence 
$C^+$ is periodic of some period $k$ and reversing time we get that 
$J^u(x)$ is included in $\Lambda :=\bigcap_{n\geq 0} f^{kn}(C^+)$. 
By  Theorem~\ref{thm:invariant_component}, $\Lambda$ is a  
  quasi-solenoid and $J(x) = C = \Lambda$. 
  
  Since 
there are only finitely many periodic semi-local components of $J^+$, this argument shows that 
$J$ has only finitely many solenoidal components.

For assertion~(\emph{ii}), let   $C$ be a  
 periodic component of $J$ which is not reduced to a point, and let $x\in C$.  
 Without loss of generality we assume $C$ is fixed. 
 Expansion in the unstable direction shows that if   $J^u(x)$ is leafwise bounded, 
 then $J^u(x) = \set{x}$, which is a contradiction. Thus by the first part of the proof, 
 $C$ is a quasi-solenoid. 
 
To prove (\emph{iii}), let $C$ be a non-trivial component of $J$, and 
for some large bidisk $\bb$, let $C^+$ be the component of $J^+\cap \bb$ containing $C$. Then by Theorem \ref{thm:component}  $C^+$  is ultimately periodic (with preperiod $k$), thus by 
Theorem \ref{thm:invariant_component}, $\bigcap_{n\geq 0} f^{kn}(C^+)$ is 
 a periodic  quasi-solenoid  $\Lambda$. This shows that $C$ is attracted by $\Lambda$ 
 in the sense that for large $n$, $f^{kn}(C)$ is contained in a $\delta$-neighborhood of $\Lambda$. 
 To get the more precise  statement that $f^{kn}(C)\subset W^s_\delta(\Lambda)$, we have to show that 
 $W^s_\delta(\Lambda)$ is relatively open in $C^+\cap J$. 
 The argument is the same as for the local product structure: since   
 large leafwise components of $J$ are separated by some uniform distance and $C$ is thick,  
 if $x\in C\cap J$ is sufficiently close to $y\in \Lambda$, $W^s_\loc(x)\cap W^u_\loc(y)$ must belong to 
 a large component of $W^u_\loc(y)\cap J$, therefore it belongs to 
 $J^u(y)$, and we are done. 
 \end{proof}

\begin{rmk}\label{rmk:locally_connected}
Leafwise bounded components of $J$ are locally connected, as follows from Theorem~\ref{thm:topology}. On the other hand a quasi-solenoid is not locally connected, since it   locally has the structure of a Cantor set times a (locally) connected set. 
\end{rmk}

The following result says that there is a 1-1 correspondence between components of $K$ and $J$, so that the 
previous theorems yield a description of components of $K$ as well. 

\begin{prop}\label{prop:component_KJ}  
Every component of $K$ contains a unique component of $J$. 
\end{prop}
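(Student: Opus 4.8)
The plan is to show that the map sending a component $C_J$ of $J$ to the component $C_K$ of $K$ containing it is a bijection. The key structural facts are: $J = \partial K^+ \cap \partial K^- = J^+ \cap J^-$, while $K = K^+ \cap K^-$, and since $f$ is dissipative we have $K^- = J^-$, so $K = K^+ \cap J^-$ and $J = J^+\cap J^-$. Thus $J \subset K$ and $K = K^+\cap J^-$ differs from $J$ only in that $K^+$ replaces $J^+ = \partial K^+$. The idea is that a component of $K$ is ``$K^+$-filled'' in the unstable direction over a component of $J$: indeed for $x\in K$, by Lemma~\ref{lem:leafwise_interior} we have $\Int_{\mathrm i}(K^u(x))\subset \Int(K^+)$ and $\fr_{\mathrm i}(K^u(x))\subset J$, so $K^u(x)$ is the polynomially convex hull of $J^u(x)$ in the leaf $W^u(x)$ (filling in the leafwise bounded complementary components), and when $K^u(x)$ is leafwise unbounded, $\Int_{\mathrm i}(K^u(x))$ lies in attracting basins which are disjoint from $K^-=J^-$, so actually $K^u(x)\cap J^- = J^u(x)$ in that case.

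First I would prove the \emph{existence} part: every component of $K$ meets $J$. Let $C_K$ be a component of $K$ and pick $x\in C_K$. Following the strategy of the other structure theorems, I work in a large bidisk $\bb$ and consider $K_\bb^+(x)$; its boundary $\fr K^+_\bb(x)\cap\bb$ is a component of $J^+\cap\bb$ by Lemma~\ref{lem:component_K+J+}, in particular non-empty and contained in $J^+$. Intersecting with $J^- = K^-$ (which is horizontal-saturated and meets every vertical object of $\bb$, since stable manifolds of $J$ are dense in $J^+$ and meet $\Delta^u\subset J^-$), one gets points of $K^+\cap J^- = K$ lying in $J^+$, i.e.\ points of $J$, inside the component of $K$ containing $x$; one must check connectivity is preserved, which follows because $K^+_\bb(x)$ is connected and its intersection with the lamination $\cW^s$ glues $x$ to points of $J$. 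Concretely: the stable manifold $W^s_\bb(x')$ of any $x'\in\fr K^+_\bb(x)\cap\bb\subset J^+$ is contained in $K^+_\bb(x)$, is connected, and meets $J^-$ (it is a vertical disk of finite degree, so meets $\Delta^u$), hence meets $J$; and $W^s_\bb(x')\cup K^+_\bb(x)$-connectedness places this point of $J$ in $C_K$. A small point to handle is relating $K^+_\bb(x)$ to the global component $K(x)$ — this is the usual flow-box/bidisk bookkeeping, using that $f^{-n}(\bb)$ exhausts and that $K = \bigcap f^{-n}(\bb)\cap K$ up to the finitely many attracting points, which are not in $K$.

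Next I would prove \emph{uniqueness}: $C_K$ contains exactly one component of $J$. Suppose $C_1, C_2$ are two components of $J$ both contained in $C_K$. Take a path $\gamma$ in $C_K$ joining a point of $C_1$ to a point of $C_2$ (or just use connectedness of $C_K$ directly). Every point $z$ of $C_K\subset K\subset K^+$ has $z\in K^u(z)$, and by Lemma~\ref{lem:leafwise_interior} either $z\in J$ (if $z\in\fr_{\mathrm i}K^u(z)$) or $z\in\Int(K^+)$; in the latter case $z$ lies in an attracting basin $\mathcal B$, but $z\in K=K^+\cap K^-\subset K^-$ and $\mathcal B\cap K^- = \emptyset$ (points of an attracting basin have unbounded backward orbit) — contradiction. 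Hence \emph{every} point of $K$ lies in $J$, so $K = J$! Wait — that would make the proposition trivial, so I must be misreading: the point is that $\Int(K^+)$ can meet $K^-$ only at points whose backward orbit stays bounded, and $K^- = J^-$ has empty interior but $\Int(K^+)$-points of $K^-$ are exactly the points of $K^-\cap\Int(K^+)$, which can be non-empty. So the correct dichotomy is: $x\in K$ is either in $J$, or in $\Int(K^+)\cap J^-$, and the latter set is exactly where $K$ is ``thicker'' than $J$ in the unstable leaves. Then: the components of $J$ contained in a fixed $C_K$ are the components of $C_K\cap J = C_K\cap J^+$; since $C_K$ has, by the leaf-wise-hull description above, connected intersection with each unstable leaf whose closure is $K^u(x)$ and $J^u(x) = \fr_{\mathrm i}K^u(x)$ is connected (it is a single component of $J\cap W^u$ because $K^u(x)$ is connected), gluing along the stable lamination of $J$ (which is contained in $K$ and is connected leafwise) shows $C_K\cap J$ is connected.

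\textbf{Main obstacle.} The delicate step is the last one: showing $C_K\cap J$ is \emph{connected}, equivalently that one cannot disconnect a component of $K$ by removing $\Int(K^+)\cap J^-$. The natural approach is to mimic the proof of Lemma~\ref{lem:component_K+J+}: work in a bidisk, show $\fr K^+$ in each horizontal slice is connected, and that the stable-lamination structure of $K^+$ together with $J^-$ being a horizontal-saturated set forces $C_K$ to retract onto $C_K\cap J$. I expect the real work is to set up the slicing correctly — using that $K\cap L$ in a horizontal line $L$ is polynomially convex and equals the filled-in version of $J\cap L$ — and to patch the leafwise statements together using the local product structure of $J$ and the global stable lamination $\cW^s$ of $J^+$, exactly as in the proofs of Theorems~\ref{thm:invariant_component} and~\ref{thm:component_J}.
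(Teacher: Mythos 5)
Your proposal leaves the crucial uniqueness step unproved, and the route you sketch for it cannot work as stated. You want to show that $C_K\cap J$ is connected by ``gluing along the stable lamination of $J$ (which is contained in $K$ and is connected leafwise)'', but stable leaves are \emph{not} contained in $K$: for $x\in J$ one has $W^s(x)\subset J^+\subset K^+$, while $W^s(x)\cap K = W^s(x)\cap J^-$ is totally disconnected under the standing hypothesis (cf.\ Proposition~\ref{prop:stable total disc}), so there is nothing connected in the stable direction inside $K$ to glue with; you yourself defer this step as ``the real work''. The paper's argument for the interesting (quasi-solenoidal) case is different: it upgrades Theorem~\ref{thm:invariant_component} to the identities $J(x)=\bigcap_{n\geq 0}f^n(J^+_\bb(x))$ and $K(x)=\bigcap_{n\geq 0}f^n(K^+_\bb(x))$, and then invokes Lemma~\ref{lem:component_K+J+} (the boundary of a component of $K^+\cap\bb$ is a \emph{single} component of $J^+\cap\bb$): two distinct components $J(x)\neq J(y)$ of $J$ inside one component of $K$ would force two distinct semi-local components $J^+_\bb(x)\neq J^+_\bb(y)$ inside $K^+_\bb(x)$, which is impossible since every point of $J^+\cap K^+_\bb(x)$ lying in $\bb$ belongs to $\fr K^+_\bb(x)\cap \bb$, a connected set. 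You quote Lemma~\ref{lem:component_K+J+} but never use the identity $K(x)=\bigcap_{n\geq 0} f^n(K^+_\bb(x))$, which is exactly what transfers the semi-local statement to components of $K$; without it (or some substitute) uniqueness is not established.

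The existence half has the same kind of gap, plus some factual slips. Connecting $x$ to a point of $J$ through $K^+_\bb(x)$ and $W^s_\bb(x')$ only shows that the semi-local component of $K^+$ meets $J$, not that the component of $x$ in $K=K^+\cap K^-$ does, because those connecting sets are not contained in $K$; the ``bookkeeping'' you postpone is the actual content. (A correct direct argument: $K\subset K^-=J^-$, so any non-attracting $x\in K$ lies on some $W^u(y)$ with $y\in J$, and $K^u(x)$ is a connected subset of $K$ whose nonempty intrinsic boundary lies in $J$ by Lemma~\ref{lem:leafwise_interior}.) Finally, attracting periodic points \emph{do} belong to $K$ (being periodic, their orbits are bounded in both time directions), attracting basins are not disjoint from $K^-=J^-$, and for a leafwise unbounded component one has $K^u(x)\subset W^u(x)\subset J^-$, hence $K^u(x)\cap J^-=K^u(x)\neq J^u(x)$; these misstatements feed the confusion you flag mid-proof and should be excised from any corrected write-up.
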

  
For polynomials in one variable, the analogous statement  is   the fact that  
every component of $K$ has a connected boundary, which follows from polynomial convexity. Here, components of $K$ have empty interior so this has to be formulated differently. 

\begin{proof}
Every component of $K$ contains a point of $J$, for otherwise it would be contained in $\mathrm{Int}(K^+)$,
so it is of the form $K(x)$ for some 
$x\in J$. If $J(x) = \set{x}$ the result is obvious. Now assume that  $J^u(x)$ is leafwise bounded. 
By Lemma~\ref{lem:leafwise_interior},  
$K^u(x)$ is obtained by filling the holes of $J^u(x)$ in $W^u(x)\simeq \C$, so $J^u(x)$ 
is equal to the intrinsic boundary of  
 $K^u(x)$ and the result follows.
  
The most interesting case is when $J(x)$ is a quasi-solenoid. Replacing $f$ by $f^k$ for some $k\geq 1$, we may assume that $J(x)$ is fixed. We proved 
 in Theorem~\ref{thm:invariant_component} that $J(x)= \bigcap_{n\geq 0} f^n(J^+_\bb(x))$. 
The very same proof shows  that $K(x) =  \bigcap_{n\geq 0} f^n(K^+_\bb(x))$. By 
Lemma~\ref{lem:component_K+J+},  
$\fr K^+_\bb(x)$ contains a unique component of $J^+_\bb(x)$ (namely, its boundary), and we conclude 
by arguing that  if   $K(x)$ contained two distinct components $J(x)$ and $J(y)$ of $J$, then 
$K^+_\bb(x)$ would contain  $J^+_\bb(x)$ and $J^+_\bb(y)$, which must be distinct 
because $\bigcap_{n\geq 0} f^n(J^+_\bb(x))\neq \bigcap_{n\geq 0} f^n(J^+_\bb(y))$, and this is impossible.  
\end{proof}

\section{Complements}

We keep the setting as in Sections~\ref{sec:semi_local} and~\ref{sec:components}. Here we prove a number of 
complementary facts which do not enter into the proof of the main theorem, so we sometimes 
 allow the presentation to be  a little sketchy.

\subsection{Transitivity}
A desirable property of quasi-solenoids is transitivity, or chain transitivity. At this stage we are not able to show that quasi-solenoidal components are transitive, but let us already explain a partial result in this direction. 
The full statement will be obtained in    Theorem~\ref{thm:complement_mixing}   under an additional assumption.  

\begin{prop}\label{prop:mixing}
If $\Lambda$ is a quasi-solenoidal component of $J$ of period $k$, there exists a quasi-solenoid 
$\Lambda'\subset \Lambda$  of period $k\ell$, which is saturated by unstable components 
(that is, if $x\in \Lambda'$ then $J^u(x)\subset \Lambda'$), 
with the property that  $f^{k\ell}\rest{\Lambda'}$ is topologically mixing. 
In addition, stable slices of $\Lambda'$ are Cantor sets   
 and  for every 
periodic point $p\in \Lambda'$, $\Lambda' = \overline{J^u(p)}$. 
\end{prop}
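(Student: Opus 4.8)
The plan is to carve out of $\Lambda$ a ``basic-set-like'' piece by the classical spectral decomposition, and then fatten it into an unstable-saturated connected set. Throughout I would replace $f$ by $g:=f^k$, so that $g(\Lambda)=\Lambda$; the period $k\ell$ in the statement will come from the iterate of $g$ we eventually use. Recall that $\Lambda$, having local product structure, is a compact \emph{locally maximal} hyperbolic set of saddle type. First produce a saddle periodic point $p\in\Lambda$: taking $x\in\Lambda$ and an accumulation point $z$ of $(g^{-n}x)_{n\ge 0}$, the point $z$ is non-wandering for $g|_\Lambda$, so the Anosov closing lemma together with local maximality of $\Lambda$ yields a saddle periodic orbit contained in $\Lambda$. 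Then consider the homoclinic class $H$ of the $g$-orbit of $p$ computed \emph{inside} $\Lambda$, i.e. the closure of the set of transverse intersection points of $W^u(\mathcal O_g(p))$ and $W^s(\mathcal O_g(p))$ lying in $\Lambda$. By standard hyperbolic theory $H\subseteq\Lambda$ is transitive, locally maximal, with a dense set of periodic points, and Smale's spectral decomposition splits $H$ into finitely many cyclically permuted compact pieces on each of which some iterate $g^\ell$ is topologically mixing. Replacing $p$ by an iterate, let $\Lambda_\ast$ be the mixing piece containing $p$; after one more replacement of $g^\ell$ by an iterate (folded into $\ell$) we may assume $g^\ell(p)=p$. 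Then $f^{k\ell}|_{\Lambda_\ast}$ is topologically mixing, and its periodic points are dense and pairwise homoclinically related.

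Next set $\Lambda':=\overline{J^u(p)}$ (ambient closure in $\cd$). Then $\Lambda'\subseteq\Lambda$ is compact, connected, $g^\ell$-invariant (since $g^\ell$ fixes $p$ and maps the component $J^u(p)$ of $p$ in $J\cap W^u(p)$ onto itself), and not a point since $J^u(p)$ is leafwise unbounded. The key local fact, proved exactly as in Theorem~\ref{thm:invariant_component}, is that \emph{$\Lambda'$ agrees with $\Lambda$ on small unstable slices of its own points}: if $x\in\Lambda'$ and $z_j\to x$ with $z_j\in J^u(p)$, then $J^u(z_j)=J^u(p)$, so $J^u(p)\cap W^u_\delta(z_j)=\Lambda\cap W^u_\delta(z_j)$ for the uniform $\delta$ of Theorem~\ref{thm:invariant_component}, and passing to the limit (continuity of local unstable manifolds, closedness of $\Lambda$) gives $\Lambda\cap W^u_\delta(x)\subseteq\Lambda'$, hence $\Lambda'\cap W^u_\delta(x)=\Lambda\cap W^u_\delta(x)=J^u(x)\cap W^u_\delta(x)$. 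A clopen argument along the (locally connected, by Theorem~\ref{thm:topology}) continuum $J^u(x)$ then upgrades this to $J^u(x)\subseteq\Lambda'$ for every $x\in\Lambda'$, i.e. $\Lambda'$ is unstable-saturated. Local product structure of $\Lambda'$ is inherited from that of $\Lambda$ once one notes that for $a\in\Lambda'\cap W^s_\loc$ and $b\in\Lambda'\cap W^u_\loc$ the bracket $[a,b]$ lies in the unstable slice component through $a$, i.e. in $J^u(a)\subseteq\Lambda'$. Together with stable total disconnectedness and Theorem~\ref{thm:topology}, this shows $\Lambda'$ is a quasi-solenoid.

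It then remains to identify $\Lambda'$ with $\Lambda_\ast$. For $\Lambda_\ast\subseteq\Lambda'$: each periodic $q\in\Lambda_\ast$ is homoclinically related to $p$ inside $\Lambda$, so there is a transverse intersection point $h\in W^u(p)\pitchfork W^s(\mathcal O_g(q))$ lying in $\Lambda$; one checks $h\in J^u(p)$ (it lies on $W^u(p)$ and is joined to $p$ within $J\cap W^u(p)$), and since the forward $g$-orbit of $h$ accumulates on $\mathcal O_g(q)$ while $J^u(p)$ is invariant under the period map, $q\in\overline{J^u(p)}=\Lambda'$; density of periodic points in $\Lambda_\ast$ gives the inclusion. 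For $\Lambda'\subseteq\Lambda_\ast$ one argues symmetrically that $J^u(p)$ is contained in the relative homoclinic class $H$ (each of its points being accumulated by transverse homoclinic points of $\mathcal O_g(p)$ lying on $W^u(p)\cap\Lambda$), hence, being connected and containing $p$, lies in the piece $\Lambda_\ast$. Thus $\Lambda'=\Lambda_\ast$ and $f^{k\ell}|_{\Lambda'}$ is topologically mixing. The stable slices of $\Lambda'$ are then Cantor sets: compact and totally disconnected by stable total disconnectedness, and perfect because, if $x$ were isolated in $W^s_\loc(x)\cap\Lambda'$, the product structure would force a neighbourhood of $x$ in $\Lambda'$ to lie in the single leaf $W^u(x)$; the set $Y$ of such points is open and $g$-invariant, hence dense by transitivity, but for $z\in Y$, pushing any point of $W^s(z)\cap\Lambda'$ forward (its iterates land in the $W^u$-neighbourhoods where $\Lambda'$ sits in one unstable leaf, and meet the stable leaf only at the base) shows $W^s(z)\cap\Lambda'=\{z\}$, contradicting the density of local stable sets in a transitive hyperbolic set; so $Y=\emptyset$. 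Finally, running the same argument with base point a given periodic $q\in\Lambda'$ (legitimate since all periodic points of $\Lambda'=\Lambda_\ast$ are homoclinically related) yields $\Lambda'\subseteq\overline{J^u(q)}\subseteq\Lambda'$, i.e. $\Lambda'=\overline{J^u(q)}$.

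\textbf{Main obstacle.} The crux is the interface between the \emph{intrinsic} structure of the unstable slices --- the components $J^u(\cdot)$ of $J\cap W^u$ --- and \emph{homoclinic relations}: concretely, showing that a transverse homoclinic point $h\in W^u(p)\cap J$ belongs to the \emph{component} $J^u(p)$ of $p$, and dually that every point of the leafwise unbounded, locally connected continuum $J^u(p)$ is accumulated by such homoclinic points. This is exactly where the John-Hölder geometry of Section~\ref{sec:john} and the finiteness of large leafwise components from Theorem~\ref{thm:topology} must enter; once this connectivity bookkeeping is in place, the rest of the argument is comparatively soft.
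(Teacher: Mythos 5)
Your construction of $\Lambda'=\overline{J^u(p)}$ and the verification that it is an unstable-saturated quasi-solenoid (via the uniform separation constant from the proof of Theorem~\ref{thm:invariant_component}, the bracket inside $\Lambda$, and a clopen argument along the locally connected continuum $J^u(x)$) is essentially sound. But the heart of the proposition is the topological mixing of $f^{k\ell}\rest{\Lambda'}$, and there your argument has a genuine gap, which you yourself flag as the ``main obstacle'': both inclusions $\Lambda_\ast\subseteq\Lambda'$ and $\Lambda'\subseteq\Lambda_\ast$ rest on the claim that a transverse homoclinic point $h\in W^u(p)\cap W^s(\mathcal{O}(q))\subset J$ belongs to the \emph{component} $J^u(p)$ of $p$ in $J\cap W^u(p)$ (and, dually, that every point of $J^u(p)$ is accumulated by such homoclinic points). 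Nothing in the hyperbolic formalism gives this: local stable holonomy only identifies \emph{local} leafwise components, and whether such local identifications propagate to global components of $J\cap W^u$ is precisely the Reeb-type difficulty that the paper isolates as the NDH property in Section~\ref{sec:NDH} and deliberately does not assume in this proposition. A priori a homoclinic point of $p$ could lie in a leafwise bounded, even point, component of $J\cap W^u(p)$, so ``one checks $h\in J^u(p)$'' is not a check but the whole difficulty; deferring it to ``the John--H\"older geometry must enter'' leaves the mixing statement unproved.

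The paper's proof avoids this issue by a softer route: it applies the Spectral Decomposition to the chain recurrent set $\Omega$ of $f\rest{\Lambda}$ (so mixing of an iterate on each basic piece comes for free), uses the Shadowing Lemma to get $\Lambda\subset W^s(\Omega)$ and Baire's theorem to select a basic piece $\Lambda'=\Omega_{i_0}$ whose local stable set has nonempty relative interior in $\Lambda$; unstable saturation then follows from a dense orbit together with unstable expansion and the uniform separation of large leafwise components, and $\Lambda'=\overline{J^u(p)}$ (as well as the perfectness of stable slices) is obtained by projecting a dense-orbit point onto $W^u_\loc(p)$ by \emph{local} stable holonomy --- never invoking any global homoclinic or leafwise-component identification. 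To salvage your route you would have to prove the leafwise connectivity statement you postponed, which is of the same nature as (and not easier than) the NDH-type statements of Section~\ref{sec:NDH}; as written, the proposal does not establish the mixing claim.
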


This proposition follows from general facts from hyperbolic dynamics. 
Let us recall some basics. 
Recall that a If $\Lambda$ is a compact hyperbolic set with local product structure, then by 
Smale's Spectral Decomposition Theorem (see e.g. \cite[\S 4.2]{yoccoz}), the non-empty 
closed invariant subset 
  $$\Omega:=   \mathcal{C}(f\rest{\Lambda}) = \overline{\mathrm{Per}(f\rest{\Lambda})}$$ 
 (where by definition $\mathcal{C}(f\rest{\Lambda})$ is the chain recurrent set of $f\rest{\Lambda}$)
 admits a decomposition    of the form  
 $\Omega = \Omega_1\cup \cdots \cup \Omega_N$. The $\Omega_i$ are called the basic pieces. 
 They are closed (and hence relatively open in $\Omega$), 
  $f$ induces a permutation on the 
  basic pieces  and if $q$ is the least integer such that $f^q(\Omega_i) = \Omega_i$, then 
  $f^q\rest{\Lambda_i}$ is topologically mixing.      In addition, $\Omega$ and the $\Omega_i$ have 
   local product structure.

\begin{proof} For notational simplicity replace $f^k$ by $f$ so that $k=1$. 
Consider the $\omega$-limit set $\omega(\Lambda) = \bigcup_{x\in \Lambda}\omega(x)$. Since a limit point is 
non-wandering, it is chain recurrent, so $\omega(\Lambda)\subset \Omega$. Conversely, since any periodic point is an $\omega$-limit point, 
we see that $\mathrm{Per}(f\rest{\Lambda}) \subset \omega(\Lambda)$, hence 
$\Omega\subset \omega(\Lambda)$ and 
$\omega(\Lambda)  = \Omega$. Then the  Shadowing Lemma  implies that 
$\Lambda \subset W^s(\Omega) = \bigcup_{x\in \Omega} W^s(x)$. 
Fix a small $\delta>0$: then $W^s(\Omega) = \bigcup_{n\geq 0} f^{-n}\lrpar{W^s_\delta(\Omega)}$. By 
   Baire's theorem,  there exists $n$ such that $f^{-n}\lrpar{W^s_\delta(\Omega)}$ 
   has  non-empty relative interior in $\Lambda$, 
 hence so does $W^s_\delta(\Omega)$, and we conclude that for some $i_0$, $W^s_\delta(\Omega_{i_0})$
 has relative non-empty interior in $\Lambda$. Let us show that $\Lambda' = \Omega_{i_0}$ satisfies the 
 requirements of the proposition. 
 
If $\ell$ is the least integer such that $f^{\ell}(\Lambda') = \Lambda'$, 
the fact that $f^{\ell}\rest{\Lambda'}$ is topologically mixing 
follows from the Spectral Decomposition Theorem. 
Since $\Lambda'$ has local product structure   and  $W^s_\delta(\Lambda')$
 has relative non-empty interior in $\Lambda$, we see that  there exists a relatively open 
 subset $U$ in $\Lambda'$ such that for any $x_0\in U$, 
  a neighborhood of $x_0$ in $J^u(x_0)$ in contained in $\Lambda'$. Since 
  $f^{\ell}\rest{\Lambda'}$ is topologically transitive we may assume that $x_0$ 
  has a dense orbit under $f^{\ell}$. So if $y\in \Lambda'$ is arbitrary we can find a sequence $(n_j)$ such that 
   $f^{\ell n_j}(x_0)\to y$. By expansion in the unstable direction, 
   there exists a uniform $\delta>0$ such that for every $j$, 
   $f^{\ell n_j}(\Lambda') = \Lambda'$ contains a $\delta$-neighborhood of $ f^{\ell n_j}(x_0)$ in 
   $J^u(f^{\ell n_j}(x_0))$, 
 so by local product structure 
   we conclude that a neighborhood of $y$ in $J^u(y)$ is contained in $\Lambda'$. On the other hand since 
   $\Lambda'$ is closed it is also relatively closed in unstable manifolds. This shows that $\Lambda'$ is saturated by unstable components. 
   
 Let us show that for every periodic point 
   $p\in \Lambda'$, $\overline{J^u(p)} = \Lambda'$. Let $N = \ell m$ be the period of $p$. 
  Since $f^{\ell}\rest{\Lambda'}$ is topologically mixing, $f^{\ell m}\rest{\Lambda'}$ is topologically
  transitive, so there exists $y$ 
   arbitrary close to $p$ such that $(f^{\ell m n }(y))_{n\geq 0}$ is dense in $\Lambda'$. 
   Let $y'$ be the projection  of $y$ in $W^u_{\loc}(p)$ under stable holonomy. By local product structure, 
   $y'$ belongs to $J^u(p)$, and  $y'\in W^s(y)$ so $(f^{ \ell m n}(y'))$ is dense, too. Since 
   all these points    belong to $J^u(p)$, we conclude that $J^u(p)$ is dense in $\Lambda'$, as asserted. 
   
For $p$ as above, since $J^u(p)$ is leafwise unbounded, it must accumulate non-trivially in $\Lambda'$. More precisely, there exists $x\in \Lambda'$ and a sequence of points $x_n\in J^u(p)$, with 
$x_n\notin W^u_\loc(x)$ and $x_n\to x$. Note that by local product structure,  $W^u_\loc(x_n)\cap \Lambda'$ 
corresponds to $W^u_\loc(x)\cap \Lambda'$ under local stable holonomy. 
Now as before there exists $y'\in W^u_\loc(p)\cap \Lambda'$  whose orbit is dense in $\Lambda'$. Thus   any $z\in \Lambda'$ is the limit of $f^{n_j}(y')$ for some subsequence 
$n_j$. But $f^{n_j}(y')$ is an accumulation point of 
$W^s_\loc(f^{n_j}(y'))\cap \Lambda'$, so the same holds for $z$, and we conclude that $\Lambda'$ is transversally perfect in the stable direction, hence it is transversally a Cantor set. 
\end{proof}

\subsection{Basins and solenoids}\label{subs:basins}  
Assume   that $f$ has an attracting cycle $\set{a_1, \ldots a_{q}}$ of exact period $q$. 
We denote by $\mathcal B$ its  basin of attraction, which is made of $k$ connected components 
$\mathcal B_i$ biholomorphic to $\cd$.  For every $i$ we can write $\mathcal B_i\cap \bb$ as the (at most) 
countable union $(\mathcal B_{i,j})_{j\geq 0}$ of its components, with $a_i\in \mathcal B_{i, 0}$.  We refer 
to these open sets as basin components and to $\mathcal B_{i, 0}$ as 
  the \emph{immediate basin}   of $a_i$. 
Note that if we replace $f$ by $f^q$, the basin of attraction of $a_i$ is now made of a single component, but  $\mathcal B_{i, 0}$ is unchanged.

By definition  a \emph{Jordan star} in $U\subset \C$ 
is a finite union of simple Jordan arcs in  $U$, intersecting at a single  point. 

\begin{thm}\label{thm:basin}
Let $f$ be dissipative and hyperbolic with a disconnected and stably totally disconnected Julia set. 
Suppose that $f$ admits an attracting  fixed point with immediate basin $\mathcal B_0$. Then:
\begin{enumerate}[(i)]
\item $\fr\mathcal B_0$ is a   properly immersed topological 
submanifold of dimension 3, which intersects any global unstable 
transversal in finitely many Jordan domains.
\item $\bigcap_{n\geq 0} \fr\mathcal B_0$ is a quasi-solenoid, whose unstable slices are Jordan stars.  In particular there is a (saddle) periodic point in $\fr\mathcal B_0$. 
\end{enumerate}
\end{thm}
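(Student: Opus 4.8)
The plan is to repeat the analysis of Sections~\ref{sec:john}--\ref{sec:components} inside the immediate basin $\mathcal B_0$, with the potential $H\inv$ from the proof of Theorem~\ref{thm:john-holder_basin} playing the role of the Green function $G^+$ and with Theorem~\ref{thm:john-holder_basin} replacing Corollary~\ref{cor:fast_escaping}, and then to recognise the set in the statement as the maximal invariant set of a semi-local component of $J^+\cap\bb$ already treated by Theorem~\ref{thm:invariant_component}. First I would replace $f$ by $f^q$, which does not change $\mathcal B_0$ and makes $a_0$ a fixed point, and record two facts. Since $\mathcal B_0\subset\Int(K^+)$ one has $\fr\mathcal B_0\subset\fr K^+=J^+$, which in the dissipative case is laminated by the stable lamination $\cW^s$ and even extends to a $C^1$ foliation near $J^+\cap\bb$ (Lemma~\ref{lem:extension}); moreover $\fr\mathcal B_0$ is \emph{saturated} by this foliation, i.e. is a union of (pieces of) leaves: a leaf through a point of $\fr\mathcal B_0$ cannot meet the open set $\mathcal B_0$ since it lies in $\fr K^+$, while $\mathcal B_0$ being open and $f$-invariant is locally foliation-saturated near $J^+$, so that each boundary point carries its entire local leaf into $\overline{\mathcal B_0}$. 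Secondly, for $x\in J$, running the proofs of Theorem~\ref{thm:topology} and Lemma~\ref{lem:diameter} with $H\inv$ in place of $G^+$ gives that every component $A$ of $\mathcal B_0\cap W^u(x)$ has locally connected intrinsic boundary and that only finitely many of them have diameter $\geq\delta$; and running the access-and-landing construction of Theorem~\ref{thm:access} along gradient rays of $H\inv$ shows that each such $A$ is simply connected — a loop in $A$ surrounding a point $z\notin\mathcal B_0$ is impossible, since such a $z$ would be the limit of gradient rays reaching $\{H\inv=r\}$, forcing $A$ itself to contain an unbounded ray through the loop — hence, by Carathéodory's theorem, each $A$ is a Jordan domain.

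For part~(i), fix a global unstable transversal $\Delta^u$. By the saturation property, $\fr\mathcal B_0\cap\Delta^u$ is exactly the union of the Jordan curves $\fr A$, where $A$ runs over the components of $\mathcal B_0\cap\Delta^u$; and $\fr\mathcal B_0\cap\bb$ is a finite union of \emph{thick} semi-local components of $J^+\cap\bb$ — it cannot contain a thin one, which is a vertical product and could not bound a basin — so by Corollary~\ref{cor:thick_components} and Proposition~\ref{prop:finiteness} it meets $\Delta^u$ in finitely many components. This is the asserted count of Jordan domains. For each of these finitely many Jordan curves $\gamma$, the union of the leaves of the extended foliation through the points of $\gamma$ is a properly immersed $3$-manifold (an $S^1$-family of stable $2$-disks, immersed rather than embedded because the holonomy around $\gamma$ may be non-trivial, and crossing the analogous $3$-manifolds along the stable leaves through the cut points where two of the curves meet); the union of all of them is $\fr\mathcal B_0$, which is therefore a properly immersed topological $3$-manifold.

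For part~(ii), pick a periodic semi-local component $C$ of $J^+\cap\bb$ contained in $\fr\mathcal B_0$ and, after passing to an iterate, assume $C$ is invariant; it is thick and non-trivial, so Theorem~\ref{thm:invariant_component} shows that $\Lambda:=\bigcap_{n\geq0}f^n(C)=\bigcap_{n\geq0}f^n(\fr\mathcal B_0\cap\bb)$ is a quasi-solenoid and a connected component of $J$. As a non-empty compact hyperbolic set of saddle type, $\Lambda$ has non-empty chain-recurrent set, accumulated by periodic points by the Shadowing Lemma; these are saddles, and they lie in $\Lambda\subset\fr\mathcal B_0$, which proves the last assertion. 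It remains to see that the unstable slices of $\Lambda$ are Jordan stars. For $x\in\Lambda$ one has $f^{-n}(x)\in C\subset\fr\mathcal B_0$ for all $n$, so by Lemma~\ref{lem:leafwise_interior} the intrinsic interior of $K^u(x)$ is filled by basin components, and $\Lambda\cap W^u(x)$ agrees, near $x$, with $J^u(x)=\fr_{\mathrm{i}}K^u(x)$ and hence with $\fr\mathcal B_0\cap W^u(x)$; finally the fast-escaping property of $\mathcal B_0$ in $W^u(x)$ forbids infinitely many components of $\mathcal B_0\cap W^u(x)$ from sharing a boundary point (each would otherwise contain a disk of a fixed radius near that point, violating disjointness), so only finitely many of the Jordan curves $\fr A$ pass through any given point, and $\fr\mathcal B_0\cap W^u(x)$ is, locally at each point, the union of finitely many Jordan arcs meeting there — a Jordan star.

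I expect the main obstacle to be the first paragraph: faithfully transplanting the one-dimensional-style external-ray arguments of Sections~\ref{sec:john}--\ref{sec:semi_local} into $\mathcal B_0$, where $H\inv$ has a discrete set of critical points and there is no well-defined external map, and pinning down the local product structure of $\fr\mathcal B_0$ near $J^+$ (the saturation and simple-connectivity statements). Once these are available, parts~(i) and~(ii) follow from the same bookkeeping as in Theorem~\ref{thm:invariant_component}.
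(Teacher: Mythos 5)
Your overall strategy (Jordan domains on a transversal, stable saturation, then the machinery of Theorem~\ref{thm:invariant_component}) is the right one, but there is a structural error that breaks both parts. You treat $\fr\mathcal B_0\cap\bb$ as a union of semi-local components of $J^+\cap \bb$ and, in (ii), identify $\bigcap_{n\geq 0}f^n(\fr\mathcal B_0\cap\bb)$ with $\bigcap_{n\geq 0}f^n(C)$ for a periodic semi-local component $C$. The containment goes the other way: $\fr\mathcal B_0$ is a closed, stably saturated \emph{subset} of one component $C$ of $J^+\cap\bb$, and in general a proper one, since $C$ also carries the boundaries of the other Fatou components and all the leafwise-bounded decorations of the slices. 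Consequently $\bigcap f^n(\fr\mathcal B_0)$ is in general strictly smaller than the quasi-solenoid $\bigcap f^n(C)$ of Theorem~\ref{thm:invariant_component}; if the two coincided, its unstable slices would be the full slices $J^u(x)$, contradicting your own claim that they are Jordan stars (and indeed $J^u(x)=\fr_{\mathrm{i}}K^u(x)$ is \emph{not} $\fr\mathcal B_0\cap W^u(x)$, so that step of your star argument also fails). So citing Theorem~\ref{thm:invariant_component} proves nothing about the set in the statement: one must redo its proof for $\fr\mathcal B_0$ itself, which is what the paper does — first the connectedness of $\fr\mathcal B_0$ (argument of Lemma~\ref{lem:component_K+J+}), then a local finiteness statement (only finitely many components of $\mathcal B_0\cap D^u(x,1)$, resp.\ $\fr\mathcal B_0\cap D^u(x,1)$, meet $D^u(x,1/2)$), obtained from item~(i) by pushing to $\Delta^u$ under stable holonomy using the uniform bound on vertical degrees, and only then the argument of Theorem~\ref{thm:invariant_component}. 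For the same reason your finiteness count in (i) is a non sequitur: Corollary~\ref{cor:thick_components} and Proposition~\ref{prop:finiteness} count the components of $C\cap\Delta^u$ for semi-local components $C$, not the components of the subset $\mathcal B_0\cap\Delta^u$ (a subset may have far more components than its superset). The paper instead gets the finiteness from the interior-component statement of Lemma~\ref{lem:diameter} (with the $G^+$-based fast escaping property) together with the thin/thick product trick of Remark~\ref{rmk:general_thick_thin} applied to $\fr\Omega_0$ and its stable saturation, and the crosscut/tree analysis of how distinct Jordan domains touch.

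Two smaller gaps. First, your simple-connectivity argument for a component $A$ of $\mathcal B_0\cap W^u(x)$ does not work: gradient lines of $H\inv$ only exist inside $\mathcal B_0$ and impose nothing on an enclosed point $z\notin\mathcal B_0$; a priori the enclosed region could contain Julia points or another basin. The correct short argument is the maximum principle applied to $G^+$ (the enclosed disk lies in $K^+$), then Lemma~\ref{lem:fatou_disk} (a disk in an unstable leaf contained in $K^+$ lies in the Fatou set), then connectivity to place it in $\mathcal B_0$, hence in $A$. Second, your ``disk of fixed radius'' step for the Jordan-star structure is not justified by Theorem~\ref{thm:john-holder_basin}: by Remark~\ref{rmk:eta0} the escape radius there depends on the component, so this does not exclude infinitely many tiny components of $\mathcal B_0\cap W^u(x)$ sharing a boundary point; and you also need the fact that two components touch in at most one point (the paper's crosscut plus maximum principle argument) and the resulting tree structure before the local picture can be recognized as a finite star.
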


We can be more precise about the structure of $\fr\mathcal B_0$: locally it is homeomorphic to the product of a 
2-disk by a  Jordan star. The proof of the theorem shows that if the components of 
$\mathcal B_0\cap \Delta^u$ have disjoint closures, then these stars are reduced to Jordan arcs, 
that is, $\fr\mathcal B_0$ is a   topological  submanifold.

The following basic fact is crucial for the proof. 

\begin{lem}\label{lem:respect_basin}
The stable lamination $\cW^s$ respects basin boundaries. That is, if $x\in J^+$ belongs to the boundary of an attracting basin $\mathcal B$, then so does its image under stable holonomy. 
\end{lem}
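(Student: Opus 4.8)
The plan is to prove the equivalent statement that, near $J^+$, the basin boundary $\partial\mathcal B$ is saturated by local stable leaves; the assertion about holonomy then follows at once, since the stable‑holonomy image of a point $x$ is obtained from $x$ by a short path running inside a single stable leaf.

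First I would record two soft preliminaries. Since the orbits in $\mathcal B$ converge to an attracting cycle, $\mathcal B$ is an open, totally invariant ($f(\mathcal B)=\mathcal B$) subset of $\Int(K^+)$; and since $\mathcal B$ is a connected component of the Fatou set, $\overline{\mathcal B}\cap\Int(K^+)=\mathcal B$, whence $\partial\mathcal B\subset\partial K^+=J^+$. Next, by Lemma~\ref{lem:extension} the stable lamination $\cW^s$ extends to a $C^1$ foliation $\mathcal F$ of a neighborhood $\mathcal N$ of $J^+$; shrinking $\mathcal N$, I may assume it is disjoint from the attracting cycle and that, throughout a fixed slightly larger neighborhood, the tangent directions of $\mathcal F$ are close enough to the stable directions that $Df$ is (weakly) contracting on them. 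The holonomy of $\mathcal F$ is continuous and $\mathcal F\rest{J^+}=\cW^s$, so it suffices to show that $\partial\mathcal B\cap\mathcal N$ is a union of small $\mathcal F$-plaques.

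The heart of the matter — and the step I expect to be the main obstacle — is the claim that there is $\delta_0>0$ such that every $\mathcal F$-plaque $P\subset\mathcal N$ with $\diam(P)<\delta_0$ is either contained in $\mathcal B$ or disjoint from $\mathcal B$. To prove it, suppose $z\in P\cap\mathcal B$. As $f^n(z)$ converges to the attracting cycle, which lies outside $\mathcal N$, there is a first time $n_1$ with $f^{n_1}(z)\notin\mathcal N$; working in the compact set containing the bounded orbit of $z$, one has $d(f^{n_1}(z),J^+)\geq\rho_0$ for a fixed $\rho_0>0$. For $n\leq n_1$ the center $f^n(z)$ stays in $\mathcal N$, so $f^n(P)$ stays in the contraction region and $n\mapsto\diam(f^n(P))$ is non‑increasing; hence $\diam(f^{n_1}(P))\leq\diam(P)<\delta_0$. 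Choosing $\delta_0<\rho_0$, the tiny set $f^{n_1}(P)$ lies in the ball $B(f^{n_1}(z),\rho_0)$, which is disjoint from $\partial\mathcal B\subset J^+$ and meets $\mathcal B$, hence is contained in $\mathcal B$; therefore $f^{n_1}(P)\subset\mathcal B$ and $P=f^{-n_1}(f^{n_1}(P))\subset\mathcal B$. The delicate point is exactly keeping track of $f^n(P)$ during the ``transit'' of the orbit of $z$ from $\mathcal N$ out towards the cycle: one must arrange the nested neighborhoods so that the uniformly small, non‑expanded plaque is trapped inside $\mathcal B$, at a definite distance $\rho_0$ from $J^+$, by the time its center first leaves $\mathcal N$.

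Granting the claim, $\mathcal B\cap\mathcal N$ is a union of small $\mathcal F$-plaques, and passing to boundaries is routine: a small plaque through a point of $\partial\mathcal B$ is a limit of plaques contained in $\mathcal B$, hence lies in $\overline{\mathcal B}$, and it cannot meet $\mathcal B$ (else it would lie in $\mathcal B$ by the claim), so it lies in $\overline{\mathcal B}\setminus\mathcal B=\partial\mathcal B$. Thus $\partial\mathcal B\cap\mathcal N$ is a union of $\mathcal F$-plaques; since $\partial\mathcal B\subset J^+$ and $\mathcal F\rest{J^+}=\cW^s$, it is locally saturated by stable leaves, which is the desired conclusion: if $x\in J^+$ lies on $\partial\mathcal B$ then so does the point obtained from $x$ by stable holonomy, namely by sliding along the stable leaf of $x$ to a nearby transversal — a motion which, by the above, never leaves $\partial\mathcal B$.
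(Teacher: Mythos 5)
Your overall architecture is the same as the paper's: invoke Lemma~\ref{lem:extension} to extend $\cW^s$ to a $C^1$ foliation $\mathcal F$ of a neighborhood $\mathcal N$ of $J^+\cap\bb$, show that plaques of $\mathcal F$ cannot cross $\fr\mathcal B$, and pass to the boundary by continuity of the holonomy. The gap is in your key claim that a small plaque $P$ meeting $\mathcal B$ lies in $\mathcal B$, which you prove by iterating forward and asserting that $\diam(f^n(P))$ does not grow while the center stays in $\mathcal N$. Two things break down there. First, $\mathcal F$ is not $f$-invariant, so $f^n(P)$ is not a plaque; after one iterate its tangent directions need not remain close to the directions of $\mathcal F$, and to control them you would need an invariant, uniformly contracted cone field on all of $\mathcal N$, which neither Lemma~\ref{lem:extension} nor hyperbolicity provides. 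Second, uniform contraction ``in directions close to the stable ones'' is only available (via an adapted metric) near the hyperbolic set $J$, not on a neighborhood of $J^+\cap\bb$: at points of a semi-local stable leaf $W^s_\bb(x)$ far from $J$ the ambient derivative of $f$ along the leaf can be strongly expanding --- for instance at a horizontal tangency of the leaf (unavoidable as soon as its vertical degree exceeds one, which is typical in the disconnected case) the leaf direction is horizontal and $Df$ stretches it by a factor comparable to $|p'(z)|$. Since the orbit of the center may linger in $\mathcal N$ far from $J$ before exiting, the bound $\diam(f^{n_1}(P))<\rho_0$ is not justified; this is precisely the ``delicate point'' you flag, and it is not closed.

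The dynamical transit is in fact unnecessary: your claim follows from a purely topological separation argument, which is the paper's proof. A plaque of $\mathcal F$ that meets $J^+$ is contained in a semi-local stable manifold, hence in $J^+$; therefore a connected plaque meeting $\Int(K^+)$ is disjoint from $J^+=\fr K^+$, and since $\cd\setminus J^+$ is the disjoint union of the open sets $\Int(K^+)$ and $(K^+)^\complement$, such a plaque lies in $\Int(K^+)$, hence in a single component of $\Int(K^+)$, i.e.\ in a single basin component. This yields the dichotomy for \emph{every} plaque, with no smallness assumption, no iteration and no contraction estimate; your concluding limit argument (plaques through points of $\fr\mathcal B$ lie in $\overline{\mathcal B}\setminus\mathcal B=\fr\mathcal B$, so the stable holonomy image of $x$ stays in $\fr\mathcal B$) then finishes the proof exactly as you wrote it.
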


\begin{proof}
This follows readily from the existence of a local extension of the stable lamination (Lemma~\ref{lem:extension}):  indeed if a leaf 
of the extended foliation joined a point from $\mathrm{Int}(K^+)$ to a point of $(K^+)^\complement$, it would 
have to intersect $J^+$. (See also \cite{saddle}, Step 3 of the proof of the main theorem, for an alternate argument without extending the stable lamination.)
\end{proof}

\begin{proof}[Proof of Theorem~\ref{thm:basin}]  
Fix a global unstable transversal $\Delta^u$. Since every semi-local stable manifold intersects 
$\Delta^u$, $\mathcal B_0\cap \Delta^u$ is   non-empty, and by the Maximum Principle each of its connected 
components is a topological disk. Pick such a connected component $\Omega_0$.  
By the John-Hölder property (Theorem~\ref{thm:john-holder_basin}),   $\fr\Omega_0$ is locally connected,
 and   by the Maximum Principle again there is no cut point, and it follows that 
 $\Omega_0$ is a Jordan domain (see~\cite[Thm 2.6]{pommerenke}). 

If the  diameter of $\Omega_0$ is small then, 
by Remark~\ref{rmk:general_thick_thin}, enlarging $\bb$ if necessary 
the saturation $\widehat{\fr\Omega_0}$ of $\fr\Omega_0$ by semi-local stable leaves is 
topologically a product and we infer that   $\widehat{\fr\Omega_0}\cap \Delta^u$ has finitely many components. Otherwise the diameter is large and by the same remark, every component of  
$\widehat{\fr\Omega_0}\cap \Delta^u$ has a large diameter. 
Then the finiteness of the number of such components follows from the John-Hölder property of $W^u(x)\setminus K^+$, Proposition~\ref{prop:finiteness}, 
and the finiteness statement for  interior components in Lemma~\ref{lem:diameter}.

By the Maximum Principle, if $\Omega_0$ and $\Omega_1$ are two components of $\mathcal B_0\cap \Delta^u$ such that $\overline{\Omega_0}\cap\overline{\Omega_1}\neq \emptyset$, then 
$\overline{\Omega_0}\cap\overline{\Omega_1}$ is a single point. Indeed if this set contained two distinct points 
$z$ and $z'$, by  using 
crosscuts of $\Omega_0$ and $\Omega_1$ ending at $z$ and $z'$
we could construct a Jordan domain $U$ with  $\fr U \subset \overline{\Omega_0}\cup\overline{\Omega_1}$, 
and  $U$ would be contained in the Fatou set, a contradiction. Create a plane 
graph from $\mathcal B_0\cap \Delta^u$ whose vertices are  its components and edges are added when two 
components touch. The Maximum Principle again shows that this graph is a finite union of trees. Since the 
stable holonomy respects  $\fr\mathcal B_0$ and  $\fr\mathcal B_0$ is obtained from $\fr\mathcal B_0\cap \Delta^u$ by saturating by stable manifolds, 
the description of $\fr\mathcal B_0$ as a properly immersed topological 
submanifold of dimension 3 follows. 

The proof of the second item of the theorem is similar to that of Theorem~\ref{thm:invariant_component}. 
First, $\fr\mathcal B_0$ is connected: the argument is identical to that of Lemma~\ref{lem:component_K+J+}. 
Then,  for every $x\in \fr\mathcal B_0 \cap J^-$,   there are only finitely many components 
of $\mathcal B_0\cap D^u(x,1)$ (resp. $\fr\mathcal B_0\cap D^u(x,1)$) intersecting $D^u(x, 1/2)$. 
Indeed,    observe first  that it is enough to prove this in  
$D^u(x,r)$ for some uniform $r$. By the
uniform boundedness of the degree of semi-local stable manifolds in $\bb$, 
 there is a uniform $r$ such that 
$D^u(x,r)$ can be pushed to $\Delta^u$ by stable holonomy, and the applying 
   item~(i) of the theorem completes the argument. 
From this point we proceed exactly as in   Theorem~\ref{thm:invariant_component}. 
The existence of a periodic point in $\fr\mathcal B_0$ follows 
from general hyperbolic dynamics (see the comments after Proposition~\ref{prop:mixing}). 
\end{proof}

\begin{rmk}\label{rmk:boundary_component}
It follows from this description that if $x\in \Lambda$ lies at the boundary of $\mathcal B_0$, then 
in $W^u(x)$, $x$ belongs to the boundary of a component $\Omega$
of $\mathcal B_0\cap W^u(x)$. In particular, $\Omega$ is a Fatou disk contained in $\comp_K(x)$. 
\end{rmk}

\begin{rmk}
We do not know whether components of $\mathcal B_0\cap \Delta^u$ can actually bump into each other, or equivalently if $\bigcap_{n\geq 0} \fr\mathcal B_0$ does contain stars. 
If bumping   occurs, let $E$ be the finite set of points at which the closures of the
components of $\mathcal B_0\cap \Delta^u$ touch each other. Then $W^s_\bb(E)$ is a finite union of 
vertical submanifolds, and $f(W^s_\bb(E))\subset W^s_\bb(E)$. It follows that 
$\bigcap_{n\geq 0} f^n(W^s_\bb(E))$ is a finite set of periodic points, and for any other point $x$ in the limiting 
quasi solenoid $\Lambda:= \bigcap_{n\geq 0} \fr\mathcal B_0$,    
  $\Lambda\cap W^u_\loc(x)$ is a Jordan arc. Thus, roughly speaking, 
  \emph{$\Lambda$ has the structure of finitely many solenoids attached at periodic ``junction'' points.} 
\end{rmk}

\subsection{Branched Julia set model}\label{subs:branched}

Let $\Lambda$ be a quasi-solenoidal component of $J$, and 
without loss of generality   assume that $\Lambda$ 
is fixed. Let $J^+_{\bb}(\Lambda)$ be its connected component in $J^+_\bb$ and consider its intersection 
$D:=J^+_{\bb}(\Lambda)\cap \Delta^u$  with  some unstable transversal, which is made of 
finitely many thick components. Introduce a  relation $\sim$ on $D$ by $x\sim y $ if and only if 
$W^s_{\overline{\bb}}(x)  = W^s_{\overline{\bb}}(y)$, where by definition 
$W^s_{\overline{\bb}}(x)  =  \bigcap_{\e>0} W^s_{(1+\e)\bb}(x)$. 
 Equivalently $x\sim y$ iff 
 $\overline{W^s_\bb(x)}\cap  \overline{W^s_\bb(y)} \neq \emptyset$: concretely, this means that $x$ and $y$ are related when they are connected by a stable manifold which is tangent to $\fr\bb$. 
 This defines a closed equivalence relation on $D$. We denote by $\tilde D:= D/\sim$ the 
 quotient topological space, which is compact (and Hausdorff) and by $\pi:D\to \tilde D$ the natural projection. 
 Since $f(W^s_{\overline{\bb}}(x))\subset W^s_{\overline{\bb}}(f(x))$, $f$ descends to the quotient 
 $\tilde D:= D/\sim$ to a well defined continuous map $\tilde f$. 
 
 Geometrically $\tilde D$ has to be thought of as a \emph{branched Julia set}, lying on the branched  
  surface --in the sense of Williams~\cite{williams}--
obtained by collapsing the semi-local stable leaves of the extended stable lamination.
 Then $\tilde f$ is expanding on the plaques of this branched manifold\footnote{Here by plaque we mean one of the finitely many overlapping disks which make up  a local chart of a branched manifold, see \cite[Def. 1.0]{williams}}, and 
its iterates  are uniformly quasiconformal wherever defined, since they are obtained by iterating $f$ and projecting along the stable lamination. Observe that $f$ is not necessarily surjective, since for every $x\in D$,
 $f^n(x)$ eventually belongs to $W^s_\bb(\Lambda)$, which may be smaller than $J^+_{\bb}(\Lambda)$ (cf. Figure~\ref{fig:bidisk_stable_component}). On the other hand 
 by the last assertion of Theorem~\ref{thm:component_J}, there exists a uniform $N$ 
 such that $f^N(J^+_{\bb}(\Lambda))\subset W^s_\bb(\Lambda)$. 
 It follows that the sequence $\bigcap_{0\leq k\leq n} \tilde f^k(\tilde D)$ is stationary for $n\geq N$ and 
that   $\tilde D': = \pi(W^s_\bb(\Lambda)\cap \Delta^u)$, 
  is an invariant,  closed, and plaque-open subset of 
 $\tilde D$ on which $\tilde f$ is surjective. 

\begin{prop}
With the above definitions, the  dynamical system $(\Lambda, f)$ is topologically conjugate to the natural extension of $( \tilde D, \tilde f)$ 
(or equivalently $(\tilde D', \tilde f)$). 
\end{prop}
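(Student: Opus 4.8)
The plan is to exhibit an explicit conjugacy between $(\Lambda,f)$ and the natural extension of $(\tilde D',\tilde f)$, and to reduce the general case to $\tilde D'$ using the fact that $f^N(J^+_\bb(\Lambda))\subset W^s_\bb(\Lambda)$ for some uniform $N$ (so that $\tilde D$ and $\tilde D'$ have the same natural extension). Recall that the natural extension $(\widehat{\tilde D'},\widehat{\tilde f})$ is the inverse limit $\{(\tilde x_n)_{n\le 0}: \tilde f(\tilde x_n)=\tilde x_{n+1}\}$, on which $\widehat{\tilde f}$ is the shift; it is a compact space with a natural projection $p_0$ to the $0$-th coordinate. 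First I would define the map $\Phi:\Lambda\to\widehat{\tilde D'}$. Given $x\in\Lambda$, for each $n\ge 0$ the point $f^{-n}(x)$ lies in $\Lambda\subset J^+_\bb(\Lambda)$, hence its semi-local stable manifold $W^s_{\overline\bb}(f^{-n}(x))$ meets the transversal $\Delta^u$ in a point of $D$; call its $\sim$-class $\tilde x_{-n}\in\tilde D$. Since $f$ descends to $\tilde f$ and $f\circ f^{-(n+1)}=f^{-n}$, we have $\tilde f(\tilde x_{-(n+1)})=\tilde x_{-n}$, so $(\tilde x_{-n})_{n\ge0}$ is a point of $\widehat{\tilde D}$; and because each $\tilde x_{-n}=\tilde f^{\,n}(\tilde f^{-n}(\cdots))$ is in the image of arbitrarily high iterates, it actually lies in $\tilde D'$. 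Set $\Phi(x)=(\tilde x_{-n})_{n\ge0}$. By construction $\Phi\circ f=\widehat{\tilde f}\circ\Phi$, and $\Phi$ is continuous because the semi-local stable holonomy to $\Delta^u$ is continuous where defined (using Lemma~\ref{lem:extension} and the fact that $\Lambda$ has uniform leafwise geometry) and because $f^{-n}$ is continuous on the (closed, invariant) set $\Lambda$.

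Next I would prove $\Phi$ is injective. Suppose $\Phi(x)=\Phi(y)$, so for every $n\ge 0$ the points $f^{-n}(x)$ and $f^{-n}(y)$ lie on the same (closure of a) semi-local stable leaf in $\overline\bb$. In particular $d(f^{-n}(x),f^{-n}(y))$ is bounded independently of $n$ since both stay in $J^+_\bb(\Lambda)$; this says that $x$ and $y$ have the same backward orbit up to bounded stable distance. But $f^{-1}$ expands the stable direction uniformly (the constant $s<1$ for $f$ on $E^s$), so applying $f^{-n}$ would blow up $d^s(x,y)$ unless $y\in W^s_{\mathrm{loc}}(x)$; combined with $x,y\in\Lambda$ and the total disconnectedness of $\Lambda\cap W^s(x)$ (part of Definition~\ref{defi:quasi_solenoid}, since $\Lambda$ is a quasi-solenoid), and the fact that $x\sim y$ forces them into the same stable slice — more precisely $\overline{W^s_\bb(f^{-n}x)}\cap\overline{W^s_\bb(f^{-n}y)}\ne\emptyset$ for all $n$ means $y$ is in the local stable manifold of $x$ — we get $y=x$. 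I would spell this out: the relation $\sim$ only identifies points on a common stable leaf tangent to $\partial\bb$, and stable leaves are disjoint, so in fact $x$ and $y$ lie on the same global stable manifold; total disconnectedness of $\Lambda\cap W^s(x)$ plus connectedness considerations (or directly: $f^{-n}$ contracts the ambient distance along $W^u$ but here we are on the stable side and $f^{-n}$ expands, forcing $d^s(f^{-n}x,f^{-n}y)\to\infty$ if $x\ne y$, contradiction with boundedness) give $x=y$.

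Then I would prove $\Phi$ is surjective. Let $(\tilde x_{-n})_{n\ge0}\in\widehat{\tilde D'}$. Lift $\tilde x_0$ to a point $z_0\in W^s_\bb(\Lambda)\cap\Delta^u$; since $\Lambda$ is saturated by its semi-local stable leaves inside $J^+_\bb(\Lambda)$ (this is where I use that $J^+_\bb(\Lambda)\cap W^s_\bb(y)$ reconstructs $\Lambda$ via $\Lambda=\bigcap_n f^n(J^+_\bb(\Lambda))$ from Theorem~\ref{thm:invariant_component}, and that $W^s_\bb(\Lambda)$ meets $\Delta^u$) there is a point $\xi_0\in\Lambda$ on the same semi-local stable leaf as $z_0$. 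Now lift the whole sequence: having produced $\xi_{-n}\in\Lambda$ whose stable class is $\tilde x_{-n}$, and since $\tilde f(\tilde x_{-(n+1)})=\tilde x_{-n}$, I pick a preimage: the point $f^{-1}(\xi_{-n})\in\Lambda$ has stable class $\tilde f^{-1}(\tilde x_{-n})$ which is one of finitely many classes (by Proposition~\ref{prop:finiteness}, finitely many components, and $\tilde f$ finite-to-one); I need the class to be exactly $\tilde x_{-(n+1)}$. This is the point where the construction of $\tilde f$ as the \emph{quotient} of $f$ must be used carefully: $\tilde f$ on $\tilde D$ is defined so that the class of $f^{-1}(\xi)$ is a $\tilde f$-preimage of the class of $\xi$, and I must choose the preimage consistent with the prescribed sequence. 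Concretely, among the (finitely many) points of $\Lambda\cap W^s_{\overline\bb}$-leaves mapping to $\xi_{-n}$ under $f$, at least one projects to $\tilde x_{-(n+1)}$; I take $\xi_{-(n+1)}$ to be (a point on) that leaf inside $\Lambda$. Because the choices are finite at each step, a compactness/diagonal argument over the tree of choices yields a coherent sequence $(\xi_{-n})$; then $x:=\xi_0$ satisfies $f^{-n}(x)=\xi_{-n}$... — here I must double check that the coherent backward sequence in $\Lambda$ actually comes from a single point, which it does since $\Lambda$ is $f$-invariant and $\xi_{-(n+1)}$ was chosen as an $f$-preimage of $\xi_{-n}$, so $x=\xi_0$ and $f^{-n}(x)=\xi_{-n}$, giving $\Phi(x)=(\tilde x_{-n})$.

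Finally, $\Phi$ is a continuous bijection between compact Hausdorff spaces, hence a homeomorphism, and it intertwines $f$ with $\widehat{\tilde f}$; this is the desired topological conjugacy. The equivalence with $(\tilde D',\tilde f)$ versus $(\tilde D,\tilde f)$ follows since $f^N(J^+_\bb(\Lambda))\subset W^s_\bb(\Lambda)$ makes $\bigcap_{k\ge0}\tilde f^k(\tilde D)=\tilde D'$, and the natural extension only sees this eventual image. \textbf{The main obstacle} I anticipate is surjectivity, specifically the coherence of the backward choices: one must argue that the finitely-many-to-one, not-necessarily-surjective map $\tilde f$ admits, along any prescribed inverse-limit orbit in $\tilde D'$, a lift to an actual backward orbit in $\Lambda$ — this requires combining the finiteness from Proposition~\ref{prop:finiteness} with a König's-lemma style argument on the tree of admissible lifts, and keeping straight the relation between "$\tilde f$-preimage of a class" and "$f$-preimage of a point on that class followed by re-projection", which is exactly where the non-surjectivity (the Reeb-like gap $W^s_\bb(\Lambda)\subsetneq J^+_\bb(\Lambda)$) could in principle cause trouble if not handled on $\tilde D'$ rather than $\tilde D$.
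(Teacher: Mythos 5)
Your overall strategy is the same as the paper's: identify a point of $\Lambda$ with the sequence of $\sim$-classes of the closed semi-local stable leaves $W^s_{\overline\bb}(f^{-n}(x))$; the paper simply writes down this correspondence and its inverse, $h\bigl((\tilde x_n)\bigr)=\bigcap_{n\geq 0} f^n\bigl(W^s_{\overline\bb}(x_{-n})\bigr)$. Your definition of $\Phi$, the equivariance, the continuity, and (modulo some looseness) the injectivity argument are fine: injectivity does come down to the fact that two distinct points of $\Lambda$ on a common stable leaf have intrinsic stable distance expanded exponentially under $f^{-1}$, while membership in a common closed semi-local leaf for all backward times bounds that distance by the uniform bound coming from the bounded vertical degree (Proposition~\ref{prop:stable total disc}).

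The surjectivity argument, however, has a genuine gap, and it is exactly at the spot you flag. Since $f$ is an automorphism, the point $\xi_{-n}$ has a \emph{unique} $f$-preimage, whose class is forced; so you cannot simultaneously insist that $\xi_{-(n+1)}$ lies in the prescribed class $\tilde x_{-(n+1)}$ and that $f(\xi_{-(n+1)})=\xi_{-n}$. On the other reading of your construction (choose $\xi_{-(n+1)}\in\Lambda$ on the leaf of class $\tilde x_{-(n+1)}$, so that $f(\xi_{-(n+1)})$ merely lies on the \emph{leaf} of $\xi_{-n}$), the sequence $(\xi_{-n})$ is not a backward orbit of any point, and the concluding claim ``$f^{-n}(x)=\xi_{-n}$ with $x=\xi_0$'' is false; the K\"onig-lemma/tree argument does not repair this, because the issue is not coherence of finitely many choices but the fact that prescribing classes does not pin down points. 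The missing idea is the paper's formula itself: the sets $f^n\bigl(W^s_{\overline\bb}(x_{-n})\bigr)$ are nested (since $f(W^s_{\overline\bb}(z))\subset W^s_{\overline\bb}(f(z))$ and $\tilde f(\tilde x_{-(n+1)})=\tilde x_{-n}$), compact, and of diameter tending to $0$ by uniform stable contraction applied to leaves of uniformly bounded size; hence their intersection is a single point, which lies in $\bigcap_n f^n\bigl(\overline{J^+_\bb(\Lambda)}\bigr)=\Lambda$ and realizes the prescribed itinerary (equivalently, in your language: pick \emph{any} $\xi_{-n}\in\Lambda$ in class $\tilde x_{-n}$, show $\bigl(f^n(\xi_{-n})\bigr)_n$ is Cauchy using $d\bigl(f^{n+1}(\xi_{-(n+1)}),f^n(\xi_{-n})\bigr)\leq C s^n L$, and use closedness of the leaves/relation $\sim$ to check the limit has the right classes). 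With that replacement your proof closes up and coincides with the paper's.
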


\begin{proof}
Indeed define $h: \varprojlim (\tilde D, \tilde f) \to \Lambda$ by $h((\tilde x_n)_{n\in \Z})  = \bigcap_{n\geq 0} f^n(W^s_{\overline\bb}(x_{-n}))$, whose  inverse is $y\mapsto h\inv(y) = ((W^s_{\overline\bb}(f^n(y)))_{n\in \Z}$. 
\end{proof}

\section{Non-divergence of holonomy and applications}\label{sec:NDH}
 
 \subsection{The NDH property}\label{subs:NDH}
  We say that the property of  \emph{Non-Divergence of Holonomy} (NDH) holds if for every pair of points 
  $x,y \in J$ such that 
 $y$ belongs to $W^s(x)$, 
 the stable  holonomy, which is locally defined from a neighborhood of 
 $x$ in $W^u(x)$ to a neighborhood of $y$ in $W^u(y)$, can be continued along any path contained 
 in $J^u(x)$. 

\begin{rmk}\
\begin{enumerate}
\item The stable holonomy $h:W^u(x)\to W^u(y)$ is independent of the choice of a path $c$ from $x$ to $y$ in $W^s(x)$ because 
$W^s(x)$ is simply connected. 
\item An unstable component $J^u(x)$ is typically \emph{not} simply connected 
(since it may encloses the trace of an attracting basin on 
$W^u(x)$). So even if the stable holonomy from $x$ to $y$ admits an extension along continuous paths, it does not generally yield  
a well-defined map from $J^u(x)$ to $J^u(y)$. 
\end{enumerate}
\end{rmk}

 We do not know any example where the NDH property fails. An analogue of this 
  property was studied in the context of the classification of Anosov diffeomorphisms, where it is expected to 
 be a crucial step in the classification program. 
It was established in the two dimensional case in  \cite{franks} 
(see also~\cite{brin, kleptsyn-kudryashov} for related results). 

Back to automorphisms of $\cd$, we have the following simple criterion:

\begin{prop}\label{prop:NT_NDH}
A  sufficient condition for  the NDH property is that   
the stable lamination $\mathcal W^s$ of $J^+$ is transverse to $\fr\bb$ (No Tangency condition, NT).   
\end{prop}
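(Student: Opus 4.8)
The plan is to use NT to promote the stable lamination to a $C^1$ foliation near $J^+$ which is \emph{uniformly transverse} to $\fr\bb$, hence free of Reeb-type degeneracies, and then to run an open--closed argument on the set of parameters along which the holonomy germ can be continued. First I would set the stage: under NT every semi-local stable manifold $W^s_\bb(z)$, $z\in J^+$, is a vertical submanifold of $\bb$, so by Proposition~\ref{prop:stable total disc} their degrees are uniformly bounded by some $D$ and $f$ is stably totally disconnected, putting us in the setting of Sections~\ref{sec:semi_local} and~\ref{sec:components}. By Lemma~\ref{lem:extension}, $\cW^s$ extends to a $C^1$ foliation $\mathcal F$ of a neighborhood $\mathcal N$ of $J^+\cap\bb$; since transversality to $\fr\bb$ is open and $J^+\cap\fr\bb$ is compact, shrinking $\mathcal N$ I may assume every leaf of $\mathcal F$ coming near $\fr\bb$ is still transverse to it, so that $z\mapsto W^s_\bb(z)$ becomes a continuous family of vertical submanifolds of $\bb$ of degree $\leq D$, transverse to $\fr\bb$, with uniformly bounded leafwise geometry. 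In other words $\mathcal F$ has no Reeb component near $J^+$.

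Next I would reduce to the case where $d(x,y)$ is small. Given $y\in W^s(x)$, I would choose a path $c$ from $x$ to $y$ inside the leaf $W^s(x)\subset J^+$, subdivide it into short arcs, and obtain a chain $x=x_0,\dots,x_m=y$ with $x_{i+1}\in W^s_\loc(x_i)$; since $W^s(x)$ is simply connected the holonomy germ from $x$ to $y$ is the composition of the elementary germs from $x_i$ to $x_{i+1}$. Because stable holonomy maps $J^+$ into $J^+$ and preserves connectedness, continuing the first germ along $\gamma$ (granting the small case) carries $\gamma$ to a path contained in $J^u(x_1)$, along which one continues the second germ, and so on; hence continuing the composition reduces to continuing each factor. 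The point of this reduction is that all the connecting arcs then remain inside single flow boxes near $J$, hence inside $\bb$, where $\mathcal F$ is under control.

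Then I would assume $x,y$ lie in a common flow box with $d(x,y)$ small, let $h_0\colon(W^u(x),x)\to(W^u(y),y)$ be the elementary plaque-holonomy germ, fix $\gamma\colon[0,1]\to J^u(x)$ with $\gamma(0)=x$, and set $T=\{t\colon h_0 \text{ continues along }\gamma|_{[0,t]}\}$. It suffices to show $T$ is non-empty, open and closed. Non-emptiness and openness are routine: at every point of $J$ the stable and unstable laminations meet transversally, so the target point $y_{t_0}\in W^s_\bb(\gamma(t_0))\cap W^u(y)$ of a continued germ is a transverse intersection point, which persists and varies continuously under small perturbation of $\gamma(t_0)$. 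For closedness, taking $t_n\uparrow t_0$ in $T$, the connecting arcs $c_{t_n}\subset W^s_\bb(\gamma(t_n))$ from $\gamma(t_n)$ to the current target are continuous deformations of the fixed short arc $c_0$ through the compact, uniformly-bounded-geometry family $\{W^s_\bb(\gamma(s))\colon s\in[0,t_0]\}$, which converges to $W^s_\bb(\gamma(t_0))$ by continuity of the lamination together with NT; hence the $c_{t_n}$ have uniformly bounded length, converge to an arc $c_{t_0}$ in $W^s_\bb(\gamma(t_0))$, and its endpoint lies on $W^u(y)$ since the $W^s_\bb(\gamma(s))$ stay uniformly transverse to the unstable lamination near $J$.

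The main obstacle is exactly this closedness step. What must be excluded is that, as $\gamma(t)$ moves, the leaf $W^s_\bb(\gamma(t))$ diverges along the connecting arc, equivalently that the family $W^s_\bb(\gamma(t))$ accumulates on a stable leaf tangent to $\fr\bb$ --- the Reeb-type phenomenon depicted in Figure~\ref{fig:reeb}. Ruling this out needs transversality to $\fr\bb$ of \emph{all} nearby stable leaves, not merely those through $J$, which is precisely why one combines NT with the $C^1$-extension of Lemma~\ref{lem:extension}. Obtaining the uniform length bound for the connecting arcs, and verifying that their limiting endpoint really stays on the fixed unstable manifold $W^u(y)$ rather than jumping to an adjacent unstable leaf, are the technical points that will require care.
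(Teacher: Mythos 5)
Your skeleton is the same as the paper's: reduce to the case where $x$ and $y$ are joined by a short (semi-local) stable arc, then run an open--closed continuation argument along $\gamma$, with closedness resting on the fact that under NT the semi-local leaves $W^s_\bb(\gamma(t_n))$ converge to $W^s_\bb(\gamma(t_\infty))$ with multiplicity one, on a uniform length bound for the connecting stable arcs coming from the uniform vertical degree, and on transversality of stable and unstable plaques to identify the limit target. Two preliminary remarks: the $C^1$ extension of Lemma~\ref{lem:extension} is not needed for this proposition, and NT does not by itself give the degree bound --- uniform boundedness of the vertical degree comes from the standing stable-total-disconnectedness hypothesis via Proposition~\ref{prop:stable total disc}. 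Also, your reduction to ``$d(x,y)$ small'' does not actually localize the difficulty: being in one flow box at $t=0$ says nothing about the connecting arc for later $t$, so the elementary germs require the full global argument anyway (the paper's forward-iteration trick serves only to initialize the condition discussed below).

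The genuine gap is in the closedness step. You define $T=\{t:\ h_0\text{ continues along }\gamma|_{[0,t]}\}$ and then assert that the connecting arcs satisfy $c_{t_n}\subset W^s_\bb(\gamma(t_n))$. Nothing in your definition of $T$ guarantees this: a priori, as $t$ increases the connecting arc inside the global leaf $W^s(\gamma(t))$ can leave $\bb$ and the target can migrate to a different component of $W^s(\gamma(t))\cap\bb$ --- this unbounded excursion is exactly the divergence phenomenon NDH is about (compare the Green distance $d_G$ and Figure~\ref{fig:reeb}), and once it is allowed, both the uniform length bound $L$ and the Hausdorff/multiplicity-one convergence of the relevant semi-local leaves are unavailable, so the cluster value of $h_0(\gamma(t_n))$ need not lie on $W^s_\bb(\gamma(t_\infty))$ at all. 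The fix is the paper's bookkeeping: take the continuation set to be $E=\{t:\ h_0\text{ continues along }\gamma|_{[0,t]}\ \text{and}\ h_0(\gamma(t))\in W^s_\bb(\gamma(t))\}$, check that this \emph{combined} condition is open (the connecting arc is a compact subset of the open bidisk, so it persists under small perturbation) and then prove it is closed --- and it is only for sequences $t_n\in E$ that your estimates (bounded-length arcs in leaves of bounded degree, convergence of $W^s_\bb(\gamma(t_n))$ granted by NT, equicontinuity of $\gamma(t_n)\mapsto h_0(\gamma(t_n))$, identification of the limit plaque) legitimately apply. In other words, the statement you treat as an observation is precisely the invariant that the continuation argument must carry along; once it is built into $E$, your closedness paragraph becomes the paper's proof.
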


\begin{proof}
Assume that the No Tangency condition holds and let 
 $x,y\in J$ be such that $y$ belongs to $W^u(x)$. Replacing $x$ and $y$ by $f^k(x)$ and $f^k(y)$ for some 
positive $k$, we may assume that $y\in W^s_\bb(x)$. There is a germ of stable holonomy $h$ sending a neighborhood of $x$ in $J^u(x)$  to some neighborhood of $y\in J^u(y)$. Let $\gamma:[0,1]\to J^u(x)$ be a continuous path: we have to show that $h$ can be continued along $\gamma$. For this, introduce 
 $E\subset [0, 1]$   the set  of parameters  $t$ such that 
$h$ can be continued along  $\gamma\rest{[0,t]}$ and $h(\gamma(t))\in W^s_\bb(\gamma(t))$.  
Obviously, $E$ is a relatively open subinterval of $[0,1]$ containing 0, 
and the proof will be complete if we show that $E$ is closed.   Thus, assume that 
$(t_n)\in E^\N$ is an increasing  sequence converging to $t_\infty$, and 
let $y_\infty$ be any cluster value of the sequence $(h(\gamma(t_n)))$.
The main observation  is that since $\mathcal W^s$ is transverse to $\fr\bb$, $W^s_\bb (\gamma(t_n))$ converges to 
$W^s_\bb (\gamma(t_\infty))$ in the Hausdorff topology,   with multiplicity 1, or equivalently in the $C^1$ topology. Furthermore, 
by the uniform boundedness of the vertical degree, there is a uniform $L$ such that for every $n$, there is a path of length at most 
$L$ joining $\gamma(t_n)$ to $h(\gamma(t_n))$ in $W^s(\gamma(t_n))$. It follows that the assignment 
$\gamma(t_n)\mapsto h(\gamma(t_n)$ is equicontinuous. Let $y_\infty$ be a cluster value of $( h(\gamma(t_n)))$. The equicontinuity property shows that $h(\gamma(t_n))$ actually converges to $y_\infty$, and also that the  the 
points $h(\gamma(t_n))$ belong to the same local plaque of the unstable lamination, which must thus coincide with $W^u_\loc(y_\infty)$. From this we conclude that $h$ extends to a neighborhood of $\gamma(t_\infty)$, with 
$h(\gamma(t_\infty)) = y_\infty$, and we are done. 
\end{proof}

One may argue that the NT condition is not intrinsic since it depends on the choice of the bidisk $\bb$. 
To get around this issue  we 
may consider the following variant: 

   (NT$_G$)   there exists $R>0$ such that the stable foliation admits no tangency with the hypersurface $\set{G^- = R}$.   
 
Note that the level set $\set{G^- = R}$ is smooth near $J^+$ for every $R>0$: indeed by the local structure of $G^-$ near infinity
this is the case  when $R$ is large, and then we use invariance to propagate this property to all $R>0$.   Arguing exactly as in the 
previous proposition shows that the  NT$_G$ property implies NDH. 

Using this idea   also enables us to understand more precisely how the NDH property may fail. If $x$ and $y$ are two points in $J$ with  $y\in W^s(x)$, define the 
\emph{Green distance} 
$$d_{G}(x,y):= \inf_{c: x\to y} \max(G^-\rest{c})$$
where the infimum runs over the set of continuous paths $c:[0, 1]\to W^s(x)$ joining $x$ to $y$. 
Since  $W^s(x)\cap J$ is totally disconnected, this indeed defines an ultrametric on $W^s(x)\cap J$, which is uniformly contracted by $f$: $d_{G}(f(x),f(y)) = d\inv d_G(x,y)$. 
It provides   an intrinsic way of measuring 
how far we need to go in $\cd$ to connect  two unstable components by  stable manifolds. Arguing exactly as in Proposition~\ref{prop:NT_NDH} shows:

\begin{prop}
Let $x,y\in J$ with $y\in W^s(x)$ and denote by $h$ the germ of stable
 holonomy $h:W^u_\loc(x)\to W^u_\loc(y)$. Let $\gamma:[0,1]\to J^u(x)$ be a continuous path and assume that $h$ can be continued along $\gamma([0, t^\star))$. Then $h$ admits an extension to $t^\star$ if and only if 
 $d_G(\gamma(t), h(\gamma(t)))$ is bounded as $t\to t^\star$.  
\end{prop}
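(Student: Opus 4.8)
The plan is to adapt the proof of Proposition~\ref{prop:NT_NDH} almost verbatim, with the sublevel set $\set{G^- \le R+1}$ (for a suitable $R$) playing the role that the bidisk $\bb$ and the transversality of $\cW^s$ to $\fr\bb$ played there. The necessity is immediate: if $h$ extends to $t^\star$, then for $t$ in a closed neighbourhood of $t^\star$ the point $h(\gamma(t))$ is joined to $\gamma(t)$ inside $W^s(\gamma(t))$ by a path $c_t$ depending continuously on $t$, so the union $\mathcal K$ of the images of these $c_t$ is compact; since $G^-$ is continuous, $d_G(\gamma(t), h(\gamma(t))) \le \sup_{\mathcal K} G^- < \infty$ for $t$ near $t^\star$.

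For the converse, suppose there are $R>0$ and $t_0<t^\star$ with $d_G(\gamma(t), h(\gamma(t))) \le R$ for all $t \in [t_0, t^\star)$. The one substantial point is a confinement statement. For $z \in J$ one has $W^s(z) \subset K^+ = \set{G^+ = 0}$, and the filtration property at infinity shows that $K^+ \cap \set{G^- \le R+1}$ is bounded; hence, enlarging $\bb$ if necessary (depending on $R$), $W^s(z) \cap \set{G^- \le R+1} \subset J^+ \cap \bb$ for every $z \in J$. By Lemma~\ref{lem:extension} the stable lamination extends to a $C^1$ foliation near this compact region, and by Proposition~\ref{prop:stable total disc} its leaves there are graphs of uniformly bounded vertical degree, hence of uniformly bounded geometry; in particular, holonomy of the extended foliation along a path of length $\le L$ is Lipschitz with constant depending only on $L$.

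Granted this, I would finish exactly as in Proposition~\ref{prop:NT_NDH}. Fix $t_n \uparrow t^\star$ and choose paths $c_n \subset W^s(\gamma(t_n))$ from $\gamma(t_n)$ to $h(\gamma(t_n))$ with $\max_{c_n} G^- \le R+1$; by confinement their images lie in $J^+ \cap \bb$, and — replacing $c_n$ by a path inside the component of $W^s(\gamma(t_n)) \cap \set{G^- \le R+1}$ that contains $c_n$, a finite-degree graph of bounded diameter — we may take $\length(c_n) \le L$ uniformly. Then the continued germ $h$ is defined on a disk $D^u(\gamma(t_n), \rho)$ of uniform radius with uniform Lipschitz constant $K$; since $\gamma$ takes values in the single leaf $W^u(x)$ and is uniformly continuous on $[0,1]$, the map $t \mapsto h(\gamma(t))$ is uniformly continuous near $t^\star$, so $y_\infty := \lim_{t \to t^\star} h(\gamma(t))$ exists, lies in $J$, and for $t$ near $t^\star$ the points $h(\gamma(t))$ all lie in the plaque $W^u_\loc(y_\infty)$. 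A uniformly convergent subsequence of the $c_n$ (Arzel\`a--Ascoli, using $\length(c_n) \le L$) produces a path $c_\infty \subset W^s(\gamma(t^\star))$ from $\gamma(t^\star)$ to $y_\infty$ with $\max_{c_\infty} G^- \le R+1$, so $y_\infty \in W^s(\gamma(t^\star)) \cap J$; hence $h$ extends to a neighbourhood of $\gamma(t^\star)$ with $h(\gamma(t^\star)) = y_\infty$, as desired.

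The only delicate step is the confinement observation: it is what guarantees that the near-minimizing connecting paths remain in a fixed compact set of bounded geometry (for $\bb$ chosen large in terms of $R$), so that the continuation of $h$ enjoys Lipschitz control uniform in $n$. With it in hand, everything else is a line-by-line transcription of the proof of Proposition~\ref{prop:NT_NDH}, with $\set{G^- \le R+1}$ in place of $\bb$.
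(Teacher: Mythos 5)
Your argument is correct and is essentially the paper's own: the paper gives no separate proof of this proposition, merely stating that one argues exactly as in Proposition~\ref{prop:NT_NDH}, and your adaptation --- using the bound on $d_G$ to confine the connecting stable paths to the compact set $K^+\cap\set{G^-\le R+1}$, where the uniform bound on the vertical degree of semi-local stable manifolds yields uniformly short connecting paths, hence equicontinuity of the continued holonomy, existence of the limit $y_\infty$, and $y_\infty\in W^s(\gamma(t^\star))$ --- is precisely the intended substitute for the transversality to $\fr\bb$ used there. The easy ``only if'' direction you include is also fine.
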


\subsection{No queer components}

\begin{thm} \label{thm:no_queer_components}
Let $f$ be   dissipative and hyperbolic, with a disconnected and 
stably totally disconnected Julia set. Assume further that the NDH property holds. 
Then any non-trivial periodic component of $K$ contains an attracting point. 
\end{thm}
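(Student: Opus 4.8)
The strategy is to reduce the statement to producing a single Fatou disk inside some leafwise component of $K$, and then to invoke (NDH) to rule out the contrary. First I would fix a non-trivial periodic component $C$ of $K$ and, replacing $f$ by an iterate, assume $C$ is fixed. By Proposition~\ref{prop:component_KJ} and Theorem~\ref{thm:component_J}, the set $\Lambda := C\cap J$ is then a (fixed) quasi-solenoid. Applying Proposition~\ref{prop:mixing} and replacing $f$ by a further iterate, I obtain a sub-quasi-solenoid $\Lambda'\subset \Lambda\subset C$ which is saturated by unstable components, has Cantor stable slices, satisfies $f|_{\Lambda'}$ topologically mixing, and $\Lambda' = \overline{J^u(p)}$ for every periodic $p\in\Lambda'$; in particular $\Lambda'$ is connected, being the closure of the connected set $J^u(p)$.

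\textbf{Reduction.} It suffices to exhibit some $x\in\Lambda'$ with $\Int_{\mathrm i}K^u(x)\neq\emptyset$. Indeed such an interior contains a Fatou disk $\Delta$, which by Lemma~\ref{lem:leafwise_interior} lies in $\Int(K^+)$, hence in some attracting basin $\mathcal B$ with cycle $\{a_1,\dots,a_q\}$. Since $\Delta\subset K^u(x)\subset K(x) = C$ and $f(C)=C$, one has $f^n(\Delta)\subset K^u(f^n(x))\subset K(f^n(x)) = C$ for all $n\geq 0$; as $\Delta\subset\mathcal B$, forward iterates of any point of $\Delta$ converge to the cycle, so some $a_i$ is a limit of points of $C$, and since $C$ is closed, $a_i\in C$ --- the desired attracting point.

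\textbf{The crux: using (NDH).} Suppose, for contradiction, that $K^u(x)=J^u(x)$ for every $x\in\Lambda'$, i.e. $J^u(x)$ has empty leafwise interior in $W^u(x)\simeq\C$. Then $J^u(x)$ contains no Jordan curve: such a curve would bound a Jordan domain $D\not\subset J^u(x)$, hence a bounded complementary component $U$ of $J^u(x)$, and by the maximum principle ($\fr U\subset J^u(x)\subset J^+$, $U\subset K^+$) we would get $U\subset\Int(K^+)\cap W^u(x)\subset\Int_{\mathrm i}K^u(x)$, a contradiction. Being connected and locally connected (Theorem~\ref{thm:topology}) and containing no simple closed curve, $J^u(x)$ is tree-like, hence simply connected, so any two paths in it with common endpoints are homotopic rel endpoints. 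Now fix a periodic $p\in\Lambda'$ and set $T := W^s_{\loc}(p)\cap\Lambda'$, a Cantor set. For each $t\in T$, (NDH) applied to the pair $(p,t)$ lets us continue the local stable-holonomy germ $h_t:(W^u_{\loc}(p),p)\to (W^u_{\loc}(t),t)$ along every path in $J^u(p)$; by the homotopy-invariance just noted the continuation is single-valued, and together with the analogous continuation of $h_t^{-1}$ it yields a homeomorphism $h_t:J^u(p)\to J^u(t)$ intertwining the stable holonomies. Consequently the map $\Phi:J^u(p)\times T\to\Lambda'$, $\Phi(\eta,t)=h_t(\eta)$, is a homeomorphism onto $\Lambda'$ --- that is, $\Lambda'$ acquires a \emph{global} product structure. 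But $J^u(p)\times T$ is disconnected, $T$ being a Cantor set, contradicting the connectedness of $\Lambda'=\overline{J^u(p)}$. Hence some $x\in\Lambda'$ satisfies $\Int_{\mathrm i}K^u(x)\neq\emptyset$, and the reduction finishes the proof.

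\textbf{Expected main obstacle.} The delicate point is the last step of the crux: promoting the local product structure of $\Lambda'$ to a global one, i.e. checking that $\Phi$ is a homeomorphism onto $\Lambda'$. Surjectivity uses $\Lambda'=\overline{J^u(p)}$ together with the holonomy-invariance of $\Lambda'$ and the identity $\Lambda'\cap W^u(t)=J^u(t)$; continuity and properness of $\Phi$ follow from the local triviality of the stable lamination and a uniform bound on the length of holonomy continuations --- in the spirit of the proof of Proposition~\ref{prop:NT_NDH}, and using the uniform boundedness of vertical degrees (Proposition~\ref{prop:stable total disc}); injectivity comes from the local product structure of $\Lambda'$ and the discreteness of $W^s\cap W^u$. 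The remaining ingredients (the structure of $C$ and $\Lambda$, Proposition~\ref{prop:mixing}, and the Fatou-disk argument) are routine given the earlier results.
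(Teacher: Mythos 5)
The fatal step is the claim, at the end of your ``crux,'' that $\Phi\colon J^u(p)\times T\to\Lambda'$, $\Phi(\eta,t)=h_t(\eta)$, is a homeomorphism onto $\Lambda'$, so that $\Lambda'$ would inherit a \emph{global} product structure and be disconnected. Even granting that each $h_t$ is a well-defined homeomorphism $J^u(p)\to J^u(t)$ (which your tree/monodromy argument plausibly gives under NDH), injectivity of $\Phi$ fails: the slices $J^u(t)$ are leafwise \emph{unbounded} and, since $\Lambda'=\overline{J^u(p)}$, the single component $J^u(p)$ is dense in $\Lambda'$ and returns to the transversal $T=W^s_\loc(p)\cap\Lambda'$ at (typically countably many, dense) points $t\neq p$; for all such $t$ one has $J^u(t)=J^u(p)$, so the images $h_t(J^u(p))$ overlap massively. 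This is exactly the mechanism by which a set with local product structure (arc, or tree, times Cantor set) can nevertheless be connected --- the classical solenoid is the model case --- so connectedness of $\Lambda'$ simply cannot be played against the local product structure; nothing in the empty-interior (``queer'') hypothesis, nor in NDH, prevents these leafwise returns. Your appeal to ``discreteness of $W^s\cap W^u$'' for injectivity points the wrong way: that discreteness is precisely what produces countably many distinct $t\in T$ lying on the same unstable component. Note also that a return $t\in J^u(p)\cap T\setminus\set{p}$ gives a holonomy self-map of the unbounded tree $J^u(p)$, which (like a translation of $\R$) need not have a fixed point, so no contradiction arises at this level either. (A secondary, repairable gap: a non-trivial fixed component of $K$ need not have a quasi-solenoidal $J$-component --- it could contain a single point of $J$ --- but that case contains basin points and is easy.)

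For contrast, the paper's proof extracts the contradiction from a different and genuinely necessary mechanism: it works with a \emph{compact} dendrite $E$, a component of $C\cap\Delta^u$ in a bounded unstable transversal, shows via unstable expansion and the John--H\"older separation that some iterate $f^n(E)$ must meet a local stable manifold twice, and then rules this out (Lemma~\ref{lem:fixed_point}) by using NDH to globalize the holonomy to a self-map of the dendrite, invoking a Brouwer/retraction fixed-point argument (this is where compactness of $E$ is essential), and finally deriving a tangency between the stable lamination and the unstable transversal, contradicting hyperbolicity. Your argument contains no analogue of this fixed-point/tangency step, and without it the contradiction you reach is spurious; the reduction to producing a Fatou disk in some $K^u(x)$ is fine, but the route from NDH to that disk needs the paper's (or an equivalent) mechanism rather than global product structure versus connectedness.
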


\begin{proof} We argue by contradiction: assume that  $\Lambda$ is a component of $K$ which does not 
contain any attracting periodic point. Let $C$ be the component of 
$\Lambda$ in $K^+\cap \bb$. Our hypothesis 
implies that $C$ has empty interior, so $C$ is a component of $J^+\cap \bb$ (and $\Lambda$ is a component of 
$J$.  Fix an unstable transversal $\Delta^u$ and let $E$ be a component of $C\cap \Delta^u$, which must have empty interior in $ \Delta^u$ by Lemma~\ref{lem:fatou_disk}. Thus $E$ is a
locally connected continuum with empty interior, that is, a dendrite. 

\begin{lem}\label{lem:fixed_point}
For every $x\in E$, $W^s(x)\cap E = \set{x}$. 
\end{lem}

Assuming this lemma for the moment, let us complete the proof. By the expansion in the unstable direction, for every $x\in E$, there exists $\delta_1>0$ such that for every $n\geq 0$, $f^n(E)$ is not relatively compact in $D^u(f^n(x), \delta_1)$, and by the John-Hölder property, there exists $\delta_2>0$ such that any two components 
of  $f^n(E)$ in $D^u(f^n(x), \delta_1)$ intersecting $D^u(f^n(x), \delta_1/2)$ are $\delta_2$-separated. 
Fix a covering of $J$ by unstable flow boxes. By the product structure of $J$, there 
exists $\e>0$ such that if  $y,z\in f^n(E)$ are $\e$-close in $\C^2$ but not on the same unstable plaque, 
then the components $\comp_{f^n(E)\cap D^u(y, \delta_1)}(y)$ and $\comp_{f^n(E)\cap D^u(z, \delta_1)}(z)$
are related by local stable holonomy. Finally, by expansion along the unstable direction and the previous 
separation property,  $f^n(E)$ cannot be 
contained in boundedly many unstable plaques as $n\to\infty$. Thus, for sufficiently large $n$
 we can find  two points in $f^n(E)$  which are $\e$-close in $\C^2$ but not on the same unstable plaque, so there exists  $y\in f^n(E)$ such that $W^s_\loc(y)$ intersects $f^n(E)$ in another point. This contradicts 
 Lemma~\ref{lem:fixed_point} and we are done. 
\end{proof}

\begin{proof}[Proof of Lemma~\ref{lem:fixed_point}]
Assume that $W^s(x)\cap E $ contains another point $y\neq x$. Then the stable holonomy defines a germ of 
homeomorphism $h: E\cap U_x\to E\cap U_y$, where $U_x$ is some neighborhood of $x$ (resp. $y$). By the 
NDH property, $h$ can be continued along paths in $E$. Since $E$ is simply connected, this extends to a  
globally defined map $h:E\to E$, sending $x$ to $y$, which is a local homeomorphism, hence a covering, 
so again using the fact that $E$ is simply connected, we conclude that $h$ is a homeomorphism.

It is a classical fact that any continuous self-map of $E$ admits a fixed point. For the reader's convenience let us include the argument. View $E$ as a subset of the plane. Then, by the Carathéodory theorem, the Riemann map $\C\setminus \dd \to \C\setminus E$ extends to a continuous 
and surjective map $\fr\dd\to\fr E = E$. From this we can construct a  
 topological disk $U \supset E$ and a retraction 
$r: \overline U \to E$: indeed take the disk bounded by some equipotential and define $r$ as collapsing each 
 external ray to its endpoint. Now let  $g = h\circ r$. Since $g$ maps $\overline U$ into itself, by the Brouwer 
 fixed point theorem it admits a fixed point $x_0$. Finally, since $g(\overline U) \subset  E$, $x_0$ belongs to
  $E$, so $g(x_0) = h(r(x_0))= h(x_0) = x_0$. 

To conclude the proof we show that the existence of such a fixed point contradicts the hyperbolicity of $f$. For this, fix a continuous path  $(x_t)_{t\in [0, 1]}$ joining $x_0$ to $x_1:=x$ and let 
$t^\star = \max \set{t\in [0,1],  \ h(x_t)=x_t}$, which satisfies $0\leq t^\star <1$. 
As $t>t^\star$ tends to $t^\star$, we see that the two point set  $\set{x_t, h(x_t)}$  collapses to 
$\set{x_{t^\star}}$. This means that there is a tangency between the stable lamination and 
$\Delta^u$ at $x_{t^\star}$, which is the desired contradiction. 
\end{proof}

\begin{rmk}\label{rmk:wrap}
With notation as in the proof of the theorem, 
it is not difficult to deduce from the proof that for every $\delta>0$, for $n\geq n(\delta)$
there exists a non-trivial simple closed curve contained in $W^s_\delta(f^n(E))$. So by the last assertion of Theorem~\ref{thm:component_J}, there is a non-trivial  simple closed curve contained in 
$W^s_\delta(\Lambda)$. 
Without the NDH property, we cannot exclude a    situation  where these simple closed curves 
do not enclose an attracting basin. 
 We may qualify these dendrites and their limit sets as \emph{queer components} of $J$. So Theorem~\ref{thm:no_queer_components} 
 asserts that under the NDH property, \emph{queer components of $J$ do not exist}. 
\end{rmk}

\subsection{Topological mixing}

\begin{thm}\label{thm:complement_mixing}
 If the NDH property holds, if $\Lambda$ is a  quasi-solenoidal component of period $k$, then 
 $f^k\rest{\Lambda}$ is topologically mixing.  In particular $\Lambda$ is transversally a Cantor set.
\end{thm}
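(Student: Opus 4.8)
The plan is to upgrade Proposition~\ref{prop:mixing} by showing that, under NDH, the mixing sub-quasi-solenoid $\Lambda'$ produced there is in fact all of $\Lambda$. Recall from Proposition~\ref{prop:mixing} that, after replacing $f$ by $f^k$, there is a basic piece $\Lambda' = \Omega_{i_0}$ of the spectral decomposition of the chain-recurrent set of $f\rest\Lambda$ which is saturated by unstable components (if $x \in \Lambda'$ then $J^u(x) \subset \Lambda'$), on which some iterate $f^\ell$ is topologically mixing, and whose stable slices are Cantor sets. First I would reduce to showing $\ell = 1$ and $\Lambda' = \Lambda$, the mixing and Cantor conclusions then being immediate from Proposition~\ref{prop:mixing}. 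Since $\Lambda'$ is saturated by unstable components and $\Lambda$ is connected with local product structure, the set $W^s_\loc(\Lambda')\cap\Lambda$ is relatively open in $\Lambda$; the point is to see it is also relatively closed, so that by connectedness it is all of $\Lambda$, forcing $\Lambda'$ to meet every stable slice and (being unstable-saturated) to equal $\Lambda$.

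The heart of the matter is therefore a continuation argument for stable holonomy, exactly parallel to the proofs of Proposition~\ref{prop:NT_NDH} and Lemma~\ref{lem:fixed_point}. Suppose $y \in \Lambda$ lies in the relative closure of $W^s_\loc(\Lambda')\cap\Lambda$ but not in $W^s_\loc(\Lambda')$ itself; pick $x_n \in \Lambda'$ with $W^s_\loc(x_n) \ni z_n$, $z_n \to y$. Since each $x_n\in\Lambda'$ and $\Lambda'$ is unstable-saturated and locally connected, $J^u(x_n)=\Lambda'\cap W^u(x_n)$ in a uniform unstable neighborhood, and under the local stable holonomy this neighborhood of $x_n$ corresponds to a neighborhood of $z_n$ inside $\Lambda$. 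Now use NDH: the stable holonomy $h_n: W^u_\loc(x_n)\to W^u_\loc(z_n)$ is a germ that, by NDH, continues along paths in $J^u(x_n)=\Lambda'\cap W^u(x_n)$. Because $\Lambda'\cap W^u(x_n)$ is leafwise unbounded and contains $x_n$, and because (by the uniform bounded vertical degree of semi-local stable manifolds and Proposition~\ref{prop:length} applied along stable leaves, cf. Remark~\ref{rmk:stable_rays}) the stable connecting paths have uniformly bounded length, the family $\{h_n\}$ is equicontinuous; its cluster value at $y$ is a germ of stable holonomy from a neighborhood of $y$ onto a neighborhood of a point of $\Lambda'$. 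Continuing this limiting germ along $J^u(y)$ (again by NDH) and tracking that the image stays in $\Lambda$ via local product structure, one shows $y \in W^s_\loc(\Lambda')$, a contradiction. Hence $W^s_\loc(\Lambda')\cap\Lambda$ is closed, so equals $\Lambda$, so $\Lambda=\Lambda'$ and $\ell=k=1$.

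The main obstacle is the equicontinuity/closedness step: one must be sure that continuing $h_n$ along an \emph{unbounded} unstable component does not let the image wander off to infinity along the stable leaves, i.e. that the Green distance $d_G(w, h_n(w))$ of Proposition~\ref{prop:NDH} stays bounded uniformly in $n$ and in $w\in J^u(x_n)$. This is where the hypothesis genuinely enters: NDH is precisely the statement that each individual continuation exists, and the uniform bound across the sequence comes from compactness of $\Lambda$ together with the uniform bound on the vertical degree of semi-local stable manifolds (Proposition~\ref{prop:stable total disc}) and the length estimate of Proposition~\ref{prop:length}. Once that uniform Green-distance bound is in hand, the limiting germ is well-defined and the rest is the same local-product-structure bookkeeping already used repeatedly in Section~\ref{sec:components}. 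I would then close by invoking Proposition~\ref{prop:mixing} directly: with $\Lambda'=\Lambda$, its conclusions give that $f^k\rest\Lambda$ is topologically mixing and that the stable slices of $\Lambda$ are Cantor sets, which is the assertion of the theorem.
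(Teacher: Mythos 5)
Your overall strategy --- upgrade Proposition~\ref{prop:mixing} by showing that, under NDH, the basic piece $\Lambda'$ is all of $\Lambda$ via an open-and-closed argument on the connected set $\Lambda$ --- is the same as the paper's, but you have mislocated where NDH is needed, and the step you present as the heart of the matter is both unnecessary and unjustified. For the \emph{local} stable set $W^s_\delta(\Lambda')\cap\Lambda$ the roles of open and closed are the opposite of what you claim: relative closedness is essentially free (compactness of $\Lambda'$ and continuity of local stable disks: if $z_n\in W^s_\delta(x_n)$ with $x_n\in\Lambda'$ and $z_n\to y$, pass to a limit $x_n\to x\in\Lambda'$ and get $y\in\overline{W^s_\delta(x)}$), so no NDH and no equicontinuity are required there; what is delicate at a fixed $\delta$ is \emph{openness} (a point at stable distance exactly $\delta$ from $\Lambda'$ has nearby points only in $W^s_{\delta+\e}(\Lambda')$). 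Moreover, even granting $\Lambda= W^s_\loc(\Lambda')\cap\Lambda$ for one $\delta$, your final inference that ``$\Lambda'$ meets every stable slice and, being unstable-saturated, equals $\Lambda$'' is a non sequitur: $y\in\Lambda\cap W^s_\delta(x)$ with $x\in\Lambda'$ does not place $y$ in $\Lambda'$. The paper resolves both issues by working with the \emph{global} stable set $W^s(\Lambda')$: it is relatively open in $\Lambda$ (unstable saturation plus local product structure), and NDH is used softly to prove it is relatively closed --- for $y\in W^s(\Lambda')$ the set of $z\in J^u(y)$ lying in $W^s(\Lambda')$ is open, and closed because the NDH continuation of the holonomy along a path in $J^u(y)$ lands, at the endpoint of the path, on the closed set $J^u(x)\subset\Lambda'$; arcwise connectedness of $J^u(y)$ gives $J^u(y)\subset W^s(\Lambda')$, and local product structure then gives closedness in $\Lambda$, whence $W^s(\Lambda')=\Lambda$. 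A second, separate step is still needed to pass from the global to the local stable set: Baire applied to $\bigcup_{n\geq 0} f^{-n}\lrpar{W^s_\delta(\Lambda')}$, relative openness of $W^s_\delta(\Lambda')$ via transitivity, a finite subcover and $f^{n_0}(\Lambda)=\Lambda$ yield $\Lambda\subset W^s_\delta(\Lambda')$ for every $\delta$, hence $\Lambda\subset\Lambda'$. Your proposal contains no substitute for this passage.

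Second, the uniform bound on which your equicontinuity argument rests --- that $d_G(w,h_n(w))$ stays bounded uniformly in $n$ and in $w\in J^u(x_n)$, deduced from ``compactness of $\Lambda$ together with the uniform bound on the vertical degree and Proposition~\ref{prop:length}'' --- is a genuine gap. NDH is a purely qualitative continuation statement for each individual pair of points and each path; it provides no uniformity, and if bounded vertical degree plus the length estimate really produced uniformly bounded stable connections along unbounded unstable components, one would in effect be proving a quantitative version of NDH (indeed NDH itself, unconditionally, in the spirit of Proposition~\ref{prop:NT_NDH}), which is exactly what is not available and why NDH appears as a hypothesis. Fortunately no such uniform estimate is needed: as explained above, the only input from NDH is the existence of the continuation along each path in $J^u(y)$, combined with the relative openness of $W^s(\Lambda')$ and the arcwise connectedness and closedness of unstable components.
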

 
\begin{proof} Without loss of generality we may assume $k=1$. 
We resume Proposition~\ref{prop:mixing} and its proof.  
Let $\Lambda'$ be as in Proposition~\ref{prop:mixing}, and let us show that $\Lambda' = \Lambda$. 
Since $\Lambda'$ is saturated in the unstable 
direction, $W^s(\Lambda')$ is relatively open in $\Lambda$. The NDH property shows that if 
$y\in W^s(\Lambda')$, then $J^u(y)\subset W^s(\Lambda')$: indeed the set of points   $z\in J^u(y)$ such that 
$z\in W^s(\Lambda')$ is open because $W^s(\Lambda')$ is relatively open, and since $J^u(y)$ is arcwise 
connected, the the NDH property implies that it is closed as well. Thus by the local product structure of $\Lambda$, we conclude that $W^s(\Lambda')$ is relatively closed  in $\Lambda$, and by connectedness 
we conclude that  $W^s(\Lambda') =\Lambda$. 

Fix a small $\delta>0$. By Baire's theorem, 
 we infer that $f^{-n}(W^s_\delta(\Lambda'))$  has non-empty relative interior in $\Lambda$ for large $n$, 
 hence so does $W^s_\delta(\Lambda')$ by invariance. Arguing as in Proposition~\ref{prop:mixing}, we see that  by topological transitivity, $W^s_\delta(\Lambda')$
is actually relatively open  in $\Lambda$. Therefore   $\bigcup_{n\geq 0} f^{-n}\lrpar{W^s_\delta(\Lambda')}$ is an open cover of $\Lambda$ and  by compactness  we conclude   that  
  $\Lambda$ is contained in $\bigcup_{0\leq n\leq n_0} f^{-n}\lrpar{W^s_\delta(\Lambda')}$ for some $n_0$. 
   and   since $f^{n_0}(\Lambda) = \Lambda$ we finally deduce 
    that  $ \Lambda \subset W^s_\delta(\Lambda')$. Since $\delta$ was arbitrary,
  $\Lambda\subset\Lambda'$, and we are done. 
  \end{proof}

\begin{rmk}
A similar argument shows that under the NDH property, the quasi-solenoids obtained as limit sets of basin boundaries in Theorem~\ref{thm:basin} are transitive. 
\end{rmk}

As a consequence of transitivity we can be more precise about the 
topological structure of periodic components of $K$. 

\begin{prop} \label{pro:complement_no_queer_components}
Let $f$ be   dissipative and hyperbolic, with a disconnected and 
stably totally disconnected Julia set. Assume further that the NDH property holds. 
 Then for any non-trivial component $D$ of $K$,  $D\cap \mathrm{Int}(K^+)$ is dense in $D$.  Equivalently, for any $x\in D$, $D\cap W^u(x)$ is the closure of its interior for the intrinsic topology. 
\end{prop}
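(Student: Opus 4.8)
The plan is to prove the second (equivalent) formulation: for every $x\in D$, the set $K^u(x):=\comp_{K\cap W^u(x)}(x)$ coincides with the intrinsic closure of $\Int_\mathrm{i}K^u(x)$ in $W^u(x)$. This suffices: if $p\in D\setminus J$ then, since $D\subset K^-=J^-$ and $p\notin J^+$, we have $p\in\Int(K^+)$, so $p\in D\cap\Int(K^+)$; and by Lemma~\ref{lem:leafwise_interior}, $\Int_\mathrm{i}K^u(x)\subset\Int(K^+)\cap K^u(x)\subset D\cap\Int(K^+)$, so the intrinsic statement at every $x\in D\cap J$ yields the ambient density statement. By Proposition~\ref{prop:component_KJ}, $D$ contains a unique component $\Lambda$ of $J$. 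If $\Lambda$ is a point, then for $x\in\Lambda$ one has $J^u(x)=\set{x}$, and using Proposition~\ref{prop:s-holonomy_point} together with the stable total disconnectedness of $K$ one checks that $D$ is reduced to a point, contrary to non-triviality. So we may assume $\Lambda$ is non-trivial and fix $x\in\Lambda$; then $K^u(x)\supset J^u(x)$ and $J^u(x)\neq\set{x}$ because $\Lambda=J(x)$ is not a point (Proposition~\ref{prop:cpt_leafwise_bdd}).

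Assume, for a contradiction, that $K^u(x)\neq\overline{\Int_\mathrm{i}K^u(x)}$. Then there are $z\in K^u(x)$ and $\rho>0$ with $\overline{D^u(z,\rho)}\subset W^u(x)$ and $\overline{D^u(z,\rho)}\cap\Int_\mathrm{i}K^u(x)=\emptyset$; hence $K^u(x)\cap\overline{D^u(z,\rho)}$ has empty interior in $W^u(x)$, so by Lemma~\ref{lem:leafwise_interior} it is contained in $J$. Since $K^u(x)$ is connected and $\neq\set{x}$, the point $z$ is not isolated in $K^u(x)$; using the local connectedness of $K^u(x)$ (Theorem~\ref{thm:topology}) choose a compact connected neighbourhood $E$ of $z$ in $K^u(x)$, contained in $D^u(z,\rho)$ and small enough to be a Peano continuum. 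Then $E$ is a non-degenerate, locally connected continuum contained in $J$, and it contains no simple closed curve: if $\gamma\subset E\subset\overline{D^u(z,\rho)}$ were one, the Jordan domain $\Omega$ it bounds would lie in $\overline{D^u(z,\rho)}$, and since $G^+\circ\psi^u_x\ge0$ is subharmonic and vanishes on $\gamma\subset K^+$, the maximum principle gives $\Omega\subset K^+\cap W^u(x)$, hence $\Omega\subset\Int_\mathrm{i}K^u(x)$, contradicting the choice of $\rho$. Thus $E$ is a non-degenerate dendrite contained in $J$; note also that for every $w\in E$, $E\subset J^u(w)=\comp_{J^+\cap W^u(w)}(w)$, so the NDH property applies to paths in $E$.

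From here the contradiction is obtained exactly as in the proof of Theorem~\ref{thm:no_queer_components}, with $E$ (and each $f^n(E)$, again a non-degenerate dendrite contained in $J$) playing the role of the component denoted $E$ there. On one hand, the argument of Lemma~\ref{lem:fixed_point} applies: a pair of distinct points $w,w'\in f^n(E)$ with $w'\in W^s(w)$ would, by continuation of the stable holonomy along paths in $f^n(E)\subset J^u(w)$ (using NDH and the simple connectivity of the dendrite), produce a self-homeomorphism $h$ of $f^n(E)$ with $h(w)=w'$; a fixed point of $h$ (obtained by Brouwer's theorem after a retraction onto $f^n(E)$) together with the maximality argument would then force a point at which $\cW^s$ is non-transverse to $W^u(x)$, impossible near $J$. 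Hence $W^s(w)\cap f^n(E)=\set{w}$ for every $n$ and every $w\in f^n(E)$. On the other hand, since $E$ is non-degenerate, $\diam_{f^n(x)}(f^n(E))\ge Cu^n\diam_x(E)\to\infty$ by leafwise expansion; by the John--Hölder property (Corollary~\ref{cor:fast_escaping}) the large leafwise components of $K^+$ are uniformly separated, so $f^n(E)$ cannot stay in boundedly many unstable plaques of a fixed finite cover of $J$ by flow boxes. Therefore, for $n$ large, $f^n(E)$ contains two points that are close in $\cd$ but lie on distinct unstable plaques; by the local product structure of $J$ these two local arcs of $f^n(E)$ are matched by local stable holonomy, yielding two distinct points of $f^n(E)$ on a common local stable manifold — contradicting the previous sentence, which completes the proof.

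The step requiring the most care is the construction of $E$: one must verify that an arbitrarily small piece of $K^u(x)$ around $z$ is a genuine non-degenerate dendrite (its local connectedness inherited from $K^u(x)$ via Theorem~\ref{thm:topology}, the absence of simple closed curves from polynomial convexity and the maximum principle) and, crucially, that it sits inside a single leafwise component $J^u(w)$ of $J^+$ through each of its points, so that NDH and the fixed-point argument of Lemma~\ref{lem:fixed_point} become available. Once $E$ is secured, no new idea is needed: the contradiction stems from the same tension — forced self-intersection under stable holonomy (expansion plus the John--Hölder finiteness) versus the absence of such self-intersections (NDH) — that underlies Theorem~\ref{thm:no_queer_components}.
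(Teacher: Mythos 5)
Your reduction to a leafwise statement and the construction of a non-degenerate continuum $E\subset J$ with no Jordan curves are fine (modulo the unjustified claim that a small connected neighbourhood of $z$ in $K^u(x)$ can be taken ``small enough to be a Peano continuum''; local connectedness of $K^u(x)$ gives small connected relative neighbourhoods, not local connectedness of their closures, though this could be repaired, e.g.\ by taking an arc). The genuine gap is in the transplantation of the argument of Theorem~\ref{thm:no_queer_components}: that argument uses, in two places, that $E$ is a \emph{full leafwise component} of $K^+$ (there $E$ is a component of $K^+\cap \Delta^u$ with $\fr\Delta^u\cap K^+=\emptyset$, hence $E=K^u(e)=J^u(e)$, and so is $f^n(E)$). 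First, in the analogue of Lemma~\ref{lem:fixed_point}, the NDH continuation of the stable holonomy along paths in $E$ produces values in $J^u$, and the reason it maps $E$ \emph{into} $E$ is that any connected holonomy image attached to a point of $E$ must lie in the leafwise component of that point, i.e.\ in $E$; for your $E$, which is only a small piece of $K^u(x)$, the continued images may leave $E$ (while staying in $J^u(z)$, which is typically not simply connected), so you do not get a self-map of the dendrite and Brouwer does not apply. Second, in the final step, the point produced by local product structure is only known to lie in the full local component of $J\cap D^u(y,\delta_1)$, hence in $J^u(y)\subset K^u(f^n(x))$; it need not lie in $f^n(E)$ (think of $y$ near the relative boundary of $f^n(E)$ in $K^u(f^n(x))$), so no contradiction with ``$W^s(w)\cap f^n(E)=\set{w}$'' is obtained. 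And in the situation of this proposition you cannot choose $E$ to be a full leafwise component: by Theorem~\ref{thm:no_queer_components} (under NDH) the component $D$ contains an attracting point, so its unstable slices have non-empty interior and are not dendrites — the whole difficulty is that the putative ``queer piece'' is attached to the thick part of the slice, and localizing destroys exactly the structure the queer-component argument relies on.

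For contrast, the paper does not localize the dendrite argument at all: it uses Theorem~\ref{thm:no_queer_components} to get an attracting point $a\in D$, Theorem~\ref{thm:basin} to get a saddle $p\in\fr\mathcal B_0\subset\Lambda$, Theorem~\ref{thm:complement_mixing} (where NDH enters again) to get topological mixing of $f\rest\Lambda$ and hence density of $W^s(p)\cap\Lambda$ in $\Lambda$, and then Remark~\ref{rmk:boundary_component} to see that each point of $W^s(p)\cap\Lambda$ lies on the boundary of a pulled-back basin component inside $D$, which gives the density of $D\cap\Int(K^+)$. If you want to salvage your approach, you would have to prove that the holonomy continuation and the matched points stay in (a controlled enlargement of) $E$, which is essentially an open problem of the same nature as the statement itself.
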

 
\begin{proof}
The equivalence between the two assertions follows from Lemma~\ref{lem:fatou_disk}, Lemma~\ref{lem:respect_basin}, and the local product structure. Let  $D$ be as in the statement of the 
proposition and  $C$ be its component in $K^+\cap\bb$. Let also $\Lambda$ the unique component of $J$ contained in $D$ (Proposition~\ref{prop:component_KJ}). 
Without loss of generality we may assume that 
$D$ (hence $C$ and $\Lambda$) is fixed by $f$. By Theorem~\ref{thm:no_queer_components} 
$D$ contains an attracting periodic point $a$, so the immediate basin $\mathcal B_0$ 
of $a$ is contained in $C$. By Theorem~\ref{thm:basin}, $\fr B_0$ contains a saddle periodic point $p$, 
which must belong to $\Lambda$ (indeed by Lemma~\ref{lem:component_K+J+} and 
Theorem~\ref{thm:invariant_component}, 
$\Lambda = \bigcap_{n\geq 0}f^n(\fr C)$). The topological mixing of 
$f\rest{\Lambda}$ (Theorem~\ref{thm:complement_mixing}) classically 
implies that   $W^s(p)\cap \Lambda$ is dense in $\Lambda$.  
 Indeed let $U$ be a product 
  neighborhood of $p$ in $\Lambda$, and $V$ be an arbitrary open subset of $\Lambda$. 
 Then   for sufficiently large $q\geq 0$ there exists  $y_q\in V$ such that $f^q(y_q)\in U$. Since $\Lambda$ has local 
 product structure $[f^q(y_q), p]:=W^u_\loc(f^q(y))\cap W^s_\loc(p)$ belongs to $\Lambda$, hence increasing $n$ again if needed, $z_q:=f^{-q}([f^q(y_q), p])$ is a point in  $W^s(p)\cap V$. 
   
To conclude from this point, we observe that by 
Remark~\ref{rmk:boundary_component} (applied to  $f^{-q}(\mathcal B_0)$)
$z_q$ belongs to the boundary of a component $\Omega$ of $W^u(z_q)\cap f^{-q}(\mathcal B_0)$ 
contained in $D$, and  we are done. 
\end{proof}

\subsection{Concluding remarks}

The non-existence problem for queer components 
bears some similarity with another well-known open problem: the non-existence of 
Herman rings for complex Hénon maps (see~\cite{bs2} for an early account). 
Indeed assume that  
$f$ admits a Herman ring, that is, a Fatou component $\Omega$ biholomorphic to 
the product of an annulus times $\C$. More precisely there exists a biholomorphism 
$h: \Omega\to A\times \C$, where $A$ is a standard 
 annulus, which conjugates $f$ to $(x,y)\mapsto (e^{i\theta} x, \delta y)$, $\abs{\delta} <1$. 
Assume further that   $J$ is disconnected, and 
fix an unstable transversal $\Delta^u$ (recall that its existence 
 does not require $f$ to be hyperbolic). Then if $C$ is an invariant circle in $A$, $f$ admits an invariant ``cylinder'' $\mathcal C = h\inv(C\times \C)$. Any component of 
$\mathcal C\cap \Delta^u$ is a piecewise smooth immersed curve, 
and a contradiction would follow if we can show that it bounds a disk in $\Delta^u$ (since by the maximum principle this disk would be a Fatou disk, whose   normal limits   would fill up the annulus).  
In other words, if $f$ admits a Herman ring, $\mathcal C\cap \Delta^u$ 
is a countable union of dendrites whose saturation under the stable foliation of $\mathcal C$ bounds a disk, 
but not a holomorphic disk (compare with Remark~\ref{rmk:wrap}). 
 Note however that   a limitation to 
the  analogy between the two problems  is that   
 the NDH property holds trivially in the Herman ring case, so the difficulty is of a different nature.

\appendix

\section{The core of a quasi-solenoid} \label{app:core}

In this Appendix,  we sketch the construction of the \emph{core} of a quasi-solenoidal component, 
which should intuitively 
be understood as the space obtained from this component after removing all ``bounded decorations'' in 
unstable manifolds. 
Initially designed  as a potential 
 tool  to  prove the non-existence of queer quasi-solenoids, it  
 also gives interesting information on the combinatorial structure of tame ones. 
% Let us stress that much of this discussion remains speculative at the moment. 
 It would be 
 interesting to compare it with other constructions such as Ishii's 
 Hubbard trees (see \cite{ishii_survey}). We keep the setting as in the previous sections, that is 
 $f$ is a uniformly hyperbolic dissipative Hénon map, with a a disconnected and stably totally disconnected Julia set. 
 
 \subsection{Number of accesses} The discussion in this paragraph is 
 reminiscent from  \cite[\S 7]{bs7}, which deals with the connected case. 
 %Let $C$ be a non-trivial connected component of $J$. 
Pick $x\in J$.  For any $R>0$, define   $N^u(x, R)$ to be 
  the number of connected components $\Omega$ of 
$D^u(x, R)\setminus J$ such that $x\in \overline \Omega$. Since $K\cap D^u(x,R)$ 
has the John-Hölder property,  Corollary~\ref{cor:fast_escaping}   implies that 
$N^u(x, R)<\infty$. Thus, $R\mapsto N^u(x, R)$ is a integer-valued 
non-increasing function which drops when two  components of $ D^u(x, R)\setminus J$ 
merge. The limit 
$$N^u_\loc(x):=\lim_{R\to 0} N^u(x, R)$$ is the number of local accesses to $x$, and 
 $$N^u(x):=\lim_{R\to \infty} N^u(x, R)$$ is the number of connected components of 
 $W^u(x)\setminus J$. Note that if $J^u(x)$ is bounded then $ N^u(x)  =1$, so this
  notion is interesting only when $x$ belongs to a quasi-solenoidal component.

 We can also restrict to counting 
  accesses from infinity, that is  components of $D^u(x, R)\setminus K^+$, and we obtain corresponding numbers $N^u_\infty(x,R)$, $N^u_{\infty, \loc}(x)$ and  $N^u_\infty(x)$. We have that 
  $N^u_\infty(x) \leq N^u(x)$ (and similarly for the other quantities), 
  and, since every point of $J$ is accessible from infinity, 
  $N^u_\infty(x)\geq 1$.  
(\footnote{The John-Hölder property of the basin of infinity directly guarantees the finiteness of  
 $N^u_{\infty, \loc}(x)$, but not that of   $N^u_\loc(x)$ (see Remark~\ref{rmk:eta0}). 
 This property can  actually be salvaged as follows: if for small $R$, $N^u(x, R)$ is large, then
 for some $k\gg 1$, $N^u(f^k(x), 1)$ is large, and projecting to some fixed transversal yields a contradiction.}) 
 
\begin{lem}\label{lem:N^u_lsc}
$N^u$ (resp. $N^u_\infty$) is upper  semicontinuous on $J$, 
that is, for any $k\geq 1$, $\set{x, \ N^u(x)\geq k}$ is closed. 
\end{lem}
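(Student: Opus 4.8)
The plan is to reduce the statement, by means of the dynamics, to an upper semicontinuity statement at one fixed scale, and then to prove the latter using the John--Hölder property together with the continuity of the unstable affine structure and the holonomy invariance of $J$.

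\textbf{Step 1: reduction to a fixed scale.} Since $f$ acts affinely on unstable leaves, $\psi^u_{f(x)}(\lambda^u_x\,\cdot)=f\rond\psi^u_x$, one has $f(D^u(x,R))=D^u(f(x),|\lambda^u_x|R)$; as $f$ is a homeomorphism preserving $J$ and sending the basepoint $x$ to the basepoint $f(x)$, it induces a bijection between the components of $D^u(x,R)\setminus J$ whose closure contains $x$ and those of $D^u(f(x),|\lambda^u_x|R)\setminus J$ whose closure contains $f(x)$. Hence $N^u(x,R)=N^u(f(x),|\lambda^u_x|R)$, and iterating backwards, $N^u(f^{-n}(x),\delta)=N^u(x,\delta\rho_n)$ with $\rho_n=\prod_{i=1}^{n}|\lambda^u_{f^{-i}(x)}|\to\infty$ by \eqref{eq:tangent_expansion}. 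As $R\mapsto N^u(x,R)$ is non-increasing with limit $N^u(x)$, this yields $N^u(x)\geq k\iff N^u(f^{-n}(x),\delta)\geq k$ for every $n\geq 0$, so that $\{x\in J: N^u(x)\geq k\}=\bigcap_{n\geq 0}f^n\big(\{y\in J: N^u(y,\delta)\geq k\}\big)$, which is closed as soon as $y\mapsto N^u(y,\delta)$ is upper semicontinuous. It therefore suffices to prove this for one fixed $\delta$, which I take smaller than the size of local unstable manifolds, so that for $d(y,y')$ small the stable holonomy $h\colon W^u_{2\delta}(y)\to W^u_{2\delta}(y')$ is defined, depends continuously on $y'$ with $h\to\id$ as $y'\to y$, and satisfies $h(J)=J$ (because $\cW^s$ saturates $J^+$). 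The same reduction applies verbatim to $N^u_\infty$ with $K^+$ in place of $J$.

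\textbf{Step 2: upper semicontinuity at scale $\delta$.} Let $y_n\to y_\infty$ in $J$ with $N^u(y_n,\delta)\geq k$, and fix $k$ distinct components $\Omega_{n,1},\dots,\Omega_{n,k}$ of $D^u(y_n,\delta)\setminus J$, each with $y_n$ in its closure. Using that $G^+$ is harmonic on $W^u(y_n)\setminus K^+$ and the maximum principle, any such component not meeting $\fr D^u(y_n,\delta)$ must be contained in $\Int(K^+)$, i.e.\ is a small Fatou disk in an attracting basin; these will be treated separately (see below). For the remaining accesses, Corollary~\ref{cor:fast_escaping} (applied along an external ray landing at $y_n$ through $\Omega_{n,i}$, cf.\ Theorem~\ref{thm:access}), together with Theorem~\ref{thm:john-holder_basin} when $\Omega_{n,i}$ is a large basin component, produces uniform constants $\delta_0,L_0>0$ and a point $p_{n,i}\in\Omega_{n,i}$ with $d^u_{y_n}(p_{n,i},y_n)\leq L_0$ and $d^u_{y_n}(p_{n,i},J)\geq\delta_0$. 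Transporting by $h_n^{-1}$ to the fixed leaf $W^u(y_\infty)$ -- where $h_n$ carries $J\cap W^u_{2\delta}(y_n)$ onto $J\cap W^u_{2\delta}(y_\infty)$ via a homeomorphism converging to the identity, so that the slices converge in the Hausdorff topology -- and passing to a subsequence (using Lemma~\ref{lem:continuity_affine}), one gets points $p_{\infty,i}\in W^u(y_\infty)\setminus J$ with $d(p_{\infty,i},J)\geq\delta_0/2$, each in a component $C_{\infty,i}$ of $W^u(y_\infty)\setminus J$ whose closure contains $y_\infty$ (this last point because $h_n$ identifies the germ of $(W^u,J)$ at $y_n$ with its germ at a point $y_n^\ast\to y_\infty$ of $W^u(y_\infty)$, and an access survives this identification). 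Finally the $C_{\infty,i}$ are pairwise distinct: a path in $W^u(y_\infty)\setminus J$ joining $p_{\infty,i}$ to $p_{\infty,j}$ is compact and at positive distance from $J$, hence from $J\cap W^u(y_n)$ for large $n$ by the Hausdorff convergence of the slices; concatenated with short segments from the $p_{\infty,\bullet}$ back to the $p_{n,\bullet}$ inside the $\delta_0$-collar, and keeping the whole discussion inside $D^u(y_n,2\delta)$ at the level of components of the leaf $W^u(y_n)\setminus J$ rather than of a disk, this gives a path joining $\Omega_{n,i}$ to $\Omega_{n,j}$, a contradiction. Hence $N^u(y_\infty,\delta)\geq k$; the argument for $N^u_\infty$ is identical with $K^+$ replacing $J$ throughout.

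\textbf{Main obstacle.} The essential ingredient is the John--Hölder property: without it the slices $W^u(x)\cap J$ could exhibit comb-like configurations (thin channels accumulating at $y_\infty$) in which an access present at every $y_n$ vanishes at $y_\infty$, so that $N^u$ would fail to be upper semicontinuous; it is precisely Corollary~\ref{cor:fast_escaping} and Theorem~\ref{thm:john-holder_basin} (``no thin channels'') that force each relevant $\Omega_{n,i}$ to reach definite depth $\delta_0$ within bounded distance $L_0$, which is what lets the accesses survive the limit. Two technical points need care: the drift of scales under holonomy, handled by working throughout in $D^u(\cdot,2\delta)$ and with components of whole leaves; and the small basin ``holes'' touching $y_n$, for which $\eta_0$ in Theorem~\ref{thm:john-holder_basin} is not uniform -- these are controlled via the fact that such a component becomes ``large'' under forward iteration (its intrinsic diameter is multiplied by $\rho_n\to\infty$) together with the finiteness argument recalled in the footnote preceding the lemma, so that after replacing $y_n$ by a suitable iterate every access is witnessed at a uniform scale.
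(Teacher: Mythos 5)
Your strategy is genuinely different from the paper's: instead of proving that $\set{x:\ N^u(x)<k}$ is open around a point where the count is finite (which is what the paper does, combining invariance under stable holonomy with an in-leaf John--H\"older finiteness argument at a scale $R$ chosen at that single point), you try to prove directly that $\set{x:\ N^u(x)\geq k}$ is closed, reducing via the dynamics to upper semicontinuity of the fixed-scale count $y\mapsto N^u(y,\delta)$ and then arguing that accesses survive the limit $y_n\to y_\infty$. Your Step 1 reduction is correct, but Step 2 has a genuine gap. The quantitative input you need is only available for accesses through $W^u\setminus K^+$: Corollary~\ref{cor:fast_escaping} gives uniform constants $(\delta_0,L_0)$ there, but an access at $y_n$ may occur through a component of $D^u(y_n,\delta)\setminus J$ contained in $\Int(K^+)$ (a basin ``hole'' whose boundary contains $y_n$), and for those Theorem~\ref{thm:john-holder_basin} only yields an escape depth $\eta_0$ depending on the component (Remark~\ref{rmk:eta0}); such holes can shrink as $n\to\infty$, so no uniform witness $p_{n,i}$ exists and the limit argument breaks exactly there. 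Your proposed patch --- replacing $y_n$ by a forward iterate so that the hole becomes large --- requires a number of iterates depending on $n$, which is incompatible with the fixed-scale reduction of Step 1: after iterating, the points no longer converge to $y_\infty$, and what you must show is still the closedness of the fixed-scale set at the original points. This degenerating-witness problem is precisely what the paper's formulation avoids, since there the scale $R$ is fixed once, at the limit point, where $N^u$ is assumed $<k$, and one only has to propagate an upper bound to nearby points (by holonomy in the stable direction, and by the John--H\"older finiteness of complementary components reaching a small disk in the unstable direction).

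A second, more repairable, defect is the transport between the leaves $W^u(y_n)$ and $W^u(y_\infty)$. The holonomy $h_n$ is defined on $J$-slices (and, via Lemma~\ref{lem:extension}, on a neighborhood of $J^+\cap\bb$), but not at points lying at definite distance from $J^+$; so neither the claim that ``an access survives the identification'' nor your distinctness argument --- where a path of $W^u(y_\infty)\setminus J$ is concatenated with ambient segments joining $p_{\infty,i}$ to $p_{n,i}$ and declared to be a path joining $\Omega_{n,i}$ to $\Omega_{n,j}$ --- actually produces a path inside the single leaf $W^u(y_n)$, which is what a contradiction with $\Omega_{n,i}\neq\Omega_{n,j}$ requires. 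This can be fixed at small scale (the disks $D^u(y_n,2\delta)$ and $D^u(y_\infty,2\delta)$ are $C^1$-close graphs over $E^u(y_\infty)$, and the graph projection between them moves points by $o(1)$, so a path staying $\delta_0/2$-far from $J$ in one leaf transfers to the other), but the first gap is structural: as written, the fixed-scale upper semicontinuity, hence the closedness of $\bigcap_{n\geq0}f^n\bigl(\set{y:\ N^u(y,\delta)\geq k}\bigr)$, is not established.
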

  
\begin{proof}
We deal with $N^u$, the proof for    $N^u_\infty$ is similar. 
It is enough to assume that $k\geq 2$. 
By the local product structure of $J$, it is enough to 
  study the semi-continuity of $x\mapsto N^u(x)$ separately along stable and unstable  manifolds. 
  Let us start by studying this semicontinuity along a local stable transversal. 
We have to prove that $\set {x, \ N^u(x)<  k}$ is open. 
Indeed assume that there are $j < k$   accesses to $x$ in $W^u(x)\setminus J$. This means that 
for large $R$, $D^u(x, R) \setminus J$ has $j$ connected components 
accumulating at $x$. 
If  $x'\in W^s(x)$  then the local stable holonomy between 
$W_\loc^u(x)$ and $W^u_\loc(x')$ is a homeomorphism,  which locally preserves the number 
of components of $W^u_\loc(x)\setminus J$.  In addition if $x'$ is sufficiently close to $x$, this 
holonomy is defined in $D^u(x,R)$. Indeed for this 
it is enough to 
iterate backwards until $f^{-n}(D^u(x, R))$ is 
contained in the domain of the extended stable lamination. 
Therefore,   there is a large domain $D'$ in  $W^u(x')$ such that $D'\setminus J$ has 
$j$ connected components  accumulating  on $x'$. Since the number of components may drop when enlarging  this disk further, we conclude that   $N^u(x')\leq j$. 

Now we work inside a given unstable manifold. Let $R$ be such that 
$N^u(x, s) = N^u(x) = j$ for $s\geq R-1$. 
By the Hölder-John property, for $R'<R$, 
 $D^u(x, R)\setminus J$ admits finitely many components intersecting $D^u(x, R')$. 
 So if   $N^u(x) = j$, there is some $0<\e< 1$ such that only $j$ of these components reach 
 $D^u(x, \e)$, and we conclude that for $x'\in D^u(x, \e)$, $N^u(x', R-1) \leq j$, 
 hence $N^u(x')\leq j$, as asserted. 
\end{proof}

Since $f$ acts linearly on unstable parameterizations, 
 $N^u(x, R) = N^u(f(x), \lambda^u R)$, and we obtain: 
 
\begin{cor}\label{cor:omegaNu}
If $N^u_\loc(x)\geq k$ then for any $y\in \omega (x)$, $N^u(y)\geq k$. 
\end{cor}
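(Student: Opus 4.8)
The plan is to exploit the expansion in the unstable direction to turn the local accesses at $x$ into accesses at arbitrarily large scale along the forward orbit, and then to pass to the limit on $\omega(x)$ by a semicontinuity argument for the access count.

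First I would record the scaling behaviour of $N^u$. Since $f\circ\psi^u_x = \psi^u_{f(x)}(\lambda^u_x\,\cdot)$ carries $D^u(x,R)\setminus J$ homeomorphically onto $D^u(f(x),\abs{\lambda^u_x}R)\setminus J$ and preserves which components have the basepoint in their closure, one gets $N^u(x,R)=N^u(f(x),\abs{\lambda^u_x}R)$, hence by iteration $N^u(f^n(x),R)=N^u(x,R/e_n)$ with $e_n=\abs{\prod_{i=0}^{n-1}\lambda^u_{f^i(x)}}\geq Cu^n\to\infty$ by \eqref{eq:tangent_expansion}. Because $R\mapsto N^u(x,R)$ is non-increasing, integer valued, and stabilizes to $N^u_\loc(x)$ as $R\to0^+$, this yields: for every fixed $R>0$ there is $n_0(R)$ with $N^u(f^n(x),R)=N^u_\loc(x)\geq k$ for all $n\geq n_0(R)$.

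Next, given $y\in\omega(x)$, I would choose $n_j\to\infty$ with $f^{n_j}(x)\to y$, so that for each fixed $R$ and $j$ large, $N^u(f^{n_j}(x),R)\geq k$. The remaining task is a semicontinuity statement: if $z_j\to z_\infty$ in $J$ with $N^u(z_j,R)\geq k$ for all $j$, then $N^u(z_\infty,R')\geq k$ for every $R'<R$. I would prove this by the very argument of Lemma~\ref{lem:N^u_lsc}, applied to the fixed-scale count. Using the local product structure one splits into two cases: when the $z_j$ lie on a common stable leaf through $z_\infty$, one uses the stable holonomy, which respects $J$ and, after pulling back finitely many times into the domain of the extended stable lamination (Lemma~\ref{lem:extension}), extends over $D^u(z_j,R)$, with only its distortion — which tends to $1$ — to control; when the $z_j$ lie on $W^u(z_\infty)$, one argues exactly as in the unstable part of Lemma~\ref{lem:N^u_lsc}, the point being that by the John–Hölder property (Corollary~\ref{cor:fast_escaping}) only finitely many components of $D^u(z_\infty,R)\setminus J$ meet a fixed smaller disk, so the $\geq k$ accesses recorded at the $z_j$ are $\delta$-separated for a uniform $\delta>0$ and hence persist at $z_\infty$. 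Applying this with $z_j=f^{n_j}(x)$ gives $N^u(y,R')\geq k$; since $R$ was arbitrary, $N^u(y,R')\geq k$ for all $R'>0$, whence $N^u(y)=\lim_{R'\to\infty}N^u(y,R')\geq k$.

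The main obstacle is the second, intrinsic part of the semicontinuity step: transporting the accesses recorded at the nearby points $z_j$ to the limit point $z_\infty$ inside a single unstable leaf. A priori the complementary components realizing those accesses at $z_j$ need not have $z_\infty$ in their closure, and without a lower bound on their size one could imagine them pinching off as $j\to\infty$; it is precisely the John–Hölder property that rules this out by forcing a uniform separation. (The same reasoning applies to $N^u_\infty$ in place of $N^u$, using the John–Hölder property of the basin of infinity.)
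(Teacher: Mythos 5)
Your argument is correct and follows the paper's own route: the corollary is presented there as a direct consequence of the scaling relation $N^u(x,R)=N^u(f(x),\abs{\lambda^u_x}R)$ combined with the semicontinuity argument of Lemma~\ref{lem:N^u_lsc}, which is exactly what you spell out (promote the local accesses at $x$ to accesses at every fixed scale along the forward orbit, then pass to the limit at $y$ via stable holonomy and the finiteness of complementary components meeting a smaller disk, letting $R\to\infty$ at the end). The fixed-scale finiteness you invoke is the same ingredient the paper itself uses in the unstable part of the proof of Lemma~\ref{lem:N^u_lsc}, so your proposal matches the intended proof.
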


An  argument similar to that of the second part of Lemma~\ref{lem:N^u_lsc} implies  
(compare~\cite[pp. 490-491]{bs7}):

\begin{lem}\label{lem:discrete}
For any $R>0$ and any $x\in \Lambda$,  the set $\set{y\in W^u(x), \ N^u(y,R) \geq 3}$ is discrete for the 
intrinsic topology. 
\end{lem}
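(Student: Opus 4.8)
The plan is to argue by contradiction, transcribing the mechanism of the second part of the proof of Lemma~\ref{lem:N^u_lsc} to the fixed scale $2R$ and feeding in the finiteness that comes from the John--Hölder property. First I would record the elementary reduction $\{y\in W^u(x):N^u(y,R)\ge 3\}\subseteq J$: if $y\notin J^+$ then $y$ lies either in $W^u(x)\setminus K^+$ or in $\Int(K^+)$, and in both cases a whole neighbourhood of $y$ sits in a single component of $D^u(y,R)\setminus J$, so $N^u(y,R)=1$. It therefore suffices to show that this set cannot accumulate at any point $y_0\in J$. So suppose there are distinct points $y_n\to y_0$ with $N^u(y_n,R)\ge 3$.

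Work in the fixed smoothly bounded disk $D:=D^u(y_0,2R)$, which contains $D^u(y_n,R)$ for $n$ large. By Corollary~\ref{cor:fast_escaping} the complement $D\setminus K^+$ has the fast escaping property, so arguing as in the proof of Theorem~\ref{thm:access} only finitely many components of $D\setminus K^+$ meet $D^u(y_0,R)$; combined with Lemma~\ref{lem:diameter} applied to $K^+\cap D$ and to $\Int(K^+\cap D)$, this gives finitely many components $C_1,\dots,C_M$ of $D\setminus J$ meeting $D^u(y_0,R)$. Those accumulating at $y_0$ are $C_1,\dots,C_{j_0}$ with $j_0=N^u(y_0,2R)$, and the rest lie at positive distance from $y_0$, so there is $\e_1\in(0,R)$ such that the components of $D\setminus J$ meeting $D^u(y_0,\e_1)$ are exactly $C_1,\dots,C_{j_0}$. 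Then, for every $n$ with $d^u_x(y_n,y_0)<\e_1$, each access of $y_n$ at scale $R$ (a component of $D^u(y_n,R)\setminus J$ whose closure contains $y_n$) meets $D^u(y_0,\e_1)$, hence is contained in one of $C_1,\dots,C_{j_0}$; this defines a map $\phi_n$ from the (at least three element) set of accesses of $y_n$ into $\{C_1,\dots,C_{j_0}\}$. Passing to a subsequence I may assume $\phi_n$ has constant combinatorial type. If some two accesses of $y_n$ go to the same $C_k$, then $C_k$ joins two distinct components of $D^u(y_n,R)\setminus J$ accumulating at $y_n$, so $C_k$ reaches $\fr D^u(y_n,R)$ and $\diam C_k\ge R/2$; if $\phi_n$ is injective, three distinct $C_a,C_b,C_c$ each possess a local ``finger'' at $y_n$. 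In all cases the situation to be excluded is that a fixed finite family of complementary components of $J$ acquires an order-$\ge 3$ branch point at the fixed scale $R$ along a sequence of distinct points converging to $y_0$.

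To rule this out I would use expansion. Since $N^u(y,R)=N^u(f(y),|\lambda^u_y|R)$ with $u\le|\lambda^u_y|\le U$, a branch point at scale $R$ is sent by $f^m$ to a branch point at scale $\ge u^mR$, hence (as $N^u(z,\cdot)$ is non-increasing) to one at a uniform scale $R_0$ for $m$ large. Choosing $m_n\asymp\log(1/d^u_x(y_n,y_0))$ and using that $f$ acts affinely on unstable leaves with expansion in the band $[u,U]$, the points $f^{m_n}(y_n)$ become uniformly separated from $f^{m_n}(y_0)$ in $W^u(f^{m_n}(x))$ while still carrying $\ge 3$ accesses at a uniform scale; by compactness of $\Lambda$, continuity of the unstable uniformizations (Lemma~\ref{lem:continuity_affine}), the local product structure of $J$, and upper semicontinuity of $N^u$ (Lemma~\ref{lem:N^u_lsc}), one reaches a limiting unstable leaf in which the accesses are forced into a configuration that the uniform John--Hölder finiteness — finitely many complementary components meeting a unit disk, uniformly over $\Lambda$, exactly as in the proof of Theorem~\ref{thm:invariant_component} — forbids. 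An alternative, closer to \cite[pp.~490--491]{bs7}, stays inside $D$ and uses that each $C_k$ has locally connected boundary (Theorem~\ref{thm:topology}(i)): an accumulating sequence of order-$\ge 3$ branch points realised by genuinely distinct channels at the fixed scale $R$ would produce infinitely many disjoint channels crowding near $y_0$, against the fast escaping property of $D\setminus K^+$.

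The part I expect to be routine is everything up to the finite dichotomy on $\phi_n$, which is a direct copy of the argument in Lemma~\ref{lem:N^u_lsc}. The main obstacle is the last step: making the ``rescaled limit'' (equivalently, the crowding-of-channels picture) into a genuine contradiction, i.e.\ proving cleanly that order-$\ge 3$ branch points at a fixed scale cannot accumulate. The subtlety is that mere local connectedness of the $C_k$ allows their closures to share large sets, so one really must exploit that the three channels at each $y_n$ are distinct \emph{at scale $R$} together with the uniform finiteness provided by John--Hölder.
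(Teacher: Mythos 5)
Your overall strategy (localize at $y_0$, reduce to finitely many complementary components via the John--H\"older property, then use the affine expansion $N^u(y,R)=N^u(f(y),\abs{\lambda^u_y}R)$ to rescale) is the route the paper itself points to (the second half of Lemma~\ref{lem:N^u_lsc}, compare \cite{bs7}), but as written there are two genuine gaps. First, the finiteness you invoke is not what Lemma~\ref{lem:diameter} provides: that lemma bounds the number of components of $\Int(K^+)\cap D$ of diameter $\geq\delta$, not the number of components of $D\setminus J$ meeting $D^u(y_0,R)$. There may be infinitely many small interior (basin-trace) components meeting $D^u(y_0,R)$, and they may accumulate on $y_0$ without containing $y_0$ in their closures; consequently your separation constant $\e_1$ and the map $\phi_n$ are not well defined, since an access of $y_n$ at scale $R$ can perfectly well be a tiny interior component which is none of $C_1,\dots,C_{j_0}$ (for instance two small basin components touching at $y_n$ already force $N^u(y_n,R)\geq 3$ at every scale $R$). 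In Lemma~\ref{lem:N^u_lsc} this issue is harmless because one ultimately counts \emph{global} accesses, which inject into the finitely many large components; at the fixed scale $R$ of the present lemma the small components genuinely contribute, and dealing with them is part of the content of the statement.

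Second, and more importantly, the decisive step is never carried out. The ``rescaled limit'' paragraph only asserts that one reaches ``a configuration that the uniform John--H\"older finiteness forbids'', without specifying the configuration or why it is impossible; and finiteness of large complementary components does not by itself exclude an accumulating sequence of order-$\geq 3$ branch points at scale $R$, because infinitely many such points could all be served by the same two or three large components (together with small ones). Excluding exactly this is the heart of the matter, and it requires an additional ingredient of the kind used in \cite{bs7} --- e.g.\ a planar-topology argument (at most two points can be accessible from each of three fixed pairwise disjoint domains, by a $K_{3,3}$/triod non-planarity argument), or a separation argument showing that distinct branch points at scale $R$ force disjoint channels of definite size, which the fast escaping property then bounds in number. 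Your closing sentence concedes precisely this: ``proving cleanly that order-$\geq 3$ branch points at a fixed scale cannot accumulate'' is identified as the main obstacle, but that assertion \emph{is} the lemma, so the proposal in effect reduces the statement to itself rather than proving it.
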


\begin{prop}
The set $\set{x\in J,\  N^u(x)\geq 3}$ is a finite set of saddle periodic points. 
\end{prop}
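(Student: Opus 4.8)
The plan is to show first that $Z_3 := \{x\in J : N^u(x)\ge 3\}$ is both closed and discrete, hence finite, and then that it is forward invariant under $f$, so that every point of it is periodic, and finally that each such periodic point must be a saddle. Closedness is immediate from Lemma~\ref{lem:N^u_lsc}, which gives upper semicontinuity of $N^u$. For discreteness, I would combine Lemma~\ref{lem:discrete} with the local product structure of $J$: the restriction $N^u\rvert_{W^u_\loc(x)}$ has the finitely-many-merges structure recorded above (so $N^u(\cdot,R)$ is locally constant away from a discrete set in the unstable direction by Lemma~\ref{lem:discrete} with some fixed $R$ large enough that $N^u(x,R)=N^u(x)$), while along the stable direction $N^u$ is locally constant because the stable holonomy $W^u_\loc(x)\to W^u_\loc(x')$ is a homeomorphism carrying $J$ to $J$ and, for $x'$ close enough to $x$, extends over $D^u(x,R)$ (pull back far enough to land in the domain of the extended stable lamination of Lemma~\ref{lem:extension}, as in the proof of Lemma~\ref{lem:N^u_lsc}). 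So near any point of $Z_3$, that point is the only point of $Z_3$ in a product flow box, whence $Z_3$ is discrete. A closed discrete subset of the compact set $J$ is finite.

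Next I would prove forward invariance: if $N^u(x)\ge 3$ then $N^u(f(x))\ge 3$. This is the remark already made before the statement: $f$ acts as an affine (linear) expansion on unstable parameterizations, so $f$ maps $D^u(x,R)\setminus J$ homeomorphically onto $D^u(f(x),\lambda^u_x R)\setminus J$, preserving the number of components accumulating at the base point; thus $N^u(f(x),\lambda^u_x R)=N^u(x,R)$, and letting $R\to\infty$ gives $N^u(f(x))=N^u(x)$. Hence $f(Z_3)\subset Z_3$, and since $Z_3$ is finite, $f\rvert_{Z_3}$ is a bijection of a finite set, so every point of $Z_3$ is periodic.

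It remains to check that a periodic point $x$ with $N^u(x)\ge 3$ is of saddle type — but every periodic point of $J$ is automatically a saddle, since $f$ is hyperbolic on $J$ with splitting $E^u\oplus E^s$, $\abs{\jac f}\le 1$, so the multipliers are $\abs{\lambda^u}>1>\abs{\lambda^s}$; alternatively one notes $x\in Z_3\subset J$ and uses that $J$ is of saddle type (\S\ref{subs:hyperbolicity}). One should also record why $Z_3$ is nonempty-or-not is irrelevant: the statement allows the empty set, which is a (trivial) finite set of saddle periodic points.

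The main obstacle I expect is the discreteness argument in the stable direction: one must be careful that the extended stable holonomy is genuinely defined on the \emph{whole} disk $D^u(x,R)$ with $R$ chosen (uniformly, after the merges stabilize) so large that $N^u(x,R)=N^u(x)$, and that this $R$ can be used simultaneously for all $x'$ in a neighborhood of $x$ — this is exactly the mechanism in the second paragraph of the proof of Lemma~\ref{lem:N^u_lsc}, transported through the flow box, and it needs the uniform boundedness of the vertical degree of semi-local stable manifolds (Proposition~\ref{prop:stable total disc}) to guarantee the backward iteration needed to enter the domain of Lemma~\ref{lem:extension} takes uniformly many steps. Everything else is bookkeeping with semicontinuity and compactness.
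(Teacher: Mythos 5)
Your steps on invariance ($N^u\circ f = N^u$), on periodicity of a finite invariant set, and on the saddle property are fine, but the heart of the argument --- the claim that $Z_3=\set{x\in J,\ N^u(x)\geq 3}$ is \emph{discrete} --- has a genuine gap. You assert that $N^u$ is locally constant along the stable direction because the stable holonomy is a homeomorphism carrying $J$ to $J$. What the holonomy argument actually gives (and this is exactly what the proof of Lemma~\ref{lem:N^u_lsc} is careful to claim) is only \emph{upper} semicontinuity: the holonomy transports the accesses inside $D^u(x,R)$ to accesses inside its image in $W^u(x')$, but those components may merge further out in $W^u(x')$, so one only gets $N^u(x')\leq N^u(x)$, not equality. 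Worse, even if local constancy along stable leaves were true, it would work against you rather than for you: combined with leafwise discreteness it would make $Z_3$ locally of the form (discrete set in the unstable direction)$\times$(stable slice), and stable slices of $J$, while totally disconnected, are typically uncountable Cantor sets --- so $Z_3$ would accumulate on itself in the stable direction and would \emph{not} be discrete. Lemma~\ref{lem:discrete} only controls $Z_3$ inside each individual unstable leaf; nothing in your proposal excludes infinitely many points of $Z_3$ lying in a single local stable manifold, which is precisely the scenario one must rule out.

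The paper closes this gap by a different mechanism, and it is genuinely needed. One first shrinks $R$ and uses the fact that every point of $J$ can be pushed by stable holonomy, along paths of uniformly bounded length, to a fixed unstable transversal $\Delta^u$; distinct accesses survive this projection, so $\set{x,\ N^u(x,R)\geq 3}$ projects into the set $\set{y\in\Delta^u,\ N^u(y,r)\geq 3}$, which is finite by the leafwise discreteness of Lemma~\ref{lem:discrete} applied on the transversal. Hence $Z_3$ is contained in a \emph{finite} union of semi-local stable manifolds $W^s_\bb(\cdot)$. The stable direction is then killed not by topology but by dynamics: $Z_3$ is closed and fully invariant, the finitely many stable leaves involved are permuted up to preperiodicity, and backward iteration expands the intrinsic metric of stable leaves while $Z_3$ stays in a leafwise bounded region (finitely many semi-local stable pieces of bounded degree), which forces each stable slice of $Z_3$ to be a single (periodic) point. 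Your proposal is missing both the projection-to-a-transversal step and this expansion argument; without them the finiteness of $Z_3$ does not follow from Lemmas~\ref{lem:N^u_lsc} and~\ref{lem:discrete} alone (the extension Lemma~\ref{lem:extension} does not help here either, since it only provides the holonomy, not the constancy of $N^u$ under it).
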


\begin{proof} 
By Lemma~\ref{lem:discrete},  the set $\set{x\in J, \ N^u(x, R)\geq 3}$ 
is contained in a countable union of local stable manifolds. Since any point in $J$ can be joined to a given unstable transversal $\Delta^u$ by a stable path of uniform length, by 
taking small enough $R$ we infer that the projection of this set to $\Delta^u$  is actually finite. 
Therefore,  the set  $\set{x\in \Lambda, \ N^u(x)\geq 3}$  is a closed 
invariant set contained in 
a finite union of semi-local stable manifolds, so it is finite. 
\end{proof}

\subsection{Definition(s) and properties of the core} 
Let $\Lambda$ be a quasi-solenoidal component of $J$. 
There are  several possible 
 definitions for the core of $\Lambda$. It is unclear for the moment
 which choice is the most appropriate.  We define:
 \begin{itemize}
 \item $\core(\Lambda) = \set{x\in \Lambda, \  N^u(x)\geq 2}$
 \item $\core'(\Lambda)  = \omega\lrpar{\set{x\in \Lambda, \ N^u_\loc(x)\geq 2}}$
% \item $\core''(\Lambda) = \omega\lrpar{\core(\Lambda)}$
 \end{itemize}
 %(Another option could be to consider $\core''(\Lambda) = \omega\lrpar{\core(\Lambda)}$.)
By   Corollary~\ref{cor:omegaNu} we have the inclusion
$\core'(\Lambda) \subset \core(\Lambda)$, and it is an open problem 
whether equality holds  It is obvious from the definition that 
$\core(\Lambda)$ (resp. $\core'(\Lambda)$)
is invariant  and  Lemma~\ref{lem:N^u_lsc} implies that it is closed. Hence it is a 
closed hyperbolic set.  Another natural open question is whether $\core(\Lambda)$ is connected. 

The core of the Julia set is the union of the cores of its  finitely many quasi-solenoidal components. 
If $x\in J$ is any point such that $W^u(x)\setminus J$ has 
 several local accesses, then $\omega(x) \subset \core( J)$. 
 
We say that $x\in \core(\Lambda)$ is \emph{regular} if $N^u(x)=2$ and \emph{singular} otherwise. 
Recall that the singular set is a finite set of periodic points. Note that if $x$ belongs to the core, then 
$J^u(x)$ disconnects $W^u(x)$. 

\begin{conjecture}
$\core(\Lambda)$ has local product structure near any regular point, and is locally the product of a Jordan arc by a totally disconnected set. 
\end{conjecture}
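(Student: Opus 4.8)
The plan is to reduce the local structure of $\core(\Lambda)$ near a regular point to what we already know about $\Lambda$ itself, using the finiteness of the singular set and the semi-continuity of $N^u$. So first I would fix a regular point $x\in\core(\Lambda)$, so $N^u(x)=2$, and choose $R>0$ large enough that $N^u(x,R)=N^u(x)=2$; since the singular set is a finite set of periodic points, after shrinking a neighborhood we may assume $x$ is the only point of $\set{N^u(\cdot)\geq 3}$ that we have to worry about nearby, i.e. there is a neighborhood $U$ of $x$ in $\cd$ with $\core(\Lambda)\cap U\subset\set{y:\ N^u(y)=2}$. By Lemma~\ref{lem:N^u_lsc} (upper semicontinuity of $N^u$) together with the second half of its proof, there is $\e>0$ such that every $y\in D^u(x,\e)\cap J$ has $N^u(y,R-1)\leq 2$; combined with $N^u_\infty\geq 1$ and the John--H\"older property this pins down $N^u(y)\in\set{1,2}$ on that leafwise disk, and $N^u(y)=2$ precisely when $J^u(y)$ locally disconnects $W^u(y)$. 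The first real task is then to show that $\set{y\in D^u(x,\e)\cap J:\ N^u(y)=2\}$ equals $\core(\Lambda)\cap D^u(x,\e)$ and is a single leafwise arc through $x$: here I would use that $\Lambda$ is locally connected in the unstable direction (Theorem~\ref{thm:component_J} via Theorem~\ref{thm:topology}), so $J^u(x)\cap D^u(x,\e)$ is a locally connected continuum disconnecting the disk into exactly two complementary components accumulating at $x$; the set of points of this continuum that \emph{separate} the two sides is, by plane topology of dendrites/continua, an arc, and these are exactly the core points.

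Second, I would transport this along stable holonomy. By the local product structure of $\Lambda$, a flow box neighborhood of $x$ in $J$ is homeomorphic to $(W^s_Q(x)\cap J)\times(W^u_Q(x)\cap J)$, the first factor a totally disconnected set (stable total disconnectedness). The first part of the proof of Lemma~\ref{lem:N^u_lsc} shows that for $x'\in W^s_\loc(x)$ close to $x$, the stable holonomy $W^u_\loc(x)\to W^u_\loc(x')$ extends to $D^u(x,R)$ (by iterating backwards into the domain of the extended stable lamination, Lemma~\ref{lem:extension}) and preserves $N^u$. Hence this holonomy carries the leafwise core arc through $x$ homeomorphically onto the leafwise core arc through $x'$, and since holonomy preserves $J$ and $\Lambda$, it carries $\core(\Lambda)\cap W^u_\loc(x)$ onto $\core(\Lambda)\cap W^u_\loc(x')$. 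Running this over all $x'$ in the stable slice gives a homeomorphism of a neighborhood of $x$ in $\core(\Lambda)$ onto the product of a Jordan arc (the leafwise core) and the totally disconnected stable slice $W^s_Q(x)\cap J$; continuity of this product chart in the transverse direction is exactly the continuity of stable holonomy already used throughout Section~\ref{sec:semi_local}. That yields both the local product structure and the announced local model.

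The main obstacle I expect is the purely one-dimensional topological step: showing that the set of points of the leafwise continuum $J^u(x)$ that genuinely have two local accesses forms an \emph{arc}, rather than some more complicated subset of a dendrite, and that it behaves well under small perturbations of the base point. One has to rule out, for a locally connected continuum $\tau=J^u(x)\cap D^u(x,\e)$ disconnecting a disk into two pieces, that the ``separating'' part branches or fails to be locally an arc; this is plausible for dendrites with exactly two ``ends'' visible from a small disk, but needs a careful argument (e.g. via the structure of the prime-end compactifications of the two complementary Jordan domains and the fact that a point with $N^u=2$ lies on the common boundary arc of both, a point with $N^u\geq 3$ being excluded locally). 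A secondary difficulty is uniformity: one needs the $R$, $\e$, and the holonomy domain to be chosen uniformly over the stable slice so that the product chart is genuinely bicontinuous; as in Lemma~\ref{lem:N^u_lsc} this follows from the uniform geometry of the laminations and the boundedness of stable degree (Proposition~\ref{prop:stable total disc}), but it must be spelled out. Everything else — invariance, closedness, the fact that core points are exactly the separating points — is already available from the lemmas in this appendix.
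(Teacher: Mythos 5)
Note first that the statement you are proving is stated in the paper only as a conjecture: the authors give no proof, so your argument would have to stand as a complete, self-contained one, and it does not. The decisive step --- that inside a small leafwise disk the set of doubly-accessible points of the locally connected continuum $J^u(x)$ is a Jordan arc, varying continuously with the base point --- is exactly the content of the conjecture, and you yourself flag it as ``the main obstacle'' and offer only a plausibility sketch via prime ends. Nothing in the paper's toolbox (John--H\"older property, local connectivity, finiteness of $\set{N^u\geq 3}$) rules out that the common boundary of the two global complementary components fails to be locally an arc at some points: the components of $W^u(x)\setminus J$ counted by $N^u$ include traces of attracting basins as well as the basin of infinity, so infinitely many ``bounded decorations'' of $K$ can accumulate on the putative arc from both sides, and one must exclude branching, non-local-connectivity, or Cantor-like common boundary by an argument that is simply not supplied. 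Until that plane-topology statement is proved, the rest of the construction has nothing to transport by holonomy.

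There is also a concrete error in the reduction itself. You identify $\core(\Lambda)\cap D^u(x,\e)$ with the set of points $y$ whose local slice $J^u(y)\cap D^u(y,\e)$ separates the disk, asserting ``$N^u(y)=2$ precisely when $J^u(y)$ locally disconnects $W^u(y)$.'' But $N^u(y)$ is a \emph{global} count: $N^u(y,R)$ is non-increasing in $R$ and drops when two local accesses merge at larger radius, so a point that locally has two sides may well have $N^u(y)=1$ (think of a point on a decoration attached to the core curve: it is locally two-sided but both accesses belong to the same global component). Hence ``locally separating'' does not imply membership in the core, and your candidate local model may strictly contain $\core(\Lambda)\cap W^u_\loc(x)$; only the inclusion of the core into the separating set, together with closedness of $\set{N^u\geq 2}$ (Lemma~\ref{lem:N^u_lsc}), is available. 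Relatedly, the first part of the proof of Lemma~\ref{lem:N^u_lsc} only gives upper semicontinuity of $N^u$ under stable holonomy; to say that holonomy carries the core trace \emph{onto} the core trace you need exact invariance of $N^u$ along stable leaves, which requires an extra (two-sided) argument and uniform control of the radius on which the holonomy is defined. These gaps --- the unproved arc structure and the local-versus-global access confusion --- are precisely why the statement is left as a conjecture.
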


On the other hand, 
$\core(\Lambda)$ does not have local product structure in the neighborhood of any of its singular points, unless it is locally contained in a single unstable manifold. So the structure of the core should 
be that of a union of solenoids joined at finitely many  branch points.
It seems that in the example 
described in~\cite[Thm 4.23]{ishii-nonplanar}, one quasisolenoidal component has a core made of two solenoids attached at a fixed saddle point. 

Note that if $\Lambda$ is not a queer component, that is the associated 
component of $K$ contains an attracting periodic point, then the solenoid at the boundary of the 
immediate basin, constructed in \S~\ref{subs:basins}, is contained in the core. Indeed it is obtained by 
taking   limits of Jordan arcs locally separating an attracting basin from the basin of infinity.  
So the topological structure of the 
 core   should  give an account how these various basins are organized and attached to each other 
  in $\Lambda$ (compare with the Hubbard tree in one-dimensional dynamics).

Finally, we may also define $\core_\infty(\Lambda) = \set{x\in \Lambda, \  
N^u_\infty(x)\geq 2}$. (If $\Lambda$ is a queer component, then $\core_\infty(\Lambda)  = \core(\Lambda)$.) We expect that  $\core_\infty(\Lambda)$ is a finite set. 
Indeed, if not, it should contain a Jordan arc
 such that every point is accessible from both sides by the basin of infinity, and such arcs 
  should not exist. Indeed,  iterating forward,
  and arguing as in Theorem~\ref{thm:no_queer_components}, 
a large iterate of this arc must spiral  and  come
close to itself, hence,  projecting to an unstable transversal, this  would cut out a
 Fatou disk,  and we conclude that  one side of the arc is contained in  an attracting basin.

\section{Continuity of   affine structure} 

Here we present the following  mild generalization  of a theorem by \'Etienne Ghys \cite{ghys}. Recall that the ratio of a triple 
$(u,v,w)\in \C^3$ is $\frac{u-v}{u-w}$.

\begin{thm}\label{thm:ghys}
Let $\psi:\C\to\cd$ be an injective holomorphic immersion, and $L=\psi(\C)$. Assume that  $(L_n)$ is a sequence
of immersed complex submanifolds converging to $L$ in the following sense: if  $K\Subset L$ 
 is any   relatively compact  subset (relative to the leafwise topology), 
 then $L_n$ contains a graph over a neighborhood of $K$ 
 for  large $n$, that is there exists  a neighborhood $N(K)$ of $K$ in $L$ and 
 a sequence of  injective  holomorphic maps $\pi_n: N(K)\to L_n$ such that $\pi_n(x)\to x$ 
 for every $x$. Assume further that for every $n$, $L_n$ is biholomorphic to $\C$. 
 
 Then the affine structures on the  $L_n$ converge  to that of $L$ in the following sense: for any 
 compact set $K\Subset L$ as above and any triple $(x,y,z)\in K^3$, if 
 $(x_n, y_n, z_n)\in \pi_n (N(K))$ are  
  close to $(\pi_n (x), \pi_n (y), \pi_n (z))$ % for the leafwise topology in $L_n$
    and 
  converge  to $(x,y,z)$, then the corresponding  ratios   converge as well. 
  \end{thm}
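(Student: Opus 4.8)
The plan is to reduce the statement to a convergence property of the transition maps between the uniformizations, and then to establish that convergence by a normal families argument which crucially exploits that each $L_n$ is parabolic.

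\emph{Step 1 (reduction).} Fix uniformizations $\phi_n\colon\C\to L_n$, and recall that, by definition, the affine structure of $L_n$ is the push-forward under $\phi_n$ of the standard affine structure of $\C$; it does not depend on the choice of $\phi_n$ since $\mathrm{Aut}(\C)$ consists of affine maps. (Likewise $\psi$ is a biholomorphism of $\C$ onto $L$ equipped with its leafwise topology, and the affine structure of $L$ is the push-forward of the standard one via $\psi$.) It is therefore enough to control the maps $h_n:=\phi_n\inv\circ\pi_n\circ\psi$, which are holomorphic on $\psi\inv(N(K))$. Choosing the base point so that $\psi(0)\in K$ and precomposing $\phi_n$ with a suitable affine automorphism $\zeta\mapsto a\zeta+b$ of $\C$ (which amounts to replacing $h_n$ by $(h_n-h_n(0))/h_n'(0)$), we normalize $h_n(0)=0$ and $h_n'(0)=1$. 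A direct computation shows that, with $\alpha_n:=\psi\inv(\pi_n\inv(x_n))$ and $\beta_n,\gamma_n$ defined similarly, the ratio of $(x_n,y_n,z_n)$ in the affine structure of $L_n$ equals $\bigl(h_n(\alpha_n)-h_n(\beta_n)\bigr)/\bigl(h_n(\alpha_n)-h_n(\gamma_n)\bigr)$. Since the pointwise convergence $\pi_n(x)\to x$ upgrades, via the holomorphic-graph form of $\pi_n$ and the Cauchy estimates, to $C^{1}$-convergence $\pi_n\to\id$ and to $\pi_n\inv\to\id$ on compact sets, we get $\alpha_n\to\psi\inv(x)$, $\beta_n\to\psi\inv(y)$, $\gamma_n\to\psi\inv(z)$. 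Hence the theorem follows once we prove that, with the above normalization, $h_n\to\id$ locally uniformly on $\C$.

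\emph{Step 2 (the convergence $h_n\to\id$; the main step).} Take the exhausting compacts $K=K_m:=\psi\bigl(\overline{D(0,m)}\bigr)$, so that for $n\geq n(m)$ the map $h_n$ is defined and holomorphic on a neighborhood of $\overline{D(0,m)}$. The key observation is that $h_n$ is there \emph{univalent}: it is a composition of the injective maps $\psi$, $\pi_n$, and $\phi_n\inv$, the last one being injective precisely because $\phi_n$ uniformizes the whole of $\C$. Thus for each $m$ and all $n\geq n(m)$, $h_n$ restricts to a univalent function on $D(0,m)$ normalized by $h_n(0)=0$, $h_n'(0)=1$, and the classical growth theorem for univalent functions (Koebe) yields $\abs{h_n(\zeta)}\leq 4\abs{\zeta}$ on $D(0,m/2)$, uniformly in $n$. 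Letting $m\to\infty$, the family $\set{h_n}$ is uniformly bounded on every compact subset of $\C$, hence normal. Any locally uniform limit $h$ is an entire function with $h(0)=0$, $h'(0)=1$ (so non-constant), and is univalent by Hurwitz's theorem; a univalent entire function is affine, so $h=\id$. Therefore the whole sequence converges, $h_n\to\id$ locally uniformly on $\C$, and Step 1 concludes.

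\emph{Main obstacle.} The crux — and the only place where the hypothesis that the $L_n$ are biholomorphic to $\C$ enters — is the uniform bound on $h_n$ in Step 2. If the leaves $L_n$ were not parabolic, the transition maps $h_n$ would not be univalent on large disks, the Koebe bound would be lost, and the subsequential limits, though still holomorphic, would need not be affine; this is exactly the mechanism by which the affine structure fails to vary continuously across a lamination with hyperbolic leaves. Two routine verifications remain: (a) the passage from $\pi_n(x)\to x$ to $C^{1}$-convergence and to convergence of the inverses $\pi_n\inv$, obtained by writing $\pi_n$ as a holomorphic graph over $N(K)$ and applying the Cauchy estimates; and (b) that the computed ratio is insensitive to the choice of representatives $(x_n,y_n,z_n)$ near $(\pi_n(x),\pi_n(y),\pi_n(z))$, which is immediate once $h_n\to\id$ and $\alpha_n,\beta_n,\gamma_n$ are seen to depend continuously on those representatives.
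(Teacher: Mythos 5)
Your argument is correct and is essentially the paper's (i.e.\ Ghys') proof: both hinge on the fact that the transition maps $h_n=\phi_n^{-1}\circ\pi_n\circ\psi$ are univalent on arbitrarily large disks -- exactly where parabolicity of $L$ and of the $L_n$ enters -- so that the Koebe theorem forces them to be asymptotically affine, and hence to asymptotically preserve ratios. The only difference is packaging: the paper applies the Koebe distortion theorem quantitatively to bound the ratio distortion on $D(0,1)$ of a univalent map on $D(0,R)$ by some $\e(R)\to 0$, whereas you use the growth theorem plus Montel, Hurwitz and the rigidity of entire univalent maps to conclude $h_n\to\mathrm{id}$ after normalization; the underlying mechanism is the same.
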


The point of this statement is to emphasize that there is no need in Ghys' theorem to 
work with the leaves of a Riemann surface lamination. Also, compactness of the ambient space is not required. 
The theorem is certainly not written in its most general form: one might assume more generally that 
\begin{itemize}
\item the $\pi_n$ are $(1+\e_n)$ quasi-conformal for some $\e_n\to 0$;
\item  $L$ and the $L_n$ are parabolic Riemann surfaces instead of copies of $\C$. 
\end{itemize}
The adaptation is left  to the reader. Notice also that any submanifold $V$ 
of a Stein manifold admits a neighborhood $W$ endowed with a holomorphic retraction $W\to V$ (see \cite[Cor. 1]{siu}). Therefore 
our convergence  assumption essentially means  that     $L_n$ converges to $L$ with multiplicity 1.

\begin{proof} We   follow \cite[\S 4]{ghys} closely. Pick a triple of distinct points  $(x,y,z)$ in $L$ and  $R_0$ 
  such that $\psi(D(0,R_0))$ contains $x,y,z$. For $\alpha  \in L$ let   $\widetilde \alpha = \psi\inv(\alpha)$.  Without loss of generality we may assume $R_0=1$. Let $R$ be a large positive number to be determined. For   $n\geq n(R)$,  $\pi_n$ is well defined in $\psi(D(0,R))$.  Let $(x_n, y_n, z_n)\in \pi_n(D(0,1))^3$
converging to $(x,y,z)$, and fix $\e>0$.. Then  by assumption 
$(\pi_n\inv(x_n), \pi_n\inv(y_n), \pi_n\inv(z_n))$ converges to $(x,y,z)$ for the leafwise topology
 in $L$. Let $\psi_n:\C\to L_n$ be any parameterization, and let 
 $\widetilde x_n = \psi_n\inv(x_n)$, $\widetilde y_n = \psi\inv(y_n)$ and 
  $\widetilde z_n = \psi\inv(z_n)$. Without loss of generality we may assume  $\widetilde x_n = 0$. 
   We have to show that for large $n$, the  ratio of $(\widetilde x_n, \widetilde y_n, 
  \widetilde z_n)$ is close to that of $(\widetilde x, \widetilde y, \widetilde z)$. 
  
  By assumption $h_n:=\psi_n\inv\circ \pi_n\circ \psi : D(0, R)\to \C$ is 
  an injective holomorphic map. By  
  renormalizing $\psi_n$ we may assume that $h_n'(0)=1$ (we use $L_n\simeq \C$ precisely here). Then by the Koebe distortion theorem, $h_n$ is almost affine in $D(0,1)$, that is, it 
   distorts the  ratios of points in $D(0,1)$ by some  small   amount $\e(R)$. Fix $R$ so large  that $\e(R)<\e$. In particular for $n\geq n(R)$ we get that 
   $$\abs{\frac{h_n(\widetilde x) - h_n(\widetilde y)}{h_n(\widetilde x) - h_n(\widetilde z)} -  \frac{\widetilde x-\widetilde y}{\widetilde x-\widetilde z} } \leq \e.$$   Now for $\alpha \in K$, $h_n(\widetilde \alpha)$
    is the parameter  in $\C$
   corresponding to $\pi_n(\alpha) \in L_n$, so $\widetilde \alpha_n$ is close to $h_n(\widetilde\alpha)$ in $\C$ and 
    for large $n$ we also get that
    $$\abs{\frac{h_n(\widetilde x) - h_n(\widetilde y)}{h_n(\widetilde x) - h_n(\widetilde z)} - \frac{\widetilde x_n-\widetilde y_n}{\widetilde x_n- \widetilde z_n} } \leq \e,$$ 
   and we are done. 
\end{proof}

 %%%%%%%%%%%%%%%%%%%%%%%%%%%%%%
 
 \bibliographystyle{plain}
 \bibliography{bib-structure}

\end{document}